\documentclass[11pt,a4paper,twoside]{article}

\usepackage[utf8]{inputenc}
\usepackage{amssymb,amsthm}
\usepackage[english]{babel}
\usepackage[leqno]{amsmath} 
\usepackage{mathtools}
\usepackage{bbm}
\usepackage[twoside, bindingoffset=5mm, left=15mm, right=25mm, top=28mm, bottom=28mm,a4paper]{geometry} 
\usepackage{float}
\usepackage{fancyhdr}
\usepackage[breaklinks,plainpages=false,pdfpagelabels=true]{hyperref}
\usepackage{bookmark}
\usepackage{tikz}
\usepackage{enumerate}
\usepackage{amsmath}
\usepackage{newpxtext}
\usepackage{keyval}
\usepackage{constants}
\usepackage{sectsty}

\newcommand{\R}{\mathbbm {R}}
\newcommand{\Rr}{\mathcal {R}}
\newcommand{\Cc}{{C}}
\newcommand{\M}{\mathcal {M}}
\newcommand{\N}{\mathbbm {N}}

\newcommand{\W}{\mathcal {W}}
\newcommand{\Kk}{\mathcal {K}}
\newcommand{\diff}{\operatorname{d}\!}
\newcommand{\Hd}{\mathcal{H}}

\renewcommand{\div}{\operatorname{div}}

\newcommand {\cb}[1] {\tikz[baseline=(char.base)]{
            \node[shape=circle,draw,inner sep=2pt] (char) {\itshape{\textbf{#1}}};}}
\newcommand {\cbm}[1] {\tikz[baseline=(char.base)]{
            \node[shape=circle,draw,inner sep=2pt] (char) {\scriptsize\itshape{\textbf{#1}}};}}

\newtheoremstyle{plain}	
	{1em}{1em}	
	{\itshape}	
	{}	
	{\bfseries}	
	{}{\newline}	
	{\thmnumber{#2 }\thmname{#1}\thmnote{ (#3)}}

\newtheorem{thm}{Theorem}
\newtheorem{corollary}[thm]{Corollary}
\newtheorem{lemma}[thm]{Lemma}

\title{The Willmore Flow of Graphs with Boundary Data: Low-Regularity Initial Data and Global Convergence}
\author{Boris Gulyak\\
    \small  Otto-von-Guericke-Universität,\\
   \small  Fakultät für Mathematik,\\
   \small   Postfach 4120,\\
   \small  39016 Magdeburg,\\
   \small Germany\\
    \small \texttt{bgulyak@ovgu.de}}

\begin{document}
\maketitle

\begin{abstract}
We study the Willmore flow for graphs over a bounded domain in $\R^2$ with Dirichlet (clamped) boundary conditions, a still little-studied setting that also serves as a prototype for higher-order flows with fixed boundary data.
We develop a low-regularity theory that avoids the classical fourth-order compatibility condition at $t=0$.
Combining a reformulation of the graphical equation, which isolates the quasilinear fourth-order principal part from the lower-order terms, with time-weighted parabolic Hölder spaces, we prove short-time existence for initial data in $C^{1+\alpha}(\overline\Omega)$ and, under a smallness assumption, also for Lipschitz data in $C^{0,1}(\overline\Omega)$, even when the initial Willmore energy is not defined. In the Hölder regime, uniqueness is obtained.

In the small-data Lipschitz regime, we also prove global existence, uniform gradient bounds, and exponential convergence to a stationary solution of the elliptic Willmore equation with the prescribed boundary data.
A key ingredient is an $L^2$-smallness criterion for graphical surfaces with small Willmore energy and small boundary values.
The approach is mainly analytic and extends naturally to related higher-order geometric flows and other boundary conditions.
\end{abstract}



\maketitle

\section{Introduction}

For a smooth immersed surface $S\subset\R^3$, the Willmore energy is defined by
\begin{align}
\W(S):=\frac14\int_S H^2\,dS ,
\label{eq:meinewilldef}
\end{align}
where $H=\kappa_1+\kappa_2$ denotes the mean curvature, i.e., the sum of the principal curvatures.
Historically, quadratic bending energies of this type already appear in the nineteenth-century work of Germain and later entered conformal surface geometry through Thomsen. The functional was subsequently popularized by Willmore and also plays a central role in the Helfrich model for elastic lipid bilayers. For modern surveys on the variational and geometric aspects of the Willmore functional, we refer to
\cite{germain1821recherches,thomsen1924grundlagen,willmore1996riemannian,helfrich1973elastic,kuwert2012willmore}.

Critical points of $\W$ are called \emph{Willmore surfaces}.
Minimal surfaces are particular examples, since $H\equiv0$ implies $\W(S)=0$.
A fundamental feature of the theory is the conformal invariance of the integrand
\(
\tfrac14 H^2-K,
\)
where $K=\kappa_1\kappa_2$ denotes the Gaussian curvature.
For closed surfaces, the Gauss-Bonnet theorem implies that the Willmore energy is invariant under Möbius transformations up to the topological constant $2\pi\chi(S)$.
This conformal structure underlies many deep results in the theory, including the resolution of the Willmore conjecture by Marques and Neves~\cite{marques2014min}.
For existence results on Willmore minimizers, we refer to Simon~\cite{simon1993existence}.

A natural way to search for critical points is to study the
$L^2$-gradient flow of the Willmore functional, the \emph{Willmore flow}.
Given a sufficiently smooth one-parameter family of immersions
\[
f:\Sigma\times[0,T)\to\R^3,
\]
with pull-back $g = f^\ast\langle\,\cdot\,, \cdot\,\rangle$ of the Euclidean
metric in $\R^3$, the Willmore flow equation reads
\begin{align}
\langle \partial_t f,N\rangle
&=
-\big(
\Delta_{g}H
+
2H
(
\tfrac14 H^2-K
)
\big),
\label{eq:willmoreflowgen}
\end{align}
where where $\Delta_g$ is the Laplace–Beltrami operator on $(\Sigma, g)$ and $N(t,\,\cdot\,)$ is a choice of unit normal along $f(\Sigma,t)$.
This is a fourth-order quasilinear geometric evolution equation, and its
analysis is substantially subtler than that of second-order flows such as
mean curvature flow.

For closed surfaces, the flow was studied in particular by Kuwert and
Schätzle \cite{kuwert2001willmore}, while in the graphical setting Dziuk and Deckelnick \cite{dziuk2006error} derived and
analyzed the corresponding graph equation.
Rough-data well-posedness for entire graphs was obtained by Koch and Lamm,
\cite{koch2012geometric}. Further,
Simonett \cite{simonett2001willmore} proved global existence and
exponential convergence to a round sphere for initial data sufficiently
close to spheres in the $C^{2+\alpha}$-topology.
Kuwert and Schätzle developed a comprehensive theory in codimension one:
in \cite{kuwert2001willmore,kuwert2004removability} they showed that for
initial energy $\W\le 8\pi$, the flow of immersed spheres exists globally
and converges to a round sphere, while in \cite{kuwert2002gradient} they
established lower bounds on the lifespan of smooth solutions in terms of
curvature concentration.
More generally, Chill, Fasangová and Schätzle \cite{chill2009willmore}
proved global existence and convergence to a local minimizer for initial
surfaces that are $W^{2,2}\cap C^1$-close to a $C^2$ Willmore minimizer.

Numerical experiments by Mayer and Simonett \cite{mayer2002numerical}
suggest the formation of singularities once smallness conditions are
violated.
Moreover, Blatt \cite{blatt2009singular} constructed examples with
Willmore energy arbitrarily close to $8\pi$ that do not converge under
the flow, exhibiting either unbounded diameter or concentration of
curvature.

To understand the possible phenomena, one can restrict attention to surfaces with symmetry or projectability properties, 
hoping to extract geometric and analytic information on the Willmore energy, its minimizers, and flow. The weaker these constraints, the harder the analysis becomes.

In this paper we consider the \emph{graphical} Willmore flow over a fixed
sufficiently smooth bounded planar domain $\Omega\subset\R^2$ with $\nu$ denoting the outward unit normal field on $\partial\Omega$.
Thus surfaces are represented as graphs for each fixed time $t\ge 0$
\[
\Gamma(u)=\{(x,u(x,t)):x\in\overline\Omega\},
\]
and the geometric evolution becomes a scalar fourth-order parabolic equation
for the sufficiently smooth height function $u\colon \overline\Omega\times[0,T]\to \R$ for some $T>0$.
We impose for all times fixed boundary position and fixed tangent half-planes along
$\partial\Omega$, which in graph coordinates correspond to the clamped
boundary conditions
\[
u=g_0,
\qquad
\partial_\nu u=g_1
\qquad\text{on }\partial\Omega 
\]
for all times and some sufficiently smooth fixed (given) functions $g_0,g_1\colon \partial\Omega\to\R$.

This is the Dirichlet, or clamped, boundary value problem for the Willmore flow.
By contrast, in the Navier problem one prescribes the height together with the mean curvature on the boundary. 
Boundary value problems for Willmore surfaces and the Willmore flow evolution become more
involved and much less is known when compared with closed surfaces. One of the reasons is that
we cannot directly apply scaling arguments. The other is that in general, no a-priori bounds are
known neither for the solution of the Willmore equation nor for Willmore energy-minimizing
sequences or minimizers.

In \cite{nitsche1993boundary}, various boundary conditions together with
corresponding existence results for small data in strong topologies are
discussed; see also \cite{barrett2017} for further boundary conditions.
For the stationary problem we refer to Schätzle~\cite{schatzle2010willmore}.
Beyond these general results, the Dirichlet problem has been studied
extensively in the class of surfaces of revolution:
Dall'Acqua, Deckelnick, and Grunau proved existence of classical
solutions for symmetric Dirichlet data
\cite{dall2008classical},
Dall'Acqua, Fröhlich, Grunau, and Schieweck treated arbitrary symmetric
Dirichlet boundary data
\cite{dall2011symmetric},
and Eichmann and Grunau established existence for non-symmetric
Dirichlet data under an explicit energy condition
\cite{eichmann2019existence}.
For rigidity and uniqueness in the homogeneous Dirichlet case see
Dall'Acqua~\cite{dall2012uniqueness}.
A broader overview of boundary value problems for the Willmore functional
is given in Grunau's survey \cite{grunau2018boundary}.
Graphical minimizers under Dirichlet or Navier conditions were studied by
Deckelnick, Grunau, and Röger~\cite{deckelnick2017minimising}.
Related results for the elliptic problem with low-regularity boundary and
Dirichlet data can be found in our recent work~\cite{gulyak2026}.

On the parabolic side, much less is known. We refer to our previous work \cite{gulyak2017willmore}. 
Recent work of Schlierf \cite{schlierf2024convergence}
studies the Willmore flow with Dirichlet boundary conditions in a
rotationally symmetric setting and proves global existence and
convergence below an explicit energy threshold.
See also Schlierf~\cite{schlierf2025global} for a complementary global
existence result based on a Simon-Li-Yau type inequality.

\subsection{The Initial-Boundary Value Problem}

In the Dirichlet case, for a given initial datum $u_0:\overline\Omega\to\R$, we consider the following 
initial-boundary value problem
\begin{align}
\left\{
\begin{aligned}
\partial_t u
&=
-\,Q\left\{
\Delta_{\Gamma(u)}H
+2H\left(\tfrac14 H^2-\Kk\right)
\right\}
&& \text{in } \Omega\times(0,T],\\
u(x,t)&=g_0(x),
&& (x,t)\in\partial\Omega\times[0,T],\\
\partial_\nu u(x,t)&=g_1(x),
&& (x,t)\in\partial\Omega\times[0,T],\\
u(x,0)&=u_0(x),
&& x\in\overline\Omega.
\end{aligned}
\right.
\tag{WF}\label{Willmore}
\end{align}
where $Q=\sqrt{1+|\nabla u|^2}$ is the induced surface area element.

Since we have prescribed boundary values, to get continuity for $t \searrow 0$ we have to make sure that the initial and boundary values are consistent with each other. 
At time $t=0$ this requires the compatibility conditions
\begin{align}
 g_0=&\	u_0, \hspace{1em} 
g_1	= \partial_\nu u_{0},  \hspace{1em} \text { on }  \partial \Omega \label{eq:wkb} \tag{CC}
\end{align}
for the solution $u$ to be at least bounded in $C^{0,1}(\overline{\Omega}).$

There are \emph{two main analytical difficulties}.
First, the equation is a fourth-order quasilinear equation, so a direct Schauder approach would naturally require, see \cite[Remark 2.3]{deckelnick2015C1}
\[
u_0\in C^{4+\alpha}(\overline\Omega),
\qquad
g\in C^{4+\alpha}(\overline\Omega).
\]
Moreover, classical $C^{4+\alpha}$ theory implies that the solution remains $C^{4+\alpha}$-bounded as $t \searrow 0$ only if the initial datum satisfies the fourth-order compatibility condition
\begin{align}
    0
    = \Delta_{\Gamma(u_{0})} H(u_0)
    + 2H\left(\tfrac{1}{4}H^2-\Kk\right)(u_0)
    \quad \text{on } \partial \Omega,
    \label{eq:comp4} \tag{uCC}
\end{align}
see \cite{lunardi1992semigroup}.
This condition reflects the fact that the height $u$ is fixed on $\partial\Omega$, and hence $\partial_t u = 0$ on the boundary.
It is automatically satisfied for all $t>0$ provided $u(\,\cdot\,,t)\in C^4(\overline\Omega)$.

In fact, it is desirable to avoid \eqref{eq:comp4}, since higher-order
compatibility conditions are a standard but restrictive feature of the
classical theory for parabolic boundary value problems and are difficult
to enforce in numerical applications. From the parabolic point of view, however, one expects an instantaneous
smoothing effect away from the initial time.
In geometric fourth-order flows with boundary, such compatibility
conditions appear explicitly, for instance, in Menzel's treatment of
Willmore type evolutions \cite{menzel2021boundary}, in Diana's recent
work on elastic flow with partial free boundary \cite{Diana2024}, in the
Willmore flow of planar networks \cite{garcke2019willmore}, and in the
surface diffusion setting \cite{garcke2021nonlinear}.
A broader discussion can also be found in the survey
\cite[Remark 2.8]{mantegazza2021survey}.

Second problem is that, the leading operator is uniformly elliptic only as long as
the gradient remains controlled. If $\|\nabla u\|_{L^\infty(\Omega)}$
is unbounded, the graph structure may be lost.
For global existence, some smallness condition on the data is therefore
unavoidable, and the fixed boundary prevents a direct scaling reduction.
Also, scaling is not directly useful, since boundary is fixed.

On the other hand, the graphical setting provides strong a priori
geometric control. In particular, Deckelnick, Grunau and Röger
\cite{deckelnick2017minimising} proved area and diameter bounds for graphs
in terms of the Willmore energy and the boundary data, a feature that
fails for general immersed surfaces with fixed boundary, see also \cite{gulyak2014willmore, gulyak2024boundary}
\begin{align*}
     \sup_{x\in\Omega}|u| + \| Q \|_{L^1(\Omega)} 
     &\  \le 
C\left(\mathcal{H}^2(\Omega),\mathcal{H}^1(\partial\Omega) ,\|g\|_{W^{1,1}(\partial\Omega)} , \W(u), |\Omega|\right).
\end{align*}
These estimates will later be used to derive $L^2$-smallness
and global existence in the small-data regime. Moreover, rewriting the Willmore flow equation in terms of the height
function transforms the geometric evolution into a fourth-order
quasilinear parabolic equation, making the tools of parabolic PDE theory
available.

The associated numerical $C^1$-finite element method for this problem was provided by Deckelnick, Katz, und Schieweck in \cite{deckelnick2015C1} with quasioptimal error bounds in Sobolev norms for the solution and its time derivative.


\subsection{Goals and Methods}

Our main goals are threefold.
First, we prove short-time existence in weighted parabolic Hölder spaces for low-regularity initial data, covering both the Hölder regime $u_0\in C^{1+\alpha}(\overline\Omega)$ and the Lipschitz regime $u_0\in C^{0,1}(\overline\Omega)$.
Second, under a smallness assumption on the Lipschitz norm of the data, we establish global existence and convergence of the flow to a stationary graph solving the elliptic Willmore equation with the prescribed boundary data.
Third, we derive geometric and analytic a priori estimates specific to the graphical setting, including an $L^2$-smallness criterion that is crucial for the global theory.

\emph{In particular, we achieve:}
\begin{itemize}
    \item[\cbm 1] the removal of the compatibility condition \eqref{eq:comp4},
    \item[\cbm 2] a reduction of the regularity assumptions for local existence from $C^{4+\alpha}$ to $C^{1+\alpha}$,
    \item[\cbm 3] global existence and convergence to a critical point for Lipschitz initial data with sufficiently small $\|\nabla u_0\|_{L^\infty(\Omega)}$.
\end{itemize}

We emphasize that the methods are primarily analytic rather than geometric in nature.
In particular, they can be applied to related flows, such as the graphical \emph{Helfrich flow} or the \emph{surface diffusion flow}, to reduce the regularity assumptions on the initial data to the Lipschitz level.
Moreover, the approach may be extended to non-graphical settings, where $u$ represents a height function with respect to a reference surface.

By introducing weighted parabolic Hölder spaces, we provide a general framework to avoid higher-order compatibility conditions for higher-order geometric flows, provided that the complementarity conditions for the boundary operators on $\partial\Omega\times[0,T]$ are satisfied, see \cite{solonnikov1965boundary}.
These conditions follow, in particular, from the validity of the Lopatinskii-Shapiro condition, see \cite{Eidelman1998parabolic, menzel2021boundary}.
Consequently, our results can be extended to other boundary conditions, such as the Navier case or problems where $(u,\Delta u)$ are prescribed on the boundary.   

From the perspective of the Sobolev theory developed by Solonnikov, \cite{solonnikov1965boundary} 
the natural regularity of the initial data is considerably higher.
Indeed, for the biharmonic parabolic operator ($2m=4$), the classical
$L^p$-theory yields that in the two-dimensional case  every solutions in the space
$W^{1,4}_p(Q_T)$ 
for $p>3$ that the initial data must be at least of class
$C^{2+\alpha}$, by embedding theorems, see \cite[Thm. B.20, Prop. B.35]{menzel2021boundary}.
This comparatively high regularity requirement is a major drawback of the
Sobolev approach, as it excludes, for instance, Lipschitz or merely
$C^{1+\alpha}$ initial data.
In contrast, our method based on weighted Hölder spaces allows for the
treatment of significantly rougher initial data.



The \emph{main strategy and methods} are as follows.
\begin{itemize}
\item[\cbm 1]
First, we formulate the Willmore flow equation in the graphical setting in a way that makes its divergence structure accessible.
For the local theory, it is convenient to separate the highest-order part of the equation.
Expanding $\Delta_{\Gamma(u)}H$ as in \cite{koch2012geometric}, we write
\begin{align*}
\partial_t u 
&= -Q\Delta_{\Gamma(u)} H - 2QH\left(\tfrac{1}{4}H^2-\Kk\right)\\
&= -\,L(\nabla u)D^4u - \mathcal R(\nabla u,D^2u,D^3u),
\end{align*}
where $L(\nabla u)D^4u$ is the fourth-order principal part and
$\mathcal R$ contains only lower-order terms.
In particular, $\mathcal R(\nabla u,D^2u,D^3u)$ is polynomial in derivatives of $u$
of order at most three, with coefficients depending smoothly on $\nabla u$
through powers of $Q^{-1}$.
More precisely, each monomial containing a third derivative is linear in $D^3u$
and involves at most one factor of $D^2u$, whereas terms without third derivatives are cubic in $D^2u$.
Consequently, for every $\Lambda>0$ there exists a constant $C_\Lambda>0$ such that
\begin{align}
|\mathcal R(\nabla u,D^2u,D^3u)|
\le
C_\Lambda\big(|D^3u|\,|D^2u|+|D^2u|^3\big)
\qquad
\text{whenever } |\nabla u|\le \Lambda .
\tag{\ref{eq:R-structure}}
\end{align}
This estimate will be used in the weighted Schauder theory and in the fixed-point argument.

\item[\cbm 2]
For short-time existence, we work in time-weighted parabolic Hölder spaces introduced by Belonosov~\cite{belonosov1979estimates}, which allow us to lower the regularity of the initial data and to avoid compatibility conditions at $t=0$.
More precisely, the spaces
\[
C^{4+\alpha,1+\alpha/4}_{1+\alpha}(\overline\Omega\times(0,T])
\quad\text{and}\quad
C^{4+\alpha,1+\alpha/4}_{1}(\overline\Omega\times(0,T])
\]
permit initial data in $C^{1+\alpha}(\overline\Omega)$ and $C^{0,1}(\overline\Omega)$, respectively.
We linearize the problem by freezing the coefficients, allowing derivatives $D^\beta u$ with $|\beta|\ge 2$ to blow up as $t\searrow 0$.
This enables the use of weighted Schauder estimates for linear parabolic equations, after which a fixed-point argument yields local existence.
The case of $C^{3+\alpha}$ initial data was treated in \cite{gulyak2017willmore}; see also \cite{DeGennaroDianaKubinKubin2024} for related uses of weighted Hölder spaces in geometric contexts.

\item[\cbm 3]
For global existence, we combine the guaranteed local existence time with the structure of the weighted norms to obtain bounds on higher-order derivatives.
We then apply spatial Hölder interpolation together with graphical a priori $L^2$-smallness estimates.
The limiting object is characterized via the associated stationary elliptic problem from \cite{gulyak2026}.
For the convergence rate, we use the divergence formulation of the Willmore flow equation due to Koch and Lamm~\cite{koch2012geometric}.
\end{itemize}

We also mention an alternative approach to reduce the regularity assumptions on the initial and boundary data, namely from $u_0\in C^{4+\alpha}(\overline\Omega)$ to $C^{2+\alpha}(\overline\Omega)$ and from $C^{4+\alpha}(\partial\Omega)$ to $C^{2+\alpha}(\partial\Omega)$, while avoiding the compatibility condition \eqref{eq:comp4}.
This approach relies on the biharmonic and divergence structure of the graphical Willmore flow, as observed by Koch and Lamm~\cite[Lemma~3.2, p.~215]{koch2012geometric} (see also \eqref{eq:willflowkoch} below), combined with the use of \emph{time-unweighted parabolic Hölder spaces}
\begin{align*}
C^{2+\alpha,(2+\alpha)/4}_{x,t}(\overline{\Omega}\times [0,T]).
\end{align*}
This method was developed by the author in his PhD thesis~\cite[Section~6.5]{gulyak2024boundary}.
The corresponding regularity framework is closer in spirit to \cite{simonett2001willmore}, where global existence and exponential convergence to a sphere are established for initial data close to spheres in the $C^{2+\alpha}$-topology.

\subsection{Main Results}

In this section we collect the main results of the paper.
They are formulated in three principal theorems.
The first theorem establishes short-time existence for both Hölder
and Lipschitz initial data. 
We present both cases since the less regular Lipschitz setting requires
a smallness assumption on the initial data, whereas the more regular
Hölder case does not. Moreover, the proofs rely on different techniques, and uniqueness is obtained only in the Hölder setting.

\begin{thm}[Local Existence]\label{thm:main1}
Let $\Omega \subset \R^2$ be a bounded domain with $C^{4+\alpha}$ boundary,
$\alpha\in(0,1)$, and let $g_0\in C^{4+\alpha}(\partial\Omega),g_1\in C^{3+\alpha}(\partial\Omega)$.

\begin{enumerate}
\item[(a)] \textbf{(Local Hölder case).}
Let $K>0$ and suppose
\[
u_0\in C^{1+\alpha}(\overline{\Omega}),
\qquad
\|\nabla u_0\|_{C^{\alpha}(\overline{\Omega})}
+\|g_0\|_{C^{4+\alpha}(\partial\Omega)}+\|g_1\|_{C^{3+\alpha}(\partial\Omega)} <K.
\]
Then there exists a time $T=T(\alpha,K,\Omega)>0$ and a unique solution
\[
u\in C^{4+\alpha,1+\alpha/4}_{1+\alpha}(\overline{\Omega}\times(0,T])
\]
to the Willmore flow problem \eqref{Willmore}.

\item[(b)] \textbf{(Local Lipschitz case).}
There exists a constant $M=M(\alpha,\Omega)>0$ such that if
\[
u_0\in C^{0,1}(\overline{\Omega}),
\qquad
\|\nabla u_0\|_{L^\infty(\Omega)}\le M,
\]
then there exists a time
\[
T=T(\alpha,\Omega,\|g\|_{C^{4+\alpha}(\partial\Omega)})>0
\]
for which the Willmore flow problem \eqref{Willmore} admits a solution
\[
u\in C^{4+\alpha,1+\alpha/4}_{1}(\overline{\Omega}\times(0,T]).
\]
\end{enumerate}
\end{thm}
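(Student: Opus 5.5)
The plan is a contraction argument in the time-weighted Belonosov spaces, built on the split $\partial_t u=-L(\nabla u)D^4u-\mathcal R(\nabla u,D^2u,D^3u)$ with the structure bound \eqref{eq:R-structure}.

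\emph{Reduction and function space.} First subtract an extension. Let $h\in C^{4+\alpha}(\overline\Omega)$ be a fixed function with $h=g_0$ and $\partial_\nu h=g_1$ on $\partial\Omega$; it comes from a bounded extension operator, so $\|h\|_{C^{4+\alpha}}\le C\|g\|$. We look for $u$ in the space
\[
X_T:=C^{4+\alpha,1+\alpha/4}_{\gamma}(\overline\Omega\times(0,T]),\qquad \gamma=1+\alpha \text{ in (a)},\quad \gamma=1 \text{ in (b)}.
\]
The norm is weighted so that, for $|\beta|=k\ge 2$, $t^{(k-\gamma)/4}|D^\beta u|$ stays bounded, together with the corresponding weighted Hölder seminorms. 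In particular $\nabla u$ is bounded in $C^{\gamma-1}$ uniformly up to $t=0$, and $D^2u=O(t^{-(2-\gamma)/4})$, $D^3u=O(t^{-(3-\gamma)/4})$.

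\emph{Linear theory.} The key linear ingredient is the weighted Schauder estimate for the frozen problem
\[
\partial_t v+L(\nabla w)D^4v=f,\qquad v=g_0,\ \partial_\nu v=g_1 \text{ on }\partial\Omega,\qquad v(0)=u_0 .
\]
The coefficient $L(\nabla w)$ is uniformly elliptic and Hölder continuous in $(x,t)$ as long as $\|\nabla w\|_\infty\le\Lambda$. For such coefficients I expect
\[
\|v\|_{X_T}\le C\big(\|u_0\|_{C^{\gamma}}+\|g\|+\|f\|_{Y_T}\big),
\]
where $Y_T$ is the weighted space $C^{\alpha,\alpha/4}_{\gamma-4}$, i.e.\ $f$ may blow up like $t^{-(4-\gamma)/4}$. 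I would obtain this from Belonosov's estimates by localization and rescaling on parabolic cylinders $Q_\rho(x,t)$ with $\rho\sim t^{1/4}$, applying the classical Solonnikov boundary estimates there. The Lopatinskii–Shapiro condition holds for the clamped operator, and no compatibility condition \eqref{eq:comp4} enters, because the weight absorbs the singularity at $t=0$.

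\emph{Fixed point.} Define $\Phi(w)=v$ with $f=-\mathcal R(\nabla w,D^2w,D^3w)$ and run it on
\[
B=\{w\in X_T:\ \|w\|_{X_T}\le R,\ \ w(0)=u_0\}.
\]
By \eqref{eq:R-structure} and the weights,
\[
|D^3w|\,|D^2w|+|D^2w|^3\lesssim t^{-(5-2\gamma)/4}R^2+t^{-3(2-\gamma)/4}R^3 .
\]
Compare this with the allowed rate $t^{-(4-\gamma)/4}$.

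\textbf{(a)} For $\gamma=1+\alpha$ the gains are $t^{(\gamma-1)/4}=t^{\alpha/4}$ and $t^{(2\gamma-2)/4}$. Hence $\|\mathcal R(w)\|_{Y_T}\le C(R)T^{\alpha/4}$, and Hölder seminorms and differences are handled the same way. Here I would first split $w=w_{\mathrm{lin}}+\tilde w$, where $w_{\mathrm{lin}}$ solves the linear problem frozen at $\nabla u_0$, so that the coefficient perturbation $L(\nabla w)-L(\nabla u_0)$ is also small in $T$. Choosing $R\sim CK$ and $T$ small then makes $\Phi$ a contraction on $B$. Uniqueness follows by applying the same difference estimate to two solutions in $X_T$.

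\textbf{(b)} For $\gamma=1$ there is no power of $T$ to gain: the quadratic term is exactly critical. Smallness must instead come from $R$, so $\|\mathcal R(w)\|_{Y_T}\le C(R^2+R^3)$. The issue is that $R$ cannot be small when $g$ is large. The fix is to write $u=h+z$, where only $z$ carries the rough part. The weighted norm of $z$ is controlled by $\|\nabla u_0\|_\infty+\|g\|\cdot(\text{small in }T)$, and terms involving $h$ gain powers of $T$ because $h$ is smooth. For $M$ small and $T=T(\alpha,\Omega,\|g\|)$ small this gives a self-map. I would obtain existence via the contraction if the Lipschitz-type difference estimates in the critical space close, and otherwise via Schauder's fixed point theorem using compactness in a weaker weighted norm; the latter is why uniqueness is not claimed. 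The coefficient $L(\nabla w)$ is only $L^\infty$ in time near $0$, so I would also freeze it at a mollified $\nabla u_0$ and use the smallness of $M$ to keep the perturbation small.

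\textbf{Main obstacle.} I expect the hardest step to be the weighted Schauder estimate with merely bounded (case b) or $C^\alpha$ (case a) coefficient behaviour at $t=0$ near the boundary. In the Lipschitz case one also has to close the critical quadratic estimate, since the scaling gives no room.
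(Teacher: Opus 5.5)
\textbf{Part (a).} Your route for (a) is sound and differs from the paper's. You get the small factor from the weights themselves. At $s=1+\alpha$ every term of $\mathcal R$ is subcritical by at least $t^{\alpha/4}$. Since $\nabla(w-u_0)$ has zero initial trace, you also have $\|L(\nabla w)-L(\nabla u_0)\|_{C^{\alpha,\alpha/4}_0(Q_T)}\le C(R,K)\,T^{\alpha/4}$, because that norm puts $t^{\alpha/4}$ on the seminorms. So freezing at $\nabla u_0$ keeps the Schauder constant dependent on $K$ alone, and the contraction closes in $C^{4+\alpha,1+\alpha/4}_{1+\alpha}$ itself. The paper instead freezes at $\nabla w$ and centers the ball at $G_T\overline u_0$. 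It gets $T^{(\alpha-\gamma)/4}$ by lowering the Hölder exponent to $\gamma<\alpha$ for functions with vanishing initial traces (Lemma~\ref{gammaalpha}), and contracts in the $\gamma$-space. That mechanism also covers the unweighted case $m=4$, where your weight gain disappears. For uniqueness you still need the restart on $[T_0,T']$ that the paper carries out.

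\textbf{Part (b) has a gap.} It sits in the reduction $u=h+z$. If $h$ only satisfies $\|h\|_{C^{4+\alpha}}\le C\|g\|$, then the claim $\|z\|_{C^{4+\alpha,1+\alpha/4}_1(Q_T)}\lesssim\|\nabla u_0\|_\infty+\|g\|\cdot o_T(1)$ is false. That norm contains the unweighted term $\sup_{Q_T}|\nabla z|\ge\|\nabla u_0-\nabla h\|_{L^\infty(\Omega)}$. The Schauder estimate also carries $\|u_0-h\|_{C^{0,1}}$ on the right. A bounded extension operator gives only $\|\nabla h\|_{L^\infty}\le C\|g\|$, with no power of $T$ and no smallness. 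Consequently $z$ does not lie in a small ball, and the critical quadratic and cubic terms cannot be absorbed. Moreover, the ellipticity and Schauder constants would then depend on $\|g\|$, which is incompatible with $M=M(\alpha,\Omega)$. To keep $h$, you would need an extension built on purpose so that $\|\nabla h\|_{L^\infty(\Omega)}\le C(\|\partial_\tau g_0\|_{L^\infty}+\|g_1\|_{L^\infty})$.

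\textbf{The missing observation.} Large $\|g\|$ never prevented a small ball. By (CC), $|g_0|$, $|\partial_\tau g_0|$ and $|g_1|$ are bounded by $\|u_0\|_{C^{0,1}}$. These are the only unweighted pieces of the boundary norms at orders $s=1$ and $s-1=0$; every other piece carries a positive power of $t$. Hence, for the time-constant extensions,
\[
\|\overline g_0\|_{C^{4+\alpha,1+\alpha/4}_{1}(\partial\Omega\times(0,T])}
+\|\overline g_1\|_{C^{3+\alpha,(3+\alpha)/4}_{0}(\partial\Omega\times(0,T])}
\le C\|u_0\|_{C^{0,1}(\overline\Omega)}+C\,T^{1/4}\big(\|g_0\|_{C^{4+\alpha}(\partial\Omega)}+\|g_1\|_{C^{3+\alpha}(\partial\Omega)}\big).
\]
A vertical translation makes $\sup|u_0|\lesssim M$. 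The total data norm then falls below the threshold of Theorem~\ref{shorttimeexistenceC} once $M$, and then $T=T(\|g\|)$, are small. The fixed point is taken for $u$ itself in a ball $\{\|u\|\le\widehat C\}$ with $\widehat C$ independent of $g$.

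\textbf{Your hedges are unnecessary.} The contraction closes directly, because every term of the difference carries a factor of the small radius, including $(L(\nabla u)-L(\nabla w))D^4G_T(\cdot)$. So you need neither a Schauder-fixed-point fallback nor a mollified freezing. In particular, $L(\nabla w)\in C^{\alpha,\alpha/4}_0(Q_T)$ is exactly the coefficient class the weighted theory admits at $s=1$.
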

\begin{proof}
The claim follows from Theorem~\ref{gewichtetexistkurz} and
Corollary~\ref{shorttimeexistenceCfixed}. Moreover, observe that any
vertical translation of a solution to the Willmore flow is again a
solution.
\end{proof}

The second main theorem provides criteria for global existence.
In contrast to the local Hölder theory, here it suffices to assume
Lipschitz regularity of the initial data.
Under a smallness condition on the Lipschitz constant and the
Dirichlet boundary data, we obtain a global solution with uniformly
bounded gradient which converges to a critical point determined by
the associated elliptic problem.

\begin{thm}[Global Existence]\label{thm:main2}
Let $\Omega \subset \R^2$ be a bounded domain with $C^{4+\alpha}$ boundary,
$\alpha\in(0,1)$, and let $g_0\in C^{4+\alpha}(\partial\Omega),g_1\in C^{3+\alpha}(\partial\Omega)$.
Assume
\[
u_0\in C^{0,1}(\overline{\Omega}).
\]

\begin{enumerate}
\item[(a)] \textbf{(Global existence)}
There exist constants $C=C(\alpha,\Omega)>0$ and
$\overline C=\overline C(\alpha,\Omega)>0$ such that if
\[
\|\nabla u_0\|_{L^\infty(\Omega)}
+
\|g_0\|_{C^{4+\alpha}(\partial\Omega)}+\|g_1\|_{C^{3+\alpha}(\partial\Omega)}
<C,
\]
then the Willmore flow problem \eqref{Willmore} admits a global solution.
More precisely, for every $T>0$ there exists
\[
u \in \Cc^{4+\alpha,1+\alpha/4}_{1}(Q_T)
\]
satisfying
\[
\|\nabla u(\cdot,t)\|_{L^\infty(\Omega)}
\le \overline C
\qquad\text{for all }t\ge0.
\]

\item[(b)] \textbf{(Convergence)}
There exists a critical point of the Willmore energy
\[
u_\infty\in C^{4+\alpha}(\overline{\Omega})
\]
such that for the global solution $u$ from part (a) and every
$\beta\in(0,\alpha)$ we have
\[
\|u(\cdot,t)-u_\infty\|_{C^{4+\beta}(\overline\Omega)}
\le \widetilde C e^{-\widetilde C (t-1)}
\qquad\text{for all } t\ge1,
\]
where $\widetilde C=\widetilde C(\alpha,\beta,\Omega)>0$.

\item[(c)] \textbf{(A priori estimate)}
Furthermore or all $t\ge0$,
\[
\sup_{x\in\Omega}|u(x,t)|
+
\int_\Omega Q(u(\cdot,t))\,dx
\le
C\big(\Omega,\|g\|_{W^{1,1}(\partial\Omega)},\W(u_0)\big).
\]
\end{enumerate}
\end{thm}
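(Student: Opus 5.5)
The plan is to build the global solution by iterating the local Lipschitz theory of Theorem~\ref{thm:main1}(b), with the restart time controlled by a uniform gradient bound. The key point is that the existence time $T=T(\alpha,\Omega,\|g\|)$ in part (b) does not depend on $u_0$ once $\|\nabla u_0\|_{L^\infty}\le M$. So it suffices to show that $\|\nabla u(\cdot,t)\|_{L^\infty}$ stays below $M$, or below some $\overline C\le M$, for all times. Then we restart at $t_k=kT/2$ and patch the pieces together. Uniqueness is available for $t>0$ by the Hölder theory of part (a), since $u(\cdot,t)\in C^{4+\alpha}$ there; hence the patched function is a well-defined element of $C^{4+\alpha,1+\alpha/4}_1(Q_T)$ for every $T$.

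\textbf{Step 1: a priori bounds from the energy.} For $t>0$ the solution is smooth up to the boundary, and $\partial_t u=0$, $\partial_t\partial_\nu u=0$ on $\partial\Omega$. The gradient-flow identity $\frac{d}{dt}\W(u(t))=-\int_\Omega (\partial_t u)^2Q^{-1}\,dx\le 0$ then holds. With the time-weighted smallness of $u$ near $t=0$ (weight $t^{1/2}$ on $D^2u$), this gives $\W(u(t))$ small for $t\ge t_0$ in terms of $M$ and $\|g\|$; for $t$ near $0$ the weighted norms themselves control the energy. Part (c) then follows from the Deckelnick--Grunau--Röger estimate quoted in the introduction, applied at every time.

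\textbf{Step 2: $L^2$-smallness and the gradient bound (main obstacle).} Next I combine the paper's $L^2$-smallness criterion: small Willmore energy and small boundary data imply that $\|u-\varphi\|_{L^2}$ and $\|D^2u\|_{L^2}$ are small, for a fixed extension $\varphi$ of $g$. On each interval $[t_k,t_k+T]$ the weighted Schauder estimate of the local theory bounds $\|u(\cdot,t)\|_{C^{4+\alpha}}$ for $t\in[t_k+T/2,t_k+T]$ by a constant depending only on $M$ and $\|g\|$. Spatial Hölder interpolation between this $C^{4+\alpha}$ bound and the small $L^2$ quantity yields $\|\nabla(u-\varphi)\|_{L^\infty}\le C\varepsilon^\theta$. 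Choosing the constant $C$ of the theorem small forces this below $M/2$ at each restart time. A continuity/bootstrap argument then closes the induction and gives $\overline C$.

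The delicate point is circularity. The Schauder constant must be independent of $k$, and the energy smallness must be quantified uniformly. For this I would fix $M$ first, then choose the data smallness so that the interpolation output beats $M$.

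\textbf{Step 3: convergence.} The uniform $C^{4+\alpha}$ bounds for $t\ge1$ plus $\int_1^\infty\int(\partial_tu)^2\,dx\,dt<\infty$ give subsequential convergence, by Arzelà--Ascoli, to a $u_\infty$ solving the stationary Willmore equation with the data $g$. The small-data uniqueness from \cite{gulyak2026} identifies $u_\infty$ independently of the subsequence. For the exponential rate I would linearize near $u_\infty$ using the Koch--Lamm divergence form. Small data make the linearized operator a small perturbation of $\Delta^2$ with clamped conditions, which is coercive: $\langle \Delta^2 w,w\rangle\ge\lambda_1\|w\|_{L^2}^2$. This gives $\frac{d}{dt}\|u-u_\infty\|_{L^2}^2\le -c\|u-u_\infty\|_{L^2}^2$. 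Interpolating the exponential $L^2$ decay against the uniform $C^{4+\alpha}$ bound upgrades it to $C^{4+\beta}$ decay for $\beta<\alpha$.
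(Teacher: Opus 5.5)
Your architecture matches the paper's. It uses a local existence time that is uniform under Lipschitz smallness, and restarts on overlapping slabs with uniform higher-order bounds away from each restart time. $L^2$-smallness then comes from energy monotonicity and Theorem~\ref{l2bound}\,(b), and Hölder--$L^2$ interpolation recovers a small Lipschitz norm. For (b) it is the Koch--Lamm energy estimate for $u-u_\infty$, then Grönwall and interpolation.

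\textbf{The gap: energy smallness in Step~1.} Your claim that $\W(u(\cdot,t_0))$ is small at some positive time $t_0$ does not follow from what you invoke.
\begin{itemize}
\item Theorem~\ref{thm:main1}\,(b) only asserts existence. Its proof places the solution in a ball of \emph{fixed} radius $\widehat C=\widehat C(\alpha,\Omega)$ in $\Cc^{4+\alpha,1+\alpha/4}_{1}(Q_T)$.
\item The weights then give only $|D^2u(\cdot,t)|\le \widehat C\,t^{-1/4}$. Note that the weight on $D^2u$ is $t^{1/4}$, not $t^{1/2}$.
\item Hence $\W(u(\cdot,t_0))\lesssim \widehat C^{2}t_0^{-1/2}$, which is bounded but not small. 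Once you have fixed $M$, no bound ``in terms of $M$'' can become small.
\end{itemize}

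The same defect blocks your first restart. At $t_1=T/2$ you need $\|\nabla u(\cdot,t_1)\|_{L^\infty}\le M$, but the local theory only bounds this by $\widehat C$. That exceeds the restart threshold, which in Corollary~\ref{shorttimeexistenceCfixed} is a small multiple of $\widehat C$. Interpolation cannot rescue this, because $L^2$-smallness at $t_1$ itself presupposes small energy.

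What is missing is an a priori bound \emph{linear in the data}:
\[
\|u\|_{\Cc^{4+\alpha,1+\alpha/4}_{1}(Q_1)}\le C\big(\|u_0\|_{\Cc^{0,1}(\overline\Omega)}+\|g_0\|_{\Cc^{4+\alpha}(\partial\Omega)}+\|g_1\|_{\Cc^{3+\alpha}(\partial\Omega)}\big).
\]
You get it by noting that the contraction conditions remain valid in every smaller ball. Run the fixed point in a ball whose radius is proportional to the data, and absorb the cubic remainder $\Rr$ in the Schauder estimate. This is Lemma~\ref{Lemma:schranken}. It makes $\|u(\cdot,1)\|_{C^2(\overline\Omega)}$, and hence $\W(u(\cdot,1))$, small in terms of the data. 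It also keeps the Lipschitz norm small on the first slab, after which your bootstrap closes as you describe.

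\textbf{A smaller gap: patching the restarts.} Uniqueness in Theorem~\ref{thm:main1}\,(a) holds in $\Cc^{4+\alpha,1+\alpha/4}_{1+\alpha}$. The solution you restart with part (b) is only known to lie in $\Cc^{4+\alpha,1+\alpha/4}_{1}$, so (a) does not identify it with the old solution on the overlap. What works is Step~3 in the proof of Theorem~\ref{thm:globalc1ac4a}. On $(t_k,t_k+\varepsilon]$ the old, classical solution has $\Cc^{4+\alpha,1+\alpha/4}_{1}$-norm at most $\|u(\cdot,t_k)\|_{\Cc^{0,1}(\overline\Omega)}+C\varepsilon^{1/4}$. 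So it lies in the contraction set $\M_\varepsilon$ and coincides with the fixed point there; classical uniqueness then takes over.

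Two further remarks:
\begin{itemize}
\item Theorem~\ref{l2bound}\,(b) gives smallness of $\|u\|_{L^2(\Omega)}$ only, not of $\|D^2u\|_{L^2(\Omega)}$. You never use the latter, so this is harmless.
\item With these repairs your convergence argument is sound. It differs from the paper only in producing $u_\infty$ by subconvergence, rather than taking the small elliptic solution from \cite{gulyak2026} and proving decay toward it directly. Once you have exponential $L^2$-decay toward your subsequential limit, the appeal to elliptic uniqueness is no longer needed.
\end{itemize}
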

\begin{proof}
    See Theorems~\ref{thm:globalc1ac4a},  \ref{thm:globconv} and \ref{l2bound} (a)
\end{proof}

In the third theorem we present a result describing $L^2$-smallness. This estimate is of independent interest for two
reasons. First, it applies to all graphical surfaces with finite
Willmore energy, not only to solutions of the Willmore flow.
Second, for surfaces with fixed boundary, previously known estimates
typically provided only upper bounds depending on the Willmore energy.
Here we show that if both the Willmore energy and the boundary data are
sufficiently small, then the graph itself must remain small.

\begin{thm}[$L^2$-Smallness] \label{thm:l2small}
Let $\Omega\subset\R^2$ be a smooth bounded domain and suppose that
$\varphi\in C^2_c(\R^2)$ and $u\in W^{2,2}(\Omega)$ satisfy
\[
u-\varphi\in \mathring W^{2,2}(\Omega).
\]
Let $\gamma:I\to\partial\Omega$ be a parametrization of the boundary
$\partial\Omega$ by arclength.

Let $K>0$ and assume that
\[
\|(\varphi\circ\gamma)'\|_{L^1(I)}<K.
\]
Then for every $\varepsilon>0$ there exists
$\delta=\delta(\varepsilon,K,\Omega)>0$
such that
\[
\W(u)+\|\varphi\|_{L^1(\partial\Omega)}\le\delta
\quad\Rightarrow\quad
\|u\|_{L^2(\Omega)}<\varepsilon .
\]
\end{thm}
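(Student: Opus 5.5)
I will argue by contradiction and compactness. Suppose the claim fails for some $\varepsilon>0$. Then there are sequences $\varphi_k\in C^2_c(\R^2)$ and $u_k\in W^{2,2}(\Omega)$ with $u_k-\varphi_k\in\mathring W^{2,2}(\Omega)$ and $\|(\varphi_k\circ\gamma)'\|_{L^1(I)}<K$. They satisfy
\[
\W(u_k)+\|\varphi_k\|_{L^1(\partial\Omega)}\to0,
\qquad
\|u_k\|_{L^2(\Omega)}\ge\varepsilon .
\]

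\textbf{Step 1: boundary data.} The condition on $\varphi_k\circ\gamma$ gives a uniform $W^{1,1}(\partial\Omega)$ bound on the traces $g_k=\varphi_k|_{\partial\Omega}$, since the $L^1$ norms tend to zero. In one dimension $W^{1,1}\hookrightarrow L^\infty$. Combining $\|g_k\|_{L^\infty}\le C(\|g_k\|_{L^1}+\|g_k'\|_{L^1})$ with the interpolation $\|g_k\|_\infty\le |\partial\Omega|^{-1}\|g_k\|_{L^1}+\|g_k'\|_{L^1}$ only gives boundedness, not smallness. This is enough for what follows.

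\textbf{Step 2: a priori bounds.} I apply the graphical area/diameter estimate of Deckelnick, Grunau and Röger quoted in the introduction:
\[
\sup_\Omega|u_k|+\|Q(u_k)\|_{L^1(\Omega)}\le C\big(\Omega,\|g_k\|_{W^{1,1}(\partial\Omega)},\W(u_k)\big)\le C_0 .
\]
So $u_k$ is bounded in $L^\infty\cap W^{1,1}(\Omega)$, hence precompact in $L^2(\Omega)$ by BV compactness and the uniform $L^\infty$ bound. After passing to a subsequence, $u_k\to u$ in $L^2$ with $\|u\|_{L^2}\ge\varepsilon$, and $u$ has trace zero in the BV sense, since the traces converge to $0$ in $L^1(\partial\Omega)$. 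Strictly speaking, the trace only passes to the limit in a relaxed sense. I will handle this by extending each $u_k$ by $g_k$-compatible functions outside $\Omega$, or simply by extending by $0$ and accounting for the jump.

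\textbf{Step 3: identify the limit (main obstacle).} The core task is to show that $u\equiv0$, which yields the contradiction. The idea is that the vanishing Willmore energy forces the limit graph to be minimal (a weak varifold limit with $H=0$), and a minimal graph with zero boundary values is flat.

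I plan to make this precise through the varifolds $V_k$ of the graphs $\Gamma(u_k)$, closed up along the boundary. Their masses are bounded by Step 2, and their first variation is controlled by $\|H_k\|_{L^1}\le \|H_k\|_{L^2}\,\|Q_k\|_{L^1}^{1/2}\to0$, plus a boundary term supported over $\partial\Omega\times\R$. Allard compactness gives a limit varifold $V$ that is stationary in $(\Omega\times\R)$. Its support contains the graph of $u$ in a measure-theoretic sense, and it is confined to $|x_3|\le C_0$.

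I then use the height function $x_3$ as a test field: $\mathrm{div}_V(x_3e_3)=|\nabla^V x_3|^2\ge0$. A cut-off near $\partial\Omega$ is needed, where the boundary heights are $\approx0$ in the averaged sense. Alternatively, I use the monotonicity formula or the convex hull property for stationary varifolds: $\operatorname{spt}V$ lies in the convex hull of its boundary. That boundary is the curve $\{(x,g_k(x))\}$, which converges to $\partial\Omega\times\{0\}$ plus possible vertical segments over $\partial\Omega$. In either case $\operatorname{spt}V\cap(\Omega\times\R)\subset\{x_3=0\}$, which forces $u=0$ a.e. and contradicts $\|u\|_{L^2}\ge\varepsilon$.

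The delicate point, and the reason $K$ appears in the statement, is the possible vertical parts at $\partial\Omega$. Where $g_k$ oscillates, the limit boundary could be a vertical wall over $\partial\Omega$. The convex hull of such a wall over a non-convex $\Omega$ could still meet the interior at nonzero height. I would first try to rule this out via the $L^1$ smallness of $g_k$ together with the uniform BV bound: the vertical segments have total length at most $K$, but they are concentrated on a null set of $\partial\Omega$ with vanishing integral. I would then use a weighted test function $\eta(x)\,x_3$, with $\eta$ vanishing on $\partial\Omega$, in the first variation, which kills the wall contribution. If this fails for non-convex $\Omega$, the fallback is to argue directly on the graphs. For this I would use the $L^1$ convergence of $u_k$ and the lower semicontinuity of $\int_\Omega |H|\,Q$ under weak convergence. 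The aim is to show that $u$ is a weak (BV) solution of the minimal surface equation with zero trace in the sense of the relaxed area functional, and then invoke uniqueness of such solutions, which gives $u\equiv0$.
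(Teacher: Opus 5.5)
\textbf{There is a genuine gap in Step 3, and Step 3 carries the entire content of the statement.} Steps 1--2 are fine: the bound $\sup_\Omega|u_k|+\int_\Omega Q_k\le C_0$ is exactly part (a) of Theorem~\ref{l2bound}. However, you do not carry out any of your routes to $u\equiv0$.
\begin{itemize}
\item \emph{Convex hull.} This argument fails as soon as the limiting boundary set $B$ contains a vertical segment $\{x_0\}\times[0,h]$ with $h>0$. Contrary to what you suggest, this failure has nothing to do with convexity of $\Omega$. Take the unit disk, $x_0=(1,0)$ and $h=1$: the point $(0,0,\tfrac12)=\tfrac12(1,0,1)+\tfrac12(-1,0,0)$ already lies in $\operatorname{conv}B$. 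The information you use ($\|g_k\|_{L^1}\to0$, total variation $<K$) does not exclude such segments, and noting that they sit over a null set of $\partial\Omega$ does not help.
\item \emph{Cut-off field.} The field $\eta x_3e_3$ on the limit varifold gives
\[
\int\eta\,|e_3^T|^2\,d\|V\|=-\int x_3\,\nabla\eta\cdot e_3^T\,d\|V\|.
\]
As $\eta\uparrow 1_\Omega$, the right-hand side concentrates near $\partial\Omega\times\R$ at nonzero heights, which is exactly where the walls sit. It is not killed.
\item \emph{Fallback.} This needs two things you do not have. First, you must identify the weak-$*$ limit of $\nabla u_k/Q_k$ with $\nabla u/\sqrt{1+|\nabla u|^2}$; a Minty-type argument for this needs convergence of $\int_\Omega|\nabla u_k|^2/Q_k$. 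Second, you need a zero boundary condition for $u$. You yourself note that traces pass to the limit only in the relaxed sense, and solutions of the minimal surface equation in $\Omega$ (e.g.\ affine functions) are not unique without it.
\end{itemize}

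\textbf{The missing idea is to do your test-field computation on each graph $\Gamma(u_k)$ before passing to any limit, keeping the exact boundary term instead of a cut-off.} That computation is the divergence identity
\[
\int_\Omega\frac{|\nabla u|^2}{Q}\,dx=\int_{\partial\Omega}\frac{u\,\partial_\nu u}{Q}\,ds-\int_\Omega uH\,dx .
\]
Since $|\partial_\nu u|\le Q$, the boundary term is at most $\|\varphi\|_{L^1(\partial\Omega)}$, so steep boundary behaviour costs nothing. The interior term satisfies $|\int_\Omega uH\,dx|\le\sup_\Omega|u|\,\sqrt{4|\Omega|\W(u)}$. The argument then closes in four steps:
\begin{enumerate}
\item The identity makes $\int_\Omega|\nabla u|^2/Q$ small. (This is also the energy convergence your fallback was missing.)
\item Cauchy--Schwarz gives $\int_\Omega|\nabla u|\le(\int_\Omega Q)^{1/2}(\int_\Omega|\nabla u|^2/Q)^{1/2}$, which is small by the area bound.
\item Poincar\'e--Friedrichs, $\|u\|_{L^1}\le C(\|\nabla u\|_{L^1}+\|u\|_{L^1(\partial\Omega)})$, makes $\|u\|_{L^1}$ small.
\item The interpolation $\|u\|_{L^2}^2\le\|u\|_{L^3}^{3/2}\|u\|_{L^1}^{1/2}$, together with the sup bound, gives $\|u\|_{L^2}<\varepsilon$.
\end{enumerate}
This is the paper's proof. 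It is direct and yields an explicit $\delta$, so no compactness or varifold theory is needed. It also shows $\int_\Omega Q\to|\Omega|$, so no vertical area survives in your limit.
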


\begin{proof}
See Theorem~\ref{l2bound}\,\cbm{b}.
\end{proof}

\subsection{Outline}

We briefly outline the structure of the paper.

\begin{enumerate}
\item [] 
In Sec.~\ref{sec:willmoreflow}, we analyze the structure of the Willmore flow
in the graphical setting and recall the expressions of the basic geometric
quantities for graphs. Using results of Deckelnick and Dziuk
\cite{dziuk2006error} and Koch and Lamm \cite{koch2012geometric},
we rewrite the Willmore flow as a fourth-order nonuniformly elliptic
operator with a lower-order right-hand side.
The corresponding formulation is established in Lemma~\ref{ellipL}
and summarized in Lemma~\ref{lemm:parabwillmorestructure}.

\item  [] 
In Sec.~\ref{sec:sup}, we prove Theorem~\ref{thm:l2small}, which establishes
a smallness result for surfaces with small Willmore energy.
This estimate applies to all graphical surfaces in $\R^3$ and will be used
exclusively in the proof of global existence in Sec.~\ref{sec:globexist}.

\item  [] 
In Sec.~\ref{sec:paraspace}, we introduce parabolic Hölder spaces from \cite{belonosov1979estimates},
which form the functional framework for the fixed-point arguments.
The main linear results are the weighted Schauder estimate
(Theorem~\ref{gewichteteschauder}) and the corresponding existence theorem
(Theorem~\ref{gewichteteexistenz}), which will be used in the global analysis.

\item  [] 
In Sec.~\ref{sec:local_existence}, we prove short-time existence results.
For initial data in $C^{1+\alpha}$, we treat the problem in
Sec.~\ref{subsec:hoelderdata}, where we first establish a time-interpolation
result (Lemma~\ref{gammaalpha}) and several auxiliary estimates
(Lemmas~\ref{productrule}, \ref{gewichtethoedlerII}, \ref{gewichtethoedlerIII}),
before proving the main result in Theorem~\ref{gewichtetexistkurz}.
For Lipschitz initial data, we proceed analogously (see
Sec.~\ref{subsec:hoelderdata}), deriving preparatory estimates in
Lemmas~\ref{productrule22}, \ref{1gewichtethoedlerII}, and
\ref{1gewichtethoedlerIII}, and then proving the short-time existence
result in Theorem~\ref{shorttimeexistenceC}.

\item  [] 
In Sec.~\ref{sec:globexist}, we establish global existence and convergence.
In Sec.~\ref{subsec:globalexist}, we prove in
Theorem~\ref{thm:globalc1ac4a} that solutions exist for all time provided
the Lipschitz norm of the initial data and the Dirichlet boundary values
are sufficiently small.
In Sec.~\ref{subsec:subconv}, we first show subconvergence as $T\to\infty$
(Theorem~\ref{thm:subc}), and then convergence to a critical point
(Theorem~\ref{thm:globconv}).
\end{enumerate}

\section{Graphical Willmore flow}
\label{sec:willmoreflow}

We collect here the structural formulas for the graphical Willmore flow
that will be used throughout the sequel.
Let $\Omega\subset\R^2$ be a bounded domain with $\nu$ denoting the outward unit normal field on $\partial\Omega$ and let
$u:\Omega\to\R$ be sufficiently smooth.
We write $x=(x_1,x_2)$ and use the notation
\[
u_{x_i}=\partial_{x_i}u,
\qquad
u_{x_i x_j}=\partial_{x_i x_j}u,
\qquad
D^m u=\{\partial^\beta u:|\beta|=m\}.
\]
The graph of $u$ is denoted by
\[
\Gamma(u):=\{(x,u(x)):x\in\Omega\}\subset\R^3.
\]
Geometric quantities, like
induced surface element, mean curvature and the Gauss curvature of the graph take the form
\[
Q=(1+|\nabla u|^2)^{1/2},
\quad
H=\div\!\left(\frac{\nabla u}{Q}\right)
=
\frac{\Delta u}{Q}
-
\frac{\nabla u\cdot(D^2u\,\nabla u)}{Q^3},
\quad
\Kk
=
\frac{\det D^2u}{Q^4}
\]
so that the Willmore energy is given by
\[
\W(u)=\frac14\int_\Omega H^2Q\,dx.
\]
The mean curvature and the Gauss curvature of the graph takes the form

As already defined in \eqref{eq:willmoreflowgen} in the introduction, a family $(u(\cdot,t))_{t\in[0,T]}$ of graphs evolves by the \emph{Willmore flow} if
\begin{align*}
\partial_t u
=
-\,Q\left\{
\Delta_{\Gamma(u)}H
+
2H\left(\tfrac14H^2-\Kk\right)
\right\}
\qquad
\text{in }\Omega\times(0,T].
\end{align*}
It is a quasilinear parabolic fourth-order differential equation, whereby the $\Delta_{\Gamma(u)} H$-terms contains a fourth order nonuniformly  elliptic operator $L(\nabla u)D^4u$, described below in \eqref{eq:A}, which depends on $\nabla u$. In detail, 
\begin{align*}
	 \Delta_{\Gamma(u)} H&\ =\frac{1}{Q}\frac{\partial \ }{\partial x}\bigg\{\frac{1}{Q}\Big((1+u_{y}^2)\frac{\partial\ }{\partial x} H-u_{x}u_{y} \frac{\partial\ }{\partial y} H\Big) \bigg\} \\
	&\qquad+\frac{1}{Q}\frac{\partial \ }{\partial y}\bigg\{\frac{1}{Q}\Big(-u_{x}u_{y}\frac{\partial\ }{\partial x}H+(1+u^2_{x}) \frac{\partial\ }{\partial y} H\Big) \bigg\}.
	\end{align*}    

The Willmore flow is the $L^2$-gradient flow of the Willmore functional. Therefore, for sufficiently smooth solutions with time-independent boundary data one has the dissipation identity
\[
\W(u(t_2))\le \W(u(t_1))
\qquad\text{for }0\le t_1\le t_2.
\]
We will use this monotonicity repeatedly in the global theory.

Let us continue with discussing the structure of the graphical Willmore flow equation. Especially, we want to separate of the fourth-order part.
Following Dziuk and Deckelnick \cite[(1.5)-(1.9)]{dziuk2006error}
and Koch and Lamm \cite{koch2012geometric},
it may also be written in \emph{divergence form} as
\begin{align}
{u_t}
= - Q
\div\!\left\{
\frac{1}{Q}
\Big(I-\frac{\nabla u\otimes\nabla u}{Q^2}\Big)\nabla(QH)
-\frac{H^2}{2Q}\nabla u
\right\}.
\label{eq:willmorefloweq}
\end{align}
This formulation is particularly useful for energy estimates and for
rewriting the equation in divergence form relative to the biharmonic
operator. 

For the local existence theory it is convenient to isolate the highest-order
terms. A direct expansion of the right-hand side of \eqref{eq:willmoreflowgen}
shows that the Willmore flow equation can be written as
\begin{align}
Q\Delta_{\Gamma(u)}H
+
2QH\bigg(\frac14H^2-\Kk\bigg)
=
L(\nabla u)D^4u+\Rr(\nabla u,D^2u,D^3u).
\tag{A}\label{eq:A}
\end{align}
Here the the \emph{principal fourth-order part} is
\[
L(\nabla u)D^4u
=
\sum_{k+\ell=4}
L_{k\ell}(\nabla u)\,
\partial_{x_1}^k\partial_{x_2}^{\ell}u,
\]
while $\Rr$ contains only lower-order
terms in the sense that it depends on derivatives of $u$ of order at most
three.

The \emph{coefficients of the principal part} are given by
\begin{align}
\begin{pmatrix}
L_{40}(\nabla u)\\[0.2em]
L_{31}(\nabla u)\\[0.2em]
L_{22}(\nabla u)\\[0.2em]
L_{13}(\nabla u)\\[0.2em]
L_{04}(\nabla u)
\end{pmatrix}
=
\frac1{Q^4}
\begin{pmatrix}
(1+u_{x_2}^2)^2\\[0.2em]
-4(1+u_{x_2}^2)u_{x_1}u_{x_2}\\[0.2em]
2(1+u_{x_2}^2)(1+u_{x_1}^2)+4u_{x_1}^2u_{x_2}^2\\[0.2em]
-4(1+u_{x_1}^2)u_{x_1}u_{x_2}\\[0.2em]
(1+u_{x_1}^2)^2
\end{pmatrix}.
\tag{L}\label{eq:L}
\end{align}
The important point is that $L(\nabla u)$ depends only on the gradient.
Hence a bound on $\|\nabla u\|_{L^\infty}$ yields uniform ellipticity.

\begin{lemma}[Ellipticity]\label{ellipL}
Let $u\in C^1(\overline\Omega)$.
Then for every $\xi=(\xi_1,\xi_2)\in\R^2$,
\[
\frac{1}{\bigl(1+\|\nabla u\|_{C^0(\overline\Omega)}^2\bigr)^2}|\xi|^4
\le
\sum_{k+\ell=4}L_{k\ell}(\nabla u)\,\xi_1^k\xi_2^\ell
\le
4|\xi|^4.
\]
\end{lemma}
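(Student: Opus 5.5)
The plan is to show that the quartic symbol is a perfect square of a quadratic form whose eigenvalues can be read off directly. Write $p=\nabla u(x)$ at a fixed point $x\in\overline\Omega$, so $Q^2=1+|p|^2$. Introduce the symmetric matrix
\[
A(p):=(1+|p|^2)I-p\otimes p=\begin{pmatrix}1+p_2^2 & -p_1p_2\\ -p_1p_2 & 1+p_1^2\end{pmatrix},
\]
which is, up to the factor $Q^2$, the tangential projection $I-\frac{p\otimes p}{Q^2}$ appearing in \eqref{eq:willmorefloweq}. The first step is the algebraic identity
\[
\sum_{k+\ell=4}L_{k\ell}(p)\,\xi_1^k\xi_2^\ell=\frac{\bigl(\xi^TA(p)\xi\bigr)^2}{Q^4}.
\]
To check it, expand $\xi^TA(p)\xi=(1+p_2^2)\xi_1^2-2p_1p_2\xi_1\xi_2+(1+p_1^2)\xi_2^2$ and square it. The coefficients of $\xi_1^4$, $\xi_1^3\xi_2$ and $\xi_1^2\xi_2^2$ are $(1+p_2^2)^2$, $-4(1+p_2^2)p_1p_2$ and $2(1+p_1^2)(1+p_2^2)+4p_1^2p_2^2$. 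By the symmetry $1\leftrightarrow2$ one gets the remaining two coefficients as well, and together these match \eqref{eq:L}. This is the only real computation, and it is routine.

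The second step is to bound the quadratic form. One has $\xi^TA(p)\xi=Q^2|\xi|^2-(p\cdot\xi)^2$. By Cauchy--Schwarz, $0\le(p\cdot\xi)^2\le|p|^2|\xi|^2$, which gives
\[
|\xi|^2\le\xi^TA(p)\xi\le Q^2|\xi|^2 .
\]
In other words, the eigenvalues of $A(p)$ are $1$ (in the direction of $p$) and $Q^2$ (orthogonal to $p$). Since the form is nonnegative, we may square these inequalities and divide by $Q^4$ to obtain
\[
\frac{|\xi|^4}{Q^4}\le\sum_{k+\ell=4}L_{k\ell}(p)\,\xi_1^k\xi_2^\ell\le|\xi|^4 .
\]

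Finally, $Q^4=(1+|\nabla u(x)|^2)^2\le(1+\|\nabla u\|_{C^0(\overline\Omega)}^2)^2$, which yields the lower bound uniformly in $x$. The upper bound $|\xi|^4\le4|\xi|^4$ is trivial, so the constant $4$ stated in the lemma is generous. No step is a genuine obstacle; the only care needed is in verifying the coefficient identity.
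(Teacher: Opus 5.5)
Your argument is correct and complete. The factorization
\[
\sum_{k+\ell=4}L_{k\ell}(p)\,\xi_1^k\xi_2^\ell=Q^{-4}\bigl(Q^2|\xi|^2-(p\cdot\xi)^2\bigr)^2
\]
matches \eqref{eq:L} coefficient by coefficient. The Cauchy--Schwarz bounds $|\xi|^2\le\xi^TA(p)\xi\le Q^2|\xi|^2$ then give the two-sided estimate, and in fact with the sharper upper constant $1$ in place of $4$.

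The paper itself gives no argument here; it only cites \cite[Lemma~6.6]{gulyak2017willmore}. Your computation is therefore a self-contained replacement for that citation.

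It also has a natural geometric reading. Since $Q^{-2}\xi^TA(p)\xi=g^{ij}\xi_i\xi_j$ with the inverse induced metric $g^{ij}=\delta_{ij}-p_ip_j/Q^2$, the symbol is exactly $(g^{ij}\xi_i\xi_j)^2$. This is what one expects from the principal part of $Q\Delta_{\Gamma(u)}H$, given that $H=Q^{-3}A_{ij}(p)u_{x_ix_j}$ and $\Delta_{\Gamma(u)}=Q^{-1}\partial_i\bigl(Q^{-1}A_{ij}(p)\partial_j\,\cdot\,\bigr)$.
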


\begin{proof}
See \cite[Lemma~6.6]{gulyak2017willmore}.
\end{proof}

Next, we want to discuss the \emph{structure of the remainder term.}
For $L^2$-based arguments it is convenient to rewrite the flow relative to
the biharmonic operator.
To this end we use the $\star$-notation of
Kuwert and Schätzle \cite{kuwert2001willmore}
and Koch and Lamm \cite{koch2012geometric},
where $\star$ denotes a linear combination of tensor contractions and
\[
P_i(\nabla u)
:=
\underbrace{\nabla u\star\cdots\star\nabla u}_{i\text{ times}}.
\]
Then, by \cite[Lemma~3.2 p. 215]{koch2012geometric},
the graphical Willmore flow may be written as
\begin{align}
\partial_t u+\Delta^2u
=
f_0[u]+\partial_i f_1^i[u]+\partial_{ij}^2 f_2^{ij}[u]
=:\,f[u],
\label{eq:willflowkoch} \tag{D}
\end{align}
where
\begin{align}
\begin{aligned}
f_0[u]
&=
D^2u\star D^2u\star D^2u
\star
\sum_{k=1}^4 Q^{-2k}P_{2k-2}(\nabla u),\\
f_1[u]
&=
D^2u\star D^2u
\star
\sum_{k=1}^4 Q^{-2k}P_{2k-1}(\nabla u),\\
f_2[u]
&=
D^2u
\star
\sum_{k=1}^2 Q^{-2k}P_{2k}(\nabla u).
\end{aligned}
\label{eq:f}
\end{align}

Expanding the divergences in \eqref{eq:willflowkoch} reproduces the
decomposition \eqref{eq:A}. In particular, the two representations
\eqref{eq:A} and \eqref{eq:willflowkoch} are complementary:
the former is adapted to Schauder estimates and ellipticity,
whereas the latter is more convenient for energy and interpolation
arguments. It holds
\begin{align}
\begin{aligned}
\Rr(\nabla u,D^2u,D^3u)
=&\ 
D^3u\star D^2u \star \sum_{k=1}^4 Q^{-2k} P_{2k-1}(\nabla u)
\notag\\
&\quad
+ D^2u\star D^2u\star D^2u \star \sum_{k=0}^4 Q^{-2(k+1)} P_{2k}(\nabla u).
\end{aligned} \tag{$\mathcal R$}
\end{align}

We summarize the structural decomposition in the following statement.

\begin{lemma}\label{lemm:parabwillmorestructure}
The graphical Willmore flow equation \eqref{eq:willmorefloweq} can be
rewritten as
\[
-u_t
=
L(\nabla u)D^4u+\Rr(\nabla u,D^2u,D^3u)
\qquad\text{in }\Omega\times(0,T],
\]
where $L$ and $\Rr$ are given by \eqref{eq:L} and
\eqref{eq:R-structure}, respectively.
\end{lemma}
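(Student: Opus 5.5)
The plan is a direct expansion of the operator in the divergence form \eqref{eq:willmorefloweq}, which we may take as the starting point since it is equivalent to \eqref{eq:willmoreflowgen} by \cite{dziuk2006error}. The goal is to sort every term by the highest derivative order of $u$ it contains. Throughout we use
\[
\partial_i Q=\frac{u_{x_k}u_{x_kx_i}}{Q},\qquad \partial_i Q^{-m}=-m\,Q^{-m-2}u_{x_k}u_{x_kx_i}.
\]
Hence each differentiation of a coefficient of the form $Q^{-2k}P_j(\nabla u)$ produces one extra factor $D^2u$. The result is again a coefficient of the same type, with $j$ and $k$ shifted compatibly.

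\emph{Step 1.} Expand $QH=\Delta u-Q^{-2}\nabla u\cdot(D^2u\nabla u)$. This is linear in $D^2u$, with coefficients of the form $Q^{-2k}P_{2k}$. Next write $A(\nabla u):=Q^{-1}(I-Q^{-2}\nabla u\otimes\nabla u)$. Then
\[
Q\,\div\bigl(A\nabla(QH)\bigr)=Q\,A_{ij}\,\partial_{ij}(QH)+Q\,(\partial_iA_{ij})\,\partial_j(QH).
\]
Differentiating $QH$ twice, the only fourth-order contribution arises when both derivatives fall on $D^2u$. This gives
\[
Q A_{ij}\,a_{kl}(\nabla u)\,\partial_{ijkl}u,\qquad a_{kl}=\delta_{kl}-Q^{-2}u_{x_k}u_{x_l}.
\]
We then symmetrize over multi-indices and collect the coefficients of $\partial_{x_1}^k\partial_{x_2}^\ell u$. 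Using $Q^2=1+u_{x_1}^2+u_{x_2}^2$, one checks that $(I-Q^{-2}\nabla u\otimes\nabla u)_{11}=Q^{-2}(1+u_{x_2}^2)$, and similarly for the other entries. Multiplying the two symmetric matrices then yields exactly the entries of \eqref{eq:L}. For example, $L_{40}=Q^{-4}(1+u_{x_2}^2)^2$, and $L_{22}$ collects the mixed products $2a_{11}a_{22}+4a_{12}^2$ scaled by $Q^{-4}$. This also confirms that the fourth-order part depends only on $\nabla u$.

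\emph{Step 2.} Every remaining term is of order at most three, and we classify it.
\begin{itemize}
\item Terms where one derivative falls on a coefficient and one on $D^2u$ give $D^3u\star D^2u$ times a coefficient.
\item Terms where both derivatives fall on coefficients, or where $\partial_iA_{ij}$ hits $\partial_j(QH)$ with a coefficient derivative, give $D^2u\star D^2u\star D^2u$ terms.
\item The term $Q\,\div(\tfrac{H^2}{2Q}\nabla u)$ gives, after differentiation, $H\,\partial H\star\nabla u$ plus cubic terms in $D^2u$. Since $\partial H$ contains $D^3u$ linearly and $H$ contains $D^2u$ linearly, this term is again of the form $D^3u\star D^2u$ plus cubic terms.
\end{itemize}
The powers of $\nabla u$ must be tracked. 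Each $\nabla u$ factor is paired with a $Q^{-1}$, or appears in the combination $Q^{-2}P_2$, and the overall factor $Q$ cancels one $Q^{-1}$. With this bookkeeping the terms land in the families $Q^{-2k}P_{2k-1}$ and $Q^{-2(k+1)}P_{2k}$ with $k\le4$, as in ($\mathcal R$). This matches the expansion of the divergences of $f_1,f_2$ in \eqref{eq:willflowkoch}, which gives an independent check via \cite[Lemma~3.2]{koch2012geometric}: the $\Delta^2u$ plus the fourth-order part of $\partial^2_{ij}f_2^{ij}$ must reproduce $L(\nabla u)D^4u$.

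\emph{Step 3.} Once the decomposition is established, the bound \eqref{eq:R-structure} follows at once. On $\{|\nabla u|\le\Lambda\}$ every coefficient $Q^{-2k}P_j(\nabla u)$ is bounded by a constant depending only on $\Lambda$, since $Q\ge1$.

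The main obstacle is the bookkeeping in Step 1, namely verifying that the symmetrized product of the two matrices gives exactly the coefficients in \eqref{eq:L}, particularly $L_{22}$ and $L_{31}$. I would carry this out by evaluating the symbol $\sum L_{k\ell}\xi_1^k\xi_2^\ell=Q^{-4}\bigl(\xi^T(Q^2I-\nabla u\otimes\nabla u)\xi\bigr)^2$ and expanding the square. This is also the identity underlying Lemma~\ref{ellipL}. The $Q$-power counting in Step 2 is routine but tedious, and the comparison with \cite{koch2012geometric} serves as a safeguard.
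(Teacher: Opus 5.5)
Your proposal is correct and is essentially the paper's argument: you expand the divergence form and sort the remaining terms by product-rule bookkeeping in $\star$-notation. The paper's appendix does this on the Koch--Lamm form \eqref{eq:willflowkoch}, which you use only as a check, and your symbol identity $\sum_{k+\ell=4}L_{k\ell}\xi_1^k\xi_2^\ell=Q^{-4}\bigl(\xi^T(Q^2I-\nabla u\otimes\nabla u)\xi\bigr)^2$ makes explicit the identification of the principal part with \eqref{eq:L} that the paper only asserts. One small correction to your Step~2: when both derivatives fall on a coefficient you also get $D^3u\star D^2u$ terms, not only cubic ones, but these lie in the admissible family anyway.
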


\begin{proof}
This follows from the expansion of \eqref{eq:willmorefloweq};
see also \cite{koch2012geometric} and
Lemma~\ref{thm:structure-remainder-term}.
\end{proof}


%

\section{Supremum and \texorpdfstring{$L^2$}{L2}-Smallness Estimates for Graphs} \label{sec:sup}

Due to the graphical setting, stronger estimates are available.
Let $\Omega\subset\R^2$ be a smooth bounded domain and denote by $\nu$
the outward unit normal vector field on $\partial\Omega$.
Let $\varphi:\overline{\Omega}\to\R$ be a $C^2$ boundary datum and consider
graphs $u:\overline{\Omega}\to\R$ belonging to the class
\[
M:=\big\{u\in W^{2,2}(\Omega)\; \big|\; (u-\varphi)\in \mathring{W}^{2,2}(\Omega)\big\},
\]
which represent Dirichlet boundary conditions.

In the graphical setting, Deckelnick, Grunau, and Röger
\cite[Theorem~2]{deckelnick2017minimising}
proved area and diameter bounds in terms of the Willmore energy,
$\|\varphi\|_{W^{2,2}(\partial\Omega)}$, and the geometry of the domain.
In particular, their result does not require any a priori bound on the
Willmore energy, such as $\W(\Sigma)<4\pi$, which is typically imposed in
the immersed setting.

As observed in \cite{grunau2018boundary}, such an estimate fails for
general non-projectable surfaces due to the scaling invariance of the
Willmore functional. 
To obtain their area and diameter bounds, Deckelnick, Grunau, and Röger
used the diameter estimate involving
$\|A\|_{L^1(\mathcal M)}$, proved by Simon in
\cite[Lemma~1.2, p.~283]{simon1993existence}.
A simpler graphical version of this lemma with explicit constants
can be found in \cite[Satz~4.2]{gulyak2014willmore}.

In the present subsection we first slightly refine this result by
employing Theorem~\ref{thm:imdiamest} instead.
This allows us to weaken the assumption
$\|\varphi\|_{W^{2,1}(\partial\Omega)}$
to
$\|\varphi\|_{W^{1,1}(\partial\Omega)}$
while also providing explicit constants.

In the second part we establish a new smallness estimate for
$\|u\|_{L^2(\Omega)}$.
This should be compared with the bound on
$\|u\|_{L^\infty(\Omega)}$
obtained in \cite[Theorem~2]{deckelnick2017minimising},
which does not include a corresponding smallness statement.

\begin{thm}\label{l2bound}
Let $\Omega\subset\R^2$ be a smooth bounded domain and suppose that
$\varphi\in C^2_c(\R^2)$ and $u\in W^{2,2}(\Omega)$ satisfy
\[
u-\varphi\in \mathring W^{2,2}(\Omega).
\]
Let $\gamma:I\to\partial\Omega$ be a parametrization of the boundary
$\partial\Omega$ by arclength.

\begin{enumerate}
\item[\cbm a]
Then, with
$\|\varphi\|_{W^{1,1}(\partial\Omega)}
=\|\varphi\circ\gamma\|_{W^{1,1}(I)}$,
we obtain
\begin{align*}
\sup_{x\in\Omega}|u(x)|+\int_\Omega Q\,dx
\le
64\Big(
\Hd^2(\Omega)
+\Hd^1(\partial\Omega)
+\|\varphi\|_{W^{1,1}(\partial\Omega)}
+\frac{16^2}{\pi^2}\W(u)
\Big)
\bigl(1+|\Omega|\W(u)\bigr).
\end{align*}

\item[\cbm b]
Let $K>0$ and assume that
$\|(\varphi\circ\gamma)'\|_{L^1(I)}<K$.
Then for every $\varepsilon>0$ there exists
$\delta=\delta(\varepsilon,K,\Omega)>0$
such that
\[
\W(u)+\|\varphi\|_{L^1(\partial\Omega)}\le\delta
\quad\Rightarrow\quad
\|u\|_{L^2(\Omega)}<\varepsilon .
\]
\end{enumerate}
\end{thm}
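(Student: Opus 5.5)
For part \cbm a I will follow Deckelnick--Grunau--Röger, replacing Simon's diameter lemma by the graphical diameter estimate Theorem~\ref{thm:imdiamest}. The plan has three steps.

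\emph{Step 1: area bound.} Test the identity $H=\div(\nabla u/Q)$ with $u-\varphi$, or more precisely with a truncation $(u-\varphi)$ cut at height levels. Integrating by parts gives
\[
\int_\Omega \frac{\nabla u\cdot\nabla(u-\varphi)}{Q}\,dx=-\int_\Omega H(u-\varphi)\,dx .
\]
Since $|\nabla u|^2/Q\ge Q-1$, the left side controls $\int_\Omega Q$ up to $|\Omega|$ and terms in $\nabla\varphi$. Because $\varphi\in C^2_c$ enters only through its trace, I will instead use level-set truncations $T_k(u)$ combined with the coarea formula on $\partial\Omega$. The right side is bounded by $\|H\sqrt Q\|_{L^2}\,\|(u-\varphi)/\sqrt Q\|_{L^2}\le 2\W(u)^{1/2}\sup|u-\varphi|\,|\Omega|^{1/2}$.

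\emph{Step 2: height bound.} Bound $\sup|u|$ by the extrinsic diameter of the graph $\Gamma(u)$. Theorem~\ref{thm:imdiamest} controls this diameter by $\Hd^1$ of the boundary curve $\{(x,\varphi(x)):x\in\partial\Omega\}$, which is at most $\Hd^1(\partial\Omega)+\|(\varphi\circ\gamma)'\|_{L^1}$, plus $\W(u)$ times explicit constants plus area terms.

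\emph{Step 3: close the system.} Feeding Step~2 into Step~1 gives an inequality of the form $\mathrm{Area}\le a+b\,\W(u)^{1/2}(c+\ldots)$. Absorbing with Young's inequality yields the stated product form $(\ldots)(1+|\Omega|\W(u))$ with the constants $64$ and $16^2/\pi^2$.

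For part \cbm b I will argue by contradiction and compactness. Suppose there are $\varepsilon>0$ and sequences $u_n,\varphi_n$ with $\W(u_n)+\|\varphi_n\|_{L^1(\partial\Omega)}\to0$, $\|(\varphi_n\circ\gamma)'\|_{L^1}<K$, and $\|u_n\|_{L^2}\ge\varepsilon$.

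\emph{Uniform bounds.} By \cbm a, $\sup|u_n|$ and $\int Q_n$ are uniformly bounded, so $u_n$ is bounded in $BV\cap L^\infty$. On the boundary, $\varphi_n\circ\gamma$ is bounded in $W^{1,1}(I)\subset L^\infty$ with $L^1$ norm tending to $0$. By interpolation, $\|\varphi_n\|_{L^\infty(\partial\Omega)}\to0$ (a $W^{1,1}$ function on an interval with small mean and bounded variation is small in $\sup$ only up to oscillation, so I will need care here). Concretely, $\sup|\varphi_n|\le \frac1{|I|}\|\varphi_n\|_{L^1}+\|(\varphi_n\circ\gamma)'\|_{L^1}$, which is only bounded, not small.

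\emph{Smallness of $u_n$.} Since the sup bound does not give smallness directly, I will work with $L^1$ of the boundary trace instead. By $BV$ compactness, a subsequence $u_n\to u_\infty$ in $L^2$ (bounded in $L^\infty$ and $BV$). Lower semicontinuity makes $u_\infty$ a "weak minimal graph" in the sense that $H=0$ on the interior, using local $W^{2,2}$ bounds from $\int H^2Q\to0$ where the gradient is controlled. Its boundary trace, taken in the $BV$ sense with possible vertical walls, has $L^1(\partial\Omega)$-norm $0$. A generalized solution of the minimal surface equation with zero boundary data, for instance via Miranda's theory of minimal graphs with the comparison principle (a vertical-wall parabola argument on the bounded domain), must vanish. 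This gives $\|u_n\|_{L^2}\to0$, a contradiction. To prove $u_\infty\equiv0$ more cheaply, I will test $H_n=\div(\nabla u_n/Q_n)$ with $u_n$ itself:
\[
\int_\Omega\frac{|\nabla u_n|^2}{Q_n}\,dx=\int_{\partial\Omega}\varphi_n\,\frac{\partial_\nu u_n}{Q_n}\,d\Hd^1-\int_\Omega H_nu_n\,dx\le\|\varphi_n\|_{L^1(\partial\Omega)}+C\,\W(u_n)^{1/2}\to0,
\]
using $|\partial_\nu u/Q|\le1$ and the uniform $\sup|u_n|$ bound from \cbm a. Then $\int\min(|\nabla u_n|,|\nabla u_n|^2)\to0$, so $\nabla u_n\to0$ in $L^1$. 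With the zero-limit trace and Poincaré in $BV$, applied after extending $u_n$ by $\varphi_n$ and noting that the trace $L^1$ norm tends to $0$, this gives $u_n\to0$ in $L^1$. Hence $u_n\to0$ in $L^2$ by the uniform $L^\infty$ bound. This direct identity may even make the contradiction argument unnecessary, and $\delta$ can be made semi-explicit.

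The main obstacle is part \cbm a. The step that needs the most care is Step~3, absorbing the area and height bounds into each other with the explicit constants, and checking that Theorem~\ref{thm:imdiamest} requires only $W^{1,1}$ control of the boundary curve.
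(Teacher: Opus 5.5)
Your overall route is the paper's. For (a) you test $H=\div(\nabla u/Q)$, bound the height via Theorem~\ref{thm:imdiamest} using $\int_\Omega|H|Q\,dx\le 2\sqrt{\W(u)}\,(\int_\Omega Q\,dx)^{1/2}$ and a boundary-curve length $\le\Hd^1(\partial\Omega)+\|(\varphi\circ\gamma)'\|_{L^1(I)}$, and close with Young. For (b), the direct argument you end with is precisely the paper's proof: test with $u$ to get $\int_\Omega|\nabla u|^2/Q\,dx\le\|\varphi\|_{L^1(\partial\Omega)}+2\sup_\Omega|u|\sqrt{|\Omega|\W(u)}$, deduce that $\nabla u$ is small in $L^1$, then use Poincar\'e--Friedrichs with the boundary term and the $L^\infty$ bound. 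The paper gets $\|\nabla u\|_{L^1}$ by Cauchy--Schwarz against $\int_\Omega Q$ instead of your $\min(|\nabla u|,|\nabla u|^2)$ splitting, which is immaterial. Drop the contradiction/$BV$/Miranda detour; it is unnecessary, and its claims about traces of the limit would need real justification.

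The one real misstep is Step~1 of (a). Testing with $u-\varphi$ produces $\int_\Omega\nabla u\cdot\nabla\varphi/Q\,dx$ and $\sup_\Omega|u-\varphi|$. These are interior values of the extension $\varphi$, which $\|\varphi\|_{W^{1,1}(\partial\Omega)}$ does not control, and your truncation/coarea remedy is left unspecified. Test with $u$ itself, exactly as you do in (b). The boundary term is then $\int_{\partial\Omega}\varphi\,\partial_\nu u/Q\,ds\le\|\varphi\|_{L^1(\partial\Omega)}$. Together with $Q=Q^{-1}+|\nabla u|^2/Q$ this gives $\int_\Omega Q\,dx\le|\Omega|+2\sup_\Omega|u|\sqrt{|\Omega|\W(u)}+\|\varphi\|_{L^1(\partial\Omega)}$, which is the inequality the paper combines with the height bound.

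Be aware of a further gap in Step~2, which the paper's first estimate in its proof of (a) shares. The diameter of $\overline{\Gamma(u)}$ only bounds $|u(x)-\varphi(y)|$ for $y\in\partial\Omega$, not $|u(x)|$. The correct statement is $\sup_\Omega|u|\le\operatorname{diam}\overline{\Gamma(u)}+\min_{\partial\Omega}|\varphi|$, where $\min_{\partial\Omega}|\varphi|\le\|\varphi\|_{L^1(\partial\Omega)}/\Hd^1(\partial\Omega)$.

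For (b) this is harmless, since only a bound depending on $K$, $\delta$ and $\Omega$ is used. For (a), however, the Step~3 you single out cannot be completed as stated. Take $\Omega=B_r(0)$, $u\equiv1$ and $\varphi\equiv1$ near $\overline{B_r(0)}$. Then $\W(u)=0$, the left-hand side is $1+\pi r^2$, and the right-hand side is $64(\pi r^2+4\pi r)<1$ for $r\le 10^{-3}$. So no $\Omega$-independent constant works. The estimate needs an extra term such as $\min_{\partial\Omega}|\varphi|$, or a constant depending on $\Omega$; the paper's proof has the same gap at this point.
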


\begin{proof}
We begin with the same crucial integral $\int_\Omega uH\,dx$
as used in \cite[Theorem~2]{deckelnick2017minimising}.
Applying the divergence
theorem in $\Omega$, together with $Q^2=1+|\nabla u|^2$, we obtain
\begin{align}
\int_\Omega uH\,dx
&=
\int_\Omega u\,\div\!\left(\frac{\nabla u}{Q}\right) dx
\notag\\
&=
-\int_\Omega \frac{|\nabla u|^2}{Q}\,dx
+\int_{\partial\Omega}\frac{u\,\partial_\nu u}{Q}\,ds .
\label{eq:angewdivsatz}
\end{align}

This identity will be used in both parts of the proof.
The main task in parts \cbm a and \cbm b is to estimate the term
\[
\int_\Omega \frac{|\nabla u|^2}{Q}\,dx .
\]

\medskip\noindent\cb{a}
We follow the argument in \cite[Theorem~2]{deckelnick2017minimising},
replacing the diameter estimate by Theorem~\ref{thm:imdiamest}.
Using Hölder's inequality we obtain
\begin{align}
\frac{\pi}{16}\sup_{x\in\Omega}|u|
&\le
\int_{\M}|H|\,d\Hd^2
+\frac{\pi}{2}\Hd^1(\partial\M)
\notag\\
&\le
\sqrt{\int_\Omega |H|^2Q\,dx}\,
\sqrt{\int_\Omega Q\,dx}
+\frac{\pi}{2}\int_{\partial\Omega}\sqrt{1+|\varphi'|^2}\,ds
\notag\\
&\le
\sqrt{4\W(u)}\,
\sqrt{\int_\Omega Q\,dx}
+\frac{\pi}{2}\Big(\Hd^1(\partial\Omega)
+\|\varphi'\|_{L^1(\partial\Omega)}\Big).
\label{eq:supu}
\end{align}

Next we combine \eqref{eq:supu} with \eqref{eq:angewdivsatz}.
Using Hölder's inequality and the notation
$|\Omega|=\Hd^2(\Omega)$ and $|\partial\Omega|=\Hd^1(\partial\Omega)$,
we obtain
\begin{align*}
\int_\Omega Q\,dx
&=
\int_\Omega \frac{1}{Q}\,dx
+
\int_\Omega \frac{|\nabla u|^2}{Q}\,dx
\\
&\le
|\Omega|
+
\int_\Omega |uH|\,dx
+
\int_{\partial\Omega}
\varphi
\frac{\partial_\nu u}{\sqrt{1+|\nabla u|^2}}
\,ds .
\end{align*}
Since $Q\ge1$, we estimate the right-hand side by
\begin{align*}
\int_\Omega Q\,dx
&\le
|\Omega|
+
\frac{16}{\pi}
\Big(
\sqrt{4\W(u)}\sqrt{\int_\Omega Q\,dx}
+\frac{\pi}{2}\big(|\partial\Omega|
+\|\varphi'\|_{L^1(\partial\Omega)}\big)
\Big)
\sqrt{4|\Omega|\W(u)}
\\
&\quad
+\|\varphi\|_{L^1(\partial\Omega)} .
\end{align*}
Applying Young's inequality yields
\begin{align*}
    \int_\Omega Q \diff x 
     &\ \le |\Omega| 
    +  \frac{16^2}{2\pi^2}|\Omega|(4 \W (u))^2
     +\frac{1}{2}\int_\Omega Q\diff x + 8 \big(|\partial\Omega| + \|\varphi'\|_{ L^1(\partial\Omega)}\big) \sqrt{4|\Omega|\W(u)} \\
     &\ \qquad + \|\varphi\|_{ L^1(\partial\Omega)}
\end{align*}
Collecting the terms, we get
\begin{align*}
    \int_\Omega Q \diff x 
     &\ \le 2|\Omega| + 2\|\varphi\|_{ L^1(\partial\Omega)}
    +  \frac{16^3}{\pi^2}|\Omega|(\W (u))^2
      + 32 \big(|\partial\Omega| + \|\varphi'\|_{ L^1(\partial\Omega)}\big) \sqrt{|\Omega|\W(u)} .
\end{align*}
Then we use again  \eqref{eq:supu} and Hölder's inequality to show
\begin{align*}
     \sup_{x\in\Omega}|u| + \int_\Omega Q \diff x 
     &\ \le  \sqrt{2 \frac {16^2}{\pi^2}\W (u)}
     \sqrt{2\int_\Omega Q\diff x}+ 8 \big(\Hd^1(\partial\Omega) + \|\varphi'\|_{ L^1(\partial\Omega)}\big) + \int_\Omega Q \diff x  \\
     &\ \le \frac{16^2}{\pi^2}  \W (u)  + {\int_\Omega Q\diff x}+
     {\int_\Omega Q\diff x}
     + 8\big(|\partial\Omega| + \|\varphi'\|_{ L^1(\partial\Omega)}\big)\\
     &\ \le 
     4\big( |\Omega| + \|\varphi\|_{ L^1(\partial\Omega)}\big)
    +  2\frac{16^2}{\pi^2}\big(16|\Omega|\W (u) +1\big)
    \W (u)\\
&\ \quad      + 8\big(|\partial\Omega| + \|\varphi'\|_{ L^1(\partial\Omega)}\big)\left(1+2\cdot 4 \sqrt{|\Omega|\W(u)}\right) \\
&\ \le 
64 \left(|\Omega|+|\partial\Omega| + \|\varphi\|_{W^{1,1}(\partial\Omega)} + \frac{16^2}{\pi^2} \W(u)\right)\big(1+|\Omega|\W(u)\big).
\end{align*}

\medskip\noindent\cb{b}
To obtain an $L^2$-smallness estimate, we combine the boundedness of
$\|u\|_{L^\infty(\Omega)}$ and $\|\nabla u\|_{L^1(\Omega)}$
with Hölder's inequality as an interpolation tool.
Thus it suffices to control $\|u\|_{W^{1,1}(\Omega)}$.
For this purpose we estimate the term
$\int_\Omega |\nabla u|^2/Q\,dx$
instead of $\int_\Omega Q\,dx$.
This avoids the appearance of the factor $\Hd^2(\Omega)$,
which cannot be made arbitrarily small since the domain $\Omega$ is fixed.

We assume that $\|\varphi\|_{L^1(\partial\Omega)}<K$.
By part \cbm{a} there exists a constant $\Cl{uQbound}$ depending only on
$|\partial\Omega|$, $K$, and $\W(u)$ such that
\[
\sup_{x\in\Omega}|u(x)|+\int_\Omega Q\,dx \le \Cr{uQbound}.
\]
Using the identity \eqref{eq:angewdivsatz} we obtain
\begin{align*}
\int_\Omega \frac{|\nabla u|^2}{Q}\,dx
&=
\int_{\partial\Omega}\frac{\partial_\nu u}{Q}\,u\,ds
-\int_\Omega uH\,dx \\
&\le
\|\varphi\|_{L^1(\partial\Omega)}
+\sup_{x\in\Omega}|u(x)|\int_\Omega |H|\,dx \\
&\le
\|\varphi\|_{L^1(\partial\Omega)}
+\Cr{uQbound}\sqrt{|\Omega|\W(u)} .
\end{align*}
Consequently,
\begin{align*}
\int_\Omega |\nabla u|\,dx
&\le
\left(\int_\Omega Q\,dx\right)^{1/2}
\left(\int_\Omega \frac{|\nabla u|^2}{Q}\,dx\right)^{1/2} \\
&\le
\Cr{uQbound}^{1/2}
\left(
\|\varphi\|_{L^1(\partial\Omega)}
+\Cr{uQbound}\sqrt{|\Omega|\W(u)}
\right)^{1/2}.
\end{align*}
By Hölder's inequality with $\frac{1}{p}+\frac{1}{q}=1$
(in particular the Cauchy–Schwarz inequality), we obtain
\begin{align*}
\left(\int_\Omega u^2\,dx\right)^2
&=
\left(\int_\Omega |u|^{3/2}|u|^{1/2}\,dx\right)^2
\le
\left(\int_\Omega |u|^3\,dx\right)
\left(\int_\Omega |u|\,dx\right).
\end{align*}
Since
\[
\int_\Omega |u|^3\,dx
\le
|\Omega|\,\sup_{x\in\Omega}|u(x)|^3,
\]
and by the Poincaré–Friedrichs inequality
\[
\int_\Omega |u|\,dx
\le
\Cl{poin}(\Omega)\!\left(
\int_\Omega |\nabla u|\,dx
+
\int_{\partial\Omega}|u|\,ds
\right),
\]
we obtain
\begin{align*}
\left(\int_\Omega u^2\,dx\right)^2
&\le
|\Omega|\,\sup_{x\in\Omega}|u(x)|^3
\,\Cr{poin}
\left(
\int_\Omega |\nabla u|\,dx
+
\int_{\partial\Omega}|u|\,ds
\right).
\end{align*}
Using the bounds obtained above,
we arrive at
\begin{align*}
\left(\int_\Omega u^2\,dx\right)^2
&\le
\Cr{uQbound}^3|\Omega|\Cr{poin}
\left(
\Cr{uQbound}^{1/2}
\left(\|\varphi\|_{L^1(\partial\Omega)}
+\Cr{uQbound}\sqrt{|\Omega|\W(u)}\right)^{1/2}
+
\|\varphi\|_{L^1(\partial\Omega)}
\right).
\end{align*}
The desired estimate now follows by choosing
$\W(u)$ and $\|\varphi\|_{L^1(\partial\Omega)}$
sufficiently small.
\end{proof}

For the interpolation arguments used later, smallness in the $L^2$-norm is sufficient to prove global existence of the Willmore flow.
A natural question in this context is whether a corresponding smallness estimate also holds for the $L^\infty$-norm.
To the best of our knowledge, this question remains open.

\section{Parabolic Hölder spaces}
\label{sec:paraspace}

In this section we recall the anisotropic Hölder spaces and the
corresponding linear Schauder theory for fourth-order parabolic
equations.
These spaces are the natural analogue of the elliptic Hölder spaces
used earlier, but now the time variable has to be treated differently
from the spatial variables.
For fourth-order equations the parabolic scaling is
\[
\partial_t \sim D_x^4,
\]
so that one time derivative carries the same weight as four spatial
derivatives.
This anisotropy is reflected in the Hölder norms below.

The general theory of anisotropic Hölder spaces can be found, for
instance, in Solonnikov \cite{solonnikov1965boundary},
Lunardi \cite{lunardi1992semigroup}, and Belonosov
\cite{belonosov1979estimates}.
Since our later applications concern the Willmore flow, which is a
fourth-order parabolic equation, we restrict ourselves from the outset
to the fourth-order scaling.

Throughout this section let $\Omega\subset\R^n$ be a bounded domain with
$\Cc^{4+\alpha}$-boundary, $0<\alpha<1$, and let $T>0$.
We denote the two kinds of parabolic cylinders by
\[
\overline Q_T:=\overline\Omega\times[0,T],
\qquad
Q_T:=\overline\Omega\times(0,T].
\]
Let $\ell>0$.
For $\ell\notin\N$ we define \emph{unweighted anisotropic Hölder spaces}
\begin{align*}
\|u\|_{\Cc^{\ell,\ell/4}_{x,t}(\overline Q_T)}
:={}&
\sum_{4k+|\beta|\le \lfloor \ell\rfloor}
\sup_{(x,t)\in\overline Q_T}|D_t^kD_x^\beta u(x,t)|
\\
&\quad
+\sum_{4k+|\beta|=\lfloor \ell\rfloor}
\sup_{t\in[0,T]}
\big[D_t^kD_x^\beta u(\cdot,t)\big]_{\Cc^{\ell-\lfloor\ell\rfloor}(\overline\Omega)}
\\
&\quad
+\sum_{\ell-4<4k+|\beta|<\ell}
\sup_{x\in\overline\Omega}
\big[D_t^kD_x^\beta u(x,\cdot)\big]_
{\Cc^{\frac{\ell-4k-|\beta|}{4}}([0,T])}.
\end{align*}
If $\ell\in\N$, we set
\begin{align*}
\|u\|_{\Cc^{\ell,\ell/4}_{x,t}(\overline Q_T)}
:={}&
\sum_{4k+|\beta|\le \ell}
\sup_{(x,t)\in\overline Q_T}|D_t^kD_x^\beta u(x,t)|
\\
&\quad
+\sum_{\ell-4<4k+|\beta|<\ell}
\sup_{x\in\overline\Omega}
\big[D_t^kD_x^\beta u(x,\cdot)\big]_
{\Cc^{\frac{\ell-4k-|\beta|}{4}}([0,T])}.
\end{align*}

For later applications we also need \emph{weighted anisotropic Hölder spaces} that allow a
controlled loss of regularity at the initial time \(t=0\).
Let \(s\le \ell\).
For \(0<t\le T\) we set
\[
Q_t':=\overline\Omega\times[t/2,t].
\]
Furthermore, for \(\ell\notin\N\) we define
\begin{align*}
[u]^\ell_{Q_t'}
:={}&
\sum_{4k+|\beta|=\lfloor\ell\rfloor}
\sup_{t'\in[t/2,t]}
\big[D_t^kD_x^\beta u(\cdot,t')\big]_{\Cc^{\ell-\lfloor\ell\rfloor}(\overline\Omega)}
\\
&\quad
+\sum_{\ell-4<4k+|\beta|<\ell}
\sup_{x\in\overline\Omega}
\big[D_t^kD_x^\beta u(x,\cdot)\big]_
{\Cc^{\frac{\ell-4k-|\beta|}{4}}([t/2,t])},
\end{align*}
and use the same formula without the first sum when \(\ell\in\N\).

The weighted norm is then given by
\begin{align*}
\|u\|_{\Cc^{\ell,\ell/4}_{s}(Q_T)}
:={}&
\sup_{0<t\le T} t^{\frac{\ell-s}{4}}[u]^\ell_{Q_t'}
+
\sum_{s<4k+|\beta|<\ell}
\sup_{(x,t)\in Q_T}
t^{\frac{4k+|\beta|-s}{4}}|D_t^kD_x^\beta u(x,t)|
\\
&\quad
+
\begin{cases}
\|u\|_{\Cc^{s,s/4}_{x,t}(\overline Q_T)}, & s\ge0,\\
0, & s<0.
\end{cases}
\end{align*}

For \(\N\not\ni s>0\), the space \(\Cc^{\ell,\ell/4}_{s}(Q_T)\) consists of all
functions \(u\in \Cc^0(\overline Q_T)\) such that

\begin{itemize}
\item \(D_t^kD_x^\beta u\) extends continuously to \(\overline Q_T\)
whenever \(4k+|\beta|\le\lfloor s\rfloor\),
\item \(D_t^kD_x^\beta u\) exists in \(Q_T\) whenever
\(\lfloor s\rfloor<4k+|\beta|\le\lfloor\ell\rfloor\),
\item \(\|u\|_{\Cc^{\ell,\ell/4}_{s}(Q_T)}<\infty\).
\end{itemize}

For $s<0$, no continuity at $t=0$ is imposed. In this case one only requires
the derivatives to exist in $Q_T$ and the weighted norm to be finite.

For $s\in\mathbb{N}$, the spaces $\Cc^{\ell,\ell/4}_{s}(Q_T)$ are defined in
a uniform way, in contrast to \cite[below (1.6), p.~154]{belonosov1979estimates},
namely in the same manner as for non-integral $s$.
In particular, for integral $s$ the derivatives $D_t^kD_x^\beta u$ with
$4k+|\beta|\le s$ are, in general, no longer continuous up to $t=0$;
see \cite[below Theorem~2, p.~166]{belonosov1979estimates}.

\emph{Remark.}
The parameter \(s\) measures how much regularity is retained at
\(t=0\). 
If \(s<\ell\), then derivatives of parabolic order strictly larger than
\(s\) are allowed to blow up as \(t\searrow0\), but only at the rate
prescribed by the weights.
This is precisely the mechanism needed later for rough initial data.

\emph{Boundary spaces:}
The spaces
\(\Cc^{\ell,\ell/4}_{s}(\partial\Omega\times(0,T])\)
are defined in the same way by replacing \(\overline\Omega\) by
\(\partial\Omega\) and taking tangential spatial derivatives.
For brevity we omit the explicit formulas.

\emph{Basic properties:}
We record the properties of weighted parabolic Hölder spaces that will be
used later. They are taken from Belonosov
\cite[p.~154]{belonosov1979estimates}; see also
 \cite[Appendix, Lemma 91]{gulyak2024boundary} for the equivalence of our norm
with Belonosov's original definition.

\begin{itemize}
\item[(i)] \(\Cc^{\ell,\ell/4}_{s}(Q_T)\) is a Banach space.
\item[(ii)] \(\Cc^{\ell,\ell/4}_{x,t}(\overline Q_T)\subset
\Cc^{\ell,\ell/4}_{s}(Q_T)\) whenever \(s\le \ell\), and 
\[
\Cc^{\ell,\ell/4}_{\ell}(Q_T)=\Cc^{\ell,\ell/4}_{x,t}(\overline Q_T).
\]
\item[(iii)] If \(4k+|\beta|\le \ell\), then 
\begin{align}
\|D_t^kD_x^\beta u\|_{
\Cc^{\ell-(4k+|\beta|),\,\frac{\ell-(4k+|\beta|)}4}_{\,r-(4k+|\beta|)}(Q_T)}
\le
\|u\|_{\Cc^{\ell,\ell/4}_{r}(Q_T)} .
\label{eq:einbett}
\end{align}
\item[(iv)] There exists a function
\(\Cl{gewichtetprodukt}(T)\), bounded as \(T\searrow0\), such that
\begin{align}
\|uw\|_{\Cc^{\ell,\ell/4}_{r}(Q_T)}
\le
\Cr{gewichtetprodukt}(T)\,
\|u\|_{\Cc^{\ell,\ell/4}_{r_1}(Q_T)}
\|w\|_{\Cc^{\ell,\ell/4}_{r_2}(Q_T)},
\label{eq:simpleprod}
\end{align}
where
\[
r=\min\{r_1,r_2,r_1+r_2\}.
\]
\end{itemize}

The following two results are the basic linear tools for the nonlinear
fixed-point arguments in the next sections, namely the \emph{linear weighted Schauder theory.}
Since later we work with fourth-order equations and clamped boundary
conditions, we formulate the theory directly in that setting.

\begin{thm}[Weighted existence]\label{gewichteteexistenz}
Let \(\Omega\subset\R^n\) be bounded with \(\Cc^{4+\alpha}\)-boundary,
let \(0<\alpha<1\), \(T>0\), and \(s\in[0,4+\alpha]\).
Assume
\[
f\in \Cc^{\alpha,\alpha/4}_{s-4}(Q_T),
\qquad
a_\beta\in \Cc^{\alpha,\alpha/4}_{\max\{0,s-4\}}(Q_T)
\quad\text{for all }|\beta|\le4.
\]
Moreover, suppose that the principal coefficients satisfy the uniform
ellipticity condition
\[
\lambda|\xi|^4
\le
\sum_{|\beta|=4} a_\beta(x,t)\,\xi^\beta
\le
\Lambda|\xi|^4
\qquad
\text{for all }\xi\in\R^n,\ (x,t)\in \overline Q_T,
\]
with constants \(0<\lambda\le\Lambda\).

Consider the linear fourth-order parabolic problem
\begin{align*}
\partial_t u+\sum_{|\beta|\le4}a_\beta(x,t)D_x^\beta u &= f(x,t),
&& (x,t)\in Q_T,\\
u(x,0)&=u_0(x),
&& x\in\overline\Omega,\\
u(x,t)&=\varphi(x,t),
&& (x,t)\in\partial\Omega\times[0,T],\\
\partial_\nu u(x,t)&=h(x,t),
&& (x,t)\in\partial\Omega\times[0,T],
\end{align*}
with data
\[
u_0\in \Cc^s(\overline\Omega),\qquad
\varphi\in \Cc^{4+\alpha,1+\alpha/4}_{s}(\partial\Omega\times(0,T]),\qquad
h\in \Cc^{3+\alpha,(3+\alpha)/4}_{s-1}(\partial\Omega\times(0,T]).
\]
Assume the compatibility conditions
\begin{align*}
\varphi(x,0)&=u_0(x),
\qquad
h(x,0)=\partial_\nu u_0(x),
\qquad x\in\partial\Omega,
\end{align*}
and, only in the case \(s\ge4\), the additional fourth-order
compatibility condition
\begin{align*}
\partial_t\varphi(x,0)
=
-\sum_{|\beta|\le4}a_\beta(x,0)D_x^\beta u_0(x)+f(x,0),
\qquad x\in\partial\Omega.
\end{align*}
Then there exists a unique solution
\[
u\in \Cc^{4+\alpha,1+\alpha/4}_{s}(Q_T).
\]
\end{thm}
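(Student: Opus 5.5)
The plan is to reduce to Belonosov's weighted Schauder theory for parabolic boundary problems satisfying the Lopatinskii--Shapiro (complementing) condition, together with a priori estimates and the continuity method. The model operator is $\partial_t+\Delta^2$ with clamped data $(u,\partial_\nu u)$, which satisfies the complementing condition. The weighted setting then enters through localization in time on the dyadic cylinders $Q_t'$.

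\textbf{Step 1: subtract an extension of the data.} Choose a function $w\in\Cc^{4+\alpha,1+\alpha/4}_{s}(Q_T)$ with
\[
w(\cdot,0)=u_0,\qquad w|_{\partial\Omega}=\varphi,\qquad \partial_\nu w|_{\partial\Omega}=h .
\]
For $s<4+\alpha$ one can take a heat-type (biharmonic semigroup) extension of $u_0$, whose derivatives of order above $s$ blow up precisely at the weighted rates $t^{-(4k+|\beta|-s)/4}$. This is corrected near $\partial\Omega$ by a boundary lifting of $(\varphi-w,\ h-\partial_\nu w)$, built in local coordinates from the normal variable. The zero- and first-order compatibility conditions make the corrected traces vanish at $t=0$ in the appropriate weighted sense. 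When $s\ge4$, the extra condition guarantees that the new right-hand side $f-\partial_t w-\sum a_\beta D^\beta w$ is compatible at $t=0$ with zero data.

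\textbf{Step 2: reduce to homogeneous data.} With $v=u-w$ the problem becomes one with zero initial and boundary data and right-hand side $F\in\Cc^{\alpha,\alpha/4}_{s-4}(Q_T)$. The remaining task is to solve this reduced problem.

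\textbf{Step 3: a priori estimate via dyadic localization.} The estimate to establish is
\[
\|v\|_{\Cc^{4+\alpha,1+\alpha/4}_s}\le C\|F\|_{\Cc^{\alpha,\alpha/4}_{s-4}} .
\]
For each $t$, introduce a cutoff $\eta$ in time supported on $[t/4,t]$ and apply the classical unweighted Schauder estimate of Solonnikov on $\overline\Omega\times[t/4,t]$ to $\eta v$. Parabolic rescaling $t\mapsto t/\tau$, $x$ fixed, shows that the constants scale so that multiplying by $t^{(4+\alpha-s)/4}$ gives a bound of $[v]^{4+\alpha}_{Q_t'}$ by the weighted norm of $F$. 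The bound also involves a lower-order term $t^{-1}\sup|v|$ on $[t/4,t]$, coming from $\partial_t\eta$. That lower-order term is controlled by interpolation together with a maximum-principle-free $L^\infty$ bound for small times. The low-order part of the norm, the $\Cc^{s,s/4}$ piece, follows by integrating the weighted derivative bounds in time, since the exponents are integrable because $s<4+\alpha$. The coefficients $a_\beta$ lie only in the weighted space with index $\max\{0,s-4\}$, but this still gives uniform $\Cc^\alpha$ control on each $Q_t'$, which is what the frozen-coefficient argument needs.

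\textbf{Step 4: existence by continuity.} Connect the operator to $\partial_t+\Delta^2$ by the homotopy $a^\theta_\beta=\theta a_\beta+(1-\theta)\delta_\beta$. Ellipticity is uniform in $\theta$, so the a priori bound is uniform. For $\theta=0$, solvability in weighted spaces is obtained by approximating $F$ by functions vanishing near $t=0$, solving classically, and passing to the limit using the uniform weighted estimate and Arzel\`a--Ascoli. Uniqueness follows from the estimate applied to the difference of two solutions.

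The main obstacle is Step 3, specifically keeping the constants uniform as $t\searrow0$ when the boundary is fixed. Scaling is only in time, because one cannot rescale $\Omega$, so the short intervals $[t/4,t]$ must be handled by a local-in-time Schauder estimate whose constant is independent of the interval length. I would first try to prove this by rescaling space as well in small boundary charts and covering $\Omega$ by balls of radius $t^{1/4}$, using the flattened $C^{4+\alpha}$ boundary. If that becomes too technical, I would instead cite Belonosov's Theorem for general parabolic systems with complementing boundary conditions, \cite{belonosov1979estimates}, after checking the complementing condition for $(\Delta^2;\,u,\partial_\nu u)$.
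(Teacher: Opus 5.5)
You could not have known this, but the paper gives no argument of its own here. The theorem is quoted from Belonosov's weighted Schauder theory (Theorem~4, p.~185 of \cite{belonosov1979estimates}, clamped fourth-order case), with Lunardi's framework in the background. Your fallback in the last paragraph is therefore exactly the paper's proof.

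\textbf{Gap in Step~3.} Your self-contained route does not close, and the failure is in Step~3.
\begin{enumerate}
\item Rescaling $t\mapsto t/\tau$ with $x$ fixed is not a parabolic scaling. It turns $\partial_t+A$ into $\partial_t+\tau A$, whose ellipticity constant degenerates. In unscaled variables the coefficients only satisfy $[a_\beta]_{C^\alpha(Q_t')}\lesssim t^{-\alpha/4}$ (for $s<4$), so Solonnikov's constant on $\overline\Omega\times[t/4,t]$ is not uniform. You need localization to balls of radius $t^{1/4}$ with flattened boundary charts, as you anticipate at the end.
\item More seriously, after that localization and interpolation you are left with the term $\sup_{0<t\le T}t^{-s/4}\sup_x|v(x,t)|$, which cannot be absorbed. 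The weighted norm is invariant under parabolic dilations centred at $t=0$, so short times give no small factor. Also $\partial_t+\Delta^2$ has no maximum principle.
\end{enumerate}
Your ``maximum-principle-free $L^\infty$ bound for small times'' is precisely the global estimate that has to be proved. It needs a genuinely global tool, for example:
\begin{enumerate}
\item kernel estimates for the clamped model problems on $\R^n$ and the half-space, in the spirit of Solonnikov;
\item a blow-up argument with Liouville theorems; this is delicate here, because for $s<4$ the coefficients lie only in $\Cc^{\alpha,\alpha/4}_0$, so their rescaled H\"older seminorms stay bounded but need not tend to zero;
\item Lunardi's time-weighted spaces for analytic semigroups.
\end{enumerate}
The weighted lifting of the data in Step~1 rests on the same machinery. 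The paper itself obtains such extensions, e.g.\ $\overline u_0$, only by applying this very theorem to the biharmonic heat flow. Steps~2 and~4 are fine once the a~priori estimate is available: the homotopy keeps ellipticity uniform, and for $s\ge4$ the reduced compatibility $F(\cdot,0)=0$ on $\partial\Omega$ does not depend on $\theta$.

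\textbf{The case $s=0$.} You say the $\Cc^{s,s/4}$ part follows by integrating in time ``because $s<4+\alpha$''. That is the wrong condition: $\int_0^t\tau^{-(4-s)/4}\,d\tau$ converges if and only if $s>0$. At $s=0$ the statement itself is false, so neither your argument nor the citation can cover it. A counterexample:
\begin{enumerate}
\item Take the operator $\Delta^2$, zero data $u_0=\varphi=h=0$, and $f=1/t$. One checks directly from the definition that $f\in\Cc^{\alpha,\alpha/4}_{-4}(Q_T)$.
\item Pick a clamped eigenfunction with $\Delta^2 e=\lambda e$, $e=\partial_\nu e=0$ on $\partial\Omega$, and $\int_\Omega e\neq0$. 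Such an $e$ exists because the eigenfunctions span $L^2(\Omega)$.
\item Set $y(t)=\int_\Omega u(\cdot,t)\,e\,dx$. Integrating by parts gives $y'=-\lambda y+t^{-1}\int_\Omega e$ for $t>0$.
\item This forces a logarithmic divergence as $t\searrow0$, which is incompatible with the bound $\sup|u|<\infty$ built into $\Cc^{4+\alpha,1+\alpha/4}_0(Q_T)$.
\end{enumerate}
The admissible range is therefore $s\in(0,4+\alpha]$. This still covers every use in the paper, namely $s=1$ and $s=m+\alpha$.
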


\begin{proof}
This follows from the results of Belonosov
\cite[Theorem~4, p.~185]{belonosov1979estimates}
for fourth-order parabolic problems with clamped boundary conditions,
combined with the standard anisotropic Hölder framework in
\cite{lunardi1992semigroup}.
\end{proof}

\begin{thm}[Weighted Schauder estimate]\label{gewichteteschauder}
Under the assumptions of Theorem~\ref{gewichteteexistenz}, the solution
satisfies the estimate
\begin{align*}
\|u\|_{\Cc^{4+\alpha,1+\alpha/4}_{s}(Q_T)}
\le
\Cl{ws}(T)
\Big(
&\ \|f\|_{\Cc^{\alpha,\alpha/4}_{s-4}(Q_T)}
+\|\varphi\|_{\Cc^{4+\alpha,1+\alpha/4}_{s}(\partial\Omega\times(0,T])}\\
&\ +\|h\|_{\Cc^{3+\alpha,(3+\alpha)/4}_{s-1}(\partial\Omega\times(0,T])}
+\|u_0\|_{\Cc^{s}(\overline\Omega)}
\Big),
\end{align*}
where \(\Cr{ws}:\R_+\to\R_+\) is a monotone function depending only on
\(\lambda,\Lambda,\Omega\), and an upper bound $C$
\[
\|a_\beta\|_{\Cc^{\alpha,\alpha/4}_{\max\{0,s-4\}}(Q_T)}\le C
\qquad
(|\beta|\le4).
\]
\end{thm}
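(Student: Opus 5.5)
The weighted estimate will be derived from the classical unweighted Schauder estimate for the clamped fourth-order problem by a localization in time, combined with a separate treatment of the low-order part of the norm near $t=0$. I will first dispose of the low-order term $\|u\|_{\Cc^{s,s/4}_{x,t}(\overline Q_T)}$ (present only for $s\ge0$). To do this, subtract an extension $w$ of the initial and boundary data. $w$ is built with the standard extension operators, so that $w(\cdot,0)=u_0$, $w=\varphi$ and $\partial_\nu w=h$ on the lateral boundary, and $\|w\|_{\Cc^{4+\alpha,1+\alpha/4}_{s}(Q_T)}$ is bounded by the data norms. Then $v=u-w$ has zero initial datum and homogeneous boundary data. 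Its right-hand side is $f-\partial_t w-\sum a_\beta D^\beta w$, which lies in $\Cc^{\alpha,\alpha/4}_{s-4}$ by the product estimate \eqref{eq:simpleprod} and the embedding \eqref{eq:einbett}.

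The main step is to estimate the weighted seminorm $\sup_t t^{(4+\alpha-s)/4}[v]^{4+\alpha}_{Q_t'}$. Fix $t\in(0,T]$ and take a smooth cutoff $\eta$ with $\eta=1$ on $[t/2,t]$, $\eta=0$ on $[0,t/4]$, and $|\eta'|\lesssim t^{-1}$. The function $\eta v$ solves the same clamped problem with zero initial data. Its right-hand side is $\eta F+\eta' v$, and the compatibility conditions hold trivially. The classical unweighted Schauder estimate on $\overline\Omega\times[t/4,t]$ (Solonnikov, Lunardi) then controls $[v]^{4+\alpha}_{Q_t'}$ by $\|\eta F\|_{\Cc^{\alpha,\alpha/4}}+t^{-1}\|v\|_{\Cc^{\alpha,\alpha/4}(\overline\Omega\times[t/4,t])}$, with constants depending on $\lambda,\Lambda,\Omega$ and the coefficient bound. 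Rescaling time to unit length makes the powers of $t$ explicit. Multiplying by $t^{(4+\alpha-s)/4}$, the forcing term is bounded by $\|F\|_{\Cc^{\alpha,\alpha/4}_{s-4}}$, since that norm contains exactly the weight $t^{(\alpha-(s-4))/4}$. The term $t^{-1}\|v\|$ produces weighted intermediate norms of order between $s$ and $4+\alpha$.

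Those intermediate terms $\sup t^{(4k+|\beta|-s)/4}|D_t^kD_x^\beta v|$ are absorbed by a weighted interpolation inequality, which is the main obstacle. For every $\epsilon>0$, an intermediate weighted norm is at most $\epsilon$ times the top weighted seminorm plus $C_\epsilon$ times the base norm $\|v\|_{\Cc^{s,s/4}}$ (or a low norm such as $\sup|v|$ when $s<0$). This is proved pointwise on each slab $Q_t'$ by the usual Hölder interpolation; multiplying by the weights gives matching powers of $t$, so the weights cancel. The base norm is then controlled by the Belonosov a priori estimate near $t=0$. Concretely, a maximum-principle-free argument is available because $v$ vanishes at $t=0$ with order $s$: $\sup|D^\beta v(\cdot,t)|\lesssim t^{(s-|\beta|)/4}$, integrated from the top-order weighted bound, which again uses the sup over $t$ and a smallness in time. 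If $T$ is small, the absorption closes directly. For larger $T$, iterate over intervals $[kT_0,(k+1)T_0]$, where the problem is unweighted away from zero. This iteration is why $\Cr{ws}$ is only a monotone function of $T$ rather than a constant.

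Finally, I will add back $w$ and collect the terms to obtain the stated inequality. The dependence of $\Cr{ws}$ only on $\lambda,\Lambda,\Omega$ and the coefficient bound follows because every step uses classical Schauder constants with those dependencies. Alternatively, the whole statement could be quoted from Belonosov \cite[Theorem~4]{belonosov1979estimates} once the equivalence of norms \cite[Lemma~91]{gulyak2024boundary} is taken into account; the plan above is the self-contained route.
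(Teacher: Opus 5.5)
Your final sentence is essentially the paper's whole proof: it quotes Belonosov's weighted estimate \cite[p.~184, Corollary to (4.10)]{belonosov1979estimates}, together with the norm equivalence of \cite[Lemma~91]{gulyak2024boundary}, and gives no independent argument. The route you present as self-contained has a genuine gap at the step where the base norm is controlled. Either it invokes the estimate being proved (``the Belonosov a priori estimate near $t=0$''), or, in your ``concrete'' version, it is circular.

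The reason is scale invariance. Under $v\mapsto v(\lambda x,\lambda^4t)$, $F\mapsto\lambda^4F(\lambda x,\lambda^4t)$, every term of $\|v\|_{C^{4+\alpha,1+\alpha/4}_{s}(Q_T)}$ and of $\|F\|_{C^{\alpha,\alpha/4}_{s-4}(Q_T)}$ is homogeneous of degree $s$, apart from lower-order sup-norm terms. Your cut-off step plus interpolation gives, with a $T$-independent constant,
\[
\sup_{0<t\le T}t^{\frac{4+\alpha-s}{4}}[v]^{4+\alpha}_{Q_t'}\le C\|F\|_{C^{\alpha,\alpha/4}_{s-4}(Q_T)}+C\sup_{0<t\le T}t^{-\frac{s}{4}}\sup_{\overline\Omega\times[t/4,t]}|v| .
\]
Integrating back from $t=0$ with $v(\cdot,0)=0$ only yields the reverse bound $t^{-s/4}|v(x,t)|\le\frac4s\sup_{\tau}\tau^{\frac{4-s}{4}}|\partial_tv(x,\tau)|$ for $0<s<4$. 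This again carries no positive power of $T$. Two inequalities pointing in opposite directions with $O(1)$ constants leave nothing to absorb. Shrinking $T$ does not help either: for the frozen principal part, parabolic rescaling turns the problem on $[0,T]$ into the problem on $[0,1]$, on a dilated domain. Smallness in $T$ comes only from the lower-order coefficients and from the oscillation of $a_\beta$ with $|\beta|=4$.

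What is missing is a genuinely global ingredient, for instance
\[
\sup_{\overline\Omega}|v(\cdot,t)|\le C\,t^{s/4}\,\|F\|_{C^{\alpha,\alpha/4}_{s-4}(Q_T)},\qquad 0<t\le T .
\]
For second-order equations this would follow from the maximum principle, but the clamped fourth-order problem has none. You need one of the following:
\begin{itemize}
\item uniform $L^1$ bounds on the Green's function of the frozen problem (Solonnikov, Eidelman) plus perturbation, which give $|v(x,t)|\le C\int_0^t\sup_{\overline\Omega}|F(\cdot,\tau)|\,d\tau\le Ct^{s/4}\|F\|$;
\item a blow-up/compactness argument combined with uniqueness for the homogeneous problem in the weighted class.
\end{itemize}
Local Schauder estimates and interpolation cannot produce such a bound, because they never use the global structure of the solution operator. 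Supplying it is exactly the content of Belonosov's theorem. Once such a bound is available, your localisation-and-interpolation scheme does close for $0<s<4$.

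A smaller point concerns $s\in(4,4+\alpha)$. There your claim that $v$ vanishes to order $s$ at $t=0$ fails, because $\partial_tv(\cdot,0)=F(\cdot,0)$ need not vanish. Then $v=O(t)$, and the cut-off contribution $t^{\frac{4+\alpha-s}{4}}\,t^{-1-\frac{\alpha}{4}}\sup_{[t/4,t]}|v|\sim t^{\frac{4-s}{4}}$ is unbounded. To fix this, the extension $w$ must also match $\partial_tw(\cdot,0)=f(\cdot,0)-\sum_{|\beta|\le4}a_\beta(\cdot,0)D^\beta u_0$.
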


\begin{proof}
This is Belonosov's weighted Schauder estimate,
see \cite[p.~184, Corollary to (4.10)]{belonosov1979estimates},
specialized to fourth-order equations and clamped boundary data.
\end{proof}

\emph{Remark.}
In later applications we will always set \(\ell=4+\alpha\) and take
\[
s=m+\alpha,
\qquad m\in\{1,2,3,4\},
\]
or, in the Lipschitz case, \(s=1\).
The weighted theory above is precisely what allows us to lower the
regularity of the initial data while retaining full \(4+\alpha\) spatial
regularity for positive times.



\section{Local Existence}
\label{sec:local_existence}

\subsection{Hölder Initial Data}
\label{subsec:hoelderdata}

In this section we develop the short-time theory for the graphical
Willmore flow with Hölder regular initial data.
The main application is the case
\(
u_0\in \Cc^{1+\alpha}(\overline\Omega),
\)
but the weighted framework naturally treats the more general classes
\[
u_0\in \Cc^{m+\alpha}(\overline\Omega),
\qquad m\in\{1,2,3,4\},
\]
and we formulate the auxiliary estimates at this level of generality.

The main difficulty is caused by the fourth-order quasilinear structure
of the equation and the loss of spatial regularity at the initial time.
If one worked in the classical unweighted parabolic Hölder spaces,
then for $m<4$ one would not expect the solution to remain in
$\Cc^{4+\alpha,1+\alpha/4}_{x,t}$ up to $t=0$.
To overcome this, we use the time-weighted spaces
\[
\Cc^{4+\alpha,1+\alpha/4}_{m+\alpha}(Q_T),
\qquad m=1,2,3,4,
\]
which allow derivatives of parabolic order strictly larger than $m$
to blow up as $t\searrow0$ at a controlled rate, while derivatives of
parabolic order at most $m$ remain Hölder continuous up to $t=0$.
In particular, this avoids the fourth-order compatibility condition at
the initial time and treats the cases $m=1,2,3,4$ in a unified way.

For the weighted linear theory developed in the appendix, we set
\[
s=m+\alpha,
\qquad
f=\Rr(\nabla u,D^2u,D^3u),
\qquad
(a_\beta)\cong L(\nabla u),
\]
with time-independent Dirichlet data
\(
\varphi=g_0,
\
h=g_1.
\)

We first recall the unweighted part of the norm:
\begin{align*}
\|u\|_{\Cc^{m+\alpha,\frac{m+\alpha}{4}}_{x,t}(\overline Q_T)}
={}&
\sum_{4k+|\beta|\le m}
\sup_{(x,t)\in \overline Q_T}|D_t^kD_x^\beta u(x,t)| \\
&\quad
+\sum_{4k+|\beta|=m}
\sup_{t\in[0,T]}
\big[D_t^kD_x^\beta u(\cdot,t)\big]_{\Cc^\alpha(\overline\Omega)} \\
&\quad
+\sum_{m-4+\alpha<4k+|\beta|\le m}
\sup_{x\in\overline\Omega}
\big[D_t^kD_x^\beta u(x,\cdot)\big]_
{\Cc^{\frac{m+\alpha-4k-|\beta|}{4}}([0,T])}.
\end{align*}
The full weighted norm is then defined by
\begin{align*}
\|u\|_{\Cc^{4+\alpha,1+\alpha/4}_{m+\alpha}(Q_T)}
={}&
\sup_{0<t\le T}
t^{\frac{4-m}{4}}[u]^{4+\alpha}_{Q_t'}
+
\sum_{m<4k+|\beta|\le4}
\sup_{(x,t)\in Q_T}
t^{\frac{4k+|\beta|-m-\alpha}{4}}
|D_t^kD_x^\beta u(x,t)| \\
&\quad
+\|u\|_{\Cc^{m+\alpha,\frac{m+\alpha}{4}}_{x,t}(\overline Q_T)},
\end{align*}
where
\(
Q_t':=\overline\Omega\times[t/2,t]
\) for \(0<t\le T,
\)
and
\begin{align*}
[u]^{4+\alpha}_{Q_t'}
={}&
\sum_{4k+|\beta|=4}
\sup_{t'\in[t/2,t]}
\big[D_t^kD_x^\beta u(\cdot,t')\big]_{\Cc^\alpha(\overline\Omega)} \\
&\quad
+\sum_{1\le4k+|\beta|\le4}
\sup_{x\in\overline\Omega}
\big[D_t^kD_x^\beta u(x,\cdot)\big]_
{\Cc^{\frac{4+\alpha-4k-|\beta|}{4}}([t/2,t])}.
\end{align*}

To keep the fixed-point argument readable, we move the technical
estimates to the appendix and record here only the ingredients needed
below. One of the key steps is to lower the Hölder exponent from
$\alpha$ to some $\gamma\in(0,\alpha)$; for functions with vanishing
initial traces, this produces the small factor
$T^{\frac{\alpha-\gamma}{4}}$ and allows us to choose the time interval
short enough for a contraction argument.

\begin{lemma}\label{gammaalpha}
Let $m\in\{1,2,3,4\}$ and let
$u\in \Cc^{4+\alpha,1+\alpha/4}_{m+\alpha}(Q_T)$ satisfy
\[
D_t^kD_x^\beta u(x,0)=0
\qquad
\text{for all }x\in\overline\Omega
\text{ and }4k+|\beta|\le m.
\]
Then there exists a constant
\[
\Cl{constgammaalpha}=\Cr{constgammaalpha}(\alpha,\gamma)>0
\]
such that for every $0<\gamma<\alpha$ and every $T\le1$,
\[
\|u\|_{\Cc^{4+\gamma,1+\gamma/4}_{m+\gamma}(Q_T)}
\le
\Cr{constgammaalpha}\,
T^{\frac{\alpha-\gamma}{4}}
\|u\|_{\Cc^{4+\alpha,1+\alpha/4}_{m+\alpha}(Q_T)}.
\]
\end{lemma}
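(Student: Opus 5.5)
We estimate the three groups of terms in the $\Cc^{4+\gamma,1+\gamma/4}_{m+\gamma}(Q_T)$-norm separately and compare each with the corresponding term of the $\alpha$-norm. Throughout, $N:=\|u\|_{\Cc^{4+\alpha,1+\alpha/4}_{m+\alpha}(Q_T)}$ and $T\le 1$. The guiding principle is that every difference of exponents equals $\alpha-\gamma$. Lowering a Hölder exponent from $\alpha$ to $\gamma$ costs at most a factor $(\text{distance})^{\alpha-\gamma}$. Lowering a time weight from $t^{(\cdot-\alpha)/4}$ to $t^{(\cdot-\gamma)/4}$ gains a factor $t^{(\alpha-\gamma)/4}\le T^{(\alpha-\gamma)/4}$.

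\emph{Weighted interior terms.} For $m<4k+|\beta|\le 4$ we write
\[
t^{\frac{4k+|\beta|-m-\gamma}{4}}|D_t^kD_x^\beta u|
= t^{\frac{\alpha-\gamma}{4}}\, t^{\frac{4k+|\beta|-m-\alpha}{4}}|D_t^kD_x^\beta u| \le T^{\frac{\alpha-\gamma}{4}} N .
\]
For the top seminorm on $Q_t'$ we use $t^{(4-m)/4}$, which is the same in both norms, and estimate $[\,\cdot\,]^{4+\gamma}_{Q_t'}$ by $[\,\cdot\,]^{4+\alpha}_{Q_t'}$ together with lower-order sup-terms. Spatially, $[v]_{C^\gamma}\le \operatorname{diam}(\Omega)^{\alpha-\gamma}[v]_{C^\alpha}$ when $|x-y|$ is small, and $[v]_{C^\gamma}\le 2\sup|v|\,|x-y|^{-\gamma}$ when $|x-y|$ is large. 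This looks problematic, so we instead split at $|x-y|\sim t^{1/4}$. For $|x-y|\le t^{1/4}$ we have $|x-y|^{\alpha-\gamma}\le t^{(\alpha-\gamma)/4}$. For $|x-y|\ge t^{1/4}$ we use the weighted sup of $D^4u$, i.e. $|D^4u|\lesssim t^{-(4-m-\alpha)/4}N$, so
\[
t^{\frac{4-m}{4}}|x-y|^{-\gamma}|D^4u|\lesssim t^{\frac{4-m}{4}-\frac{\gamma}{4}-\frac{4-m-\alpha}{4}}N=t^{\frac{\alpha-\gamma}{4}}N .
\]
The time seminorms over $[t/2,t]$ are handled the same way: since the interval has length $\le t$, the ratio $|t_1-t_2|^{(\alpha-\gamma)/4}\le t^{(\alpha-\gamma)/4}$ comes for free. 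Each piece is therefore bounded by $C\,T^{(\alpha-\gamma)/4}N$.

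\emph{Unweighted $m+\gamma$ part; this is the main obstacle.} Here the vanishing initial traces enter. For $4k+|\beta|\le m$ and $v:=D_t^kD_x^\beta u$, we have $v(x,0)=0$ and $v$ is Hölder in time with exponent $(m+\alpha-4k-|\beta|)/4$, controlled by $N$ (for $4k+|\beta|>m-4+\alpha$). Hence
\[
|v(x,t)|\le N\,t^{\frac{m+\alpha-4k-|\beta|}{4}}\le N\,T^{\frac{\alpha-\gamma}{4}}T^{\frac{m+\gamma-4k-|\beta|}{4}},
\]
which gives smallness of the sup-terms. The remaining terms with $4k+|\beta|\le m-4+\alpha$ occur only when $m=4$, $k=0$, $|\beta|=0$; for these we use time derivatives and the same initial-trace argument. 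For the time Hölder seminorm of exponent $\theta_\gamma=\theta_\alpha-(\alpha-\gamma)/4$, we have
\[
\frac{|v(t_1)-v(t_2)|}{|t_1-t_2|^{\theta_\gamma}}\le N|t_1-t_2|^{\frac{\alpha-\gamma}{4}}\le N T^{\frac{\alpha-\gamma}{4}} .
\]
The delicate term is the spatial $C^\gamma$-seminorm of $D^\beta u(\cdot,t)$ with $|\beta|=m$ (or $D_tu$ if $m=4$), because spatial distances need not be small. For $|x-y|^4\le t$ we use $[\,\cdot\,]_{C^\alpha}$ and gain $|x-y|^{\alpha-\gamma}\le t^{(\alpha-\gamma)/4}$. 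For $|x-y|^4>t$ we bound the difference by $2\sup|v(\cdot,t)|\le 2Nt^{\alpha/4}$ from the initial-trace estimate, and divide by $|x-y|^\gamma\ge t^{\gamma/4}$, which again yields $t^{(\alpha-\gamma)/4}$. Summing all pieces gives the claimed bound, with a constant $\Cr{constgammaalpha}$ depending only on $\alpha$, $\gamma$ and, through the interpolation steps, on $\Omega$.
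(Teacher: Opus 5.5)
Your proposal is correct and follows essentially the same route as the paper. The weighted sup terms gain $t^{(\alpha-\gamma)/4}$ directly, the temporal seminorms lose exactly $(\alpha-\gamma)/4$ on intervals of length $\le t$ or $\le T$, and the vanishing initial traces give $|D_t^kD_x^\beta u(x,t)|\le N t^{(m+\alpha-4k-|\beta|)/4}$, with $|u|\le t\sup|u_t|$ in the case $m=4$. For the spatial $C^\gamma$-seminorms you split at the parabolic scale $|x-y|\sim t^{1/4}$ instead of quoting the interpolation inequality $[v]_{C^\gamma}\le 2^{1-\gamma/\alpha}[v]_{C^\alpha}^{\gamma/\alpha}\|v\|_{C^0}^{1-\gamma/\alpha}$, which gives the same estimate. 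Like the paper's argument, yours yields a constant independent of $\Omega$, so the $\Omega$-dependence you mention at the end is not needed.
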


\begin{proof}
See Appendix, Lemma~\ref{gammaalphaappendix}.
\end{proof}

The remainder term $\Rr$ consists of products of derivatives of $u$.
Therefore, in order to estimate it in
$\Cc^{\alpha,\alpha/4}_{m+\alpha-4}(Q_T)$,
we need product estimates in weighted Hölder spaces.

\begin{lemma}[Basic product estimates]\label{productrule}
Let $m\in\{1,2,3,4\}$, let $0<\gamma\le\alpha<1$, assume $\gamma\ge\alpha/2$,
and let $T\le1$.
Then there exists a constant
\[
\Cl{prodrule}=\Cr{prodrule}(\alpha,\gamma,\Omega)>0
\]
depending only on $\alpha$, $\gamma$, $\Omega$, and the fixed algebraic
structure of $L$ and $\Rr$, such that for all
$u,v,w\in \Cc^{4+\gamma,1+\gamma/4}_{m+\gamma}(Q_T)$,
\begin{align}
          &\ \|\nabla u\| _{C^{\alpha, \frac{\alpha}{4}}_{\max\{0,m+\alpha-4\}}(Q_T)} 
          \le     \Cr{prodrule}\|\nabla u\|_{C^{3+\gamma, \frac{3+\gamma}{4}}_{m+\gamma-1}(Q_T)}    \label{eq:1}, \\
     &\ \|D^3wD^2 u\|_{C^{\alpha, \frac{\alpha}{4}}_{m+\alpha-4}(Q_T)}  
     \le 
     \Cr{prodrule} \|D^3w\|_{C^{1+\gamma, \frac{1+\gamma}{4}}_{m+\gamma-3}(Q_T)} 
     \cdot \|D^2u\|_{C^{2+\gamma, \frac{2+\gamma}{4}}_{m+\gamma-2}(Q_T)}  \label{eq:32} ,\\ 
      &\  \begin{aligned}
            \|D^2 u D^2 & w D^2v\|_{C^{\alpha, \frac{\alpha}{4}}_{m+\alpha-4}(Q_T)} \\
           &\ \le 
          \Cr{prodrule} \|D^2u\|_{C^{2+\gamma, \frac{2+\gamma}{4}}_{m+\gamma-2}(Q_T)} \cdot \|D^2w\|_{C^{2+\gamma, \frac{2+\gamma}{4}}_{m+\gamma-2}(Q_T)} 
          \cdot \|D^2v\|_{C^{2+\gamma, \frac{2+\gamma}{4}}_{m+\gamma-2}(Q_T)}  .
        \end{aligned}    \label{eq:222}
\end{align}
\end{lemma}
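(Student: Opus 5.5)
Each of the three inequalities reduces to two elementary ingredients. The first is a \emph{change of Hölder exponent} from $\gamma$ to $\alpha$ in weighted spaces. The second is the weighted product rule \eqref{eq:simpleprod}. Unlike Lemma~\ref{gammaalpha}, which lowers the exponent, here we must raise it from $\gamma$ to $\alpha$. This is paid for by spending spatial/temporal regularity: the target spaces carry only $\alpha$ derivatives, while the source spaces carry $1+\gamma$, $2+\gamma$ or $3+\gamma$ derivatives. So the plan is first to prove an embedding of the type
\[
\|w\|_{\Cc^{\alpha,\alpha/4}_{r+\alpha-\gamma-j}(Q_T)}\le C\,\|w\|_{\Cc^{j+\gamma,(j+\gamma)/4}_{r}(Q_T)},\qquad j\ge1,\ T\le1,
\]
with a suitable shift of the weight index. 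We then apply it to $\nabla u$, $D^3w$ and $D^2u$, and finally multiply using \eqref{eq:simpleprod}.

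For the embedding I would work directly with the weighted norm on the cylinders $Q_t'=\overline\Omega\times[t/2,t]$. Spatially, a $C^\alpha$ seminorm on $Q_t'$ is bounded by the sup of the gradient times $\operatorname{diam}(\Omega)^{1-\alpha}$. Only a uniform $C^{1}$-type bound is needed here, via the interior-cone/extension property of the $C^{4+\alpha}$ domain. The gradient on $Q_t'$ is controlled by $t^{-(1+\gamma\text{-shift})/4}$ times the weighted norm. Temporally, the $C^{\alpha/4}$ seminorm on $[t/2,t]$ follows from the $C^{(j+\gamma)/4}$ seminorm there (or from $D_t$ if $j\ge4$), since $|t_1-t_2|\le t\le 1$. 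Tracking the powers of $t$ gives exactly the weight exponents $t^{(\ell-s)/4}$ appearing in the definition. Here one uses that the difference $\ell-s$ is preserved, up to the harmless $\alpha-\gamma\ge0$, because $(j+\gamma)-r$ and $\alpha-(r+\alpha-\gamma-j)$ coincide.

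The unweighted part ($s\ge0$) needs extra care. This is the cross-scale Hölder quotient for $t_1<t_1/2$ and the continuity at $t=0$. I would handle it with the usual dyadic telescoping in time, using that $\gamma\ge\alpha/2$ keeps the resulting geometric series summable.

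With this embedding, \eqref{eq:1} is immediate: take $j=3$, $r=m+\gamma-1$. Then $\max\{0,m+\alpha-4\}\le m+\gamma-1$ for $m\ge1$ (since $\gamma\ge\alpha/2$ and $\alpha<1$), and lowering the weight index is a continuous inclusion, by the definition of the norm and $T\le1$.

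For \eqref{eq:32}, we embed $D^3w$ into $\Cc^{\alpha,\alpha/4}_{m+\alpha-3-1}$ and $D^2u$ into $\Cc^{\alpha,\alpha/4}_{m+\alpha-2-2}$, up to index adjustments. We then apply \eqref{eq:simpleprod} with $r=\min\{r_1,r_2,r_1+r_2\}$. Both indices are $\le0$ in the blow-up cases, so the product index is $r_1+r_2$. This must be checked to be $\ge m+\alpha-4$. I expect this bookkeeping to be the main obstacle. Naively adding the weights gives $2m-8+\dots$, which is too negative. So I would instead estimate the product by hand on each $Q_t'$. The factor $D^2u$ is bounded uniformly when $m\ge2$, and like $t^{-(2-m-\gamma)/4}$ when $m=1$. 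The factor $D^3w$ blows up like $t^{-(3-m-\gamma)/4}$. Their product therefore blows up like $t^{-(4-m-\alpha)/4}\cdot t^{(\alpha+2\gamma-\dots)/4}$. The condition $\gamma\ge\alpha/2$ is exactly what makes the total exponent at most $(4-m-\alpha)/4$ when $m=1$.

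The Hölder seminorm of the product is handled by the Leibniz-type splitting $[fg]\le\|f\|_\infty[g]+[f]\|g\|_\infty$ on $Q_t'$. The same exponent count applies to each term. The triple product \eqref{eq:222} follows by the identical argument with three factors of $D^2u$-type. Each factor contributes at most $t^{-(2-m-\gamma)^+/4}$, and $3(1-\gamma)\le 3-\alpha$ again uses $\gamma\ge\alpha/2$ together with $\alpha<1$.
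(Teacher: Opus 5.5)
\textbf{Gap 1: the spatial H\"older seminorms of the products.} Your pointwise exponent count is correct, and so is your temporal treatment (Leibniz splitting on $[t/2,t]$, then lowering the exponent, which produces a factor $t^{\nu-\mu}$); both match the paper. The problem is the spatial H\"older seminorms, which are the heart of the lemma: the tools you list do not control them. The only spatial tool you name is $[f]_{C^\alpha}\lesssim\|Df\|_\infty$. Your embedding preserves $\ell-s$, so it only yields $t^{(4-m)/4}[D^3w(\cdot,t')]_{C^\alpha}\lesssim 1$ and $t^{(4-m)/4}[D^2u(\cdot,t')]_{C^\alpha}\lesssim 1$.

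Inserted into the Leibniz splitting, this is too lossy. For $m\le 2$ it leaves an unweighted factor $\|D^3w(\cdot,t')\|_\infty$, and for $m=1$ also $\|D^2u(\cdot,t')\|_\infty$, and these blow up. Even the direct Lipschitz bound $[D^3w(\cdot,t')]_{C^\alpha}\lesssim\|D^4w(\cdot,t')\|_\infty\lesssim t^{-(3-\gamma)/4}$ only gives, for $m=1$,
\[
t^{3/4}\,[D^3w(\cdot,t')]_{C^\alpha}\,\|D^2u(\cdot,t')\|_\infty\lesssim t^{\frac34-\frac{3-\gamma}{4}-\frac{1-\gamma}{4}}=t^{\frac{2\gamma-1}{4}} .
\]
This is unbounded whenever $\gamma<1/2$, which is admissible since $\gamma$ may be as small as $\alpha/2$. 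The triple product likewise only gives $t^{(3\gamma-1)/4}$. So ``the same exponent count applies to each term'' does not follow from what you have.

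The missing ingredient is the interpolation inequality $[f]_{C^\alpha(\overline\Omega)}\le C\|Df\|_\infty^{\alpha}\|f\|_\infty^{1-\alpha}$ (plus $C\|f\|_\infty$ on nonconvex $\Omega$). Apply it to $f=D^3w(\cdot,t')$ and $f=D^2u(\cdot,t')$, using the weighted sup bounds on $D^2u,D^3u,D^3w,D^4w$. For $m=1$ this gives $[D^3w(\cdot,t')]_{C^\alpha}\lesssim t^{-(2-\gamma+\alpha)/4}$ and $[D^2u(\cdot,t')]_{C^\alpha}\lesssim t^{-(1-\gamma+\alpha)/4}$. Both Leibniz terms then carry exactly $t^{(2\gamma-\alpha)/4}$, and the triple product carries $t^{(3\gamma-\alpha)/4}$. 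So $\gamma\ge\alpha/2$ is needed a second time here, not only in the pointwise term. This is Step~3 in the proof of Lemma~\ref{productruleappendix}.

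\textbf{Gap 2: the derivation of \eqref{eq:1}.} Here the inclusion runs the wrong way. With $j=3$ and $r=m+\gamma-1$, your embedding lands in $C^{\alpha,\alpha/4}_{m+\alpha-4}$. For $m\le3$, however, the target index is $\max\{0,m+\alpha-4\}=0>m+\alpha-4$. Raising the index is not an inclusion: the index-$0$ norm contains $\sup|\nabla u|$ and uses the weight $t^{\alpha/4}$ instead of $t^{(4-m)/4}$. Your comparison $\max\{0,m+\alpha-4\}\le m+\gamma-1$ relates indices of spaces with different $\ell$, which your embedding does not cover.

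The estimate is still true, but it needs a direct argument:
\begin{enumerate}
\item[(i)] $\sup|\nabla u|$ comes from the unweighted part of the source norm, since $m+\gamma-1>0$.
\item[(ii)] The temporal part follows by lowering the exponent.
\item[(iii)] For $m=1$ the spatial part again needs $[\nabla u(\cdot,t')]_{C^\alpha}\lesssim\|D^2u\|_\infty^\alpha\|\nabla u\|_\infty^{1-\alpha}$. This gives $t^{\alpha/4}t^{-\alpha(1-\gamma)/4}=t^{\alpha\gamma/4}$, whereas the Lipschitz bound gives $t^{(\alpha+\gamma-1)/4}$, unbounded if $\alpha+\gamma<1$.
\end{enumerate}
The dyadic telescoping you plan for the unweighted parts is unnecessary. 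For $m\le3$ the targets in \eqref{eq:32} and \eqref{eq:222} have negative index, the index-$0$ target in \eqref{eq:1} has only a sup term, and $m=4$ is the classical unweighted case.
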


\begin{proof}
See Appendix, Lemma~\ref{productruleappendix}.
\end{proof}

The preceding product bounds imply the following estimates for the
coefficients of the principal part and for the lower-order remainder.

\begin{lemma}[Hölder estimates I]\label{gewichtethoedlerII}
Let $m\in\{1,2,3,4\}$, let $0<\gamma,\alpha<1$, and let $T\le1$.
Then there exist constants
\[
\Cl{gewhoelderI}=\Cr{gewhoelderI}(\Omega,\alpha,\gamma)>0,
\qquad
k_H\in\N,
\]
depending only on $\Omega$, $\alpha$, $\gamma$, and the fixed algebraic
structure of $L$ and $\Rr$, such that for every
$u\in \Cc^{4+\gamma,1+\gamma/4}_{m+\gamma}(Q_T)$,
\begin{align*}
\|\Rr(\nabla u,D^2u,D^3u)\|_{\Cc^{\alpha,\alpha/4}_{m+\alpha-4}(Q_T)}
&\le
\Cr{gewhoelderI}
\Big(
1+\|u\|_{\Cc^{4+\gamma,1+\gamma/4}_{m+\gamma}(Q_T)}
\Big)^{k_H}
\|u\|_{\Cc^{4+\gamma,1+\gamma/4}_{m+\gamma}(Q_T)}^3,\\
\sum_{k+\ell=4}
\|L_{k\ell}(\nabla u)\|_{\Cc^{\alpha,\alpha/4}_{\max\{0,m+\alpha-4\}}(Q_T)}
&\le
\Cr{gewhoelderI}
\Big(
1+\|u\|_{\Cc^{4+\gamma,1+\gamma/4}_{m+\gamma}(Q_T)}^4
\Big).
\end{align*}
\end{lemma}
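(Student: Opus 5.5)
The plan is to reduce both estimates to the product bounds of Lemma~\ref{productrule}, combined with the embedding property \eqref{eq:einbett} and a composition estimate for smooth functions of $\nabla u$ in weighted Hölder spaces.

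\textbf{Embeddings.} By \eqref{eq:einbett}, for $|\beta|=j\le 3$ we have
\[
\|D^j u\|_{\Cc^{4+\gamma-j,(4+\gamma-j)/4}_{m+\gamma-j}(Q_T)}\le\|u\|_{\Cc^{4+\gamma,1+\gamma/4}_{m+\gamma}(Q_T)}=:N.
\]
So $\nabla u$, $D^2u$ and $D^3u$ each lie in exactly the spaces appearing on the right-hand sides of \eqref{eq:1}, \eqref{eq:32} and \eqref{eq:222}, with norm at most $N$.

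\textbf{Coefficient functions.} Write $\Phi(\nabla u)$ for any of the smooth coefficient functions: the entries of $L_{k\ell}$ in \eqref{eq:L}, and the factors $Q^{-2k}P_j(\nabla u)$ in \eqref{eq:R-structure}. I will show
\[
\|\Phi(\nabla u)\|_{\Cc^{\alpha,\alpha/4}_{\max\{0,m+\alpha-4\}}(Q_T)}\le C(1+N)^{k}
\]
for some fixed power $k$. Since $m\ge1$, the space $\Cc^{m+\gamma-1}$ contains $\Cc^{\gamma,\gamma/4}_{x,t}(\overline Q_T)$, so $\nabla u$ is bounded, $|\nabla u|\le N$, and Hölder continuous up to $t=0$. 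The weighted target norm with index $\max\{0,m+\alpha-4\}$ then reduces to an unweighted $\Cc^{\alpha,\alpha/4}$ (or $\Cc^{m+\alpha-4}$ for $m=4$) norm of $\nabla u$ on $\overline Q_T$ plus weighted seminorms.

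I will handle this by a Lipschitz/mean-value argument:
\[
|\Phi(p)-\Phi(q)|\le \sup_{|r|\le N}|D\Phi(r)|\,|p-q|,
\]
with $|D\Phi|$ growing at most polynomially in $N$. Each entry of \eqref{eq:L} is a polynomial of degree $4$ in $\nabla u$ times $Q^{-4}\le1$, which yields the bound $1+N^4$ after applying \eqref{eq:1}. For $m=4$, where one Hölder derivative of $\Phi(\nabla u)$ must be controlled, I will apply the chain rule once more and use \eqref{eq:simpleprod}.

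\textbf{Remainder estimate.} Insert the structure \eqref{eq:R-structure}: $\Rr$ is a finite sum of terms
\[
\Phi_1(\nabla u)\,D^3u\star D^2u
\quad\text{and}\quad
\Phi_2(\nabla u)\,D^2u\star D^2u\star D^2u.
\]
Apply \eqref{eq:32} to the $D^3u\,D^2u$ part and \eqref{eq:222} to the cubic part; these give bounds $N^2$ and $N^3$. The coefficient $\Phi_i(\nabla u)$ lies in an index-$\max\{0,\cdot\}\ge0$ space, so by \eqref{eq:simpleprod} with $r=\min\{r_1,r_2,r_1+r_2\}$ multiplying by it preserves the index $m+\alpha-4$ and contributes the factor $(1+N)^{k}$.

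The terms $D^3u\,D^2u$ are only quadratic in $N$. To match the cubic form $N^3$ on the right-hand side, I would try first to note that \eqref{eq:R-structure} shows each such monomial carries an odd polynomial $P_{2k-1}(\nabla u)$ with $k\ge1$, hence at least one factor of $\nabla u$. Pulling that factor out as an extra $\Cc^{\alpha}$-factor bounded by $N$ via \eqref{eq:1} gives $N^3$. Enlarging $k_H$ then absorbs everything.

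\textbf{Main obstacle.} I expect the hardest part to be the weighted Hölder estimate of the composition $\Phi(\nabla u)$, in particular for $m=4$, where $\max\{0,m+\alpha-4\}=\alpha>0$ and time-weighted Hölder seminorms near $t=0$ must be controlled. I also need to make sure that Lemma~\ref{productrule}'s hypothesis $\gamma\ge\alpha/2$ (absent here) is not actually needed, or is handled by restricting $\gamma$.
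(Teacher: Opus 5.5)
Your proposal is correct and follows essentially the same route as the paper's appendix proof: the structure \eqref{eq:R-structure} of $\Rr$, the splitting via \eqref{eq:simpleprod}, the bounds \eqref{eq:32}, \eqref{eq:222} and \eqref{eq:1}, a composition estimate for the $\nabla u$-dependent coefficients, and pulling one factor of $\nabla u$ out of the odd polynomial $P_{2k-1}$ to reach the cubic power. Your worry about $\gamma\ge\alpha/2$ is justified. The appendix version of the lemma explicitly assumes $\alpha/2\le\gamma\le\alpha$, and this is what makes the weights match, since for $m=1$ they only yield $t^{(3-\alpha)/4}|D^3u\,D^2u|\lesssim t^{(2\gamma-\alpha)/4}$. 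For $m=4$ the target index is $\max\{0,\alpha\}=\alpha<1$, so your mean-value argument already suffices and no extra chain-rule step is needed.
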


\begin{proof}
See Appendix, Lemma~\ref{gewichtethoedlerIIappendix}.
\end{proof}

For the contraction step we also need difference estimates.

\begin{lemma}[Hölder estimates II]\label{gewichtethoedlerIII}
Let $m\in\{1,2,3,4\}$, let $0<\gamma,\alpha<1$, and let $T\le1$.
Then there exist constants
\[
\Cl{hoelderII}=\Cr{hoelderII}(\alpha,\gamma,\Omega)>0,
\qquad
k_H'\in\N,
\]
depending only on $\alpha$, $\gamma$, $\Omega$, and the fixed algebraic
structure of $L$ and $\Rr$, such that for all
$u,w\in\Cc^{4+\gamma,1+\gamma/4}_{m+\gamma}(Q_T)$,
\begin{align*}
	&\	\left\|\Rr( \nabla u,D^2 u, D^3 u)-\Rr( \nabla w,D^2 w, D^3 w)\right \|_{\Cc^{\alpha,\alpha/4}_{m+\alpha-4}(Q_T)} \\
&\ \quad +
\sum_{k+\ell=4}\left\|L_{k\ell}(\nabla u)-L_{k\ell}(\nabla w)\right\|_{\Cc^{\alpha,\alpha/4}_{\max\{0,m+\alpha-4\}}( Q_T)}\\
&\ \qquad \qquad \qquad   
\le 
\Cr{hoelderII} \Big(1+ \max\Big\{\|u\|_{\Cc^{4+\gamma,1 + \gamma/4}_{m+\gamma} (Q_T)},\|w\|_{\Cc^{4+\gamma, 1 + \gamma/4}_{m+\gamma} (Q_T)}\Big\}\Big)^{k'_H} \\
& \ \qquad \qquad \qquad \qquad 
\cdot
\max\Big\{\|u\|_{\Cc^{4+\gamma,1 + \gamma/4}_{m+\gamma} (Q_T)},\|w\|_{\Cc^{4+\gamma, 1 + \gamma/4}_{m+\gamma} (Q_T)}\Big\}^2
 \cdot
 \|\nabla(u-w)\|_{\Cc^{3+\gamma, \frac{3+\gamma} 4}_{m+\gamma-1} (Q_T)}.
\end{align*}
\end{lemma}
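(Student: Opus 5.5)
The plan is to telescope every difference into finitely many products in which exactly one factor carries $u-w$. Each such product is then estimated with the weighted product rules of Lemma~\ref{productrule} and property \eqref{eq:simpleprod}. Throughout write
\[
U:=\max\Big\{\|u\|_{\Cc^{4+\gamma,1+\gamma/4}_{m+\gamma}(Q_T)},\|w\|_{\Cc^{4+\gamma,1+\gamma/4}_{m+\gamma}(Q_T)}\Big\},
\qquad
\delta:=\|\nabla(u-w)\|_{\Cc^{3+\gamma,\frac{3+\gamma}4}_{m+\gamma-1}(Q_T)} .
\]
By \eqref{eq:einbett} applied to $u-w$, the quantities $\|D^2(u-w)\|_{\Cc^{2+\gamma,\frac{2+\gamma}4}_{m+\gamma-2}}$ and $\|D^3(u-w)\|_{\Cc^{1+\gamma,\frac{1+\gamma}4}_{m+\gamma-3}}$ are both bounded by $\delta$. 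Likewise $D^2u$, $D^3u$, $D^2w$, $D^3w$ are bounded by $U$ in the corresponding spaces.

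\textbf{Coefficients of $L$.} Each $L_{k\ell}(p)$ is a rational function of $p$ of the form $P(p)/(1+|p|^2)^2$ with $P$ even. I write
\[
L_{k\ell}(\nabla u)-L_{k\ell}(\nabla w)=\int_0^1 DL_{k\ell}\big(\nabla w+\theta\nabla(u-w)\big)\,d\theta\cdot\nabla(u-w).
\]
Since $L_{k\ell}$ is even, $DL_{k\ell}(p)=p\,G_{k\ell}(p)$ with $G_{k\ell}$ smooth and bounded together with all its derivatives. The norm of $G_{k\ell}(\cdot)$ in $\Cc^{\alpha,\alpha/4}_{\max\{0,m+\alpha-4\}}$ is controlled by a power of $(1+U)$, using the chain rule in Hölder spaces as in the proof of Lemma~\ref{gewichtethoedlerII}. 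Then \eqref{eq:1} together with \eqref{eq:simpleprod} gives a bound of the form $C(1+U)^{k}\,U\,\delta$.

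\textbf{Remainder $\Rr$.} I use the explicit structure of \eqref{eq:R-structure}. For the cubic part $D^2u\star D^2u\star D^2u\star c(\nabla u)$ I telescope
\[
D^2u\,D^2u\,D^2u-D^2w\,D^2w\,D^2w=D^2(u-w)\,D^2u\,D^2u+D^2w\,D^2(u-w)\,D^2u+D^2w\,D^2w\,D^2(u-w).
\]
Each term is estimated by \eqref{eq:222} and is $\le CU^2\delta$. The variation of the coefficient $c(\nabla u)-c(\nabla w)$ is handled as in the $L$ step and multiplies $U^3$. For the terms $D^3u\star D^2u\star c(\nabla u)$ I split analogously into $D^3(u-w)D^2u$, $D^3w\,D^2(u-w)$, and the coefficient difference. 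I then use \eqref{eq:32}, which gives contributions $CU\delta$ and $C(1+U)^kU^2\delta$. Collecting everything and absorbing all extra powers into $(1+U)^{k'_H}$ yields the claimed shape.

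\textbf{Main obstacle.} The contributions that are only linear in $U$ need care: the $L$-differences and the pieces $D^3(u-w)D^2u$, $D^3w\,D^2(u-w)$ produce $U\delta$ rather than $U^2\delta$. For $U\ge1$ one has $U\le U^2$ and the stated bound follows directly. For $U<1$ this step does not close. The example $\nabla w$ fixed, $\nabla u=\nabla w+h$ with $h$ small shows that $L(\nabla u)-L(\nabla w)$ is genuinely of order $|\nabla w|\,|h|$, so no better than $U\delta$ is available from these terms. My first attempt would therefore be to check whether the $U\delta$ pieces cancel in the precise combination of $L$ and $\Rr$ entering the contraction. If they do not cancel, the honest conclusion of this plan is the bound with $\max\{\cdot\}^2$ replaced by $\max\{\cdot\}$ in the regime $U<1$, equivalently with the factor $(1+U)^{k'_H}U\,\delta$ in general. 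That weaker form is presumably all the subsequent fixed-point argument needs.
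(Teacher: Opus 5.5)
Your method is the paper's. Its proof is only a pointer to the proof of Lemma~\ref{gewichtethoedlerIIappendix} plus a telescoping representation of differences of products, i.e.\ your decomposition followed by Lemma~\ref{productrule} and \eqref{eq:simpleprod}.

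\textbf{Your objection is correct.} The inequality is false as stated, and no cancellation can rescue it: the left-hand side is a sum of nonnegative norms, and the $L$-summand alone violates the bound. Take $w\equiv 0$ and $u(x,t)=\varepsilon x_1$. Then $U$ and $\delta$ are both $O(\varepsilon)$, so the right-hand side is $O(\varepsilon^3)$. On the other hand, the norm in $\Cc^{\alpha,\alpha/4}_{\max\{0,m+\alpha-4\}}(Q_T)$ contains the supremum, so the $L_{40}$-term alone gives a left-hand side of at least $1-(1+\varepsilon^2)^{-2}\ge\varepsilon^2$ for small $\varepsilon$. Since $L$ is even with $L(p)-L(0)=O(|p|^2)$, the sharp bound for the coefficient differences is $C(1+U)^{k}\,U\,\delta$. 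That is exactly what your representation $DL(p)=p\,G(p)$ yields. The paper's one-line proof passes over this.

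\textbf{You are too pessimistic about $\Rr$.} In \eqref{eq:R-structure} the monomials containing a third derivative are $D^3u\star D^2u\star Q^{-2k}P_{2k-1}(\nabla u)$ with $k\ge1$. So $P_{2k-1}$ has degree at least one, and Step~3 of the proof of Lemma~\ref{gewichtethoedlerIIappendix} bounds this coefficient by $C(1+U)^{2k}U^{2k-1}\le C(1+U)^{4k}U$. Hence the pieces $D^3(u-w)\star D^2u\star Q^{-2k}P_{2k-1}(\nabla u)$ and $D^3w\star D^2(u-w)\star Q^{-2k}P_{2k-1}(\nabla u)$ are bounded by $C(1+U)^{k}U^2\delta$, not merely $CU\delta$. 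Structurally, the graphical Willmore operator is odd in $u$, so $\Rr$ has no quadratic part. The $\Rr$-half of the lemma therefore holds as stated, and the correct lemma has $U^2\delta$ for $\Rr$ and $U\delta$ for $L$.

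\textbf{The corrected version suffices downstream.} In Steps~4 and~6 of Theorem~\ref{gewichtetexistkurz}, the Lipschitz constant only needs to depend on an upper bound for the norms, because the contraction comes from the factor $T^{\frac{\alpha-\gamma}{4}}$ in Lemma~\ref{gammaalpha}. The Lipschitz-regime analogue, Lemma~\ref{1gewichtethoedlerIII}, already states the $L$-difference without any quadratic factor.

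\textbf{One hypothesis to record.} By invoking Lemma~\ref{productrule} you import its hypothesis $\alpha/2\le\gamma\le\alpha$, which the statement omits. For $m=1$ the weights in the pointwise part only match when $2\gamma\ge\alpha$. State this restriction and choose $\gamma\in[\alpha/2,\alpha)$ in the application.
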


\begin{proof}
The proof is similar to that of
Lemma~\ref{gewichtethoedlerIIappendix}; in addition one uses the
representation of differences of polynomials from \cite[Appendix, Lemma 96]{gulyak2024boundary}.
\end{proof}

We are now in a position to state the short-time existence theorem for
Hölder initial data.



\begin{thm}[Short-time existence and uniqueness in weighted Hölder spaces]
\label{gewichtetexistkurz}
Let $\Omega \subset \R^2$ be a bounded domain with $\Cc^{4+\alpha}$-smooth boundary and outer unit normal $\nu$.
Let $m \in \{1,2,3,4\}$.
Then there exists a time $T \in (0,1)$ such that the Willmore flow problem \eqref{Willmore}
admits a unique solution
\[
u \in \Cc^{4+\alpha,\,1+\alpha/4}_{m+\alpha}(Q_T),
\]
provided the initial datum
\[
u_0 \in \Cc^{m+\alpha}(\overline{\Omega})
\]
and the boundary data
\[
g_0 \in \Cc^{4+\alpha}(\partial\Omega),
\qquad
g_1 \in \Cc^{3+\alpha}(\partial\Omega)
\]
satisfy the compatibility conditions
\begin{align*}
g_0(x) = u_0(x),
\qquad
g_1(x) = \partial_\nu u_0(x),
\qquad x \in \partial\Omega.
\end{align*}
In the case $m=4$, an additional compatibility condition is required, namely
\begin{align*}
0
= \Delta_{\Gamma(u_0)} H(u_0)
+ 2 H(u_0)\Bigl(\tfrac14 H(u_0)^2 - \Kk(u_0)\Bigr),
\qquad x \in \partial\Omega.
\end{align*}
\end{thm}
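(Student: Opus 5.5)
The plan is a contraction-mapping argument built on the linear weighted theory of Theorems~\ref{gewichteteexistenz} and \ref{gewichteteschauder}. The difference from the classical argument is that we work in the weighted class $\Cc^{4+\alpha,1+\alpha/4}_{m+\alpha}(Q_T)$. Contraction is measured in the weaker norm with exponent $\gamma\in[\alpha/2,\alpha)$, so that Lemma~\ref{gammaalpha} supplies the small factor $T^{(\alpha-\gamma)/4}$.

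\emph{Reference solution.} We first let $\bar u$ be the solution of the linear frozen problem
\[
\partial_t \bar u + L(\nabla u_0)D^4\bar u = 0
\]
with data $u_0,g_0,g_1$, obtained from Theorem~\ref{gewichteteexistenz} with $s=m+\alpha$. For $m\le3$ the coefficients $L_{k\ell}(\nabla u_0)$ are time-independent and lie in $\Cc^{\alpha}\subset\Cc^{\alpha,\alpha/4}_0$, because $\nabla u_0\in\Cc^{\alpha}$. Uniform ellipticity follows from Lemma~\ref{ellipL} with $\lambda=(1+\|\nabla u_0\|_\infty^2)^{-2}$. For $m=4$ we instead use a right-hand side $-\Rr(\nabla u_0,D^2u_0,D^3u_0)$, so that the fourth-order compatibility condition reduces exactly to the stated geometric one.

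\emph{Fixed-point set and map.} We define
\[
X_T=\bigl\{u\in\Cc^{4+\alpha,1+\alpha/4}_{m+\alpha}(Q_T):\ D^k_tD^\beta_x u(\cdot,0)=D^k_tD^\beta_x u_0 \text{ for } 4k+|\beta|\le m,\ \|u-\bar u\|_{\Cc^{4+\alpha,1+\alpha/4}_{m+\alpha}}\le R\bigr\},
\]
which we equip with the metric of $\Cc^{4+\gamma,1+\gamma/4}_{m+\gamma}$. For $w\in X_T$ we let $\Phi(w)=u$ be the solution of
\[
\partial_t u+L(\nabla w)D^4u=-\Rr(\nabla w,D^2w,D^3w)
\]
with data $(u_0,g_0,g_1)$.

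The coefficient bounds required by Theorem~\ref{gewichteteexistenz} follow from the second inequality of Lemma~\ref{gewichtethoedlerII}, applied with $\gamma$ in place of $\alpha$ on the input side. These bounds use $\|w\|_{\Cc^{4+\gamma}_{m+\gamma}}\le C(\|\bar u\|+R)$. Ellipticity is preserved for small $T$, since $\nabla w$ stays $C^0$-close to $\nabla u_0$. This closeness follows because $\nabla(w-\bar u)$ has zero initial trace and is Hölder in $t$ up to $0$ (here $m\ge1$), while $\nabla\bar u$ is close to $\nabla u_0$ by continuity.

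Writing
\[
\Phi(w)-\bar u=\text{solution with zero data of } \partial_t v+L(\nabla w)D^4 v=-\Rr(w)-\bigl(L(\nabla w)-L(\nabla u_0)\bigr)D^4\bar u,
\]
the Schauder estimate of Theorem~\ref{gewichteteschauder} gives $\|\Phi(w)-\bar u\|\le C\bigl(\|\Rr(w)\|_{\Cc^{\alpha}_{m+\alpha-4}}+\|(L(\nabla w)-L(\nabla u_0))D^4\bar u\|_{\Cc^{\alpha}_{m+\alpha-4}}\bigr)$.

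\emph{Smallness.} We then have to make both terms on the right small. We write $w=\bar u+(w-\bar u)$ and apply Lemma~\ref{gammaalpha} to $w-\bar u$, whose traces vanish. This makes $\|w-\bar u\|_{\Cc^{4+\gamma}_{m+\gamma}}\le CT^{(\alpha-\gamma)/4}R$. The part coming from $\bar u$ alone is small for small $T$, because $\bar u\in\Cc^{4+\alpha}_{m+\alpha}$ and the same lemma can be applied to $\bar u$ minus a fixed smooth extension of $u_0$. After approximating $u_0$ by smooth $u_0^\varepsilon$, this part tends to $0$ as $T\to0$. The difference estimate Lemma~\ref{gewichtethoedlerIII} together with the product rule \eqref{eq:simpleprod} then controls the $L$-difference term. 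Hence $\Phi(X_T)\subset X_T$ once $T$ is small.

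\emph{Contraction.} For $w_1,w_2\in X_T$, the difference $\Phi(w_1)-\Phi(w_2)$ solves a linear problem with zero data and right-hand side
\[
-\bigl(L(\nabla w_1)-L(\nabla w_2)\bigr)D^4\Phi(w_2)-\bigl(\Rr(w_1)-\Rr(w_2)\bigr).
\]
Theorem~\ref{gewichteteschauder}, applied in the $\alpha$-norm, and Lemma~\ref{gewichtethoedlerIII} bound this by $C\|\nabla(w_1-w_2)\|_{\Cc^{3+\gamma}_{m+\gamma-1}}\le C\|w_1-w_2\|_{\Cc^{4+\gamma}_{m+\gamma}}$. Lemma~\ref{gammaalpha}, applied to $\Phi(w_1)-\Phi(w_2)$, whose traces vanish, converts the $\alpha$-norm into the $\gamma$-norm with a factor $T^{(\alpha-\gamma)/4}$. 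This yields a contraction for small $T$.

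$X_T$ is complete in the $\gamma$-metric, because the $\alpha$-ball is closed under $\gamma$-convergence: bounds pass to limits by lower semicontinuity of Hölder seminorms. Banach's fixed-point theorem then gives existence. Uniqueness among all solutions in $\Cc^{4+\alpha}_{m+\alpha}$ follows by running the same difference estimate on any two solutions over a possibly shorter interval and then continuing in time.

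\emph{Main obstacle.} The hardest step is the self-mapping estimate, specifically making the contributions of $\bar u$ small. Lemma~\ref{gammaalpha} needs vanishing initial traces, and $\bar u$ does not satisfy this. To handle it, I would subtract a time-independent extension $E u_0$ with $D^4 Eu_0$ controlled in the weighted norm (for instance a smoothed $u_0$ via a mollification at scale $t^{1/4}$). I would then check that the weighted norms of $\bar u-Eu_0$ on $(0,T]$ tend to zero with $T$. Throughout, the product estimates of Lemma~\ref{productrule} must be compatible with the weights at negative index $m+\alpha-4$.
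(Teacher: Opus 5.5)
Your contraction step and your uniqueness sketch coincide with Steps~4 and~6 of the paper. In the contraction you use the Schauder estimate in the $\alpha$-norm, then Lemma~\ref{gewichtethoedlerIII}, then Lemma~\ref{gammaalpha} applied to $\Phi(w_1)-\Phi(w_2)$. The gap is in the self-map step, and your choice of iteration set creates it.

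\textbf{Why the self-map does not close as written.} With an $\alpha$-ball of radius $R$ around $\bar u$, boundedness alone cannot close the argument: the Schauder bound for $\Phi(w)-\bar u$ grows polynomially in $\|\bar u\|+R$. So you must show that
$\|\Rr(w)\|_{\Cc^{\alpha,\alpha/4}_{m+\alpha-4}(Q_T)}$ and $\|(L(\nabla w)-L(\nabla u_0))D^4\bar u\|_{\Cc^{\alpha,\alpha/4}_{m+\alpha-4}(Q_T)}$
tend to $0$ as $T\to0$, uniformly on $X_T$. None of the tools you cite gives this.
\begin{itemize}
\item Lemma~\ref{gewichtethoedlerII} bounds $\Rr(w)$ by a polynomial in $\|w\|_{\Cc^{4+\gamma,1+\gamma/4}_{m+\gamma}(Q_T)}$. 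That norm is bounded below by $\|u_0\|_{\Cc^{m+\gamma}(\overline\Omega)}$, so it never becomes small.
\item Lemma~\ref{gewichtethoedlerIII} cannot be applied with the time-independent $u_0$. For $m<4$ it has no fourth derivatives and does not lie in $\Cc^{4+\gamma,1+\gamma/4}_{m+\gamma}(Q_T)$.
\item Your proposed remedy fails as stated. Subtracting a smooth $u_0^\varepsilon\ne u_0$ destroys the vanishing initial traces that Lemma~\ref{gammaalpha} requires. The constant-in-time extension of a rough $u_0$ is not in the space. A $t^{1/4}$-mollified extension just reproduces the original question for that extension.
\end{itemize}
Two smaller points for $m=4$. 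In $X_T$ you must prescribe $\partial_t u(\cdot,0)=-L(\nabla u_0)D^4u_0-\Rr(u_0)$, not $\partial_tu_0=0$. The equation for $\Phi(w)-\bar u$ then has $\Rr(u_0)-\Rr(w)$ on the right.

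\textbf{How to fix it.} The smallness you want is true, but only with extra work. You would have to redo the weighted product estimates keeping the positive powers of $T$ that the appendix discards; for $m=1$, e.g., $t^{(3-\alpha)/4}|D^3w\,D^2w|\le M^2t^{\alpha/4}$. The coefficient difference needs a separate argument: for $m\le3$ its supremum vanishes at $t=0$ by temporal Hölder continuity of $\nabla w$, and its seminorms carry the factor $t^{\alpha/4}$ in $\Cc^{\alpha,\alpha/4}_0$.

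The paper avoids all of this by taking the iteration set to be a closed \emph{$\gamma$-ball} of radius $1$ around the reference $G_T\overline u_0$. Your $\bar u$ would serve equally well, since it has the same initial traces as every $\Phi(w)$. The argument then runs as follows.
\begin{itemize}
\item For $w$ in that ball, $\|w\|_\gamma\le 1+C_\ast$.
\item Lemma~\ref{gewichtethoedlerII} turns $\gamma$-input into $\alpha$-output, so Theorem~\ref{gewichteteschauder} gives a $T$-independent bound $\|G_Tw\|_\alpha\le\overline C$.
\item Since $G_Tw-G_T\overline u_0$ has vanishing initial traces, Lemma~\ref{gammaalpha} yields $\|G_Tw-G_T\overline u_0\|_\gamma\le C(\overline C+C_\ast)T^{(\alpha-\gamma)/4}\le1$ for small $T$.
\end{itemize}
Only boundedness of the nonlinearity is ever used. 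The factor $T^{(\alpha-\gamma)/4}$ supplies all the smallness, in both the self-map and the contraction step. With that change your ``main obstacle'' disappears, and the rest of your outline goes through.
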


\begin{proof}
The main idea of the proof is to reduce the quasilinear Willmore flow equation
to a linear parabolic problem by freezing all derivatives of order strictly
less than four.
More precisely, we formulate a fixed point problem based on an iterative
solution of a linear parabolic equation, whose fixed point coincides with
the desired solution of the Willmore flow.

The iteration set is constructed as a neighborhood of a reference solution,
given by a linearized  Willmore flow with the right-hand side frozen derivatives coming from the biharmonic heat flow with the same initial and boundary data.
In particular, the fixed point argument is carried out for small deviations
from this reference flow.

To justify this approach, we verify that the assumptions of the Banach
fixed point theorem are satisfied.
The proof is organized as follows:
\begin{itemize}
	\item[\cbm1] \emph{Biharmonic heat flow extension of the parabolic boundary data},
	used to construct the reference solution.
	
	\item[\cbm2] \emph{Definition of the iteration mapping $G$ and the 	iteration set $\M$} for the fixed point problem.
	
	\item[\cbm3] \emph{Invariance of the iteration set}: the mapping $G$ is a self-map, that is,
	\(
	G \colon \M \to \M.
	\)
	
	\item[\cbm4] \emph{Contraction property of the iteration mapping}: 	for all $u,w \in \M$,
	\[
	\|G(u)-G(w)\| \le q \|u-w\|,
	\qquad q \in (0,1).
	\]
	
	\item[\cbm5] \emph{Application of the Banach fixed point theorem}
	to obtain the existence of a fixed point of $G$.
	
	\item[\cbm6] \emph{Uniqueness of the solution} in the parabolic Hölder
	space $\Cc^{4+\alpha,\,1+\alpha/4}_{m+\alpha}(Q_T)$.
\end{itemize}

In the first two steps, the time horizon $T \in (0,1)$ is not fixed.
It is chosen sufficiently small in Steps~\cbm3 and~\cbm4 in order to
ensure the invariance and contraction properties required for the
application of the Banach fixed point theorem. The smallness properties in time arise from the interpolation
between the $\alpha$- and $\gamma$-Hölder scales,
which produces the factor $T^{\frac{\alpha-\gamma}{4}}$. The contraction argument is then carried out in a suitably chosen
closed subset of a lower Hölder space.

\medskip\noindent\cb{1} \textbf{ Boundary Values Discussion}\\
Since the boundary data
\[
u|_{\partial\Omega}(\,\cdot\,,t)=g_0(\,\cdot\,), 
\qquad 
\partial_\nu u|_{\partial\Omega}(\,\cdot\,,t)=g_1(\,\cdot\,)
\]
are time–independent for all $t\in[0,1]$, we extend them trivially to
$\partial\Omega\times(0,1]$ by setting
\[
\overline g_0(x,t):=g_0(x), 
\qquad 
\overline g_1(x,t):=g_1(x),
\quad x\in\partial\Omega.
\]
All time derivatives of $\overline g_0$ and $\overline g_1$ therefore vanish.
In particular, all temporal Hölder seminorms are zero and we obtain
\begin{align*}
\|\overline g_0\|_{\Cc^{4+\alpha,1+\alpha/4}_{m+\alpha}(\partial\Omega\times(0,1])}
&= \|g_0\|_{\Cc^{4+\alpha}(\partial\Omega)},\\
\|\overline g_1\|_{\Cc^{3+\alpha,(3+\alpha)/4}_{m-1+\alpha}(\partial\Omega\times(0,1])}
&= \|g_1\|_{\Cc^{3+\alpha}(\partial\Omega)}.
\end{align*}
Next, we construct a space–time extension of the initial datum
$u_0\in \Cc^{m+\alpha}(\overline\Omega)$.
A trivial constant extension would not yield sufficient spatial regularity
for $t>0$ in the case $m<4$.
To obtain the required smoothing, we define $\overline u_0$ as the solution of
the biharmonic heat equation
\begin{align}\left\{
\begin{aligned}
\partial_t v &= -\Delta^2 v 
&&\text{in } \Omega\times(0,1],\\
v(\cdot,0) &= u_0 
&&\text{in } \Omega,\\
v &= \overline g_0 
&&\text{on } \partial\Omega\times[0,1],\\
\partial_\nu v &= \overline g_1 
&&\text{on } \partial\Omega\times[0,1].
\end{aligned}
\right. \label{eq:u0} \tag{A} 
\end{align}
It follows directly from Theorem~\ref{gewichteteexistenz} that
there exists a unique solution $\overline u_0\in \Cc^{4+\alpha,1+\alpha/4}_{m+\alpha}(Q_1)$ that solves \eqref{eq:u0}.
Moreover, the weighted Schauder estimate of Theorem~\ref{gewichteteschauder} yields 
\begin{align}
	\begin{aligned}
		\|\overline u_0\|_{\Cc^{4+\alpha, 1+\alpha/4}_{m+\alpha} (Q_1)}
	\le&\ \Cr{ws}(1) \Big( \|\overline g_0\|_{\Cc^{4+\alpha,1+\alpha/4}_{m+\alpha}(\partial \Omega\times(0,1])} \! +\|\overline g_1\|_{\Cc^{3+\alpha,\frac{3+\alpha}{4}}_{m-1+\alpha}(\partial \Omega\times(0,1])}\!\! + \! \|u_0\|_{\Cc^{m+\alpha}(\overline\Omega
	)}\Big)\\
		\le&\ \Cl{A(1)}\big(\Omega, \|g_0\|_{\Cc^{4+\alpha}(\partial \Omega)},\|g_1\|_{\Cc^{3+\alpha}(\partial \Omega)},\|u_0\|_{\Cc^{m+\alpha}(\overline\Omega)}\big)
\end{aligned}\label{gewichtetschauderv11}
\end{align}
where $\Cr{ws}(1)$ depends only on $\Omega$, $m$, $\alpha$ and the operator
$\Delta^2$.
The compatibility conditions are also satisfied since $v(\,\cdot\,,0)=u_0$ and
\begin{align*}
	 \overline g_0(x,0)=&\ g_0(x)=	u_0(x), \hspace{1em} 
  \overline g_1(x,0)	= g_1(x)= \frac{\partial u_{0}}{\partial \nu}(x)  \hspace{1em} x\in \partial \Omega.
\end{align*}
In the case $m=4$, we simply define the constant extension
$\overline u_0(x,t):=u_0(x)$ for $(x,t)\in\Omega\times[0,1]$.
Then $\overline u_0\in \Cc^{4+\alpha,1+\alpha/4}_{m+\alpha}(Q_1)$
and the estimate \eqref{gewichtetschauderv11} remains valid.

\medskip\noindent\cb{2} \textbf{ Definition of the Iteration Map and Set}\\
For $T\in(0,1]$ we define the iteration map
\[
G_T\colon \Cc^{4+\alpha,1+\alpha/4}_{m+\alpha}(Q_T)
\longrightarrow
\Cc^{4+\alpha,1+\alpha/4}_{m+\alpha}(Q_T)
\]
by freezing the coefficients of the quasilinear operator as well as all
derivatives occurring in the lower–order term $\mathcal R$.
For $w\in \Cc^{4+\alpha,1+\alpha/4}_{m+\alpha}(Q_T)$ we define
$v:=G_Tw$ as the solution of
\begin{align}
\left\{
\begin{aligned}
\partial_t v &=
- L(\nabla w)\, D^4 v
- \mathcal R(\nabla w, D^2 w, D^3 w)
&&\text{in } \Omega\times(0,T],\\
v(\cdot,0) &= u_0
&&\text{in } \Omega,\\
v &= \overline g_0
&&\text{on } \partial\Omega\times[0,T],\\
\partial_\nu v &= \overline g_1
&&\text{on } \partial\Omega\times[0,T].
\end{aligned}
\right.
\label{eq:gewichtetlinear}
\tag{G}
\end{align}

For $w\in \Cc^{4+\alpha,1+\alpha/4}_{m+\alpha}(Q_T)$,
Lemma~\ref{gewichtethoedlerII} implies
\[
L(\nabla w)\in \Cc^{\alpha,\alpha/4}_{\max\{0,m+\alpha-4\}}(Q_T),
\qquad
\mathcal R(\nabla w,D^2 w,D^3 w)\in \Cc^{\alpha,\alpha/4}_{m+\alpha-4}(Q_T).
\]
Moreover, uniform ellipticity follows from Lemma~\ref{ellipL}.
Hence, by Theorem~\ref{gewichteteexistenz},
there exists a unique solution
\[
v=G_Tw\in \Cc^{4+\alpha,1+\alpha/4}_{m+\alpha}(Q_T),
\]
and the mapping $G_T$ is well defined.

In the case $m=4$, we additionally require the existence of
$G_T\overline u_0$ together with uniform estimates, in order to control
time derivatives at $t=0$ and to get later the same derivatives in $t=0$ as for the fixed point solution; see \eqref{eq:grunaustar}.
By Lemma~\ref{gewichtethoedlerII}, for $|\beta|=4$,
\[
\left\|L_{\beta_1,\beta_2}(\nabla \overline u_ 0)\right\|_{\Cc^{\alpha,\alpha/4}_{\max\{0,m+\alpha-4\}}(Q_T)}
\le 	\Cr{gewhoelderI}\Big(1+  \|\overline u_0\|_{\Cc^{4+\alpha, 1+\alpha/4}_{m+\alpha} (Q_1)}^4\Big).
\]
Uniform ellipticity follows from \eqref{gewichtetschauderv11}, i.e.
\[
\frac{|\xi|^4}{(1+\|\nabla\overline u_0\|_{\Cc^0(\overline Q_T)}^2)^2}
\le
\sum_{k+\ell=4} L_{k\ell}(\nabla\overline u_0)\,\xi_1^k\xi_2^\ell
\le C|\xi|^4 .
\]
Applying the Schauder estimate in Theorem~\ref{gewichteteschauder} to \eqref{eq:gewichtetlinear} with constant $\Cr{ws}$ yields
	\begin{align}
	\begin{aligned}
	\|G_T\overline u_0\|_{\Cc^{4+\alpha, 1+\alpha/4}_{m+\alpha}(Q_T)}\overset{\mathclap{\text{Thm. } \ref{gewichteteschauder}}}\le&\ \Cr{ws}(1) \left( \begin{aligned}
	\|&\mathcal R( \nabla \overline u_0,D^2 \overline u_0, D^3 \overline u_0)\|_{\Cc^{\alpha,\alpha/4}_{m+\alpha-4}(Q_T)}+ \| g_0\|_{\Cc^{4+\alpha}(\partial \Omega)}\\
	&+\| g_1\|_{\Cc^{3+\alpha}(\partial \Omega)}+ \|u_0\|_{\Cc^{m+\alpha}(\overline\Omega)}
\end{aligned}\right), \\
	  \underset{\mathclap{\eqref{eq:gewichteteinsabsch}}}{\overset{\mathclap{\text{Lem. }\ref{gewichtethoedlerII}}}\le} &\
	  \Cl{ast}\big(\Omega, \|g_0\|_{\Cc^{4+\alpha}(\partial \Omega)},\|g_1\|_{\Cc^{3+\alpha}(\partial \Omega)},\|u_0\|_{\Cc^{m+\alpha}(\overline\Omega)}\big).
\end{aligned}\label{gewichtetschauderv1} 
\end{align}
Let $\gamma\in(0,\alpha)$.
We define the following iteration set as  a closed subset of $\Cc^{4+\gamma,1+\gamma/4}_{m+\gamma}(Q_T)$
\begin{align}
	\M_T:=\!\left\{
w	 \in \Cc^{4+\gamma,1+\gamma/4}_{m+\gamma}(Q_T) 
\left|  \begin{aligned}  & \|w-G_T\overline u_0\|_{\Cc^{4+\gamma, 1+\gamma/4}_{m+\gamma} (Q_T)}\le 1,\\
& w(\cdot,0)=u_0, 
\ w|_{\partial\Omega}=g_0,
\ \partial_\nu w|_{\partial\Omega}=g_1
	\end{aligned} \right. \right\}. \label{eq:mmw} \tag{M}
\end{align}
The set $\M_T$ is non-empty since $G_T\overline u_0\in\M_T$.
For all $w\in \M_T$ we also obtain the following estimate, which will be used later:
\begin{align}
	\|w\|_{\Cc^{4+\gamma, 1+\gamma/4}_{m+\gamma} (Q_T)}\le\|w-G_T\overline u_0\|_{\Cc^{4+\gamma, 1+\gamma/4}_{m+\gamma} (Q_T)} +\|G_T\overline u_0\|_{\Cc^{4+\gamma, 1+\gamma/4}_{m+\gamma} (Q_T)}\overset{\eqref{gewichtetschauderv1}}\le 1+\Cr{ast}. \label{eq:gewichteteinsabsch}
\end{align}


\medskip\noindent\cb{3} \textbf{ $G$ is a self-map}\\
We show that $G_T\colon \M_T\to\M_T$ using the weighted Schauder estimates
from Theorem~\ref{gewichteteschauder}.
Let $w\in\M_T$.
We begin by proving the \emph{uniform control of the coefficients}.
By Theorem~\ref{gewichtethoedlerII}, for all multi-indices $|\beta|=4$,
\begin{align*}
\|L_{\beta_1,\beta_2}(\nabla w)\|_{\Cc^{\alpha,\alpha/4}_{\max\{0,m+\alpha-4\}}(Q_T)}
&\le
\Cr{gewhoelderI}\Bigl(1+\|w\|^4_{\Cc^{4+\gamma,1+\gamma/4}_{m+\gamma}(Q_1)}\Bigr) \\
&\overset{\eqref{eq:gewichteteinsabsch}}{\le}
\Cr{gewhoelderI}\Bigl(1+(1+\Cr{ast})^4\Bigr),
\end{align*}
which is bounded by a constant independent of $T$.

Moreover, by \eqref{eq:gewichteteinsabsch} and Lemma~\ref{ellipL}, the operator
is \textit{uniformly elliptic} on $\M_T$: there exist constants depending only on $\Omega,\|g_0\|_{\Cc^{4+\alpha}(\partial \Omega)},\|g_1\|_{\Cc^{3+\alpha}(\partial \Omega)}$ and $\|u_0\|_{\Cc^{m+\alpha}(\overline\Omega)}$
\[
\lambda:=\frac{1}{\bigl(1+(1+\Cr{ast})^2\bigr)^2},
\qquad
\Lambda:=4,
\]
such that for all $w\in\M_T$, $T\in(0,1)$,
\begin{align}
\lambda|\xi|^4
\le
\sum_{k+\ell=4} L_{k\ell}(\nabla w)\,\xi_1^k\xi_2^\ell
\le
\Lambda|\xi|^4 .
\label{eq:gewichtetlambda}
\end{align}

By the \textit{Schauder estimate} in Theorem \ref{gewichteteschauder} for the boundary problem \eqref{eq:gewichtetlinear} with constant $\Cr{ws}$ for all $w\in \M_T$ it follows that for $v=G_T w$
	\begin{align}
	\begin{aligned}
	\|v\|_{\Cc^{4+\alpha, 1+\alpha/4}_{m+\alpha}(Q_T)}\overset{\mathclap{\text{Thm. }\ref{gewichteteschauder}}}\le&\ \Cr{ws}(1) \left( \begin{aligned}
	\|&\mathcal R( \nabla w,D^2 w, D^3 w)\|_{\Cc^{\alpha,\alpha/4}_{m+\alpha-4}(Q_T)}+ \|\overline g_0\|_{\Cc^{4+\alpha,1+\alpha/4}_{m+\alpha}(\partial \Omega\times(0,T])}\\
	&+\|\overline g_1\|_{\Cc^{3+\alpha,\frac{3+\alpha}{4}}_{m-1+\alpha}(\partial \Omega\times(0,T])}+ \|u_0\|_{\Cc^{m+\alpha}(\overline\Omega)}
\end{aligned}\right), \\
	  \underset{\mathclap{\eqref{eq:gewichteteinsabsch}}}{\overset{\mathclap{\text{Lem. }\ref{gewichtethoedlerII}}}\le} &\  \Cl{overlineC}\big(\Omega, \|g_0\|_{\Cc^{4+\alpha}(\partial \Omega)},\|g_1\|_{\Cc^{3+\alpha}(\partial \Omega)},\|u_0\|_{\Cc^{m+\alpha}(\overline\Omega)}\big),
\end{aligned}\label{eq:gewichtetshud}
\end{align}
where, by Theorem~\ref{gewichteteschauder}, the results of Part~\cbm 1,
and \eqref{eq:gewichteteinsabsch}, the $T$-independent constant $\Cr{ws}(1)$
depends only on $\Cr{ast}$ and $\Omega$.

Further, let us
\emph{return to the iteration set.}
We now show that $v\in\M_T$ for $T>0$ sufficiently small.
First, we consider the difference $v-G_T\overline u_0$, which measures the
deviation from the reference solution
\begin{align*}
	\|v-G_T\overline u_0\|_{\Cc^{4+\alpha, 1+\alpha/4}_{m+\alpha}(Q_T)}\le&\ \|v\|_{\Cc^{4+\alpha, 1+\alpha/4}_{m+\alpha}(Q_T)}+\|G_T\overline u_0\|_{\Cc^{4+\alpha, 1+\alpha/4}_{m+\alpha}(Q_T)}	\le \Cr{overlineC} + \Cr{ast}\\
	 =:&\ \Cl{Ccirc}\big(\Omega, \|g_0\|_{\Cc^{4+\alpha}(\partial \Omega)},\|g_1\|_{\Cc^{3+\alpha}(\partial \Omega)},\|u_0\|_{\Cc^{m+\alpha}(\overline\Omega)}\big).
\end{align*}
So the difference is $\Cc^{4+\alpha, 1+\alpha/4}_{m+\alpha}(Q_T)$-bounded, because $G_T\overline u_0$ and $v$ are $\Cc^{4+\alpha, 1+\alpha/4}_{m+\alpha}(Q_T)$-bounded. Moreover, the derivatives of $v-G_T\overline u_0$ of order less than $m$ vanish at
$t=0$, since both functions satisfy the same initial condition:
\begin{align*}
\forall |\beta|\le m,\ \forall x\in\overline\Omega:\qquad
D_x^\beta v(x,0)-D_x^\beta G_T\overline u_0(x,0)=0.
\end{align*}
This \textit{vanishing property} is crucial for the application of
Lemma~\ref{gammaalpha} and is responsible for the gain of smallness in time
in the Hölder interpolation step below.

In the case $m=4$, we additionally have
\begin{align}
D_t v(x,0)-D_t G_T\overline u_0(x,0)=0,
\qquad x\in\overline\Omega,
\label{eq:grunaustar}
\end{align}
which follows directly from the Willmore flow equation.
 This is the only step where the auxiliary function
$G_T\overline u_0$ is required; for $m<4$, $\overline u_0$ would suffice.

We now pass to the lower Hölder regularity
$\Cc^{4+\gamma,1+\gamma/4}_{m+\gamma}(Q_T)$ and apply
Lemma~\ref{gammaalpha}.
Since all initial traces vanish, we obtain
\begin{align*}
\|v-G_T\overline u_0\|_{\Cc^{4+\gamma,1+\gamma/4}_{m+\gamma}(Q_T)}
&\le
\Cr{constgammaalpha}
\|v-G_T\overline u_0\|_{\Cc^{4+\alpha,1+\alpha/4}_{m+\alpha}(Q_T)}
\, T^{\frac{\alpha-\gamma}{4}} \\
&\le
\Cr{constgammaalpha}\Cr{Ccirc}\,T^{\frac{\alpha-\gamma}{4}}.
\end{align*}
Choosing $T>0$ sufficiently small such that
\[
T < (\Cr{constgammaalpha}\Cr{Ccirc})^{-\frac{4}{\alpha-\gamma}},
\]
we conclude that
\[
\|v-G_T\overline u_0\|_{\Cc^{4+\gamma,1+\gamma/4}_{m+\gamma}(Q_T)}\le 1,
\]
and hence $v\in\M_T$.
Therefore, $G_T\colon\M_T\to\M_T$.

\medskip\noindent\cb{4} \textbf{ $G$ is a contraction}\\
In this section, we prove that for $T>0$ sufficiently small,
the mapping $G_T\colon \M_T\to\M_T$ is a contraction.
Let $u,w\in\M_T$.
Since $G_Tu$ and $G_Tw$ satisfy the same initial and boundary conditions, we have
\begin{align*}
G_Tu(x,0)-G_Tw(x,0) &= u_0(x)-u_0(x)=0,
&& x\in\overline\Omega,\\
G_Tu(x,t)-G_Tw(x,t) &= g_0(x)-g_0(x)=0,
&& (x,t)\in\partial\Omega\times[0,T],\\
\partial_\nu(G_Tu-G_Tw)(x,t) &= g_1(x)-g_1(x)=0,
&& (x,t)\in\partial\Omega\times[0,T].
\end{align*}
Hence, the difference $v:=G_Tu-G_Tw$ solves the linear initial–boundary value
problem
\begin{align}
\left\{
\begin{aligned}
\partial_t v &=
- \tfrac12\big(L(\nabla u)+L(\nabla w)\big) D^4 v\\
&\quad - \tfrac12\big(L(\nabla u)-L(\nabla w)\big) D^4(G_Tu+G_Tw) \\
&\quad
- \mathcal R(\nabla u,D^2u,D^3u)
+ \mathcal R(\nabla w,D^2w,D^3w)
&& \text{in }\Omega\times(0,T],\\
v(\cdot,0) &= 0 && \text{in }\Omega,\\
v &= 0 && \text{on }\partial\Omega\times[0,T],\\
\partial_\nu v &= 0 && \text{on }\partial\Omega\times[0,T].
\end{aligned}
\right.
\label{eq:gewichtetmodi}
\tag{Z}
\end{align}

The operator 
\(
\tfrac12\big(L(\nabla u)+L(\nabla w)\big)
\in \Cc^{\alpha,\alpha/4}_{\max\{0,m+\alpha-4\}}(Q_T)
\)
is uniformly elliptic with the same time-independent constants
$\lambda$ and $\Lambda$ as in \eqref{eq:gewichtetlambda}.
Moreover, by applying the Schauder estimates to $G_Tu$ and $G_Tw$,
as solutions of \eqref{eq:gewichtetlinear},  it follows that
\begin{align}
\begin{aligned}
 \|D^4(G_Tu+ & G_Tw)\|_{\Cc^{\alpha,\alpha/4}_{m+\alpha-4}(Q_T)}\\
 \le &\	\|G_Tu\|_{\Cc^{4+\alpha, 1+\alpha/4}_{m+\alpha}(Q_T)}\ +\|G_Tw\|_{\Cc^{4+\alpha, 1+\alpha/4}_{m+\alpha}(Q_T)}\\
	\overset{
 }\le &\ 2 \Cr{overlineC}\big(\Omega, \|g_0\|_{\Cc^{4+\alpha}(\partial \Omega)},\|g_1\|_{\Cc^{3+\alpha}(\partial \Omega)},\|u_0\|_{\Cc^{m+\alpha}(\overline\Omega)}\big). 
\end{aligned}\label{eq:g+w}
\end{align}

As before, Theorem~\ref{gewichteteschauder} provides a weighted Schauder
estimate for $G_Tu-G_Tw$ as the solution of \eqref{eq:gewichtetmodi},
with a constant $\Cr{ws}(1)$ depending only on
$\|g_0\|_{\Cc^{4+\alpha}(\partial \Omega)}$,
$\|g_1\|_{\Cc^{3+\alpha}(\partial \Omega)}$,
$\|u_0\|_{\Cc^{m+\alpha}(\overline{\Omega})}$,
and $\Omega$.
\begin{align*}
	\|G_Tu- & G_Tw\|_{\Cc^{4+\alpha, 1+\alpha/4}_{m+\alpha}(Q_T)}\\
	\le &\ \Cr{ws}(1) \cdot\left\|\begin{aligned}
& - \tfrac{1}{2}\big(L(\nabla u)-L(\nabla w)\big) \big( D^4 (G_Tu+G_Tw)\big)\\
	&\ - \mathcal R\big(\nabla u, D^2 u, D^3 u\big)+\mathcal R\big(\nabla w, D^2 w, D^3 w\big)
\end{aligned}\right\|_{\Cc^{\alpha,\alpha/4}_{m+\alpha-4}(Q_T)}\\
\overset{\eqref{eq:simpleprod}}{\le}&\ \frac{\Cr{ws}(1)}{2}  \cdot\left\| \big(L(\nabla u)-L(\nabla w)\big)\right\|_{\Cc^{\alpha,\alpha/4}_{\max\{0,m+\alpha-4\}}(Q_T)}\\
&\quad \cdot \left\| \big( D^4 (G_Tu+G_Tw)\big)\right\|_{\Cc^{\alpha,\alpha/4}_{m+\alpha-4}(Q_T)}\\
	&\	 + \Cr{ws}(1)\left\|\Rr( \nabla u,D^2 u, D^3 u)-\Rr( \nabla w,D^2 w, D^3 w)\right \|_{\Cc^{\alpha,\alpha/4}_{m+\alpha-4}(Q_T)}.
\end{align*}

Using Lemma~\ref{gewichtethoedlerIII} and \eqref{eq:g+w} for $D^4(G_Tu+G_Tw)$, we obtain
\begin{align*}
\|G_Tu-G_Tw\|_{\Cc^{4+\alpha,1+\alpha/4}_{m+\alpha}(Q_T)}
\le
\Cl{Cdag}\,
\|u-w\|_{\Cc^{4+\gamma,1+\gamma/4}_{m+\gamma}(Q_T)},
\end{align*}
for all $u,w\in\M_T$, where $\Cr{Cdag}$ depends only on
$\Omega$, $\|g_0\|_{\Cc^{4+\alpha}(\partial \Omega)}$, $\|g_1\|_{\Cc^{3+\alpha}(\partial \Omega)}$, $\|u_0\|_{\Cc^{m+\alpha}(\overline\Omega)} $.

Since $v=G_Tu-G_Tw$ has vanishing initial and boundary data,
we may apply Lemma~\ref{gammaalpha} to obtain
\begin{align*}
	\|G_Tu\ -&\ G_Tw\|_{\Cc^{4+\gamma, 1+\gamma/4}_{m+\gamma} (Q_T)}\le { \Cr{constgammaalpha}\ \Cr{Cdag} T^{\frac{\alpha-\gamma}{4}}}\cdot\|u-w\|_{\Cc^{4+\gamma, 1+\gamma/4}_{m+\gamma} (Q_T)}.
\end{align*}

Choosing
\[
T < (\Cr{constgammaalpha}\Cr{Cdag})^{-\frac{4}{\alpha-\gamma}},
\]
we conclude that $G_T$ is a contraction on $\M_T$.

\medskip\noindent\cb{5} \textbf{Application of the Fixed Point Theorem}\\
For $T>0$ sufficiently small, the mapping
$G_T\colon \M_T\to\M_T$ is a contraction.
Hence, by the Banach fixed point theorem, there exists a unique fixed point
\[
v^\ast \in \M_T \subset
\Cc^{4+\gamma,1+\gamma/4}_{m+\gamma}(Q_T)
\]
such that
\[
v^\ast = G_T v^\ast .
\]
Since $G_T$ was defined by freezing the coefficients in the original
quasilinear equation, the fixed point identity $v^\ast=G_Tv^\ast$
implies that $v^\ast$  solves the original full nonlinear Willmore flow problem  \eqref{Willmore}.

Moreover, applying the weighted Schauder estimate
(Theorem~\ref{gewichteteschauder}) to $v^\ast$ yields the improved
regularity
\[
v^\ast \in
\Cc^{4+\alpha,1+\alpha/4}_{m+\alpha}(Q_T),
\]
together with the estimate
\begin{align*}
\|v^\ast\|_{\Cc^{4+\alpha,1+\alpha/4}_{m+\alpha}(Q_T)}
\le
\Cr{overlineC}\big(
\Omega,
\|g_0\|_{\Cc^{4+\alpha}(\partial\Omega)},
\|g_1\|_{\Cc^{3+\alpha}(\partial\Omega)},
\|u_0\|_{\Cc^{m+\alpha}(\overline\Omega)}
\big).
\end{align*}

\medskip\noindent\cb{6} \textbf{Uniqueness} \\
So far, uniqueness has only been established within the iteration set $\M_T$.
We now prove uniqueness in the larger space
$\Cc^{4+\alpha,1+\alpha/4}_{m+\alpha}(Q_T)$.
Our main idea is to consider the difference of two solutions and proceed as in the contraction step~\cbm 4.
Applying Lemma~\ref{gammaalpha} will introduce the factor
$T_0^{\frac{\alpha-\gamma}{4}}$,
which will allow us to conclude uniqueness for $T_0$ sufficiently small.

Let $T$ be the fixed point time chosen in Steps~\cbm 3 and \cbm 4.
Let $u\in\M_T$ be the solution constructed in Step~\cbm 5,
and let
\[
w \in \Cc^{4+\alpha,1+\alpha/4}_{m+\alpha}(Q_{T'})
\]
be another solution of \eqref{Willmore}.
Without loss of generality, we may assume $T' < T$.

Let $0<T_0<T'$ be a positive time, to be fixed sufficiently small below. \textit{First, we show uniqueness in the space}.
$\Cc^{4+\alpha,1+\alpha/4}_{m+\alpha}(Q_{T_0})$.
In this case, the difference $v:=u-w$ satisfies the following initial–boundary value problem on $Q_{T_0}$ with vanishing initial and boundary data:
\begin{align}
\left\{
\begin{aligned}
\partial_t v &=
- \tfrac12\big(L(\nabla u)+L(\nabla w)\big) D^4 v\\
&\quad - \tfrac12\big(L(\nabla u)-L(\nabla w)\big) D^4(u+w) \\
&\quad
- \mathcal R(\nabla u,D^2u,D^3u)
+ \mathcal R(\nabla w,D^2w,D^3w)
&& \text{in }\Omega\times(0,T_0],\\
v(\cdot,0) &= 0 && \text{in }\Omega,\\
v &= 0 && \text{on }\partial\Omega\times[0,T_0],\\
\partial_\nu v &= 0 && \text{on }\partial\Omega\times[0,T_0].
\end{aligned}
\right.
\label{eq:gewichteteind}
\tag{E${}_{w}$}
\end{align}
It follows that
\[
\tfrac12\big(L(\nabla u)+L(\nabla w)\big)
\in \Cc^{\alpha,\alpha/4}_{\max\{0,m+\alpha-4\}}(Q_{T_0})
\]
is a uniformly elliptic operator with time-independent ellipticity
constants
\[
\lambda^\ast :=
\frac{1}{2\bigl(1+\|u\|^2_{\Cc^{4+\alpha,1+\alpha/4}_{m+\alpha}(Q_{T'})}
+\|w\|^2_{\Cc^{4+\alpha,1+\alpha/4}_{m+\alpha}(Q_{T'})}\bigr)},
\qquad
\Lambda^\ast := 4 .
\]
Moreover, we obtain the estimates
\begin{align*} \|D^4(u+w)\|_{\Cc^{\alpha,\alpha/4}_{m+\alpha-4}(Q_{T_{0}})} \le&\ \|u\|_{\Cc^{4+\alpha, 1+\alpha/4}_{m+\alpha}(Q_{T'})}\ +\|w\|_{\Cc^{4+\alpha, 1+\alpha/4}_{m+\alpha}(Q_{T'})},\\ \|L(\nabla u)+L(\nabla w)\|_{\Cc^{\alpha,\alpha/4}_{\max\{0,m+\alpha-4\}}(Q_{T_{0}})} \le&\ \Cr{gewhoelderI}\left( \begin{aligned} &\ \Big(1+ \|u\|_{\Cc^{4+\alpha, 1+\alpha/4}_{m+\alpha} (Q_{T'})}\Big)^{k_H} \\ &\ + \Big(1+ \|w\|_{\Cc^{4+\alpha, 1+\alpha/4}_{m+\alpha} (Q_{T'})}\Big)^{k_H} \end{aligned}\right). \end{align*}
where we used Lemma~\ref{gewichtethoedlerII}.

Once again, Theorem~\ref{gewichteteschauder} yields a weighted Schauder
estimate for the problem \eqref{eq:gewichteteind},
with a constant $\Cr{ws}(1)$ depending only on $\Omega$ and on the
$\Cc^{4+\alpha,1+\alpha/4}_{m+\alpha}(Q_{T'})$-norms of $u$ and $w$.
Proceeding as in Step~\cbm 4, we obtain
Like in step \cbm 4:
	\begin{align*}
	\|u- \ &\  w\|_{\Cc^{4+\alpha, 1+\alpha/4}_{m+\alpha}(Q_{T_{0}})}\\ 
\le&\ \frac{\Cr{ws}(1)}{2}  \cdot\left\| L(\nabla u)-L(\nabla w)\right\|_{\Cc^{\alpha,\alpha/4}_{\max\{0,m+\alpha-4\}}( Q_{T_{0}})} \cdot \left\| \big( D^4 (u+w)\big)\right\|_{\Cc^{\alpha,\alpha/4}_{m+\alpha-4}(Q_{T_{0}})}\\
	&\	 + \Cr{ws}(1)\left\|\Rr( \nabla u,D^2 u, D^3 u)-\Rr( \nabla w,D^2 w, D^3 w)\right \|_{\Cc^{\alpha,\alpha/4}_{m+\alpha-4}(Q_{T_{0}})} \\
\le &\ {\Cr{Cdag}}\cdot\|u-w\|_{\Cc^{4+\gamma, 1+\gamma/4}_{m+\gamma} (Q_{T_{0}})},
\end{align*}
where the constant $\Cr{Cdag}$ depends only  on $\Omega$ and on the
$\Cc^{4+\alpha,1+\alpha/4}_{m+\alpha}(Q_{T'})$-norms of $u$ and $w$.
Since $u-w$ has vanishing initial and boundary data,
Lemma~\ref{gammaalpha} implies
\begin{align*}
\|u-w\|_{\Cc^{4+\gamma,1+\gamma/4}_{m+\gamma}(Q_{T_0})}
\le
\Cr{constgammaalpha}\Cr{Cdag}
\,T_0^{\frac{\alpha-\gamma}{4}}
\,
\|u-w\|_{\Cc^{4+\gamma,1+\gamma/4}_{m+\gamma}(Q_{T_0})}.
\end{align*}

Choosing
\[
T_0<
(\Cr{constgammaalpha}\Cr{Cdag})^{-\frac{4}{\alpha-\gamma}},
\]
we conclude \(
\|u -w\|_{\Cc^{4+\gamma, 1+\gamma/4}_{m+\gamma} (Q_{T_{0}})}=0\). Therefore, $u$ and $w$ coincide in
$\Cc^{4+\gamma,1+\gamma/4}_{m+\gamma}(Q_{T_0})$:
\[
u=w
\quad\text{on } Q_{T_0}.
\]

\textit{To extend the equality to times} $t\in (T_0,T']$,
we first consider the unweighted case $m=4$.
In this case, the time $T_0$ depends only on $\Omega$ and the bounds on $\Cc^{4+\alpha, 1+ \alpha/4}_{x,t} (Q_{T'})$-norm of $u$ and $w$.
Since for all $x\in\overline{\Omega}$ and all multi-indices $|\beta|\le 4$,
\[
D_x^\beta u(x,T_0)=D_x^\beta w(x,T_0),
\]
we may restart the uniqueness argument at time $T_0$,
using the same uniqueness time $T_0$.
Consequently, uniqueness holds on the interval
\[
[0,\min\{2T_0,T'\}].
\]
Repeating this procedure finitely many times,
we obtain $u=w$ on $[0,T']$.

By the definition of the weighted norms, in the cases $m=1,2,3$,
for every $T_0>0$ the solutions $u$ and $w$ belong to the unweighted
parabolic Hölder space on $\overline\Omega\times[T_0,T']$.
More precisely, we have
\begin{align*}
\|u\|_{\Cc^{4+\alpha,1+\alpha/4}_{x,t}(\overline\Omega\times[T_0,T'])}
&\le
C(T_0,T')\,
\|u\|_{\Cc^{4+\alpha,1+\alpha/4}_{m+\alpha}(Q_{T'})}, \\
\|w\|_{\Cc^{4+\alpha,1+\alpha/4}_{x,t}(\overline\Omega\times[T_0,T'])}
&\le
C(T_0,T')\,
\|w\|_{\Cc^{4+\alpha,1+\alpha/4}_{m+\alpha}(Q_{T'})}.
\end{align*}

Furthermore, the compatibility condition \eqref{eq:wkb}
is satisfied for all times in $[T_0,T']$.
Since $u$ and $w$ coincide at time $T_0$,
we may take
\[
u(\cdot,T_0)=w(\cdot,T_0)
\]
as new initial data for the corresponding unweighted
parabolic problem on $\overline\Omega\times[T_0,T']$.

The classical uniqueness result for the unweighted problem then implies
\[
u=w
\quad \text{on } \overline\Omega\times[T_0,T'].
\]
Consequently,
\[
u=w
\quad \text{in } \Cc^{4+\alpha,1+\alpha/4}_{m+\alpha}(Q_{T'}).
\]
Finally, extending this argument up to time $T$
yields uniqueness in
\(
\Cc^{4+\alpha,1+\alpha/4}_{m+\alpha}(Q_T).
\)
\end{proof}

\subsection{Lipschitz initial data}
\label{subsec:c1init}

We now turn to the short-time theory for merely Lipschitz initial data
\[
u_0\in \Cc^{0,1}(\overline\Omega).
\]
In contrast to the previous subsection, we no longer assume any Hölder
regularity of $\nabla u_0$.
The price for this weaker initial regularity is that the fixed-point
argument can only be closed under a smallness assumption on the data.

In the notation of the weighted linear theory from
Section~\ref{sec:paraspace}, this corresponds to the choice
\[
s=1,
\qquad
f=\Rr(\nabla u,D^2u,D^3u),
\qquad
(a_\beta)\cong L(\nabla u),
\]
together with time-independent clamped boundary data
\(
\varphi=g_0,
\
h=g_1.
\)

The essential difference to the $C^{1+\alpha}$-theory is the following.
In the space $\Cc^{4+\alpha,1+\alpha/4}_{1}(Q_T)$ one controls only the
Lipschitz norm of $u$ up to $t=0$, whereas higher derivatives may blow up
at the rate prescribed by the weights.
In particular, the norm no longer contains the spatial Hölder seminorm of
$\nabla u(\,\cdot\,,t)$ or the temporal Hölder seminorm of
$\nabla u(x,\,\cdot\,)$ at $t=0$.
For this reason, the interpolation argument based on lowering the Hölder
exponent from $\alpha$ to some $\gamma<\alpha$ is not available here.
Instead, the contraction will be obtained directly in the same weighted
space, and the smallness needed for the fixed-point argument will come
from the smallness of the data and, for time-independent boundary values,
from choosing the time interval sufficiently short.

Let $\Omega\subset\R^n$ be bounded with $\Cc^{4+\alpha}$-boundary and let
$T>0$.
For later reference we record the weighted norm in the case $s=1$.
The low-order anisotropic norm is
\begin{align*}
\|u\|_{\Cc^{1,1/4}_{x,t}(\overline Q_T)}
=
\sum_{|\beta|\le1}\sup_{(x,t)\in\overline Q_T}|D_x^\beta u(x,t)|
+
\sup_{x\in\overline\Omega}[u(x,\,\cdot\,)]_{\Cc^{1/4}([0,T])},
\end{align*}
and the full weighted norm is
\begin{align*}
\|u\|_{\Cc^{4+\alpha,1+\alpha/4}_{1}(Q_T)}
={}&
\sup_{0<t\le T} t^{\frac{3+\alpha}{4}}[u]^{4+\alpha}_{Q_t'}
+
\sum_{2\le 4k+|\beta|\le4}
\sup_{(x,t)\in Q_T}
t^{\frac{4k+|\beta|-1}{4}}|D_t^kD_x^\beta u(x,t)| \\
&\quad
+\|u\|_{\Cc^{1,1/4}_{x,t}(\overline Q_T)},
\end{align*}
where
\begin{align*}
[u]^{4+\alpha}_{Q_t'}
={}&
\sum_{4k+|\beta|=4}
\sup_{t'\in[t/2,t]}
\big[D_t^kD_x^\beta u(\,\cdot\,,t')\big]_{\Cc^\alpha(\overline\Omega)} \\
&\quad
+\sum_{1\le 4k+|\beta|\le4}
\sup_{x\in\overline\Omega}
\big[D_t^kD_x^\beta u(x,\,\cdot\,)\big]_
{\Cc^{\frac{4+\alpha-4k-|\beta|}{4}}([t/2,t])}.
\end{align*}

\emph{Remark.}
Compared with the space
$\Cc^{4+\alpha,1+\alpha/4}_{1+\alpha}(Q_T)$ from the previous subsection,
the norm $\Cc^{4+\alpha,1+\alpha/4}_{1}(Q_T)$ carries an additional factor
$t^{\alpha/4}$ in front of the highest-order quantities and no longer
controls the Hölder seminorms of $\nabla u$ up to the initial time.
This is precisely what makes the space compatible with
Lipschitz initial data.

As in the Hölder case, the technical product and composition estimates
needed for the nonlinear fixed-point argument are proved in the appendix.
We record below only the forms used later.

\begin{lemma}[Basic product estimates in the Lipschitz regime]
\label{productrule22}
Let $0<\alpha<1$ and let $T\le1$.
Then there exists a constant
\[
\Cl{thmproductrule22}=\Cr{thmproductrule22}(\alpha,\Omega)>0
\]
such that for all
$u,v,w\in \Cc^{4+\alpha,1+\alpha/4}_{1}(Q_T)$,
\begin{align*}
\|D^3w\,D^2u\|_{\Cc^{\alpha,\alpha/4}_{-3}(Q_T)}
&\le
\Cr{thmproductrule22}\,
\|D^3w\|_{\Cc^{1+\alpha,\frac{1+\alpha}{4}}_{-2}(Q_T)}
\|D^2u\|_{\Cc^{2+\alpha,\frac{2+\alpha}{4}}_{-1}(Q_T)},\\
\|D^2u\,D^2w\,D^2v\|_{\Cc^{\alpha,\alpha/4}_{-3}(Q_T)}
&\le
\Cr{thmproductrule22}\,
\|D^2u\|_{\Cc^{2+\alpha,\frac{2+\alpha}{4}}_{-1}(Q_T)}
\|D^2w\|_{\Cc^{2+\alpha,\frac{2+\alpha}{4}}_{-1}(Q_T)}
\|D^2v\|_{\Cc^{2+\alpha,\frac{2+\alpha}{4}}_{-1}(Q_T)},\\
\|\nabla u\|_{\Cc^{\alpha,\alpha/4}_{0}(Q_T)}
&\le
\Cr{thmproductrule22}\,
\|\nabla u\|_{\Cc^{3+\alpha,3+\alpha/4}_{0}(Q_T)}.
\end{align*}
\end{lemma}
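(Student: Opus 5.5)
The proof works directly with the explicit weighted norms for $s=-3,-2,-1,0$ and reduces each product bound to three elementary facts on the dyadic slabs $Q_t'=\overline\Omega\times[t/2,t]$.

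First, I would write out what each norm controls. For $f=D^2u$, the norm $\|f\|_{\Cc^{2+\alpha,\frac{2+\alpha}{4}}_{-1}(Q_T)}$ gives:
\begin{itemize}
\item weighted sup bounds $t^{\frac{1+j}{4}}|D^jf|\le \|f\|$ for $j=0,1,2$, since for $s<0$ all orders $0\le j<\ell$ carry a weight;
\item the top seminorm $t^{\frac{3+\alpha}{4}}[f]^{2+\alpha}_{Q_t'}$.
\end{itemize}
Similarly, for $g=D^3w$ in $\Cc^{1+\alpha}_{-2}$ we get $t^{\frac{2+j}{4}}|D^jg|\le\|g\|$ for $j=0,1$ and $t^{\frac{3+\alpha}{4}}[g]^{1+\alpha}_{Q_t'}$. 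The target norm $\Cc^{\alpha,\alpha/4}_{-3}$ consists of $\sup t^{3/4}|fg|$ together with $\sup_t t^{\frac{3+\alpha}{4}}[fg]^{\alpha}_{Q_t'}$, where the latter seminorm is the spatial $\alpha$-Hölder seminorm and the temporal $\alpha/4$-Hölder seminorm on $[t/2,t]$.

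Second, the sup part is immediate from the weights: $t^{3/4}|D^3w\,D^2u|\le (t^{2/4}|D^3w|)(t^{1/4}|D^2u|)$, and for the triple product $t^{3/4}=(t^{1/4})^3$. The weights are additive precisely because $-3=-2+(-1)=-1-1-1$. This is the case $r=r_1+r_2$ of \eqref{eq:simpleprod}, so alternatively one could cite (iv) directly, after checking the regularity indices are consistent.

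Third, the Hölder part uses the Leibniz rule for seminorms on $Q_t'$: $[fg]_\alpha\le [f]_\alpha\sup|g|+\sup|f|[g]_\alpha$. The needed bound on $[g]_\alpha$ over $Q_t'$ comes by interpolation. Spatially, $[g(\cdot,t')]_{\Cc^\alpha}\le C\big(\sup|g|+\sup|\nabla g|\big)$ on the smooth bounded domain $\Omega$. With weights, the $\sup|\nabla g|$ term gives $t^{-\frac{3}{4}}\|g\|$, and using $t\le T\le1$ we have $t^{-3/4}\ge t^{-2/4}$, so $[g]_\alpha\lesssim t^{-3/4}\|g\|$. For the temporal $\alpha/4$ seminorm on $[t/2,t]$ I would not reason pointwise. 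Instead I would interpolate between the sup bound and the top temporal seminorm $[g]^{(1+\alpha)/4}_{[t/2,t]}\le t^{-\frac{3+\alpha}{4}}\|g\|$, which gives
\[
[g]_{\Cc^{\alpha/4}([t/2,t])}\le C\big(t^{-\alpha/4}\sup_{Q_t'}|g|+t^{1/4}[g]_{\Cc^{(1+\alpha)/4}([t/2,t])}\big)\lesssim t^{-\frac{2+\alpha}{4}}\|g\|.
\]
This uses that the interval $[t/2,t]$ has length comparable to $t$. Multiplying by $\sup|f|\lesssim t^{-1/4}$ and by $t^{\frac{3+\alpha}{4}}$ leaves a nonnegative power of $t$, hence a bound by a constant since $T\le1$. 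The same bookkeeping handles $[f]_\alpha\sup|g|$ and, applied twice, the triple product.

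Fourth, the last estimate is trivial: $\Cc^{3+\alpha}_0\hookrightarrow\Cc^{\alpha}_0$. Both norms contain the sup of $\nabla u$. The $\alpha$-seminorms on $Q_t'$ weighted by $t^{\alpha/4}$ are controlled by the same interpolation from the weighted bounds on $D\nabla u$ and the top seminorm, again using $t\le1$.

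The step I expect to be the main obstacle is the temporal Hölder seminorm. One must check carefully that every interpolation exponent yields a weight $t^{\kappa}$ with $\kappa\ge0$, including the cross terms and the mixed spatial-temporal pieces. One must also make sure the constant is uniform in $t$, which is where the dyadic structure of $Q_t'$ and the restriction $T\le1$ are used.
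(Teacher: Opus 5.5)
Your overall structure matches the paper's proof. You split the $\Cc^{\alpha,\alpha/4}_{-3}$-norm into the weighted sup part, the spatial $\alpha$-seminorm, and the temporal $\alpha/4$-seminorm on $[t/2,t]$, then apply the Leibniz rule and interpolate. Your sup part and temporal part are correct. The gap is in the spatial Hölder seminorm.

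The additive bound $[g(\cdot,t')]_{\Cc^\alpha(\overline\Omega)}\le C(\sup|g|+\sup|\nabla g|)$ interpolates at the fixed spatial scale $1$ rather than the parabolic scale $t^{1/4}$, and so it loses a factor $t^{-(1-\alpha)/4}$. Concretely, for $g=D^3w$ it gives $[g]_\alpha\lesssim t^{-3/4}\|D^3w\|$, and then
\[
t^{\frac{3+\alpha}{4}}\,\sup|D^2u|\,[D^3w(\cdot,t')]_{\Cc^\alpha(\overline\Omega)}\lesssim t^{\frac{3+\alpha}{4}-\frac14-\frac34}=t^{-\frac{1-\alpha}{4}},
\]
which is unbounded as $t\searrow0$. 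So your claim that the bookkeeping ``leaves a nonnegative power of $t$'' fails for the spatial piece. The same defect appears in three further places:
\begin{itemize}
\item the cross term $[D^2u]_\alpha\sup|D^3w|$, which gives $t^{\frac{3+\alpha}{4}-\frac24-\frac24}$;
\item the triple product;
\item the term $J_x$ of the third estimate, which gives $t^{\alpha/4}\cdot t^{-1/4}$.
\end{itemize}
The restriction $T\le 1$ cannot repair a negative exponent.

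The missing ingredient is the scale-invariant multiplicative interpolation
\[
[h]_{\Cc^\alpha(\overline\Omega)}\le C(\Omega)\,\|\nabla h\|_{C^0(\overline\Omega)}^{\alpha}\|h\|_{C^0(\overline\Omega)}^{1-\alpha}.
\]
It holds because a bounded domain with smooth boundary is quasiconvex; equivalently, split $|x-y|$ at $t^{1/4}$. This is what the paper uses. It yields
\[
[D^3w(\cdot,t')]_\alpha\lesssim t^{-\frac{2+\alpha}{4}},\qquad [D^2u(\cdot,t')]_\alpha\lesssim t^{-\frac{1+\alpha}{4}},\qquad [\nabla u(\cdot,t')]_\alpha\lesssim t^{-\frac{\alpha}{4}}.
\]
With these bounds all exponents add up to exactly zero, just as in your temporal computation.

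Incidentally, the temporal part is the easy one, not the main obstacle. On $[t/2,t]$ one simply has $[g]_{\Cc^{\alpha/4}}\le t^{1/4}[g]_{\Cc^{(1+\alpha)/4}}$ by (i), so no sup term is needed.
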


\begin{proof}
See Appendix, Lemma~\ref{productrule22appendix}.
\end{proof}

The preceding product estimates imply the following bounds for the
coefficients of the principal part and for the lower-order remainder term.

\begin{lemma}[Hölder estimates I]
\label{1gewichtethoedlerII}
Let $0<\alpha<1$ and $T\le1$.
Then there exist constants
\[
\Cl{1gewhoelderI}=\Cr{1gewhoelderI}(\Omega,\alpha)>0,
\qquad
k_H\in\N,
\]
depending only on $\Omega$, $\alpha$, and the fixed algebraic structure
of $L$ and $\Rr$, such that for every
$u\in \Cc^{4+\alpha,1+\alpha/4}_{1}(Q_T)$,
\begin{align*}
\|\Rr(\nabla u,D^2u,D^3u)\|_{\Cc^{\alpha,\alpha/4}_{-3}(Q_T)}
&\le
\Cr{1gewhoelderI}
\Big(1+\|u\|_{\Cc^{4+\alpha,1+\alpha/4}_{1}(Q_T)}\Big)^{k_H}
\|u\|_{\Cc^{4+\alpha,1+\alpha/4}_{1}(Q_T)}^3,\\
\sum_{k+\ell=4}
\|L_{k\ell}(\nabla u)\|_{\Cc^{\alpha,\alpha/4}_{0}(Q_T)}
&\le
\Cr{1gewhoelderI}
\Big(1+\|u\|_{\Cc^{4+\alpha,1+\alpha/4}_{1}(Q_T)}^4\Big).
\end{align*}
\end{lemma}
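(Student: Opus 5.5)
The plan is to follow the proof of Lemma~\ref{gewichtethoedlerII} and replace the product estimates of Lemma~\ref{productrule} by those of Lemma~\ref{productrule22}. We use the structure formula \eqref{eq:R-structure} for $\Rr$. Every monomial in $\Rr$ has one of two forms. The first is $D^3u\star D^2u\star c(\nabla u)$. The second is $D^2u\star D^2u\star D^2u\star c(\nabla u)$. In both cases the coefficient $c(\nabla u)$ is a finite sum of terms $Q^{-2k}P_j(\nabla u)$. The coefficients $L_{k\ell}(\nabla u)$ from \eqref{eq:L} have the same kind of form, namely polynomials in $\nabla u$ times $Q^{-4}$. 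The proof therefore has three ingredients: bounds for smooth functions of $\nabla u$ in $\Cc^{\alpha,\alpha/4}_0(Q_T)$, product bounds for the derivative factors in the negative-weight spaces, and the product rule \eqref{eq:simpleprod} to combine them.

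\emph{Step 1 (coefficients).} By \eqref{eq:einbett} with $r=1$, $\ell=4+\alpha$ and one derivative, $\|\nabla u\|_{\Cc^{3+\alpha,(3+\alpha)/4}_0(Q_T)}\le\|u\|_{\Cc^{4+\alpha,1+\alpha/4}_1(Q_T)}$. The third estimate of Lemma~\ref{productrule22} then gives $\|\nabla u\|_{\Cc^{\alpha,\alpha/4}_0(Q_T)}\le C\|u\|$. The norm of $\Cc^{\alpha,\alpha/4}_0$ is the sup norm plus weighted seminorms in which the weight is $t^{\alpha/4}$ on $Q'_t$. The map $p\mapsto Q(p)^{-2k}$ is smooth and globally Lipschitz on $\R^2$, with all derivatives bounded. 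Composing it with $\nabla u$ therefore keeps the sup norm bounded and multiplies the Hölder seminorms by at most a constant factor. This gives $\|Q^{-2k}\|_{\Cc^{\alpha,\alpha/4}_0}\le C(1+\|u\|)$. Products of the components of $\nabla u$ are handled by \eqref{eq:simpleprod} with $r_1=r_2=0$. For $L_{k\ell}$ the degree of the numerator is at most $4$, and only a $Q^{-4}$ factor is needed, which is bounded. We also use that $(1+x)^a\le C(1+x^4)$ for $a\le4$. Together these give the second inequality of the lemma, $\sum\|L_{k\ell}(\nabla u)\|_{\Cc^{\alpha,\alpha/4}_0}\le C(1+\|u\|^4)$.

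\emph{Step 2 (remainder).} Applying \eqref{eq:einbett} with two and three derivatives gives $\|D^2u\|_{\Cc^{2+\alpha,(2+\alpha)/4}_{-1}}\le\|u\|$ and $\|D^3u\|_{\Cc^{1+\alpha,(1+\alpha)/4}_{-2}}\le\|u\|$. The first two estimates of Lemma~\ref{productrule22} then bound each derivative part $D^3u\,D^2u$ and $D^2u\,D^2u\,D^2u$ in $\Cc^{\alpha,\alpha/4}_{-3}(Q_T)$. The bounds are $C\|u\|^2$ and $C\|u\|^3$ respectively. Next we multiply by the coefficient $c(\nabla u)\in\Cc^{\alpha,\alpha/4}_0$ using \eqref{eq:simpleprod} with $r_1=0$ and $r_2=-3$. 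This gives $r=\min\{0,-3,-3\}=-3$, and the constant stays bounded for $T\le1$. Each monomial is therefore bounded by $C(1+\|u\|)^{k}\|u\|^2$ or by $C(1+\|u\|)^{k}\|u\|^3$.

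\emph{Main obstacle.} The quadratic monomials $D^3u\,D^2u$ must be brought to the claimed form $\|u\|^3(1+\|u\|)^{k_H}$, with a cubic factor. The structure formula shows that each term containing $D^3u$ has a coefficient $P_{2k-1}(\nabla u)$ with $2k-1\ge1$. Each such term thus carries at least one factor of $\nabla u$. We estimate that factor in $\Cc^{\alpha,\alpha/4}_0$ by $C\|u\|$ as in Step 1, which supplies the missing third power. For the cubic terms the bound is already of the required form. After absorbing all remaining coefficient factors into $(1+\|u\|)^{k_H}$, with $k_H$ fixed by the maximal degree in \eqref{eq:R-structure}, we obtain the first inequality. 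A second point needs care: the weight structure of $\Cc^{\alpha,\alpha/4}_0$ in the Lipschitz regime carries no Hölder control at $t=0$. We must check that the composition with $Q^{-2k}$ respects the $t^{\alpha/4}$-weighted seminorms on each $Q'_t$. This is a pointwise Lipschitz argument applied interval by interval, so it should go through.
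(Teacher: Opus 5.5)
Your proposal is correct and is essentially the paper's argument, which just points back to the proof of Lemma~\ref{gewichtethoedlerIIappendix}: the structure formula \eqref{eq:R-structure}, the embedding \eqref{eq:einbett}, Lemma~\ref{productrule22} for $D^3u\,D^2u$ and $(D^2u)^3$, \eqref{eq:simpleprod} with $r=\min\{0,-3,-3\}=-3$ to attach the coefficients, and the odd factor $P_{2k-1}(\nabla u)$ to supply the third power of $\|u\|$. There is one small bookkeeping slip, which the paper's own Step~5 shares: $\|Q^{-4}\|\le C(1+\|u\|)$ times $\|P_4(\nabla u)\|\le C\|u\|^4$ has degree five, not four, so for the $L_{k\ell}$ bound you should apply your global-Lipschitz composition argument directly to each $p\mapsto L_{k\ell}(p)$ (bounded, with bounded gradient on $\R^2$), which gives $C(1+\|u\|)\le 2C(1+\|u\|^4)$.
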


\begin{proof}
The proof is analogous to that of
Appendix Lemma~\ref{gewichtethoedlerIIappendix}.
\end{proof}

For the contraction step we also need difference estimates.

\begin{lemma}[Hölder estimates II]
\label{1gewichtethoedlerIII}
Let $0<\alpha<1$ and $T\le1$.
Then there exist constants
\[
\Cl{1hoelderII}=\Cr{1hoelderII}(\Omega,\alpha)>0,
\qquad
k_H'\in\N,
\]
depending only on $\Omega$, $\alpha$, and the fixed algebraic structure
of $L$ and $\Rr$, such that for all
$u,w\in \Cc^{4+\alpha,1+\alpha/4}_{1}(Q_T)$,
\begin{align*}
	&\	\left\|\Rr( \nabla u,D^2 u, D^3 u)-\Rr( \nabla w,D^2 w, D^3 w)\right \|_{\Cc^{\alpha,\alpha/4}_{-3}(Q_T)} \\
&\ \qquad \qquad 
\le 
\Cr{1hoelderII} \Big(1+ \max\Big\{\|u\|_{\Cc^{4+\alpha, 1+\alpha/4}_{1} (Q_T)},\|w\|_{\Cc^{4+\alpha, 1+\alpha/4}_{1} (Q_T)}\Big\}\Big)^{k'_H} \\
& \ \qquad \qquad \qquad  
\cdot
\max\Big\{\|u\|_{\Cc^{4+\alpha, 1+\alpha/4}_{1} (Q_T)},\|w\|_{\Cc^{4+\alpha, 1+\alpha/4}_{1} (Q_T)}\Big\}^2
 \cdot
 \|u-w\|_{\Cc^{4+\alpha, 1+\alpha/4}_{1} (Q_T)}, \\
 &\ \quad 
\sum_{k+\ell=4}\left\|L_{k\ell}(\nabla u)-L_{k\ell}(\nabla w)\right\|_{\Cc^{\alpha,\alpha/4}_{0}( Q_T)}\\
&\ \qquad \qquad 
\le 
\Cr{1hoelderII} \Big(1+ \max\Big\{\|u\|_{\Cc^{4+\alpha, 1+\alpha/4}_{1} (Q_T)},\|w\|_{\Cc^{4+\alpha, 1+\alpha/4}_{1} (Q_T)}\Big\}\Big)^{k'_H} 
 \cdot
 \|u-w\|_{\Cc^{4+\alpha, 1+\alpha/4}_{1} (Q_T)}.
\end{align*}
\end{lemma}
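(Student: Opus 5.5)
The plan is to reduce both difference estimates to the structural form of $\Rr$ and $L$ together with the product estimates of Lemma~\ref{productrule22} and the bounds of Lemma~\ref{1gewichtethoedlerII}.

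\textbf{Difference of the principal coefficients.} By \eqref{eq:L}, each $L_{k\ell}(\nabla u)$ is a smooth function $\Phi(p)$ of $p=\nabla u$. It is a polynomial in $p$ divided by $Q^4=(1+|p|^2)^2$, so all its derivatives are globally bounded on $\R^2$. We write
\[
L_{k\ell}(\nabla u)-L_{k\ell}(\nabla w)=\int_0^1 D\Phi\big(\nabla w+\theta\nabla(u-w)\big)\,d\theta\cdot\nabla(u-w).
\]
By the product rule \eqref{eq:simpleprod} with $r_1=r_2=0$, the $\Cc^{\alpha,\alpha/4}_0$-norm is bounded by
\[
\Big\|\int_0^1 D\Phi(\cdots)\,d\theta\Big\|_{\Cc^{\alpha,\alpha/4}_0}\,\|\nabla(u-w)\|_{\Cc^{\alpha,\alpha/4}_0}.
\]
The second factor is controlled by $\|u-w\|_{\Cc^{4+\alpha,1+\alpha/4}_1}$ through the last inequality of Lemma~\ref{productrule22} and \eqref{eq:einbett}. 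For the first factor we need the $\Cc^{\alpha,\alpha/4}_0$-norm of a composition $\Psi(\nabla v)$. Here the supremum is bounded because $D\Phi$ is bounded. The weighted seminorm is controlled through the mean value theorem by $\|D\Psi\|_\infty$ times the corresponding seminorm of $\nabla v$. This yields a polynomial factor $(1+\max\{\|u\|,\|w\|\})^{k_H'}$.

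\textbf{Difference of the remainder.} By \eqref{eq:R-structure}, $\Rr$ is a finite sum of monomials of two types:
\[
D^3u\star D^2u\star a(\nabla u)
\qquad\text{and}\qquad
D^2u\star D^2u\star D^2u\star b(\nabla u),
\]
where $a$ and $b$ are smooth and bounded with bounded derivatives, since they are built from $Q^{-2k}P_j(\nabla u)$ with $j\le 2k$. For each monomial we telescope, replacing one factor at a time. For the first type, for instance,
\begin{align*}
&D^3u\,D^2u\,a(\nabla u)-D^3w\,D^2w\,a(\nabla w)\\
&\quad=D^3(u-w)\,D^2u\,a(\nabla u)+D^3w\,D^2(u-w)\,a(\nabla u)+D^3w\,D^2w\,\big(a(\nabla u)-a(\nabla w)\big).
\end{align*}
Each product is then estimated in $\Cc^{\alpha,\alpha/4}_{-3}$ as follows.
\begin{itemize}
\item The product of the two higher-order factors is handled by the first two inequalities of Lemma~\ref{productrule22}, with the weights $-2$ for $D^3$ and $-1$ for $D^2$.
\item The bounded coefficient factor in $\Cc^{\alpha,\alpha/4}_0$ is attached via \eqref{eq:simpleprod} with $r=\min\{-3,0,-3\}=-3$.
\item The coefficient difference $a(\nabla u)-a(\nabla w)$ is treated exactly as in the first step.
\item The norms of $D^jv$ are bounded by $\|v\|_{\Cc^{4+\alpha,1+\alpha/4}_1}$ using \eqref{eq:einbett}.
\end{itemize}
Every term then contains exactly one factor $\|u-w\|$ and two factors bounded by $\max\{\|u\|,\|w\|\}$. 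The cubic monomials contribute three higher-order factors, one of which is the difference; the remaining two give the $\max^2$. The coefficient factors give $(1+\max)^{k_H'}$. The first type contributes only two higher-order factors, but it can still be bounded by $\max^2$ times polynomial factors after adjusting $k_H'$; I would mirror exactly how Lemma~\ref{1gewichtethoedlerII} handles this point.

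\textbf{Main obstacle.} I expect the main difficulty to be the composition estimate for $\Psi(\nabla v)$ in the weighted space $\Cc^{\alpha,\alpha/4}_0(Q_T)$. Since $s=0$, there is no unweighted Hölder part of the norm at $t=0$. Only the weighted seminorm $\sup_t t^{\alpha/4}[\cdot]_{Q_t'}$ and the sup norm enter. So one has to check that the weighted seminorm of $\Psi(\nabla v)$ on $Q_t'$ is dominated by $\|D\Psi\|_\infty\, t^{\alpha/4}[\nabla v]^{\alpha}_{Q_t'}$. The latter is finite by Lemma~\ref{productrule22}. The check is a direct pointwise mean-value argument on each slab $Q_t'$, and it gives constants independent of $T\le1$.
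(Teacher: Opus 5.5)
Your route is the paper's route. The paper's proof points back to Lemma~\ref{gewichtethoedlerIIappendix} together with a representation of differences of polynomials. You use the same ingredients: the structure \eqref{eq:R-structure}, a telescoping of each monomial, the product estimates of Lemma~\ref{productrule22} with \eqref{eq:simpleprod} and \eqref{eq:einbett}, and a mean-value composition estimate in $\Cc^{\alpha,\alpha/4}_0(Q_T)$. The $L$-difference and the monomials cubic in $D^2u$ are handled correctly. The gap is the power of $X:=\max\{\|u\|_{\Cc^{4+\alpha,1+\alpha/4}_{1}(Q_T)},\|w\|_{\Cc^{4+\alpha,1+\alpha/4}_{1}(Q_T)}\}$ for the monomials of the first type. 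You use only that $a$ is bounded with bounded derivatives. Then the telescoped terms $D^3(u-w)\,D^2u\,a(\nabla u)$ and $D^3w\,D^2(u-w)\,a(\nabla u)$ are bounded by $C(1+X)\,X\,\|u-w\|_{\Cc^{4+\alpha,1+\alpha/4}_{1}(Q_T)}$, which has a single power of $X$. Your claim that this can still be bounded by $X^2$ times polynomial factors ``after adjusting $k_H'$'' is false. Since $(1+X)^{k_H'}\ge 1$ does not vanish as $X\to 0$, no choice of $k_H'$ gives $X\le C(1+X)^{k_H'}X^2$ for small $X$.

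The missing idea is that the first-type coefficients are $Q^{-2k}P_{2k-1}(\nabla u)$ with $k\ge 1$, so they vanish at $\nabla u=0$. Write $Q^{-2k}P_{2k-1}(\nabla u)=\nabla u\star Q^{-2k}P_{2k-2}(\nabla u)$. The second factor is a bounded function of $\nabla u$ with bounded derivatives, so your composition estimate bounds it by $C(1+X)$ in $\Cc^{\alpha,\alpha/4}_0(Q_T)$. The first factor satisfies $\|\nabla u\|_{\Cc^{\alpha,\alpha/4}_0(Q_T)}\le CX$ by the third inequality of Lemma~\ref{productrule22} and \eqref{eq:einbett}. By \eqref{eq:simpleprod} with $r_1=r_2=0$ you get $\|Q^{-2k}P_{2k-1}(\nabla u)\|_{\Cc^{\alpha,\alpha/4}_0(Q_T)}\le C(1+X)X$, which supplies the missing power. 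Equivalently, telescope $D^3u\star D^2u\star\nabla u\star Q^{-2k}P_{2k-2}(\nabla u)$ over four factors. This is the mechanism behind the cubic power in Lemma~\ref{gewichtethoedlerIIappendix}, where the factor $P_{2k-1}(\nabla u)$ of degree at least one contributes an extra power of the norm. Your deferral to Lemma~\ref{1gewichtethoedlerII} has to make this explicit. A bound linear in $X$ would in fact still close the contraction in Theorem~\ref{shorttimeexistenceC}, but it is not what the lemma states.
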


\begin{proof}
The proof is similar to that of
Lemma~\ref{gewichtethoedlerIIappendix}; in addition one uses
\cite[Appendix, Lemma 96]{gulyak2024boundary}. to rewrite the difference of the relevant
polynomials.
\end{proof}



We now establish short-time existence for the graphical Willmore flow
with low-regularity Lipschitz initial data and time-dependent Dirichlet boundary
conditions. 

\begin{thm}[Short-Time Existence for time-dependent Dirichlet boundary data]
\label{shorttimeexistenceC}
Let $0<\alpha<1$ and $0<T\le 1$.
There exists a time-independent constant
\[
\Cl{c1short}=\Cr{c1short}(\Omega,\alpha)>0
\]
such that the following holds.
Assume
\[
u_0 \in \Cc^{0,1}(\overline\Omega), 
\quad
\varphi \in \Cc^{4+\alpha,1+\alpha/4}_{1}(\partial\Omega\times(0,T]),
\quad
h \in \Cc^{3+\alpha,\frac{3+\alpha}{4}}_{0}(\partial\Omega\times(0,T]),
\]
and that the compatibility conditions
\begin{align*}
\varphi(x,0) &= u_0(x), \quad
h(x,0) = {\partial_\nu u_0}(x),
\qquad x\in\partial\Omega,
\end{align*}
are satisfied.
If, in addition,
\begin{align*}
\| u_0\|_{\Cc^{0,1}(\overline\Omega)}
+
\|\varphi\|_{\Cc^{4+\alpha,1+\alpha/4}_{1}(\partial\Omega\times(0,T])}
+
\|h\|_{\Cc^{3+\alpha,\frac{3+\alpha}{4}}_{0}(\partial\Omega\times(0,T])}
< \Cr{c1short},
\end{align*}
then there exists a solution
\[
u \in \Cc^{4+\alpha,1+\alpha/4}_{1}(Q_T)
\]
of the initial-boundary value problem for the Willmore flow
with time-dependent Dirichlet boundary conditions:
\begin{align*}
\left\{
\begin{aligned}
\partial_t u
&=
- Q\Big\{
\Delta_{\Gamma(u)} H
+ 2H\Big(\tfrac14 H^2 - \Kk\Big)
\Big\}
&& \text{in } \Omega\times(0,T],\\
u(\cdot,0) &= u_0,
&& \text{in } \Omega,\\
u &= \varphi,
&& \text{on } \partial\Omega\times[0,T],\\
\partial_\nu u &= h,
&& \text{on } \partial\Omega\times[0,T].
\end{aligned}
\right.
\end{align*}
\end{thm}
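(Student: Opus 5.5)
The plan is a Banach fixed-point argument carried out \emph{directly} in $\Cc^{4+\alpha,1+\alpha/4}_{1}(Q_T)$, since for $s=1$ the interpolation Lemma~\ref{gammaalpha} is not available. All smallness will come from the data bound $\delta:=\|u_0\|_{\Cc^{0,1}}+\|\varphi\|_{\Cc^{4+\alpha,1+\alpha/4}_{1}}+\|h\|_{\Cc^{3+\alpha,(3+\alpha)/4}_{0}}<\Cr{c1short}$, and not from shortness of time. The working set is the closed ball
\[
\M_\rho:=\Big\{w\in \Cc^{4+\alpha,1+\alpha/4}_{1}(Q_T)\ \Big|\ \|w\|_{\Cc^{4+\alpha,1+\alpha/4}_{1}(Q_T)}\le\rho,\ w(\cdot,0)=u_0,\ w|_{\partial\Omega}=\varphi,\ \partial_\nu w|_{\partial\Omega}=h\Big\},
\]
with $\rho\le1$ to be fixed. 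It is closed in the Banach space $\Cc^{4+\alpha,1+\alpha/4}_{1}(Q_T)$. Nonemptiness I would check via the biharmonic heat flow $\overline u_0$ with data $(u_0,\varphi,h)$. Theorem~\ref{gewichteteexistenz} with $s=1$ gives existence of $\overline u_0$. Theorem~\ref{gewichteteschauder} (constant $\Cr{ws}(1)$ for $\Delta^2$, monotone in $T\le1$) gives $\|\overline u_0\|\le \Cr{ws}(1)\delta\le\rho$ once $\delta$ is small.

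For $w\in\M_\rho$, define $G w:=v$ as the solution of the frozen linear problem \eqref{eq:gewichtetlinear}, but with data $(u_0,\varphi,h)$. The key uniform facts are as follows.
\begin{itemize}
\item Since the norm contains $\sup|\nabla w|$, we get $\|\nabla w\|_{C^0}\le\rho\le1$. Lemma~\ref{ellipL} then gives ellipticity with $\lambda=1/4$ and $\Lambda=4$, independent of $w$ and $T$.
\item Lemma~\ref{1gewichtethoedlerII} bounds $\|L_{k\ell}(\nabla w)\|_{\Cc^{\alpha,\alpha/4}_{0}}\le 2\Cr{1gewhoelderI}$.
\end{itemize}
Hence Theorem~\ref{gewichteteschauder} applies with a single constant $C_*:=\Cr{ws}(1)$ depending only on $\Omega$ and $\alpha$. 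Using the cubic structure of $\Rr$ from Lemma~\ref{1gewichtethoedlerII},
\[
\|Gw\|_{\Cc^{4+\alpha,1+\alpha/4}_{1}}\le C_*\big(\Cr{1gewhoelderI}2^{k_H}\rho^3+\delta\big).
\]
Choose $\rho$ so that $C_*\Cr{1gewhoelderI}2^{k_H}\rho^2\le\tfrac12$, and then $\delta\le\rho/(2C_*)$. This gives $G\colon\M_\rho\to\M_\rho$.

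For the contraction, take $u,w\in\M_\rho$. The difference $Gu-Gw$ solves \eqref{eq:gewichtetmodi} with zero data and principal part $\tfrac12(L(\nabla u)+L(\nabla w))$, which is again uniformly elliptic with the same constants. The first source term $(L(\nabla u)-L(\nabla w))D^4(Gu+Gw)$ is estimated in $\Cc^{\alpha,\alpha/4}_{-3}$ by the product rule \eqref{eq:simpleprod} with $r_1=0$ and $r_2=-3$, so that $r=-3$. Combining the second part of Lemma~\ref{1gewichtethoedlerIII} with \eqref{eq:einbett} for $D^4(Gu+Gw)$ bounds it by $C\|u-w\|\cdot2\rho$. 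The difference of the $\Rr$-terms is bounded by $C\rho^2\|u-w\|$, by the first part of Lemma~\ref{1gewichtethoedlerIII}. Altogether
\[
\|Gu-Gw\|_{\Cc^{4+\alpha,1+\alpha/4}_{1}}\le C(\Omega,\alpha)\,(\rho+\rho^2)\,\|u-w\|_{\Cc^{4+\alpha,1+\alpha/4}_{1}},
\]
and shrinking $\rho$ makes this factor at most $\tfrac12$. The fixed point $u=Gu$ then solves the Willmore flow by Lemma~\ref{lemm:parabwillmorestructure}, and we set $\Cr{c1short}:=\rho/(2C_*)$.

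The main obstacle I anticipate is ensuring that every constant is genuinely independent of $T\in(0,1]$ and of the specific data. This concerns the Schauder constant, the product constant $\Cr{gewichtetprodukt}(T)$, and the ellipticity bounds, since otherwise the smallness thresholds for $\rho$ and $\delta$ would depend on $T$. I would handle this by using the monotonicity of $\Cr{ws}(T)$ and the boundedness of $\Cr{gewichtetprodukt}(T)$ as $T\searrow0$, evaluating both at $T=1$.

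A secondary delicate point is the integral weight $s=1$. Here only $u$ and $\nabla u$ are controlled up to $t=0$, so I must verify that Theorem~\ref{gewichteteexistenz} is applicable with $f\in\Cc^{\alpha,\alpha/4}_{-3}$ and $a_\beta\in\Cc^{\alpha,\alpha/4}_{0}$. No fourth-order compatibility condition is needed, because $s<4$, and the first-order compatibility conditions hold by assumption.
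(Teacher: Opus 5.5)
Your proposal is correct and follows essentially the same route as the paper's proof: a Banach fixed point for the frozen-coefficient map in a closed ball of $\Cc^{4+\alpha,1+\alpha/4}_{1}(Q_T)$ with the prescribed initial and boundary traces, nonemptiness via the biharmonic heat flow, the self-map property from the cubic structure of $\Rr$ together with smallness of the data (the paper's conditions (C1)--(C2)), and a contraction factor of order $\rho+\rho^2$ (the paper's (C3)), with all smallness coming from the data rather than from shortness of $T$. Your restriction $\rho\le 1$ fixes the ellipticity constants at $\lambda=1/4$, $\Lambda=4$, which is a slightly tidier way to make the Schauder constant uniform than the paper's dependence on an upper bound for $\widehat C$; you also correctly impose the traces $\varphi,h$ (rather than $g_0,g_1$) in the iteration set.
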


\begin{proof}
We proceed as in the proof of Theorem~\ref{gewichtetexistkurz},
dividing the argument into several steps.


\medskip\noindent\cb{1} \textbf{Definition of the iteration map and set}

Proceeding similarly as in Step~\cbm 1 of the proof of
Theorem~\ref{gewichtetexistkurz}, we extend the boundary data
and obtain a function
\begin{align*}
\overline u_0 \in \Cc^{4+\alpha,1+\alpha/4}_{1}(Q_T) \quad 
\text{solving} \quad \left\{
\begin{aligned}
\partial_t v &= -\Delta^2 v 
&&\text{in } \Omega\times(0,T],\\
v(\cdot,0) &= u_0 
&&\text{in } \Omega,\\
v &=  \varphi
&&\text{on } \partial\Omega\times[0,T],\\
\partial_\nu v &= h
&&\text{on } \partial\Omega\times[0,T],
\end{aligned}
\right. 
\end{align*}
the corresponding biharmonic heat problem. This function will serve as an element to ensure that the iteration set is non-empty. Moreover, the weighted Schauder estimate of
Theorem~\ref{gewichteteschauder} yields
\begin{align*}
\|\overline u_0\|_{\Cc^{4+\alpha,1+\alpha/4}_{1}(Q_T)}
\le
\Cl{overlineuest}(\alpha, \Omega)\cdot \big(
\|\varphi\|_{\Cc^{4+\alpha,1+\alpha/4}_{1}(\partial\Omega\times(0,T])}
+
\|h\|_{\Cc^{3+\alpha,\frac{3+\alpha}{4}}_{0}(\partial\Omega\times(0,T])}
+
\|u_0\|_{\Cc^{0,1}(\overline\Omega)}
\big).
\end{align*}
To apply the Banach fixed point theorem, we define the iteration map
\[
G_T\colon
\Cc^{4+\alpha,1+\alpha/4}_{1}(Q_T)
\longrightarrow
\Cc^{4+\alpha,1+\alpha/4}_{1}(Q_T)
\]
by setting $v=G_T w$ to be the solution of
\begin{align*}
\left\{
\begin{aligned}
\partial_t v &=
- L(\nabla w)\, D^4 v
- \mathcal R(\nabla w, D^2 w, D^3 w)
&&\text{in } \Omega\times(0,T],\\
v(\cdot,0) &= u_0
&&\text{in } \Omega,\\
v &= \varphi
&&\text{on } \partial\Omega\times[0,T],\\
\partial_\nu v &= h
&&\text{on } \partial\Omega\times[0,T].
\end{aligned}
\right.
\end{align*}
Since $w\in \Cc^{4+\alpha,1+\alpha/4}_{1}(Q_T)$,
Lemma~\ref{1gewichtethoedlerII} implies
\[
L(\nabla w)\in \Cc^{\alpha,\alpha/4}_{0}(Q_T),
\qquad
\mathcal R(\nabla w,D^2w,D^3w)
\in \Cc^{\alpha,\alpha/4}_{-3}(Q_T).
\]
Uniform ellipticity follows from Lemma~\ref{ellipL}.
Hence, by Theorem~\ref{gewichteteexistenz}, there exists a unique solution
\[
v=G_T w \in \Cc^{4+\alpha,1+\alpha/4}_{1}(Q_T).
\]
Therefore, the mapping $G_T$ is well defined.

Since we work with $T\le 1$, Theorem~\ref{gewichteteschauder}
yields the estimate
	\begin{align}
	\begin{aligned}
	\|G_T w\|_{\Cc^{4+\alpha, 1+\alpha/4}_{1}(Q_T)}\le&\ \Cr{ws}(1) \left( \begin{aligned}
	\|&\mathcal R( \nabla w,D^2 w, D^3 w)\|_{\Cc^{\alpha,\alpha/4}_{-3}(Q_T)}+  \|u_0\|_{\Cc^{0,1}(\overline\Omega)} \\
    & +\|\varphi\|_{\Cc^{4+\alpha,1+\alpha/4}_{1}(\partial\Omega\times(0,T])} 
 +
\|h\|_{\Cc^{3+\alpha,\frac{3+\alpha}{4}}_{0}(\partial\Omega\times(0,T])}
\end{aligned}\right).
\end{aligned}\label{1gewichtetschauderv1} 
\end{align}
We now define a non-empty closed subset of
$\Cc^{4+\alpha,1+\alpha/4}_{1}(Q_T)$,
characterized by the boundary conditions and a uniform bound on the norm:
\begin{align}
\M_T :=
\left\{
w \in \Cc^{4+\alpha,1+\alpha/4}_{1}(Q_T)
\;\left|\;
\begin{aligned}
& \|w\|_{\Cc^{4+\alpha,1+\alpha/4}_{1}(Q_T)} \le \widehat C
,\\
& w(\cdot,0)=u_0, 
\ w|_{\partial\Omega}=g_0,
\ \partial_\nu w|_{\partial\Omega}=g_1
\end{aligned}
\right\}\right.. \label{eq:MsetC}
\end{align}
Here, $\widehat C>0$ is a constant to be specified below by conditions~\eqref{eq:1C1} and \eqref{eq:1C3}.
The set $\M_T$ is non-empty, since for $\Cr{c1short}$ sufficiently small,
it contains the function $\overline u_0$ constructed above.

\medskip\noindent\cb{2} \textbf{$G$ is a self-map}

Let $w\in \M_T$.
Then, by Lemma~\ref{1gewichtethoedlerII} we get
\begin{align*}
\|L_{\beta_1,\beta_2}(\nabla w)\|_{\Cc^{\alpha,\alpha/4}_{0}(Q_T)}
\le
\Cr{gewhoelderI}\big(1+\widehat C^4\big).
\end{align*}
Moreover, uniform ellipticity follows from
\(\|w\|_{\Cc^{4+\alpha,1+\alpha/4}_{1}(Q_T)} \le \widehat C\):
\begin{align*}
\frac{|\xi|^4}{(1+\widehat C^2)^2}
\le
\sum_{k+\ell=4}
L_{k\ell}(\nabla w)\,\xi_1^k\xi_2^\ell
\le
4|\xi|^4.
\end{align*}
Using \eqref{1gewichtetschauderv1} with $T\le1$
and Lemma~\ref{1gewichtethoedlerII},
we obtain
\begin{align*}
      \|G_Tw\|_{C^{4+\alpha,1+\alpha/4}_{1}(Q_T)} 
      \le&\ \Cr{ws}(1) \left( \begin{aligned}
	& \Cr{1gewhoelderI} (1+    \|w\|_{C^{4+\alpha,1+\alpha/4}_{1}(Q_T)})^{k_H}  \|w\|_{C^{4+\alpha,1+\alpha/4}_{1}(Q_T)}^3+ \|u_0\|_{\Cc^{0,1}(\overline\Omega)}
    \\
    & +\|\varphi\|_{\Cc^{4+\alpha,1+\alpha/4}_{1}(\partial\Omega\times(0,T])} 
 +
\|h\|_{\Cc^{3+\alpha,\frac{3+\alpha}{4}}_{0}(\partial\Omega\times(0,T])}
\end{aligned}\right)\\
     \le&\ \Cl{1Gws} \left( \begin{aligned}
	& \Cr{1gewhoelderI} (1+    \widehat C)^{k_H} \widehat C^3+ \|u_0\|_{\Cc^{0,1}(\overline\Omega)}
    \\
    & +\|\varphi\|_{\Cc^{4+\alpha,1+\alpha/4}_{1}(\partial\Omega\times(0,T])} 
 +
\|h\|_{\Cc^{3+\alpha,\frac{3+\alpha}{4}}_{0}(\partial\Omega\times(0,T])}
\end{aligned}\right).
 \end{align*}
where the constant $\Cr{1Gws}$ depends only on $\Omega$, $\alpha$, and an upper bound for $\widehat C$.
From now on, we choose 
$\widehat C$
 such that the nonlinear part contributes at most 
$\tfrac{1}{2}\widehat C$. So we assume that $\widehat C$ satisfies
\begin{align}
\Cr{1Gws}\Cr{1gewhoelderI}
(1+\widehat C)^{k_H}
\widehat C^3
\le
\frac{\widehat C}{2}.
\label{eq:1C1}
\tag{C1}
\end{align}
Furthermore, we impose the smallness condition on the parabolic boundary values
\begin{align}
\|u_0\|_{\Cc^{0,1}(\overline\Omega)}
+\|\varphi\|_{\Cc^{4+\alpha,1+\alpha/4}_{1}(\partial\Omega\times(0,T])} 
 +
\|h\|_{\Cc^{3+\alpha,\frac{3+\alpha}{4}}_{0}(\partial\Omega\times(0,T])}
\le
\frac{\widehat C}{2\Cr{1Gws}}.
\label{eq:1C2}
\tag{C2}
\end{align}
Later, after imposing an additional condition on $\widehat C$
(cf.~\eqref{eq:1C3}), we will define
\(
\Cr{c1short} := \frac{\widehat C}{2\,\Cr{1Gws}},
\)
which is consistent with the smallness assumption in the theorem. Under conditions \eqref{eq:1C1} and \eqref{eq:1C2},
we obtain
\[
\|G_T w\|_{\Cc^{4+\alpha,1+\alpha/4}_{1}(Q_T)}
\le
\widehat C.
\]
Hence, $G_T$ maps $\M_T$ into itself.

\medskip\noindent\cb{3} \textbf{$G$ is a contraction}

Let $u,w\in\M_1$.
Then, arguing as in the contraction step of
Theorem~\ref{gewichtetexistkurz},
the difference $G_Tu-G_Tw$ solves the linear problem
\eqref{eq:gewichtetmodi}.
The operator
\[
\tfrac12\big(L(\nabla u)+L(\nabla w)\big)
\in \Cc^{\alpha,\alpha/4}_{0}(Q_T)
\]
is uniformly elliptic with the same time-independent constants
$\lambda$ and $\Lambda$ as before  for $L(\nabla u)$ and $L(\nabla w)$.
Moreover, since $G_Tu,G_Tw\in\M_T$, we obtain
\begin{align}
\begin{aligned}
 \|D^4(G_Tu+\ G_Tw)\|_{\Cc^{\alpha,\alpha/4}_{-3}(Q_T)}
 \le&\	\|G_Tu\|_{\Cc^{4+\alpha, 1+\alpha/4}_{0}(Q_T)}\ +\|G_Tw\|_{\Cc^{4+\alpha, 1+\alpha/4}_{0}(Q_T)}\\
	\overset{
 }\le &\ 2 \hat C. 
\end{aligned}\label{eq:1g+w}
\end{align}

Applying the weighted Schauder estimate to \eqref{eq:gewichtetmodi}
yields the estimate
\begin{align*}
	\|G_Tu- &\ G_Tw\|_{\Cc^{4+\alpha, 1+\alpha/4}_{1}(Q_T)}\\
	\le&\ \Cr{ws}(1) \cdot\left\|\begin{aligned}
& - \frac{1}{2}\big(L(\nabla u)-L(\nabla w)\big) \big( D^4 (G_Tu+G_Tw)\big)\\
	&\ - \mathcal R\big(\nabla u, D^2 u, D^3 u\big)+\mathcal R\big(\nabla w, D^2 w, D^3 w\big)
\end{aligned}\right\|_{\Cc^{\alpha,\alpha/4}_{-3}(Q_T)}\\
\overset{\mathclap{\eqref{eq:simpleprod}}}{\le}&\ \frac{\Cr{ws}(1)}{2}  \cdot\left\| \big(L(\nabla u)-L(\nabla w)\big)\right\|_{\Cc^{\alpha,\alpha/4}_{0}(Q_T)} \cdot \left\| \big( D^4 (G_Tu+G_Tw)\big)\right\|_{\Cc^{\alpha,\alpha/4}_{-3}(Q_T)}\\
	&\	 + \Cr{ws}(1)\left\|\Rr( \nabla u,D^2 u, D^3 u)-\Rr( \nabla w,D^2 w, D^3 w)\right \|_{\Cc^{\alpha,\alpha/4}_{-3}(Q_T)}
\end{align*}
with a constant $\Cr{ws}(1)$ depending only on $\Omega$, $\alpha$,
and an upper bound for $\widehat C$.

By Lemma~\ref{1gewichtethoedlerIII} and the fact that
$u,w\in \M_T$, so that
\(u,w \in M_T\)
, so that
\[\|u\|_{\Cc^{4+\alpha, 1+\alpha/4}_{1} (Q_T)},\|w\|_{\Cc^{4+\alpha, 1+\alpha/4}_{1} (Q_T)}\le \widehat C\]
we obtain the estimates
\begin{align*}
\big\|\Rr( \nabla u,D^2 u, D^3 u)& -\Rr( \nabla w,D^2 w, D^3 w)\big \|_{\Cc^{\alpha,\alpha/4}_{-3}(Q_T)} \\
&
\le 
\Cr{1hoelderII} (1+ \widehat C)^{k'_H} 
\widehat C^2
 \cdot
 \|u-w\|_{\Cc^{4+\alpha, 1+\alpha/4}_{1} (Q_T)}, 
 \\
\sum_{k+\ell=4}\big\|L_{k\ell}(\nabla u)&-L_{k\ell}(\nabla w)\big\|_{\Cc^{\alpha,\alpha/4}_{0}( Q_T)}\\
&\ 
\le 
\Cr{1hoelderII} (1+ \widehat C)^{k'_H} 
 \cdot
 \|u-w\|_{\Cc^{4+\alpha, 1+\alpha/4}_{1} (Q_T)}.
\end{align*}
Following, Using  \eqref{eq:1g+w}, we obtain
\begin{align*}
\|G_Tu-G_Tw\|_{\Cc^{4+\alpha,1+\alpha/4}_{1}(Q_T)}
\le &
\Cr{ws}(1)\Cr{1hoelderII}
(1+\widehat C)^{k'_H}
\widehat C
\cdot
\|u-w\|_{\Cc^{4+\alpha,1+\alpha/4}_{1}(Q_T)}\\
&
+
\Cr{ws}(1)\Cr{1hoelderII}
(1+\widehat C)^{k'_H}
\widehat C^2
\cdot
\|u-w\|_{\Cc^{4+\alpha,1+\alpha/4}_{1}(Q_T)}\\
\le&
\Cr{ws}(1)\Cr{1hoelderII}
(1+\widehat C)^{k'_H}
(\widehat C+\widehat C^2)
\cdot
\|u-w\|_{\Cc^{4+\alpha,1+\alpha/4}_{1}(Q_T)}.
\end{align*}
Therefore, we impose the additional condition
\begin{align}
\Cr{ws}(1)\Cr{1hoelderII}
(1+\widehat C)^{k'_H}
(\widehat C+\widehat C^2)
\le \tfrac12.
\label{eq:1C3}\tag{C3}
\end{align}
Under conditions \eqref{eq:1C1}–\eqref{eq:1C3}, we obtain
\[
\|G_Tu-G_Tw\|_{\Cc^{4+\alpha,1+\alpha/4}_{1}(Q_T)}
\le
\tfrac12
\|u-w\|_{\Cc^{4+\alpha,1+\alpha/4}_{1}(Q_T)}.
\]
Hence, $G_T$ is a contraction on $\M_T$.

\medskip\noindent\cb{4} \textbf{Application of the Fixed Point Theorem}

Assume that \eqref{eq:1C1}–\eqref{eq:1C3} and
\[
\|u_0\|_{\Cc^{0,1}(\overline\Omega)}
+
\|\varphi\|_{\Cc^{4+\alpha,1+\alpha/4}_{1}(\partial\Omega\times(0,T])}
+
\|h\|_{\Cc^{3+\alpha,\frac{3+\alpha}{4}}_{0}(\partial\Omega\times(0,T])}
\le
\frac{\widehat C}{2\,\Cr{1Gws}}
=:\Cr{c1short},
\]
cf.~\eqref{eq:1C2}.
Under this smallness condition,
the mapping
\[
G_T:\M_T\to\M_T
\]
is a contraction. 
Hence, by the Banach fixed point theorem,
there exists a unique fixed point
\[
v^\ast\in\M_T
\subset
\Cc^{4+\alpha,1+\alpha/4}_{1}(Q_T)
\quad
\text{
such that}
\quad
v^\ast = G_T v^\ast.
\]

By construction of the iteration map,
the identity $v^\ast = G_T v^\ast$
implies that $v^\ast$ solves the original
Willmore flow problem with time-dependent Dirichlet boundary conditions
in the space $\M_T$. We emphasize that uniqueness in the space
$\Cc^{4+\alpha,1+\alpha/4}_{1}(Q_T)$
is not established here, since time-interpolation techniques
are not available in this setting.
\end{proof}

The below corollary shows that the Willmore flow can be initiated
solely under a smallness condition on the Lipschitz norm of the
initial graph $u_0$.
No smallness is required for the boundary data $g_0$ and $g_1$ up to the compatibility conditions. In general, large boundary norms merely reduce the maximal existence time $T$.
\begin{corollary}[Short-Time Existence for time-independent Dirichlet boundary data]
\label{shorttimeexistenceCfixed}
Let $0<\alpha<1$.
Assume
\[
u_0 \in \Cc^{0,1}(\overline\Omega), 
\qquad
g_0 \in \Cc^{4+\alpha}(\partial\Omega),
\qquad
g_1 \in \Cc^{3+\alpha}(\partial\Omega),
\]
and that the compatibility conditions
\[
g_0(x)=u_0(x),
\qquad
g_1(x)=\partial_\nu u_0(x),
\qquad x\in\partial\Omega,
\]
are satisfied.
If, in addition,
\[
\|u_0\|_{\Cc^{0,1}(\overline\Omega)}<\tfrac 13\Cr{c1short},
\]
with 
\(
\Cr{c1short}=\Cr{c1short}(\Omega,\alpha)>0
\) from Theorem \ref{shorttimeexistenceC}
 then there exists a time
\[
T=T\!\left(\Omega,\alpha,
\|g_0\|_{\Cc^{4+\alpha}(\partial\Omega)},
\|g_1\|_{\Cc^{3+\alpha}(\partial\Omega)}\right)\in(0,1]
\]
and a solution
\[
u \in \Cc^{4+\alpha,1+\alpha/4}_{1}(Q_T)
\]
of the Willmore flow problem \eqref{Willmore} with time-independent boundary data.
\end{corollary}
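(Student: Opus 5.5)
The idea is to deduce the corollary directly from Theorem~\ref{shorttimeexistenceC}, applied with the time-independent extensions
\[
\varphi(x,t):=g_0(x),\qquad h(x,t):=g_1(x),\qquad (x,t)\in\partial\Omega\times(0,T].
\]
The compatibility conditions of Theorem~\ref{shorttimeexistenceC} then reduce to those assumed in the corollary. So the only task is to verify the smallness hypothesis
\[
\|u_0\|_{\Cc^{0,1}}+\|\varphi\|_{\Cc^{4+\alpha,1+\alpha/4}_{1}}+\|h\|_{\Cc^{3+\alpha,\frac{3+\alpha}4}_{0}}<\Cr{c1short}
\]
by choosing $T$ small. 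This is consistent with the statement: $\Cr{c1short}$ does not depend on $T\le1$, and $T$ is allowed to depend on the boundary norms. Since $\|u_0\|_{\Cc^{0,1}}<\frac13\Cr{c1short}$, it suffices to show that each boundary norm is $<\frac13\Cr{c1short}$ for $T$ small.

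\textbf{Step 1: weighted norm of $\varphi$.} Because $\varphi$ is independent of $t$, all time derivatives and all temporal Hölder seminorms vanish. The weighted norm $\|\varphi\|_{\Cc^{4+\alpha,1+\alpha/4}_1(\partial\Omega\times(0,T])}$ therefore consists of three pieces:
\begin{itemize}
\item[(i)] the unweighted part $\|g_0\|_{C^0}+\|\partial_\tau g_0\|_{C^0}$, where $\partial_\tau$ denotes the tangential derivative;
\item[(ii)] the weighted sup terms $\sup_t t^{(j-1)/4}\|\partial_\tau^j g_0\|_{C^0}\le T^{1/4}\|g_0\|_{C^{4+\alpha}}$ for $2\le j\le4$;
\item[(iii)] the top seminorm term $\sup_t t^{(3+\alpha)/4}[\partial_\tau^4 g_0]_{\alpha}\le T^{(3+\alpha)/4}\|g_0\|_{C^{4+\alpha}}$.
\end{itemize}
Pieces (ii) and (iii) are small for small $T$. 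Piece (i), however, does not carry a $t$-weight, so its smallness must come from the compatibility conditions.

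Since $g_0=u_0$ on $\partial\Omega$, we have $\|g_0\|_{C^0(\partial\Omega)}\le\|u_0\|_{C^0}$ and $\|\partial_\tau g_0\|_{C^0}\le\|\nabla u_0\|_{L^\infty}$. Similarly, in the $h$-norm with $s=0$, the only unweighted quantity is $\|g_1\|_{C^0(\partial\Omega)}=\|\partial_\nu u_0\|_{C^0}\le\|\nabla u_0\|_{L^\infty}$. All higher tangential derivatives of $g_1$ carry weights $t^{j/4}$ or $t^{(3+\alpha)/4}$, so they are bounded by $T^{\min(1/4,\ldots)}\|g_1\|_{C^{3+\alpha}}$.

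\textbf{Step 2: the main obstacle — bookkeeping of the constants.} The unweighted contributions are each controlled by $\|u_0\|_{\Cc^{0,1}}$, which yields a total bound of roughly $3\|u_0\|_{\Cc^{0,1}}$. This is exactly where the factor $\frac13$ in the assumption is meant to enter. I would make this precise in either of two ways.
\begin{itemize}
\item[(a)] Note that the $C^{0,1}$-norm of $u_0$ dominates the combined boundary quantity $\|g_0\|_{C^0}+\|\partial_\tau g_0\|_{C^0}+\|g_1\|_{C^0}$, giving the estimate
\[
\|u_0\|_{\Cc^{0,1}}+(\text{unweighted parts})\le 2\|u_0\|_{\Cc^{0,1}}<\tfrac23\Cr{c1short}.
\]
\item[(b)] Alternatively, revisit the proof of Theorem~\ref{shorttimeexistenceC}. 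There the data norms enter only through the Schauder bound for $\overline u_0$ and condition \eqref{eq:1C2}, so unweighted boundary norms bounded by $\|u_0\|_{\Cc^{0,1}}$ can be absorbed directly.
\end{itemize}

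\textbf{Step 3: choice of $T$.} Next, choose $T\in(0,1]$ with
\[
T^{1/4}\bigl(\|g_0\|_{C^{4+\alpha}}+\|g_1\|_{C^{3+\alpha}}\bigr)<\Cr{c1short}-3\|u_0\|_{\Cc^{0,1}}.
\]
One subtlety is that the right-hand side depends on $u_0$, whereas the statement claims that $T$ depends only on $\Omega$, $\alpha$ and the boundary norms. I would handle this by using the strict inequality with margin, e.g. replacing $\tfrac13$ by a fixed fraction. Failing that, one can note that the weighted terms can be absorbed into $\widehat C/2$ in \eqref{eq:1C2} with a fixed portion reserved for them, so that $T$ becomes independent of $u_0$.

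With this choice of $T$, Theorem~\ref{shorttimeexistenceC} yields $u\in\Cc^{4+\alpha,1+\alpha/4}_1(Q_T)$. Since $\varphi=g_0$ and $h=g_1$, this $u$ solves \eqref{Willmore}.
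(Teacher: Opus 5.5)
Your argument is the paper's: extend $g_0,g_1$ constantly in time, note that the only unweighted pieces of the two boundary norms are $\|g_0\|_{C^0}+\|\partial_\tau g_0\|_{C^0}$ and $\|g_1\|_{C^0}$, each bounded by $\|u_0\|_{C^{0,1}(\overline\Omega)}$ through the compatibility conditions (whence the total $3\|u_0\|_{C^{0,1}(\overline\Omega)}$ and the factor $\tfrac13$), observe that every other term carries at least $T^{1/4}$, and shrink $T$. Only your option (a) is off---$\|\partial_\tau g_0\|_{C^0}$ and $\|g_1\|_{C^0}$ can both equal $\|\nabla u_0\|_{L^\infty}$ at different boundary points (e.g.\ $u_0=x_1$ on the unit disk), so $2\|u_0\|_{C^{0,1}}$ is not a valid bound---but your Step 3 uses the correct $3\|u_0\|_{C^{0,1}}$, and your remark that $T$ then also depends on the margin below the threshold of Theorem~\ref{shorttimeexistenceC} is a fair point that the paper's proof passes over.
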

\begin{proof}
We argue as in Step~\cbm{1} (Boundary Values Discussion)
in the proof of Theorem~\ref{gewichtetexistkurz}.
Since the boundary data are time–independent, we extend them trivially to
$\partial\Omega\times(0,1]$ by setting
\[
\overline g_0(x,t):=g_0(x), 
\qquad 
\overline g_1(x,t):=g_1(x),
\quad x\in\partial\Omega .
\]

Hence all time derivatives of $\overline g_0$ and $\overline g_1$ vanish.
In particular, the temporal Hölder seminorms are zero and we obtain
\begin{align*}
\|\overline g_0\|_{\Cc^{4+\alpha,1+\alpha/4}_{1}(\partial\Omega\times(0,T])}
&=
T^{\frac{3+\alpha}{4}}
\sum_{|\beta|=4}[D^\beta_x g_0]_{\Cc^\alpha(\partial\Omega)}
+
\sum_{1\le |\beta|\le4}
T^{\frac{|\beta|-1}{4}}
\|D^\beta_x g_0\|_{C^0(\partial\Omega)}
+
\|g_0\|_{C^0(\partial\Omega)},\\
\|\overline g_1\|_{\Cc^{3+\alpha,(3+\alpha)/4}_{0}(\partial\Omega\times(0,T])}
&=
T^{\frac{3+\alpha}{4}}
\sum_{|\beta|=3}[D^\beta_x g_1]_{\Cc^\alpha(\partial\Omega)}
+
\sum_{0\le |\beta|\le3}
T^{\frac{|\beta|}{4}}
\|D^\beta_x g_1\|_{C^0(\partial\Omega)} .
\end{align*}

Using the compatibility conditions and the assumption $T<1$, we deduce
\begin{align*}
\|\overline g_0\|_{\Cc^{4+\alpha,1+\alpha/4}_{1}(\partial\Omega\times(0,T])}
&\le
T^{1/4}
\left(
\sum_{|\beta|=4}[D^\beta_x g_0]_{\Cc^\alpha(\partial\Omega)}
+
\sum_{2\le |\beta|\le4}\|D^\beta_x g_0\|_{C^0(\partial\Omega)}
\right)
+
\|u_0\|_{\Cc^{0,1}(\overline\Omega)},\\
\|\overline g_1\|_{\Cc^{3+\alpha,(3+\alpha)/4}_{0}(\partial\Omega\times(0,T])}
&\le
T^{1/4}
\left(
\sum_{|\beta|=3}[D^\beta_x g_1]_{\Cc^\alpha(\partial\Omega)}
+
\sum_{1\le |\beta|\le3}\|D^\beta_x g_1\|_{C^0(\partial\Omega)}
\right)
+
\|u_0\|_{\Cc^{0,1}(\overline\Omega)} .
\end{align*}
Consequently, choosing $\varphi=g_0$, $h=g_1$, and $T>0$ sufficiently small,
we obtain
\[
\|u_0\|_{\Cc^{0,1}(\overline\Omega)}
+
\|\varphi\|_{\Cc^{4+\alpha,1+\alpha/4}_{1}(\partial\Omega\times(0,T])}
+
\|h\|_{\Cc^{3+\alpha,(3+\alpha)/4}_{0}(\partial\Omega\times(0,T])}
<
\Cr{c1short}.
\]
Hence the assumptions of Theorem~\ref{shorttimeexistenceC} are satisfied,
which yields the desired short–time solution.
\end{proof}

\section{Global Existence} \label{sec:globexist}

In this chapter, we want to investigate global existence of the graphical Willmore flow solutions. Especially, we will need a lower bound on short existence time in order to be able to extend the local solution by this fixed time. In this way, we will prevent blow-ups. In what follows we always assume the compatibility condition \eqref{eq:wkb} for $u_0,g_0$ and $g_1$.

\begin{lemma}\label{Lemma:schranken}
Let $\alpha\in(0,1)$.
Then there exist constants
\[
\Cl{delta}=\Cr{delta}(\alpha,\Omega)>0,
\qquad
\Cl{11}=\Cr{11}(\alpha,\Omega)>0,
\]
such that the following holds.
If
\begin{align*}
\|u_0\|_{\Cc^{0,1}(\overline\Omega)}
+
\|g_0\|_{\Cc^{4+\alpha}(\partial\Omega)}
+
\|g_1\|_{\Cc^{3+\alpha}(\partial\Omega)}
\le \Cr{delta},
\end{align*}
then there exists a solution
\[
u\in \Cc^{4+\alpha,1+\alpha/4}_{1}(Q_{1})
\]
of the Willmore flow problem \eqref{Willmore} with $T=1$ satisfying
\begin{align}
\|u\|_{\Cc^{4+\alpha,1+\alpha/4}_{1}(Q_{1})}
\le
\Cr{11}
\Big(
\|u_0\|_{\Cc^{0,1}(\overline\Omega)}
+
\|g_0\|_{\Cc^{4+\alpha}(\partial\Omega)}
+
\|g_1\|_{\Cc^{3+\alpha}(\partial\Omega)}
\Big).
\label{eq:klein}
\end{align}
\end{lemma}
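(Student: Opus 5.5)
The plan is to apply Theorem~\ref{shorttimeexistenceC} directly with $T=1$, taking the time-independent data $\varphi:=\overline g_0$, $h:=\overline g_1$. The difference from Corollary~\ref{shorttimeexistenceCfixed} is that we no longer shrink $T$ to make the boundary norms small. Instead, smallness of $\|g_0\|_{\Cc^{4+\alpha}(\partial\Omega)}+\|g_1\|_{\Cc^{3+\alpha}(\partial\Omega)}$ is assumed outright.

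The first step is to compute, exactly as in the proof of Corollary~\ref{shorttimeexistenceCfixed} but with $T=1$, that
\[
\|\overline g_0\|_{\Cc^{4+\alpha,1+\alpha/4}_{1}(\partial\Omega\times(0,1])}\le C(\Omega)\|g_0\|_{\Cc^{4+\alpha}(\partial\Omega)},\qquad
\|\overline g_1\|_{\Cc^{3+\alpha,(3+\alpha)/4}_{0}(\partial\Omega\times(0,1])}\le C(\Omega)\|g_1\|_{\Cc^{3+\alpha}(\partial\Omega)}.
\]
This holds because all temporal seminorms vanish and the weights $t^{(\cdot)/4}\le1$. Choosing $\Cr{delta}:=\Cr{c1short}/(2(1+C(\Omega)))$ then puts the data into the smallness regime of Theorem~\ref{shorttimeexistenceC}, and that theorem gives a solution $u\in\M_1\subset\Cc^{4+\alpha,1+\alpha/4}_1(Q_1)$.

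The remaining point is the linear bound \eqref{eq:klein}. Membership in $\M_1$ only gives $\|u\|\le\widehat C$, which is a fixed constant rather than something proportional to the data. To get the proportional bound I would write $D:=\|u_0\|_{\Cc^{0,1}}+\|g_0\|_{\Cc^{4+\alpha}}+\|g_1\|_{\Cc^{3+\alpha}}$ and use the fixed-point identity $u=G_1u$ together with the Schauder estimate \eqref{1gewichtetschauderv1} and Lemma~\ref{1gewichtethoedlerII}:
\[
\|u\|\le \Cr{1Gws}\big(\Cr{1gewhoelderI}(1+\widehat C)^{k_H}\widehat C^2\,\|u\|+C(\Omega)D\big).
\]
Here one factor $\|u\|$ is kept out of the cubic term, and the other two are bounded by $\widehat C$. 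Condition \eqref{eq:1C1} gives $\Cr{1Gws}\Cr{1gewhoelderI}(1+\widehat C)^{k_H}\widehat C^2\le\tfrac12$. The nonlinear term can therefore be absorbed into the left-hand side, leaving $\|u\|\le 2\Cr{1Gws}C(\Omega)D=:\Cr{11}D$.

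I expect this absorption to be the main obstacle, and it is a delicate rather than a hard one. The issue is that the Schauder constant $\Cr{ws}(1)$ depends on ellipticity, which is controlled only through $\widehat C$, so $\widehat C$ has to be fixed before $\Cr{delta}$ is chosen. Since $\widehat C$ depends only on $(\alpha,\Omega)$ through \eqref{eq:1C1} and \eqref{eq:1C3}, the resulting constants $\Cr{delta}$ and $\Cr{11}$ depend only on $(\alpha,\Omega)$, as the statement claims.
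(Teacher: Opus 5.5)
Your proposal is correct and follows essentially the same route as the paper. You obtain existence on $Q_1$ from Theorem~\ref{shorttimeexistenceC} under smallness of the time-independent data, then apply the weighted Schauder estimate to the fixed point $u=G_1u$ and absorb the superlinear remainder bounded via Lemma~\ref{1gewichtethoedlerII}. The only difference is in the absorption step: the paper introduces an auxiliary smaller radius and reruns the fixed-point argument in a shrunken iteration set, whereas you note that condition (C1) from the proof of Theorem~\ref{shorttimeexistenceC}, divided by $\widehat C$, already gives the factor $\tfrac12$. That is a valid and slightly more economical shortcut.
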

\begin{proof}
\emph{Idea of the proof.}
Starting from the short-time existence result for time-independent
boundary data, we derive a weighted Schauder estimate for the solution.
The key observation is that the nonlinear terms grow only polynomially
in the weighted Hölder norm. Using this structure of the nonlinear terms and the bound on the Schauder
constants, we show that the nonlinear contribution can be absorbed
for sufficiently small data.
This yields the desired a priori estimate.

Assume first that $\Cr{delta}\le \Cr{c1short}(\alpha,\Omega)$.
Arguing as in Step~\cbm{1} (Boundary Values Discussion)
in the proof of Theorem~\ref{gewichtetexistkurz},
we extend the time-independent Dirichlet boundary data trivially by
\[
\overline g_0(x,t):=g_0(x),\qquad
\overline g_1(x,t):=g_1(x).
\]
Since the boundary data do not depend on time, the weighted norms satisfy
\[
\|\overline g_0\|_{\Cc^{4+\alpha,1+\alpha/4}_{1}(\partial\Omega\times(0,1])}
=
\|g_0\|_{\Cc^{4+\alpha}(\partial\Omega)},
\qquad
\|\overline g_1\|_{\Cc^{3+\alpha,(3+\alpha)/4}_{0}(\partial\Omega\times(0,1])}
=
\|g_1\|_{\Cc^{3+\alpha}(\partial\Omega)} .
\]
Hence Theorem~\ref{shorttimeexistenceCfixed} yields a solution
\[
u\in \Cc^{4+\alpha,1+\alpha/4}_{1}(Q_{1})
\]
of \eqref{Willmore} for \(T=1\)
\begin{align}
\left\{
\begin{aligned}
\partial_t u
&=
- L(\nabla u)\,D^4u
- \mathcal R(\nabla u,D^2u,D^3u),
&& \text{in } \Omega\times(0,1],\\
u(\cdot,0) &= u_0,
&& \text{in } \Omega,\\
u &= g_0,
&& \text{on } \partial\Omega\times[0,1],\\
\partial_\nu u &= g_1,
&& \text{on } \partial\Omega\times[0,1].
\end{aligned}
\right.
\tag{G} \label{eq:newg}
\end{align}
Moreover, we have the estimate
\begin{align*}
\|u\|_{\Cc^{4+\alpha,1+\alpha/4}_{1}(Q_{1})}
&\le
\Cr{overlineC}\bigl(
\Omega,\,
\|g_0\|_{\Cc^{4+\alpha}(\partial\Omega)},\,
\|g_1\|_{\Cc^{3+\alpha}(\partial\Omega)},\,
\|u_0\|_{\Cc^{0,1}(\overline\Omega)}
\bigr)
\\
&\le \Cl{12},
\end{align*}
where $\Cr{12}=\Cr{12}(\alpha,\Omega)$, since $\Cr{delta}\le \Cr{c1short}$ and the data are
assumed to be $\Cr{delta}$-small.
Since $u \in \Cc^{4+\alpha,1+\alpha/4}_{1}(Q_{1})$,
Lemma~\ref{1gewichtethoedlerII} implies
\[
\mathcal R(\nabla u,D^2u,D^3u)\in \Cc^{\alpha,\alpha/4}_{-3}(Q_1).
\]
Together with Lemma~\ref{ellipL}, this allows us to apply
Theorem~\ref{gewichteteschauder} and obtain
\begin{align*}
\|u\|_{\Cc^{4+\alpha,1+\alpha/4}_{1}(Q_1)}
\le
\Cr{ws}(1)
\left(
	\begin{aligned}
	    &\ \|\mathcal R(\nabla u,D^2u,D^3u)\|_{\Cc^{\alpha,\alpha/4}_{-3}(Q_1)}
+\|g_0\|_{\Cc^{4+\alpha}(\partial \Omega)} \\
&\ + \|g_1\|_{\Cc^{3+\alpha}(\partial \Omega)} + \|u_0\|_{\Cc^{0,1}(\overline\Omega)}
	\end{aligned}
\right).
\end{align*}

Here $\Cr{ws}(1)$ only depends on
the operator in \eqref{eq:newg} and on $\Omega$,
and on an upper bound for $\|u\|_{\Cc^{4+\alpha,1+\alpha/4}_{1}(Q_{1})}$.
Indeed, by Lemma~\ref{1gewichtethoedlerII} we can control the ellipticity
constant and the coefficients in terms of this norm:
\begin{align*}
\frac{1}{\bigl(1+\|u\|_{\Cc^{4+\alpha,1+\alpha/4}_{1}(Q_{1})}^2\bigr)^2}
\le 
\lambda,
\qquad
\|L(\nabla u)\|_{\Cc^{\alpha,\alpha/4}_{0}(Q_1)}
\le
\Cr{gewhoelderI}\bigl(1+\|u\|_{\Cc^{4+\alpha,1+\alpha/4}_{1}(Q_{1})}^4\bigr).
\end{align*}
In particular, arguing as in the proof of Theorem~\ref{shorttimeexistenceC},
we may use Lemma~\ref{1gewichtethoedlerII} to estimate
\[
\|\mathcal R(\nabla u,D^2u,D^3u)\|_{\Cc^{\alpha,\alpha/4}_{-3}(Q_1)}
\le
\Cr{gewhoelderI}
\bigl(1+\|u\|_{\Cc^{4+\alpha,1+\alpha/4}_{1}(Q_1)}\bigr)^{k_H}
\|u\|_{\Cc^{4+\alpha,1+\alpha/4}_{1}(Q_1)}^{3}.
\]
Consequently,
\begin{align*}
	\|u \|_{\Cc^{4+\alpha, 1+\alpha/4}_1  (Q_{T_1} 	 )} 
	& \le \Cr{ws}(1) 
	\left(
	\begin{aligned}
	     	\Cr{gewhoelderI} &
\Big(1+\|u\|_{\Cc^{4+\alpha, 1+\alpha/4}_1 (Q_{1})} \Big)^{k_H}
 \|u\|_{\Cc^{4+\alpha, 1+\alpha/4}_1 (Q_{1})}^3 \\
&
+\|g_0\|_{\Cc^{4+\alpha}(\partial \Omega)} + \|g_1\|_{\Cc^{3+\alpha}(\partial \Omega)} + \|u_0\|_{\Cc^{0,1}(\overline\Omega)}
	\end{aligned}
\right)\\
	&
	\le \Cr{SchauderT0}
	\left(
	\begin{aligned}
	     	 &
\Big(1+\|u\|_{\Cc^{4+\alpha, 1+\alpha/4}_1 (Q_{1})} \Big)^{k_H}
\|u\|_{\Cc^{4+\alpha, 1+\alpha/4}_1 (Q_{1})}^3  \\
&
+\|g_0\|_{\Cc^{4+\alpha}(\partial \Omega)} + \|g_1\|_{\Cc^{3+\alpha}(\partial \Omega)} + \|u_0\|_{\Cc^{0,1}(\overline\Omega)}
	\end{aligned}
\right) .
\end{align*}
Since 
\(
\|u\|_{\Cc^{4+\alpha,1+\alpha/4}_{1}(Q_{1})}
\le
\Cr{12}(\alpha,\Omega),
\)
the Schauder constants \(
\Cl{SchauderT0}=\Cr{SchauderT0}(\alpha,\Omega)
\) remain uniformly bounded. 

Furthermore, there exists a constant
\(
\Cl{25}=\Cr{25}(\alpha,\Omega)
\)
such that,
\begin{align}
\|u\|_{\Cc^{4+\alpha,1+\alpha/4}_{1}(Q_1)}
\le \Cr{25}
\ \Rightarrow\
\Cr{SchauderT0}
\bigl(1+\|u\|_{\Cc^{4+\alpha,1+\alpha/4}_{1}(Q_1)}\bigr)^{k_H}
\|u\|_{\Cc^{4+\alpha,1+\alpha/4}_{1}(Q_1)}^2
\le
\frac{1}{2},
\label{eq:cstern} \tag{C1}
\end{align}
where the constant $\Cr{SchauderT0}$ is a Schauder constant and depends only on $\alpha$.

Next, we reconsider the proof of the short-time existence
Theorem~\ref{shorttimeexistenceC} and modify the iteration set
$\M_T$ defined in \eqref{eq:MsetC} by
\begin{align}
	\M_1:=\left\{
w	 \in \Cc^{4+\alpha,1+\alpha/4}_1(Q_1) 
\left|  \begin{aligned} &  \|w\|_{\Cc^{4+\alpha, 1+\alpha/4}_{1} (Q_1)} \le  \tfrac 12\Cr{25} ,\\
& w(\cdot,0)=u_0, 
\ w|_{\partial\Omega}=g_0,
\ \partial_\nu w|_{\partial\Omega}=g_1
	\end{aligned} \right. \right\}. \label{eq:M1def}
\end{align}

Arguing as in Theorem~\ref{shorttimeexistenceC}, we obtain a fixed
point $u$ of the mapping $G_T:\M_1\to\M_1$, provided that $\Cr{25}$ and therefore  \[\Cr{delta}\le \min(\Cr{25}/\Cr{overlineuest},\Cr{25})\]
are chosen sufficiently small.
In particular, we require $\Cr{25}$ to be smaller than the bound
appearing in \eqref{eq:cstern}, namely
\begin{align}
    \Cr{25}\le \widehat C,
    \label{eq:const2} \tag{C2}
\end{align}
where $\widehat C=\widehat C(\alpha,\Omega)$ is the constant introduced
in the proof of Theorem~\ref{shorttimeexistenceC}.
Moreover, the fixed point
\[
u\in\Cc^{4+\alpha,1+\alpha/4}_1(Q_{1})
\]
 solves \eqref{eq:newg} with $T=1$.
From the definition of $\M_T$ we obtain
\[
\|u\|_{\Cc^{4+\alpha,1+\alpha/4}_1(Q_{1})}
\le
\tfrac{1}{2}\Cr{25}.
\]
We can therefore apply the above statement \eqref{eq:cstern}. Namely, by using the $\Cc^{4+\alpha, 1+\alpha/4}_1 (Q_{1})$-Schauder estimate for the solution $u$ of  \eqref{eq:newg} in Theorem~\ref{shorttimeexistenceC}   we then get
\begin{align*}
	\|u &\|_{\Cc^{4+\alpha, 1+\alpha/4}_1  (Q_{1} 	 )} \\
	&\ \overset{
	}{\le} \Cr{SchauderT0} 
	\left(
	\begin{aligned}
	     	 &
\Big(1+\|u\|_{\Cc^{4+\alpha, 1+\alpha/4}_1 (Q_{1})} \Big)^{k_H}
\|u\|_{\Cc^{4+\alpha, 1+\alpha/4}_1 (Q_{1})}^3  \\
&
+\|g_0\|_{\Cc^{4+\alpha}(\partial \Omega)} + \|g_1\|_{\Cc^{3+\alpha}(\partial \Omega)} + \|u_0\|_{\Cc^{0,1}(\overline\Omega)}
	\end{aligned}
\right)\\
&\ \overset{\mathclap{\eqref{eq:cstern}}}{\le}  \Cr{SchauderT0}
	\left(
	\begin{aligned}
	     \frac{1}{2 \Cr{SchauderT0}} \|u\|_{\Cc^{4+\alpha, 1+\alpha/4}_1 ( Q_{1})} 
+\|g_0\|_{\Cc^{4+\alpha}(\partial \Omega)} + \|g_1\|_{\Cc^{3+\alpha}(\partial \Omega)} + \|u_0\|_{\Cc^{0,1}(\overline\Omega)}
	\end{aligned}
\right).
\end{align*}
Collecting terms,  we conclude
\begin{align*}
	\|u \|_{\Cc^{4+\alpha, 1+\alpha/4}_1 ( Q_{1} 	 )} \le&\ 
	2\Cr{SchauderT0}
	\left(
\|g_0\|_{\Cc^{4+\alpha}(\partial \Omega)} + \|g_1\|_{\Cc^{3+\alpha}(\partial \Omega)} + \|u_0\|_{\Cc^{0,1}(\overline\Omega)}
\right) \\
	\le & \Cr{11}(\alpha, \Omega)
	\left(
\|g_0\|_{\Cc^{4+\alpha}(\partial \Omega)} + \|g_1\|_{\Cc^{3+\alpha}(\partial \Omega)} + \|u_0\|_{\Cc^{0,1}(\overline\Omega)}
\right)
,
\end{align*}
which proves the lemma.
\end{proof}


\subsection{Main Global Existence Theorem} \label{subsec:globalexist}

\begin{thm}[Global Existence]\label{thm:globalc1ac4a}
Let $\alpha\in(0,1)$ and let $\Omega\subset\R^2$ 
be a bounded domain with $\Cc^{4+\alpha}$-boundary.
Then there exists a constant
\[
\Cl{13}=\Cr{13}(\alpha,\Omega)>0
\]
such that the following holds.
If
\begin{align*}
\|u_0\|_{\Cc^{0,1}(\overline\Omega)}
+
\|g_0\|_{\Cc^{4+\alpha}(\partial\Omega)}
+
\|g_1\|_{\Cc^{3+\alpha}(\partial\Omega)}
<
\Cr{13},
\end{align*}
then the corresponding solution of the Willmore flow problem
\eqref{Willmore} exists globally in time, i.e.,
for every $T>0$ there exists a solution
\[
u \in \Cc^{4+\alpha,1+\alpha/4}_{1}(Q_T) \quad \text{ with } \|u(\,\cdot\,,t)\|_{\Cc^{0,1}(\overline\Omega)}\le \tfrac 13 \Cr{delta}
\text{ for all } t\in[0,\infty).
\]
\end{thm}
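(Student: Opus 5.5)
The plan is to glue together unit-time solutions from Lemma~\ref{Lemma:schranken}. The danger is that the bound \eqref{eq:klein} has a factor $\Cr{11}\ge1$, so a naive restart would let the Lipschitz norm grow geometrically. To prevent this, at every restart time I will recover the smallness of $\|u(\cdot,t)\|_{\Cc^{0,1}(\overline\Omega)}$ from a source that does not accumulate. That source is the $L^2$-smallness of Theorem~\ref{l2bound}\,\cbm{b}, fed by the monotonicity of $\W$, and interpolated against a uniform $\Cc^{4+\alpha}$ bound at positive times.

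\emph{Step 1 (first interval).} Take $\Cr{13}\le\Cr{delta}$. Lemma~\ref{Lemma:schranken} gives a solution on $Q_1$ with $\|u\|_{\Cc^{4+\alpha,1+\alpha/4}_{1}(Q_1)}\le \Cr{11}\Cr{13}$. Since the weighted norm contains $\|u\|_{\Cc^{1,1/4}_{x,t}(\overline Q_1)}$ without weight, we get $\|u(\cdot,t)\|_{\Cc^{0,1}}\le \Cr{11}\Cr{13}\le\frac13\Cr{delta}$ on $[0,1]$ once $\Cr{13}$ is small. On $[\frac12,1]$ the weights $t^{(|\beta|-1)/4}$ and $t^{(3+\alpha)/4}$ are bounded below. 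Hence $\|u(\cdot,t)\|_{\Cc^{4+\alpha}(\overline\Omega)}\le C\Cr{11}\Cr{13}$ there, and in particular $\W(u(\cdot,\tfrac12))\le C\|D^2u(\cdot,\frac12)\|_{L^\infty}^2\le C\Cr{13}^2$.

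\emph{Step 2 (induction on unit intervals).} The inductive hypothesis is that a solution exists on $[0,k]$ and $\|u(\cdot,t)\|_{\Cc^{0,1}}\le\frac13\Cr{delta}$ there. Then $u(\cdot,k)$ together with $g_0,g_1$ satisfies \eqref{eq:wkb}. Since $\frac13\Cr{delta}+\Cr{13}\le\Cr{delta}$, Lemma~\ref{Lemma:schranken} restarted at time $k$ (the equation is autonomous) extends the solution to $[k,k+1]$ with weighted norm $\le\Cr{11}\Cr{delta}$. This is not small enough by itself, so the improvement comes from Step~3. For $t\in[k,k+1]$ with $k\ge1$, I restart instead at $k-\frac12$. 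This gives $\|u(\cdot,t)\|_{\Cc^{4+\alpha}}\le C_*$ for all $t\ge1$, with $C_*=C(\alpha,\Omega)$ independent of $k$.

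\emph{Step 3 (recovering smallness; main obstacle).} The solution is smooth for $t>0$ with time-independent boundary data, so the dissipation identity gives $\W(u(\cdot,t))\le\W(u(\cdot,\frac12))\le C\Cr{13}^2$ for all $t\ge\frac12$. The boundary datum $\varphi$ (a $C^2_c$ extension of $g_0$) has $\|\varphi\|_{L^1(\partial\Omega)}\le C\Cr{13}$ and $\|(\varphi\circ\gamma)'\|_{L^1}<K=K(\Omega)$. Theorem~\ref{l2bound}\,\cbm{b} therefore gives $\|u(\cdot,t)\|_{L^2(\Omega)}<\varepsilon$ for every prescribed $\varepsilon$, provided $\Cr{13}$ is small enough. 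A Gagliardo–Nirenberg/Hölder interpolation on the fixed domain then yields
\[
\|\nabla u(\cdot,t)\|_{L^\infty}+\|u(\cdot,t)\|_{L^\infty}\le C\|u(\cdot,t)\|_{L^2}^{\theta}\|u(\cdot,t)\|_{\Cc^{4+\alpha}}^{1-\theta}\le C\varepsilon^\theta C_*^{1-\theta}.
\]
Choosing $\varepsilon$ so that the right-hand side is at most $\frac13\Cr{delta}$ closes the induction for $t\in[k,k+1]$, and this $\varepsilon$ fixes $\Cr{13}$.

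The delicate point is that $C_*$ must not depend on the unknown Lipschitz bound beyond the inductive hypothesis. This works because Lemma~\ref{Lemma:schranken} only uses smallness of the restarted data, which the hypothesis provides. One must also check that the weighted solutions glue to an element of $\Cc^{4+\alpha,1+\alpha/4}_1(Q_T)$. This holds because away from $t=0$ the pieces are classical and coincide on overlaps by the uniqueness of Theorem~\ref{gewichtetexistkurz} (case $m=4$), applied from any positive time.
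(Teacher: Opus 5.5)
Your strategy is the paper's: restart Lemma~\ref{Lemma:schranken} on unit slabs to get $k$-independent higher-order bounds, obtain $L^2$-smallness from the monotonicity of $\W$ and Theorem~\ref{l2bound}(b), and interpolate back to a small Lipschitz norm. Your induction, in which bounds on $[k,k+1]$ come only from restarts at times $s\le k$, replaces the paper's contradiction argument with $T_m$ and $T_b$. If anything it is cleaner, because no continuity of $t\mapsto\|u(\cdot,t)\|_{\Cc^{0,1}}$ is needed. Two small repairs are needed. First, a restart at $k-\frac12$ only covers $[k,k+\frac12]$, so take $s=\min\{t-\frac12,k\}$. Second, $\varphi$ must extend both $g_0$ and $g_1$ for $u-\varphi\in\mathring W^{2,2}(\Omega)$ to hold.

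The genuine gap is the identification step you defer to the last sentence. Estimate \eqref{eq:klein} holds for the particular fixed point built in Lemma~\ref{Lemma:schranken}, but you need it for your glued $u$. Your justification does not work.
\begin{itemize}
\item A piece restarted at time $s$ is only known to lie in $\Cc^{4+\alpha,1+\alpha/4}_1$ near $s$. Uniqueness in that class is explicitly not available (see the end of the proof of Theorem~\ref{shorttimeexistenceC}).
\item That piece is not ``classical'' at $t=s$. The $m=4$ uniqueness of Theorem~\ref{gewichtetexistkurz} needs two classical solutions with a common initial time, which is exactly what is missing.
\item The same problem hits your extension step. The glued function need not lie in the weighted space across $t=k$, since $D^4$ of the new piece may blow up like $(t-k)^{-3/4}$. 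It also hits your use of the dissipation identity across junctions.
\end{itemize}

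The missing idea is the paper's device in its Step~3. If $u$ is classical near $s$, then for small $\varepsilon$
\[
\|u\|_{\Cc^{4+\alpha,1+\alpha/4}_1(\overline\Omega\times(s,s+\varepsilon])}\le\sup_{t\in[s,s+\varepsilon]}\|u(\cdot,t)\|_{\Cc^{0,1}(\overline\Omega)}+C\varepsilon^{1/4}\|u\|_{\Cc^{4+\alpha,1+\alpha/4}_{x,t}(\overline\Omega\times[s,s+\varepsilon])}\le\tfrac12\Cr{25},
\]
using $\Cr{delta}\le\Cr{25}$. Hence $u$ lies in the iteration set $\M_\varepsilon$, on which the contraction has a unique fixed point. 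The restriction of the small solution from Lemma~\ref{Lemma:schranken} lies there too, so the two agree on $[s,s+\varepsilon]$. Only then does the $m=4$ uniqueness propagate equality to $[s,s+1]$.

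Accordingly, extend classically, for instance by Theorem~\ref{gewichtetexistkurz} with $m=4$ from $u(\cdot,k)\in\Cc^{4+\alpha}$. This datum satisfies \eqref{eq:comp4} because $\partial_tu=0$ on $\partial\Omega$. Then transfer \eqref{eq:klein}, and hence your bound $C_*$ and the energy monotonicity, to $u$ via this identification.
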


\begin{proof} 
\emph{Idea of the proof.}
We argue by a continuation principle.
First, Lemma~\ref{Lemma:schranken} provides a short-time solution on $[0,1]$
together with a small Lipschitz bound.
Assuming for contradiction the existence of a finite maximal time $T_m$,
we introduce the supremum time $T_b\in[1,T_m)$ on which the Lipschitz norm
remains below $\Cr{delta}/2$.
On each time slab of length $1$ contained in $[1,T_b]$ we may restart
Lemma~\ref{Lemma:schranken}, yielding uniform weighted Schauder bounds and,
in particular, a uniform $C^3$-bound on $[1,T_b]$.
An interpolation inequality then shows that the Lipschitz norm becomes small
once $\|u(t)\|_{L^2(\Omega)}$ is sufficiently small.
Finally, smallness of $\|u(t)\|_{L^2(\Omega)}$ on $[1,T_b]$ is obtained
from the monotonicity of the Willmore energy and Theorem~\ref{l2bound}\,\cbm b,
provided $\Cr{13}$ is chosen sufficiently small.
This yields a contradiction to the definition of $T_b$ and implies $T_m=\infty$.

\medskip\noindent\cb{1}\ \textbf{Short-time existence and Lipschitz control up to $1$.}

We first assume that
\[
\Cr{13}\le \frac{\Cr{delta}}{2},
\]
where $\Cr{delta}=\Cr{delta}(\alpha,\Omega)$
is the constant from Lemma~\ref{Lemma:schranken}.
By Lemma~\ref{Lemma:schranken} the Willmore flow problem \eqref{Willmore}
admits a unique solution
\[
u\in \Cc^{4+\alpha,1+\alpha/4}_{1}(Q_{1}).
\]
Observe that the parabolic Hölder norm controls the spatial Lipschitz norm.
Indeed,
\begin{align}
\|u\|_{\Cc^{4+\alpha,1+\alpha/4}_{1}(Q_{1})}
\ge
\sum_{|\beta|\le 1}
\sup_{(x,t)\in \overline\Omega\times(0,1]}
|D_x^\beta u(x,t)|
=
\sup_{t\in[0,1]}
\|u(t)\|_{\Cc^{0,1}(\overline\Omega)}.
\label{eq:c2c4bound}
\end{align}

Next, we further assume
\[
\Cr{13}\le \frac{\Cr{delta}}{2\,\Cr{11}},
\]
where $\Cr{11}=\Cr{11}(\alpha,\Omega)$
is the constant from Lemma~\ref{Lemma:schranken}.
Note that $\Cr{11}$ depends only on $\alpha$ and $\Omega$.
Then Lemma~\ref{Lemma:schranken} implies
\begin{align}
    \begin{aligned}
\sup_{t\in[0,1]}	\|u(\,\cdot\,, t)\|_{C^{0,1}(\overline\Omega)}\overset{\eqref{eq:c2c4bound}}{\le}&\ \|u\|_{\Cc^{4+\alpha,1+\frac{\alpha}{4}}_{1}( Q_{1})} \\
	\le &\  \Cr{11}\left(   \|u_0\|_{\Cc^{0,1}(\overline\Omega)}
   + \|g_0\|_{\Cc^{4+\alpha}(\partial \Omega)}
   + \|g_1\|_{\Cc^{3+\alpha}(\partial \Omega)}\right)\\
   \le &\ \frac{\Cr{delta}}2. 
   \end{aligned}\label{eq:supt0}
\end{align}
Since the $\Cc^{0,1}(\overline\Omega)$-norm of $u(\,\cdot\,,t)$
remains below $\Cr{delta}$ on $[0,1]$,
the smallness assumption of Lemma~\ref{Lemma:schranken}
is still valid at time $1$.
Therefore, we may restart the argument at $t=1$
and extend the solution to a larger time interval.

\medskip\noindent\cb{2}\ \textbf{Contradiction setup: maximal existence time $T_m$ and the supremum time $T_b$.}

Suppose, contrary to the claim, that there exists a finite maximal
existence time $T_m>0$.
More precisely, assume that for every $T\in (1,T_m)$
the following boundary value problem admits a unique solution
\[
v\in \Cc^{4+\alpha,1+\alpha/4}_{x,t}
(\overline\Omega\times[1,T]),
\]
where
\begin{align}
\left\{
\begin{aligned}
\partial_t v
&=
- L(\nabla v) D^4 v
- \mathcal R(\nabla v,D^2v,D^3v),
&& \text{in } \Omega\times[1,T],\\
v(x,1) &= u(x,1),
&& x\in\overline\Omega,\\
v(x,t) &= g_0(x),
&& (x,t)\in\partial\Omega\times[1,T],\\
\partial_\nu v(x,t) &= g_1(x),
&& (x,t)\in\partial\Omega\times[1,T],
\end{aligned}
\right.
\end{align}
but that the solution cannot be extended beyond $T_m$.
By $\Cc^{4+\alpha,1+\alpha/4}_{x,t}
(\overline\Omega\times[1,T])$-uniqueness, we may define
\[
u(x,t):=v(x,t),
\qquad (x,t)\in \overline\Omega\times[1,T_m).
\]

Next, we define the supremum time
\[
T_b:=\sup\Big\{
\tau\in[1,T_m)\;:\;
\|u(\,\cdot\,,t)\|_{\Cc^{0,1}(\overline\Omega)}\le \tfrac 13 \Cr{delta}
\text{ for all } t\in[0,\tau]
\Big\}.
\]
By \eqref{eq:supt0}, we have $T_b\ge 1$.
Combining the assumption
\(
\Cr{13}\le \tfrac 12\Cr{delta}
\)
with the definition of $T_b$, we obtain  
\begin{align}
\forall t \in [0, T_b ]\colon\quad  \|u(\,\cdot\,,t)\|_{C^{0,1}(\overline \Omega)}  + \|g_0\|_{\Cc^{4+\alpha}(\partial \Omega)}
   + \|g_1\|_{\Cc^{3+\alpha}(\partial \Omega)}\le&\   \Cr{delta} /3 + \Cr{delta}/2 < \Cr{delta} . \label{eq:defannahmec1alphacor}
\end{align}

\medskip\noindent\cb{3}\ \textbf{Restart on slabs and uniform higher-order bounds on $[1,T_b]$.}

Now fix $t\in[1,T_b]$.
Since \eqref{eq:defannahmec1alphacor} holds at time $t-1$,
the hypotheses of Lemma~\ref{Lemma:schranken}
are satisfied with initial time $t-1$. 
Since we want to apply the weighted estimate \eqref{eq:klein}
from Lemma~\ref{Lemma:schranken} on the time interval $(t-1,t]$,
we must ensure that the
$u\in\Cc^{4+\alpha,1+\alpha/4}_{x,t}(\overline\Omega\times[t-1,t])$
solution of the Willmore flow also belongs to the set $\M_1$
defined in \eqref{eq:M1def}.
To this end, choose $\varepsilon>0$ sufficiently small.
Then
\begin{align*}
\|u\|_{\Cc^{4+\alpha,1+\alpha/4}_{1}
(\overline\Omega\times(t-1,t-1+\varepsilon])}
&\le
\sup_{t'\in[0,T_b-1]}
\|u(\,\cdot\,,t')\|_{\Cc^{0,1}(\overline\Omega)}
+
\varepsilon^{1/4}
\|u\|_{\Cc^{4+\alpha,1+\alpha/4}_{x,t}
(\overline\Omega\times[t-1,t-1+\varepsilon])} \\
&\le
\frac{\Cr{delta}}{2}
\le
\frac{\Cr{25}}{2}.
\end{align*}

Hence the restriction of $u$ to
$\overline\Omega\times(t-1,t-1+\varepsilon]$
lies in $\M_\varepsilon$. 
By uniqueness in $\M_\varepsilon$ from Theorem~\ref{shorttimeexistenceC}, where we replace $1$ by $\varepsilon$,
the solution in
$\Cc^{4+\alpha,1+\alpha/4}_{x,t}
(\overline\Omega\times[t-1,t-1+\varepsilon])$
coincides with the small
$\Cc^{4+\alpha,1+\alpha/4}_{1}$–solution.
Finally, uniqueness on the remaining interval
$[t-1+\varepsilon,t]$
follows from the unweighted existence theorem
Theorem~\ref{gewichtetexistkurz}.

Next, we obtain
\begin{align*}
    	\|u\|_{\Cc^{4+\alpha, 1+\alpha/4}_{1}( \overline\Omega\times (t-1, t]) } &\le  \Cr{11} \left(   \|u(\,\cdot\,, t-1 )\|_{\Cc^{0,1}(\overline\Omega)}
   + \|g_0\|_{\Cc^{4+\alpha}(\partial \Omega)}
   + \|g_1\|_{\Cc^{3+\alpha}(\partial \Omega)}\right) \\
&\overset{\eqref{eq:defannahmec1alphacor}}\le 
\Cr{11} \Cr{delta}.
\end{align*}

By the definition of the weighted Hölder norm,
for every $t\in[1,T_b]$ we have
\begin{align*}
\|u\|_{\Cc^{4+\alpha,1+\alpha/4}_{1}
(\overline\Omega\times(t-1,t])}
&\ge
\sum_{|\beta|\le 1}
\sup_{(x,t')\in\overline\Omega\times(t-1,t]}
\big|D_x^\beta u(x,t')\big|
\\
&\quad
+
\sup_{(x,t')\in\overline\Omega\times(t-1,t]}
(t'-t+1)^{1/2}
\big|D_x^3 u(x,t')\big|
\\
&\quad
+
\sup_{(x,t')\in\overline\Omega\times(t-1,t]}
(t'-t+1)^{1/4}
\big|D_x^2 u(x,t')\big|
\\
&\ge 
    \sup_{x\in\overline\Omega}1^{\frac{2}{4} }\big| D_x^3  u(x, t)\big|
    +	
    \sup_{x\in\overline\Omega}1^{\frac{1}{4} }\big| D_x^2  u(x,t)\big| 
	+ 
    \sum_{|\beta|\le 1}\sup_{x\in \overline \Omega}\big|  D^\beta_x u(x,t)\big|.
\end{align*}
Then this estimate implies that for all $t\in[1,T_b]$ we have \textit{a uniform $C^3$-bound }
\begin{align*}
\|u(\,\cdot\,,t)\|_{\Cc^{3}(\overline\Omega)}
&\le
\Cr{11}\Cr{delta}
=:
\Cl{14},
\end{align*}
where $\Cr{14}=\Cr{14}(\alpha,\Omega)>0$
depends only on $\alpha$ and $\Omega$.

\medskip\noindent\cb{4}\ \textbf{Interpolation: Lipschitz control from an $L^2$-bound.}

By the  Theorem~\ref{thm:interpolation}, there exists $\beta\in(0,1)$
such that for all $t\in[1,T_b]$,
\begin{align*}
 \|u( \,\cdot\,,t)\|_{C^{0,1}(\overline\Omega)}
 \le &\ \Cl{15}(\alpha,\Omega) \|u( \,\cdot\,,t)\|_{C^{2}(\overline\Omega)}
 \\
 \le&\  \Cl{16}(\alpha,\Omega) \|u(\,\cdot\,,t)\|_{L^2(\Omega)}^\beta\cdot \|u(\,\cdot\,,t)\|_{C^{2+\alpha}(\overline\Omega)}^{1-\beta}  \\
 \le &\ \Cr{16}\|u(\,\cdot\,,t)\|_{L^2(\Omega)}^\beta\big( \Cr{15}(\alpha,\Omega) \|u(\,\cdot\,,t)\|_{C^{3}(\overline\Omega)}\big)^{1-\beta} \\
  \le &\ \Cr{16}\|u(\,\cdot\,,t)\|_{L^2(\Omega)}^\beta\big( \Cr{15}\Cr{14}\big)^{1-\beta}.
\end{align*}
Here the constants $\Cr{14},\Cr{15},\Cr{16}$
depend only on $\alpha$ and $\Omega$,
and in particular are independent of $\Cr{13}$. 

Suppose now that $\|u(\,\cdot\,,t)\|_{L^2(\Omega)}$
remains sufficiently small for all $t\in[1,T_b]$,
so that
\[
\|u(\,\cdot\,,t)\|_{\Cc^{0,1}(\overline\Omega)}
<
\tfrac 14{\Cr{delta}}
\quad\text{for all } t\in[1,T_b].
\]
Then this contradicts the definition of $T_b$
Therefore, we must have $T_b=T_m$.
In particular, the $\Cc^{0,1}$-norm of $u$
remains bounded at time $T_m$.
Consequently, the local existence result applies once more,
and the solution can be extended beyond $T_m$,
contradicting the maximality of $T_m$.
Hence $T_m=\infty$, and the solution exists globally in time.

\medskip\noindent\cb{5}\ \textbf{Smallness of $\|u(\,\cdot\,,t)\|_{L^2}$ from energy monotonicity.}

So our goal is to choose $\Cr{13}$ sufficiently small such that
$\|u(\,\cdot\,,t)\|_{L^2(\Omega)}$ becomes small.
First, evaluating the definition of the weighted norm at time $t=1$
(cf.~\eqref{eq:c2c4bound}), we obtain
\begin{align*}
\|u(\,\cdot\,,1)\|_{\Cc^{2}(\overline\Omega)}
&\le
\sum_{|\beta|\le 1}\sup_{x\in\overline\Omega}|D_x^\beta u(x,1)|
+
\sup_{x\in\overline\Omega}|D_x^2u(x,1)|
\\
&\le
\|u\|_{\Cc^{4+\alpha,1+\alpha/4}_{1}(Q_{1})}
+
1^{-1/4}\,
\|u\|_{\Cc^{4+\alpha,1+\alpha/4}_{1}(Q_{1})}
\\
&=
2
\|u\|_{\Cc^{4+\alpha,1+\alpha/4}_{1}(Q_{1})}.
\end{align*}
In view of Lemma~\ref{Lemma:schranken} we further have
\[
\|u\|_{\Cc^{4+\alpha,1+\alpha/4}_{1}(Q_{1})}
\le
\Cr{11}\Big(
\|u_0\|_{\Cc^{0,1}(\overline\Omega)}
+
\|g_0\|_{\Cc^{4+\alpha}(\partial\Omega)}
+
\|g_1\|_{\Cc^{3+\alpha}(\partial\Omega)}
\Big),
\]
and hence
\begin{align}
\|u(\,\cdot\,,1)\|_{\Cc^{2}(\overline\Omega)}
\le
2\Cr{11}\Big(
\|u_0\|_{\Cc^{0,1}(\overline\Omega)}
+
\|g_0\|_{\Cc^{4+\alpha}(\partial\Omega)}
+
\|g_1\|_{\Cc^{3+\alpha}(\partial\Omega)}
\Big)
\le
2\Cr{11}\Cr{13},
\label{eq:C2atT1}
\end{align}
Moreover, the Willmore energy is finite at time $1$ and non-increasing
along the flow. In particular, for all $t\ge 1$,
\[
\W\big(u(\,\cdot\,,t)\big)\le \W\big(u(\,\cdot\,,1)\big)\le \C(\Omega)\|u(\,\cdot\,,1)\|_{C^2(\overline\Omega)}^3.
\]

\medskip\noindent\cb{6}\ \textbf{Conclusion: $T_m=\infty$.}

Combining \eqref{eq:C2atT1} with the monotonicity of the Willmore energy,
we can make $\W(u(1))$ arbitrarily small by choosing $\Cr{13}$ sufficiently small.
Therefore, Theorem~\ref{l2bound}\,\cbm b implies that
\[
\sup_{t\in[1,T_b]}\|u(\,\cdot\,,t)\|_{L^2(\Omega)}
\]
is as small as desired (depending only on $\alpha$ and $\Omega$).
In particular, we may ensure that the right-hand side in the interpolation
estimate is below $\Cr{delta}/4$.

\end{proof}


\subsection{Subconvergence/Convergence to a Critical Point}
\label{subsec:subconv}
The next theorem shows that every globally defined solution whose
$C^{0,1}$-norm remains uniformly small subconverges to a critical point of
the Willmore energy.
\begin{thm}[Subconvergence]\label{thm:subc}
Let $\alpha\in(0,1)$ and let 
\[
g_0\in C^{4+\alpha}(\partial\Omega),\qquad
g_1\in C^{3+\alpha}(\partial\Omega),\qquad
u_0\in C^{0,1}(\overline{\Omega}).
\]
Let $(u(\,\cdot\,,t))_{t\ge0}$ be a global solution of the Willmore flow
\eqref{Willmore} with initial value $u(0)=u_0$ and $u(\,\cdot\,,t)\in C^{4+\alpha}(\overline{\Omega})$ for all $t>0$,  constructed in
Theorem~\ref{thm:globalc1ac4a}.

Then there exist a sequence $(t_k)_{k\in\N}\subset\R_+$ with
\[
t_k\to\infty
\quad\text{as }k\to\infty
\]
and a critical point $u_\infty$ of the Willmore energy such that for
every $\beta\in(0,\alpha)$
\[
u(\,\cdot\,, t_k)\to u_\infty
\qquad\text{in } C^{4+\beta}(\overline\Omega).
\]
\end{thm}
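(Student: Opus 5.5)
The plan is to combine three ingredients: uniform higher-order bounds for $t\ge1$, the energy dissipation identity, and Arzelà--Ascoli compactness.

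\textbf{Uniform bounds.} By Theorem~\ref{thm:globalc1ac4a} we have $\|u(\cdot,t)\|_{C^{0,1}(\overline\Omega)}\le\tfrac13\Cr{delta}$ for all $t\ge0$. Together with the smallness of $g_0,g_1$, this means the hypotheses of Lemma~\ref{Lemma:schranken} hold with initial time $t-1$ for every $t\ge1$. Restarting on slabs $(t-1,t]$ exactly as in Step~\cbm3 of that proof gives
\[
\|u\|_{\Cc^{4+\alpha,1+\alpha/4}_1(\overline\Omega\times(t-1,t])}\le \Cr{11}\Cr{delta}.
\]
The weights $(t'-t+1)^{\cdot}$ are bounded below on $[t-\tfrac12,t]$. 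Hence this yields a uniform bound $\|u(\cdot,t)\|_{C^{4+\alpha}(\overline\Omega)}\le C$ for all $t\ge1$, and likewise uniform bounds on $\partial_t u(\cdot,t)$ in $C^{\alpha}(\overline\Omega)$, since $\partial_t u=-L(\nabla u)D^4u-\Rr$.

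\textbf{Energy identity and choice of $t_k$.} For $t\ge1$ the solution is classical. Multiplying the equation in divergence form \eqref{eq:willmorefloweq} by $u_t/Q$ and integrating by parts, the boundary terms vanish because $u_t=0$ and $\partial_\nu u_t=0$ on $\partial\Omega$. This gives
\[
\frac{d}{dt}\W(u(t))=-\int_\Omega \frac{|\partial_tu|^2}{Q}\,dx .
\]
Integrating from $1$ to $\infty$ and using $\W\ge0$ and $Q\le C$ (by the Lipschitz bound), we obtain $\int_1^\infty\|\partial_tu(t)\|_{L^2}^2\,dt\le C\,\W(u(1))<\infty$. Therefore there is a sequence $t_k\to\infty$ with $\|\partial_t u(\cdot,t_k)\|_{L^2(\Omega)}\to0$.

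\textbf{Compactness and identification of the limit.} The embedding $C^{4+\alpha}(\overline\Omega)\hookrightarrow C^{4+\beta}(\overline\Omega)$ is compact for $\beta<\alpha$. By the uniform bound, after passing to a subsequence, $u(\cdot,t_k)\to u_\infty$ in $C^{4+\beta}$, and the $C^{4+\alpha}$ bound passes to the limit, so $u_\infty\in C^{4+\alpha}(\overline\Omega)$. Since the right-hand side $L(\nabla u)D^4u+\Rr(\nabla u,D^2u,D^3u)$ is continuous with respect to $C^4$ convergence, $\partial_tu(\cdot,t_k)$ converges uniformly to $-\big(L(\nabla u_\infty)D^4u_\infty+\Rr(\nabla u_\infty,\dots)\big)$. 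This uniform limit must agree with the $L^2$-limit $0$. Hence $u_\infty$ solves the stationary Willmore equation $\Delta_{\Gamma}H+2H(\tfrac14H^2-\Kk)=0$ with $u_\infty=g_0$ and $\partial_\nu u_\infty=g_1$ on $\partial\Omega$, i.e.\ it is a critical point of $\W$ in the class $M$. The boundary conditions pass to the limit directly by uniform convergence.

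\textbf{Main obstacle.} The delicate point is the first step. One must justify that the slab-restart estimate from Lemma~\ref{Lemma:schranken} applies at every $t\ge1$, i.e.\ that the global solution coincides on each slab with the small fixed-point solution; this uses the uniqueness argument of Step~\cbm3 of Theorem~\ref{thm:globalc1ac4a}. One also needs the weighted norm to control the unweighted $C^{4+\alpha}$ seminorm at the endpoint $t$ with a weight bounded below. The rigorous derivation of the dissipation identity with the clamped boundary terms should be routine once smoothness for $t>0$ is available.
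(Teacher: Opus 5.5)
Your proposal is correct and follows essentially the same route as the paper: uniform $C^{4+\alpha}$ bounds for $t\ge1$ obtained by applying Lemma~\ref{Lemma:schranken} on unit time slabs, the dissipation identity to select $t_k\to\infty$ with $\|\partial_t u(\cdot,t_k)\|_{L^2(\Omega)}\to0$, compactness of $C^{4+\alpha}(\overline\Omega)\hookrightarrow C^{4+\beta}(\overline\Omega)$, and passage to the limit in the equation. Your choice to integrate the dissipation only from $t=1$ (using $\W(u(\cdot,1))<\infty$ together with the bound $Q\le C$ and the integrand $|\partial_t u|^2/Q$) is slightly more careful than the paper. The paper integrates from $t=0$ against $\W(u_0)$, which need not be finite for merely Lipschitz $u_0$.
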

\begin{proof} 
First, $u$ is the global solution constructed in
Theorem~\ref{thm:globalc1ac4a}, which yields
\[
\|u(t)\|_{C^{0,1}(\overline\Omega)}\le \frac{\Cr{delta}}{2}
\qquad\text{for all }t>0 .
\]
Hence the assumptions of Lemma~\ref{Lemma:schranken} are satisfied
with initial time $t$.
Applying it on the interval $(t,t+1]$ for some
 yields
\[
\|u\|_{C^{4+\alpha,1+\alpha/4}_{1}
(\overline\Omega\times(t,t+1])}
\le \Cl{C}
\]
where $\Cr{C}$ depends on $\alpha, \Omega$ and $\|g_0\|_{\Cc^{4+\alpha}(\partial\Omega)}
$ with $
\|g_1\|_{\Cc^{3+\alpha}(\partial\Omega)}$.
Consequently, we obtain
\[
u(\,\cdot\,,,t)\in C^{4+\alpha}(\overline\Omega)
\qquad\text{for all }t>0,
\]
and moreover it follows
\[
\|u(\,\cdot\,,,t)\|_{C^{4+\alpha}(\overline\Omega)}\le \Cr C
\qquad\text{for all }t\ge1 .
\]

Furthermore, since the Dirichlet boundary data are independent of time,
the Willmore energy is non-increasing along the flow. Indeed,
for every $t_0>0$ we have
\begin{align*}
\frac{d}{dt}\W(u)\big|_{t=t_0}
&=
-\frac12
\int_\Omega
\left|
\Delta_{\Gamma(u)}H
+2H\!\left(\frac12H^2-\Kk\right)
\right|^2
Q\,dx \\
&=
-\frac12
\int_\Omega |\partial_t u|^2 Q\,dx
\le 0 .
\end{align*}
Hence the Willmore energy decreases monotonically and for all
$t_1,t_2>0$ we obtain
\[
\W(u(\,\cdot\,,t_2))-\W(u(\,\cdot\,,t_1))
=
-\frac12
\int_{t_1}^{t_2}\!\!\int_\Omega
|\partial_t u|^2 Q\,dx\,dt .
\]

Letting $t_1\to0$ and $t_2\to\infty$, and using $\W\ge0$ and $Q\ge1$,
we obtain
\[
\|\partial_t u\|_{L^2(\Omega\times\R_+)}
\le
\sqrt{2\W(u_0)} .
\]

Consequently,
\[
t\mapsto\|\partial_t u(\,\cdot\,,t)\|_{L^2(\Omega)}^2
\in L^1(\R_+),
\]
and therefore there exists a sequence $(t_k)_{k\in\N}$ with
$\R_+\ni t_k\to\infty$ such that
\begin{align}
    \| \partial_t u (\,\cdot\,, t_k) \|^2_{L^2(\Omega)} 
    \underset{k\to\infty}{\longrightarrow} 
    0. \label{eq:l2dtu}
\end{align}

We may assume that $t_1>1$. 
The uniform $C^{4+\alpha}(\overline\Omega)$ bound obtained above implies that
the sequence $\{u(\,\cdot\,,t_k)\}_{k\in\N}$ is bounded in $C^{4+\alpha}(\overline\Omega)$.
Hence, by compactness, there exists a subsequence
$\{t_{k_\ell}\}_{\ell\in\N}$ and a function
\[
u_\infty \in C^{4+\beta}(\overline\Omega)
\qquad \text{for every } \beta\in(0,\alpha),
\]
such that
\[
u(\,\cdot\,,t_{k_\ell}) \to u_\infty
\qquad \text{in } C^{4+\beta}(\overline\Omega).
\]

Since $u$ satisfies the Willmore flow equation
the above convergence together with the uniform
$C^{4+\alpha}$ bounds and \eqref{eq:l2dtu} yields
\[
\Delta_{\Gamma(u(\cdot,t_k))}H
+2H\Big(\tfrac12 H^2-\Kk\Big)(u(\cdot,t_k))
\underset{k\to\infty}{\longrightarrow}  0
\qquad\text{in } C^0(\overline\Omega).
\]

Passing to the limit along the subsequence we conclude that
$u_\infty$ satisfies the stationary Willmore equation, hence,  $u_\infty$ is a critical point of the Willmore energy.
\end{proof}

In the subsequent theorem, we establish that as $t \rightarrow \infty$, there is convergence towards a unique critical point. This critical point is derived from the elliptic solution to the Willmore equation, as detailed in \cite[Section 5, Theorem 38]{gulyak2024boundary}.  At that point,  it was necessary to impose a smallness condition on the boundary data, which we also assume for the temporal limit for the Willmore flow solution.

\begin{thm}[Global Convergence]\label{thm:globconv}
Let $\alpha\in(0,1)$.
Then there exists a constant
\[
\Cl{deltaun}=\Cr{deltaun}(\Omega,\alpha)>0
\]
such that the following holds.
If 
\[
g_0\in C^{4+\alpha}(\partial\Omega),\qquad
g_1\in C^{3+\alpha}(\partial\Omega),\qquad
u_0\in C^{0,1}(\overline{\Omega}).
\]
with 
\[
\|u_0\|_{\Cc^{0,1}(\overline\Omega)}
+
\|g_0\|_{\Cc^{4+\alpha}(\partial\Omega)}
+
\|g_1\|_{\Cc^{3+\alpha}(\partial\Omega)}
\le \Cr{deltaun},
\]
then there exists a unique critical point
\[
u_\infty\in \Cc^{4+\alpha}(\overline\Omega)
\]
of the Willmore energy such that the corresponding global solution of
\eqref{Willmore} satisfies  for every $\beta\in(0,\alpha)$ 
\begin{align*}
    \|u(\,\cdot\,,t)-u_\infty\|_{C^{4+\beta}(\overline\Omega)}
    \le \Cl{new34} e^{-\Cr{new34} \cdot (t-1)} \quad \text{ for all $t\ge 1$},
\end{align*}
where $\Cr{new34}$ depends only on $\alpha, \beta$ and $\Omega$.
\end{thm}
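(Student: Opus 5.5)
Taking $\Cr{deltaun}\le\Cr{13}$, Theorem~\ref{thm:globalc1ac4a} gives a global solution with $\|u(\cdot,t)\|_{C^{0,1}(\overline\Omega)}\le\tfrac13\Cr{delta}$. The proof of Theorem~\ref{thm:subc} gives a uniform bound $\|u(\cdot,t)\|_{C^{4+\alpha}(\overline\Omega)}\le \Cr{C}$ for $t\ge1$, and, by Step~\cbm5 of the proof of Theorem~\ref{thm:globalc1ac4a}, this bound is small in $C^{2}$ (and then in $C^{4+\beta}$ by interpolation with the $L^2$-smallness from Theorem~\ref{l2bound}\,\cbm b) once $\Cr{deltaun}$ is small. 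By Theorem~\ref{thm:subc} there is a subsequential limit $u_\infty$ with $u(t_k)\to u_\infty$ in $C^{4+\beta}$, and $u_\infty$ solves the stationary Willmore equation with data $(g_0,g_1)$. It is small in $C^{4+\beta}$. The first step is to identify $u_\infty$ with the small solution of the elliptic clamped problem from \cite[Section~5, Theorem~38]{gulyak2024boundary}, which is unique in a small $C^{4+\alpha}$-ball for small boundary data. Consequently every subsequential limit equals this $u_\infty$, which gives the uniqueness claim and full convergence $u(t)\to u_\infty$ without a rate.

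For the rate, I will set $w:=u-u_\infty$, so that $w=\partial_\nu w=0$ on $\partial\Omega$ and $w(t)\to0$. Using the divergence form \eqref{eq:willflowkoch} for both $u$ and the stationary $u_\infty$ and subtracting,
\[
\partial_t w+\Delta^2 w=\big(f[u]-f[u_\infty]\big)=f_0'+\partial_i f_1'^{\,i}+\partial^2_{ij}f_2'^{\,ij}.
\]
From \eqref{eq:f}, each difference term is bounded by $\varepsilon$ times derivatives of $w$ of order at most two; here $\varepsilon$ is controlled by $\sup_{t\ge1}\big(\|u\|_{C^{3}}+\|u_\infty\|_{C^{3}}\big)$, which is small. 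I then test with $w$ and integrate by parts; the boundary terms vanish because of the clamped conditions. This yields
\[
\tfrac12\tfrac{d}{dt}\|w\|_{L^2}^2+\|D^2w\|_{L^2}^2\le C\varepsilon\,\|w\|_{W^{2,2}}^2 .
\]
Because $w\in\mathring W^{2,2}(\Omega)$, Poincaré gives $\|w\|_{W^{2,2}}\le C_P\|D^2w\|_{L^2}$, so for $\varepsilon$ small we obtain $\tfrac{d}{dt}\|w\|_{L^2}^2\le -c\|w\|_{L^2}^2$. Hence $\|w(t)\|_{L^2}\le Ce^{-c(t-1)}$.

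Finally, the uniform bound $\|w(t)\|_{C^{4+\alpha}}\le 2\Cr{C}$ combined with the interpolation inequality of Theorem~\ref{thm:interpolation} upgrades the $L^2$-decay to $\|w(t)\|_{C^{4+\beta}}\le C\|w\|_{L^2}^{\theta}\|w\|_{C^{4+\alpha}}^{1-\theta}\le Ce^{-c\theta(t-1)}$ with $\theta=\theta(\alpha,\beta)>0$. After renaming constants this is the claimed estimate with $\Cr{new34}$.

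The main obstacle is the energy estimate. The first point is to check that the difference $f[u]-f[u_\infty]$ really has a small coefficient: this requires smallness of $\nabla u$ and $D^2u$ (not merely boundedness), which must be extracted from the smallness of $\Cr{deltaun}$ uniformly for $t\ge1$. The second point is to handle the $\partial_{ij}^2f_2'$ term, which after integration by parts pairs with $D^2w$ and has to be absorbed. I would first write $f_2[u]-f_2[u_\infty]=D^2w\star(\text{small})+D^2u_\infty\star(\nabla w\star\text{bounded})$, using the structure in \eqref{eq:f}. If the smallness of $D^2u$ turns out to be unavailable, the fallback is to linearize at $u_\infty$ and use the coercivity of the second variation of $\W$ near the small stationary solution.
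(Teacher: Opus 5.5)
Your proposal is correct and follows essentially the paper's route: the $L^2$ energy estimate for $w=u-u_\infty$ based on the divergence form \eqref{eq:willflowkoch}, absorption of the difference terms by smallness together with Poincaré on $\mathring W^{2,2}(\Omega)$, Grönwall, and finally interpolation against the uniform $C^{4+\alpha}$ bound.

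The differences are only preliminary. The paper takes $u_\infty$ directly from the elliptic theory, whereas you obtain it as a subsequential limit. Your identification step is superfluous, since the energy estimate only needs some small stationary solution with the same boundary data. That also makes the claim that every subsequential limit is stationary unnecessary; Theorem~\ref{thm:subc} only gives stationarity along special sequences. You also obtain uniform smallness for $t\ge1$ by interpolating the $L^2$-smallness of Theorem~\ref{l2bound}\,\cbm{b} against the $C^{4+\alpha}$ bound. This makes explicit a smallness that the paper reads off directly from \eqref{eq:klein}.
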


\begin{proof}

From elliptic theory we know that, for $\Cr{deltaun}$ sufficiently small,
there exists a solution (see defintion of $\M^K_\delta$ in \cite[Theorem 21]{gulyak2024boundary})
\[
u_\infty \in C^{4+\alpha}(\overline{\Omega})\quad \text{ with }\quad \|u_\infty\|_{C^{2+\alpha}(\overline{\Omega})}\le \C \Cr{deltaun}  
\]
of the stationary Willmore equation with the prescribed boundary data.
We denote this solution by $u_\infty$, anticipating that it will be the
limit of the Willmore flow.

In this proof we first assume that
\begin{align}
\Cr{deltaun}\le \Cr{13}.
\tag{C1}\label{eq:condition1}
\end{align}
Then Theorem~\ref{thm:globalc1ac4a} provides a global solution $u$ satisfying
\[
\|u(\cdot,t)\|_{\Cc^{0,1}(\overline\Omega)}\le \tfrac13\Cr{delta}
\qquad \text{for all } t\ge0 .
\]
Hence Lemma~\ref{Lemma:schranken}, in particular estimate
\eqref{eq:klein}, yields
\begin{align*}
\|u\|_{\Cc^{4+\alpha,1+\alpha/4}_{1}
(\overline\Omega\times(t,t+1])}
\le
\Cr{11}\,\Cr{deltaun}.
\end{align*}
Consequently,
\begin{align}
\|u(\cdot,t)\|_{C^{4+\alpha}(\overline\Omega)}
\le
\Cr{11}\,\Cr{deltaun}
\qquad\text{for all } t\ge1 .
\label{eq:letzter}
\end{align}

Next, set \(w:=u-u_\infty\). Then \(w\) satisfies
\begin{align*}
\left\{
\begin{aligned}
\partial_t w
&\ = -\Delta^2w + f_0[u]-f_0[u_\infty] && &\\
&\ \quad +\nabla_i \big( f^i_1[u]- f^i_1[u_\infty] \big) + D^2_{ij} \big( f^{ij}_2[u] - f_2^{ij}[u_\infty] \big),
&& \text{in } \Omega\times(0,\infty),\\
w(\cdot,0) &= u_0-u_\infty,
&& \text{in } \Omega,\\
w &= 0,
&& \text{on } \partial\Omega\times[0,\infty),\\
\partial_\nu w &= 0,
&& \text{on } \partial\Omega\times[0,\infty).
\end{aligned}
\right.
\end{align*}

Multiplying the equation for $w=u-u_\infty$ by $w$ and integrating over
$\Omega$, an integration by parts together with the Poincaré inequality
(for $w=\partial_\nu w=0$ on $\partial\Omega$) and the Cauchy–Schwarz
inequality yields, for all $t\ge0$,
\begin{align*}
\frac12 \partial_t\|w\|_{L^2(\Omega)}^2
+\|D^2 w\|_{L^2(\Omega)}^2
&\le
\C\Big(
\|f_0[u]-f_0[u_\infty]\|_{L^2(\Omega)}^2
+\|f_1[u]-f_1[u_\infty]\|_{L^2(\Omega)}^2
\\
&\qquad+\|f_2[u]-f_2[u_\infty]\|_{L^2(\Omega)}^2
\Big).
\end{align*}
Using the structure of the nonlinearities,
we obtain
\begin{align*}
\|f_i[u]-f_i[u_\infty]\|_{L^2(\Omega)}
\le &\
\C
\sup_{t\ge1}\max\{\|u(\,\cdot\,,t\|_{C^2(\overline\Omega)},\|u_\infty\|_{C^2(\overline\Omega)}\}^2
\\
&\ \qquad \cdot\|D^2(u-u_\infty)\|_{L^2(\Omega)},
\end{align*}
for $i=0,1,2$. Hence
\begin{align*}
\frac12 \partial_t\|w\|_{L^2(\Omega)}^2
+\|D^2 w\|_{L^2(\Omega)}^2
\le
\Cl{333}
\Cr{deltaun}^2\|D^2 w\|_{L^2(\Omega)}^2 .
\end{align*}

Denoting by $\Cr{333}$ the corresponding constant depending only on
$\Omega$, we impose the smallness condition
\begin{align}
\Cr{333}\,\Cr{deltaun}^2 \le \frac12 ,
\tag{C2}\label{eq:condition2}
\end{align}
which will ensure that the right-hand side can be absorbed by the
left-hand side.



Therefore, we obtain for all $t\ge 1$
\begin{align*}
   \partial_t \|u(\,\cdot\,,t)-u_\infty\|^2_{L^2(\Omega)}+ & \ \Cl {9898}(\Omega)\|u(\,\cdot\,,t)-u_\infty\|^2_{L^2(\Omega)}
   \\
   \le& \
   \partial_t \|u(\,\cdot\,,t)-u_\infty\|^2_{L^2(\Omega)}+ \|D^2(u(\,\cdot\,,t)-u_\infty)\|^2_{L^2(\Omega)}
   \\
   \le&\
   0.
\end{align*}
By Grönwall's lemma, it follows for all $t\ge 1$
\begin{align*}
    \|u(\,\cdot\,,t)-u_\infty\|^2_{L^2(\Omega)}\le e^{-\Cr {9898}\cdot (t-1)} \|u(\,\cdot\,,1)-u_\infty\|^2_{L^2(\Omega)} \le \Cl{new11} e^{-\Cr {new11}\cdot (t-1)}
\end{align*}
where $\Cr{new11}$ depends only on $\alpha$ and $\Omega$.

Then one uses an interpolation inequality of the form
\[
\|w\|_{C^{4+\beta}(\overline{\Omega})}
\le
\Cl{zzz}\,\|w\|_{L^2(\Omega)}^{\theta}
\|w\|_{C^{4+\alpha}(\overline{\Omega})}^{1-\theta},
\]
for some $\theta=\theta(\alpha,\beta,\Omega)\in(0,1)$ and get together with the uniform bound \eqref{eq:letzter} all $t\ge 1$
\begin{align*}
    \|u(\,\cdot\,,t)-u_\infty\|_{C^{4+\beta}(\overline\Omega)}
    \le &\
    \Cr{zzz} \Cr{new11}^\theta e^{-\theta\Cr {new11}\cdot (t-1)} (\Cr{11} \cdot  \Cr{deltaun})^{1-\theta}\\
    \le &\ \Cr{new34} e^{-\Cr{new34} \cdot (t-1)}
\end{align*}
where $\Cr{new34}$ depends only on $\alpha, \beta$ and $\Omega$.
\end{proof}


\begin{corollary}[Global existence and convergence for homogeneous boundary data]
\label{cor:nullbd}
Let $\alpha\in(0,1)$ and let $\Omega\subset\R^2$ be a bounded domain with
$\Cc^{4+\alpha}$-boundary.
Assume homogeneous boundary data
\[
g_0\equiv 0,
\qquad
g_1\equiv 0 \ \text{on }\partial\Omega.
\]
Then there exists a constant
\[
\Cl{nulldelta}=\Cr{nulldelta}(\alpha,\Omega)>0
\]
such that the following holds: if
\[
\|u_0\|_{\Cc^{0,1}(\overline\Omega)}<\Cr{nulldelta},
\]
then the solution $u$ of the Willmore flow problem \eqref{Willmore}
exists globally in time, i.e. for all $t>0$, and
\[
u(t)\to u_\infty \equiv 0
\quad \text{as } t\to\infty .
\]
\end{corollary}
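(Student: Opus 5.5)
The corollary is the special case $g_0\equiv0$, $g_1\equiv0$ of Theorems~\ref{thm:globalc1ac4a} and \ref{thm:globconv}. The only new point is to identify the limit: we must show $u_\infty\equiv 0$.

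\emph{Global existence.} With homogeneous data,
\[
\|u_0\|_{\Cc^{0,1}(\overline\Omega)}+\|g_0\|_{\Cc^{4+\alpha}(\partial\Omega)}+\|g_1\|_{\Cc^{3+\alpha}(\partial\Omega)}=\|u_0\|_{\Cc^{0,1}(\overline\Omega)} .
\]
Set $\Cr{nulldelta}:=\min\{\Cr{13},\Cr{deltaun}\}$. The compatibility conditions \eqref{eq:wkb} then say $u_0=\partial_\nu u_0=0$ on $\partial\Omega$, which we assume as throughout the section. Theorem~\ref{thm:globalc1ac4a} gives a global solution with $\|u(\cdot,t)\|_{\Cc^{0,1}(\overline\Omega)}\le\tfrac13\Cr{delta}$ for all $t\ge0$.

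\emph{Convergence.} Theorem~\ref{thm:globconv} gives exponential convergence in $C^{4+\beta}(\overline\Omega)$ to the critical point $u_\infty$. In its proof, $u_\infty$ is the small solution of the stationary Willmore equation with the prescribed boundary data, taken from the elliptic theory with $\|u_\infty\|_{C^{2+\alpha}}\le C\Cr{deltaun}$. We expect this to be the main obstacle: that elliptic solution is unique in the small set $\M^K_\delta$ of \cite[Theorem 21]{gulyak2024boundary}. The function $0$ solves the stationary Willmore equation ($H\equiv0$) with zero Dirichlet data, and it lies in that small set. By uniqueness there, $u_\infty\equiv0$.

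\emph{Alternative without uniqueness.} If one prefers not to rely on the precise uniqueness class, argue directly. By energy monotonicity, $\W(u_\infty)=\lim_{t\to\infty}\W(u(t))$, and $\W(u(1))$ is small. More directly, apply the energy estimate from the proof of Theorem~\ref{thm:globconv} with the comparison function $0$ in place of $u_\infty$. This is legitimate because $f[0]=0$ and $0$ has the same zero boundary data. We obtain
\[
\tfrac12\partial_t\|u\|_{L^2}^2+\|D^2u\|_{L^2}^2\le C\sup_t\|u\|_{C^2}^2\|D^2u\|_{L^2}^2 .
\]
Since $\|u(t)\|_{C^2}\le\Cr{11}\Cr{nulldelta}$ for $t\ge1$ by Lemma~\ref{Lemma:schranken}, the right-hand side can be absorbed after shrinking $\Cr{nulldelta}$. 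Poincaré's inequality (using $u=\partial_\nu u=0$) and Grönwall then give $\|u(t)\|_{L^2}\to0$ exponentially. Interpolating against the uniform $C^{4+\alpha}$ bound \eqref{eq:letzter} gives $u(t)\to0$ in $C^{4+\beta}(\overline\Omega)$, which identifies $u_\infty\equiv0$.
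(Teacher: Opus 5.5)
Your proposal is correct and matches what the paper intends: the corollary is given without a separate proof, as the special case $g_0=g_1=0$ of Theorems~\ref{thm:globalc1ac4a} and~\ref{thm:globconv}, and the limit is identified as the small stationary solution for zero data, namely $u_\infty\equiv 0$. One caveat on your alternative route: uniform smallness of $\|u(t)\|_{C^2(\overline\Omega)}$ for $t\ge1$ does not follow from Lemma~\ref{Lemma:schranken} alone. Restarting at $t-1$ only bounds it by a multiple of $\|u(t-1)\|_{C^{0,1}(\overline\Omega)}$, and Theorem~\ref{thm:globalc1ac4a} bounds that quantity only by a fixed constant. The smallness instead comes from the $L^2$-smallness and interpolation steps in the proof of Theorem~\ref{thm:globalc1ac4a}; the paper's \eqref{eq:letzter} takes the same shortcut.
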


{\bf Acknowledgments.}
A part of this research was carried out during the author’s PhD studies under the supervision of Hans-Christoph Grunau.
The author is deeply indebted to him for his guidance, careful reading, corrections, and many helpful discussions.
The author is also grateful to Klaus Deckelnick for valuable suggestions.
Finally, the author acknowledges Matteo Novaga and Matthias Röger for their careful review of the author’s PhD thesis and for their valuable comments and suggestions.

\section{Appendix}
\begin{lemma}[Structure of the remainder term]\label{thm:structure-remainder-term}
Let $u\in C^4(\Omega)$ and assume that the remainder term $\Rr(\nabla u,D^2u,D^3u)$ is given by
\[
\Rr(\nabla u,D^2u,D^3u)
=
-f_0[u]-\nabla_i f_1^i[u]-D^2_{ij}f_2^{ij}[u]+\Delta^2u-L(\nabla u)D^4u.
\]
Then $\Rr$ admits the representation
\begin{align}
\Rr(\nabla u,D^2u,D^3u)
=&\ 
D^3u\star D^2u \star \sum_{k=1}^4 Q^{-2k} P_{2k-1}(\nabla u)
\notag\\
&\quad
+ D^2u\star D^2u\star D^2u \star \sum_{k=0}^4 Q^{-2(k+1)} P_{2k}(\nabla u).
\label{eq:R-structure} \tag{$\mathcal{R}$}
\end{align}
In particular, the remainder term consists only of terms which are either
linear in $D^3u$ and quadratic in $D^2u$, or cubic in $D^2u$, with
coefficients that are smooth rational functions of $\nabla u$ of the form
$Q^{-2m}P_\ell(\nabla u)$.
\end{lemma}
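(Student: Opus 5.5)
The plan is to compute $\Rr$ directly from its defining identity. By \eqref{eq:A}, $\Rr = Q\Delta_{\Gamma(u)}H + 2QH(\tfrac14H^2-\Kk) - L(\nabla u)D^4u$. Equivalently, via \eqref{eq:willflowkoch}, $\Rr = -f_0-\partial_i f_1^i-\partial_{ij}f_2^{ij}+\Delta^2u-L(\nabla u)D^4u$. I will work with the second form, since the $\star$-structure of $f_0,f_1,f_2$ is already recorded in \eqref{eq:f} from \cite[Lemma~3.2]{koch2012geometric}. The key bookkeeping fact is that every coefficient function $Q^{-2k}P_j(\nabla u)$ is smooth in $\nabla u$. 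Moreover, $\partial_i(Q^{-2k}) = -2k\,Q^{-2k-2}\,\nabla u\star D^2u$ and $\partial_i P_j(\nabla u) = P_{j-1}(\nabla u)\star D^2u$. So each derivative falling on a coefficient produces one extra factor $D^2u$ and shifts the index pair $(2k,j)$ to either $(2k+2,j+1)$ or $(2k,j-1)$.

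The expansion proceeds term by term. First, $f_0$ is already of the form $D^2u\star D^2u\star D^2u\star\sum_k Q^{-2k}P_{2k-2}$, which is the cubic part of \eqref{eq:R-structure}, matching after reindexing $k\mapsto k+1$. Second, $\partial_i f_1^i$ gives two kinds of terms. A derivative on one $D^2u$ factor yields $D^3u\star D^2u\star Q^{-2k}P_{2k-1}$, which is exactly the first sum. A derivative on the coefficient yields $D^2u^{\star3}\star Q^{-2k-2}P_{2k}$ or $D^2u^{\star3}\star Q^{-2k}P_{2k-2}$, both belonging to the cubic sum. Third, $\partial_{ij}f_2^{ij}$ with $f_2=D^2u\star Q^{-2k}P_{2k}$ ($k=1,2$) gives four kinds of terms:
\begin{itemize}
\item $D^4u\star Q^{-2k}P_{2k}$;
\item $D^3u\star D^2u\star(\text{derivative of coefficient})$, which has the first-sum form, with odd index of $P$ after one differentiation;
\item $D^2u\star D^2u\star D^2u\star(\text{two derivatives of coefficient})$, which lies in the cubic sum;
\item $D^2u\star D^3u\star\cdots$, again of the first-sum form.
\end{itemize}

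Throughout, the parity pattern, namely an odd number of $\nabla u$ factors with $D^3u$ and an even number in the purely cubic terms, is preserved by the derivative rules above. I will verify this by tracking the total homogeneity. Scaling $u\mapsto\lambda u$ near $\nabla u=0$ and using that $H$ is odd under $u\mapsto -u$ confirms that each term is odd in $u$. This forces the parity count, since $D^3u\star D^2u$ is even and therefore needs an odd $P$, while $(D^2u)^3$ is odd and therefore needs an even $P$.

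The main obstacle is showing that all fourth-order terms cancel exactly. These are the $D^4u$ terms from $\partial_{ij}f_2^{ij}$ together with $\Delta^2u$ and $-L(\nabla u)D^4u$, and their sum must vanish identically, so that $\Rr$ depends only on derivatives of order at most three. Formally this is immediate from the definition: $L(\nabla u)D^4u$ is by definition the full coefficient of $D^4u$ in the flow operator \eqref{eq:A}. The real check is consistency with \eqref{eq:L}, that is, confirming that $\Delta^2u - \partial_{ij}(\text{leading part of }f_2)$ reproduces $L_{k\ell}$. For this I would compute the principal symbol of $Q\Delta_{\Gamma(u)}H$ from the explicit formula for $\Delta_{\Gamma(u)}$. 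That symbol is $Q^{-4}\big((1+|\nabla u|^2)|\xi|^2-(\nabla u\cdot\xi)^2\big)^2$, which expands to \eqref{eq:L}. Hence the $D^4u$ terms coming from the $f_2$ part are exactly $(L(\nabla u)-\Delta^2)D^4u$, and they cancel. Finally, bounding $\Rr$ for $|\nabla u|\le\Lambda$ by $C_\Lambda(|D^3u||D^2u|+|D^2u|^3)$ follows because $Q\ge1$ makes $Q^{-2k}|P_j|\le C(1+\Lambda)^j$.
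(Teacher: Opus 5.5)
Your proposal is correct and follows the paper's route. Like the paper, you expand $\partial_i f_1^i$ and $\partial_{ij}f_2^{ij}$ from the Koch--Lamm form \eqref{eq:willflowkoch} by the product rule. Each derivative landing on a coefficient $Q^{-2k}P_j(\nabla u)$ produces an extra factor $D^2u$ and turns it into $Q^{-2k-2}P_{j+1}$ or $Q^{-2k}P_{j-1}$, and the resulting terms are then sorted into the two sums of \eqref{eq:R-structure}.

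The one addition is your explicit check that the $D^4u$ terms cancel. You compute the principal symbol $Q^{-4}\bigl((1+|\nabla u|^2)|\xi|^2-(\nabla u\cdot\xi)^2\bigr)^2$ of $Q\Delta_{\Gamma(u)}H$ and confirm that the $D^4u$ terms from $\partial_{ij}f_2^{ij}$, together with $\Delta^2u$, reproduce exactly $L(\nabla u)D^4u$ from \eqref{eq:L}. The paper drops these terms silently, so this makes a step explicit that it leaves implicit. Your parity remark is only a consistency check and is not needed for the argument.
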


\begin{proof}
Starting from
\begin{align*}
\Rr(\nabla u,D^2u,D^3u)
=&\ 
-f_0[u]-\nabla_i f_1^i[u]-D^2_{ij}f_2^{ij}[u]+\Delta^2u-L(\nabla u)D^4u,
\end{align*}
we expand the divergence terms according to the product rule. Since
\[
D(Q^{-2\ell})
=
D^2u\star\nabla u\star Q^{-2(\ell+1)},
\] this yields
\begin{align*}
\Rr(\nabla u,D^2u,D^3u)
=&\ 
D^2 u \star D^2 u\star D^2 u\star \sum_{k=1}^4 Q^{-2k} P_{2k-2} (\nabla u) \\
&\ 
+ D\!\left( D^2 u \star D^2 u\star  \sum_{k=1}^4 Q^{-2k} P_{2k-1} (\nabla u) \right)\\
&\ 
+ D^2\!\left( D^2 u \star  \sum_{k=1}^2 Q^{-2k} P_{2k} (\nabla u)\right)
+\Delta^2 u- L(\nabla u) D^4 u .
\end{align*}
Expanding the first- and second-order derivatives once more gives
\begin{align*}
\Rr(\nabla u,D^2u,D^3u)
=&\ 
D^2 u \star D^2 u\star D^2 u\star \sum_{k=1}^4 Q^{-2k} P_{2k-2} (\nabla u) \\
&\ 
+ D^3 u \star D^2 u\star  \sum_{k=1}^4 Q^{-2k} P_{2k-1} (\nabla u) \\
&\ 
+ D^2 u \star D^2 u\star D^2 u \star  \sum_{k=1}^4 Q^{-2(k+1)} P_{2k} (\nabla u)\\
&\ 
+ D\!\left( D^3 u \star  \sum_{k=1}^2 Q^{-2k} P_{2k} (\nabla u)
+ D^2 u \star D^2u\star  \sum_{k=1}^3 Q^{-2k} P_{2k-1} (\nabla u)\right) \\
&\ 
+\Delta^2 u- L(\nabla u) D^4 u .
\end{align*}
After distributing the remaining derivative and collecting terms of the same
type, we obtain
\begin{align*}
\Rr(\nabla u,D^2u,D^3u)
=&\ 
D^2 u \star D^2 u\star D^2 u\star \sum_{k=1}^5 Q^{-2k} P_{2k-2} (\nabla u)  \\
&\ 
+ D^3 u \star D^2 u\star  \sum_{k=1}^4 Q^{-2k} P_{2k-1} (\nabla u)\\
&\ 
+ D^3 u \star D^2 u\star  \sum_{k=1}^2 Q^{-2(k+1)} P_{2k+1} (\nabla u) \\
&\ 
+ D^3 u \star D^2u\star  \sum_{k=1}^3 Q^{-2k} P_{2k-1} (\nabla u)\\
&\ 
+ D^2u\star D^2 u \star D^2u\star  \sum_{k=1}^3 Q^{-2(k+1)} P_{2k} (\nabla u).
\end{align*}
All contributions are therefore of one of the two forms
\[
D^3u\star D^2u \star Q^{-2k}P_{2k-1}(\nabla u)
\qquad\text{or}\qquad
D^2u\star D^2u\star D^2u \star Q^{-2(k+1)}P_{2k}(\nabla u).
\]
Absorbing overlapping index ranges into the sums proves \eqref{eq:R-structure}.
\end{proof}
\begin{thm}[Diameter estimate]\label{thm:imdiamest}
Let $f:\Sigma \to \mathbb{R}^3$ be a compact two–dimensional surface
with boundary, and denote its image by $\M := f(\Sigma)\subset\mathbb{R}^3$.
Then the following estimate holds:
\begin{align}
\operatorname{diam}(\M)
\le
\frac{16}{\pi}
\left(
\int_{\M} |H| \, d\mathcal H^2
+
\frac{\pi}{2}\,
\mathcal H^1(\partial \M)
\right).
\end{align}
\end{thm}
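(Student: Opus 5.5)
The plan is to follow Simon's proof of \cite[Lemma~1.2]{simon1993existence}: a first-variation (monotonicity) argument gives a lower bound at each scale, and a covering argument along the connected set $\M$ converts it into a linear bound on $\operatorname{diam}(\M)$. I will assume $\M$ is connected, since otherwise the diameter is not controlled by curvature at all; this holds for the graphs used in Theorem~\ref{l2bound}. Throughout, $H$ denotes the mean curvature vector and $\eta$ the outer unit conormal along $\partial\M$.

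\textbf{Step 1 (first variation).} For $x_0\in\M$ and a vector field $X$, the divergence theorem on $\M$ gives
\[
\int_\M \div_\M X\,d\Hd^2=-\int_\M H\cdot X\,d\Hd^2+\int_{\partial\M}X\cdot\eta\,d\Hd^1 .
\]
I take $X(x)=(x-x_0)\bigl(r^{-2}_\sigma-\rho^{-2}\bigr)_+$ with $r=|x-x_0|$ and $r_\sigma=\max\{r,\sigma\}$. Computing $\div_\M X$ with $|\nabla^\M r|\le1$ and letting $\sigma\searrow0$ should give the density bound
\[
\pi\le\frac{\Hd^2(\M\cap B_\rho(x_0))}{\rho^2}+\int_{\M\cap B_\rho(x_0)}\frac{|H|}{r}\,d\Hd^2+\int_{\partial\M\cap B_\rho(x_0)}\frac{1}{r}\,d\Hd^1 .
\]
Here $\pi$ is the density of a $C^1$ surface at an interior point. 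For $x_0\in\partial\M$ the density is $\pi/2$, and I will simply work with that weaker constant.

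\textbf{Step 2 (removing the $1/r$ weights).} The weights must be replaced by $\rho^{-1}$. I would test Step 1 a second time with $X=(x-x_0)\bigl(r^{-1}_\sigma-\rho^{-1}\bigr)_+$, for which $|X|\le1$ and $\div_\M X\ge r^{-1}-2\rho^{-1}$ on $B_\rho(x_0)$. The same cut-off also bounds the area term, since $\Hd^2(\M\cap B_\rho)$ is controlled by $\rho\bigl(\int_{B_\rho}|H|+\Hd^1(\partial\M\cap B_\rho)\bigr)$ plus a lower-order contribution. Averaging the resulting inequality over $\rho'\in[\rho/2,\rho]$, or applying Simon's ``good radius'' selection, should yield for every $x_0\in\M$
\[
\frac{\pi}{8}\,\rho\le\int_{\M\cap B_\rho(x_0)}|H|\,d\Hd^2+\frac{\pi}{2}\,\Hd^1(\partial\M\cap B_\rho(x_0)),
\qquad 0<\rho\le\tfrac12\operatorname{diam}(\M).
\]
The restriction $\rho\le\tfrac12\operatorname{diam}(\M)$ guarantees that $\M$ leaves $B_\rho(x_0)$. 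I expect the exact numerical factors ($\pi/8$ versus $\pi/2$) to need adjustment in the actual computation.

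\textbf{Step 3 (covering).} Let $d=\operatorname{diam}(\M)$ and choose $p,q\in\M$ with $|p-q|=d$. Since $\M$ is connected, for each $j=0,\dots,N-1$ with $N\approx d/(2\rho)$ there is a point $x_j\in\M$ with $|x_j-p|=(2j+1)\rho$. The balls $B_\rho(x_j)$ are then pairwise disjoint. Summing the Step~2 inequality over $j$ gives
\[
N\,\frac{\pi}{8}\,\rho\le\int_\M|H|\,d\Hd^2+\frac{\pi}{2}\,\Hd^1(\partial\M).
\]
Taking $\rho\searrow0$, so that $N\rho\to d/2$, yields
\[
d\le\frac{16}{\pi}\Bigl(\int_\M|H|\,d\Hd^2+\frac{\pi}{2}\,\Hd^1(\partial\M)\Bigr).
\]

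\textbf{Main obstacle.} The hard part is Step~2: turning the singular $1/r$-weighted quantities into a clean linear inequality at every scale $\rho$, including centers near or on $\partial\M$, while still reaching the constant $16/\pi$. If the two vector fields do not combine directly, I would first try the coarea-plus-ODE argument for $A(\rho)=\Hd^2(\M\cap B_\rho)$ as in \cite[Satz~4.2]{gulyak2014willmore}, which handles the graphical case explicitly.
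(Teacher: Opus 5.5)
\textbf{There is a genuine gap: Step~2 is false.} The failure is structural, not a matter of constants. Take $\M$ a flat disk of radius $R$, $x_0$ its center, and $\rho=R/2\le\frac12\operatorname{diam}(\M)$. Then $\int_{\M\cap B_\rho(x_0)}|H|\,d\mathcal H^2=0$ and $\partial\M\cap B_\rho(x_0)=\emptyset$. So the right-hand side of your scale-by-scale inequality vanishes, while the left-hand side is $\frac{\pi}{8}\rho>0$, and no choice of the constant $\pi/8$ repairs this.

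The fact that $\M$ leaves $B_\rho(x_0)$ forces neither curvature nor boundary inside the ball. The same disk refutes your claim that $\mathcal H^2(\M\cap B_\rho)$ is controlled by $\rho\bigl(\int_{B_\rho}|H|+\mathcal H^1(\partial\M\cap B_\rho)\bigr)$ up to a lower-order term. The cut-off term you call lower order is $\rho\,\frac{d}{d\rho}\mathcal H^2(\M\cap B_\rho)$, which is of the same order as the area.

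On its own, the first-variation identity of Step~1 only bounds area ratios from below. It yields curvature in a ball only when the area ratio at that scale is already known to be small, and nothing in your setup provides this at a prescribed scale. Averaging over $\rho'\in[\rho/2,\rho]$ does not remove the $1/r$ singularity at the center. For $x_0\in\partial\M$ the term $\int_{\partial\M\cap B_\rho}r^{-1}\,d\mathcal H^1$ is even infinite. Consequently Step~3, which sums a fixed-radius inequality over equally spaced disjoint balls, has nothing valid to sum. The constant $16/\pi$ you reach is only an echo of the postulated $\pi/8$.

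\textbf{The missing idea is a dichotomy with \emph{point-dependent} radii,} as in Topping's argument for closed surfaces. There are $\delta,\kappa>0$ such that if $\int_{\M\cap B_r(x)}|H|\,d\mathcal H^2+\mathcal H^1(\partial\M\cap B_r(x))\le\delta r$ for all $r\le R$, then $\mathcal H^2(\M\cap B_R(x))\ge\kappa R^2$.

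This comes not from the $1/r$-weighted monotonicity formula but from a Michael--Simon (Allard) type Sobolev inequality, in which the boundary enters through the first variation of $\M$. Applied to radial cut-offs, it gives $A(r)^{1/2}\le C\bigl(A'(r)+\int_{\M\cap B_r}|H|+\mathcal H^1(\partial\M\cap B_r)\bigr)$ for $A(r)=\mathcal H^2(\M\cap B_r(x))$. A continuity argument starting from the small-scale density $\pi$ (or $\pi/2$ at boundary points) then closes the dichotomy.

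Since $\mathcal H^2(\M)<\infty$, every $x\in\M$ has \emph{some} radius $r(x)$ at which the ball carries mass $>\delta r(x)$. Take a Vitali covering by these balls and project it onto the segment from $p$ to $q$; connectedness of $\M$ ensures the projection covers the whole segment. This gives $d\le 10\,\delta^{-1}\bigl(\int_\M|H|\,d\mathcal H^2+\mathcal H^1(\partial\M)\bigr)$, i.e. the estimate with some constant.

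The explicit form $\frac{16}{\pi}\bigl(\int_\M|H|+\frac{\pi}{2}\mathcal H^1(\partial\M)\bigr)$ requires careful bookkeeping that the paper does not do itself. Its proof simply invokes Miura's Theorem~1.1 with $n=3$, together with the bound $C_T(3)\ge\pi/16$ on Topping's constant from Miura's appendix. Your connectedness assumption is correct and genuinely necessary; the statement as written omits it.
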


\begin{proof}
The estimate follows from \cite[Theorem~1.1]{miura2022diameter}
applied with $n=3$. 
Using the bound $C_T(3)\ge \pi/16$, which is established in
\cite[(A.1)]{miura2022diameter}, yields the stated inequality.
\end{proof}


    \begin{thm}[Hölder Interpolation Result] \label{interpolappendix}
For $\Omega\subset \R^n$, $0\le a\le b$ and $\Omega$ bounded with $C^b$ boundary, $0<\lambda<1$ there is a constant $\Cl{int}=\Cr{int}(\Omega, a,b)$ such that
\begin{align}
    \|u\|_{C^a(\overline{\Omega})}\le  \Cr{int} \|u\|_{C^b(\overline{\Omega})}, \qquad 
    \| u\|_{C^{\lambda a +(1-\lambda)b}(\overline{\Omega})} \le \Cr{int} 
    \|u\|_{C^{a}(\overline{\Omega})}^\lambda \cdot  \|u\|_{C^{b}(\overline{\Omega})}^{1-\lambda}.
\end{align}
with $C^k(\overline{\Omega})=C^{k-1,1}(\overline{\Omega})$ in this Theorem. 
\end{thm}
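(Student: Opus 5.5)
The plan is to prove the two inequalities of Theorem~\ref{interpolappendix} separately. The first, $\|u\|_{C^a}\le \Cr{int}\|u\|_{C^b}$ for $a\le b$, is an embedding and is essentially elementary. The second, the interpolation inequality, I would reduce to a small number of elementary interpolation steps, using that $\Omega$ is bounded with $C^b$ boundary so that a bounded extension operator is available.

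\emph{Step 1: extension.} I would fix a Stein/Seeley-type extension operator $E$ that maps $C^k(\overline\Omega)$ boundedly into $C^k_c(\R^n)$ for every $k\le b$ simultaneously. The point of insisting on one operator for all orders is that the interpolation can then be carried out on $\R^n$ for $Eu$, and afterwards restricted back to $\Omega$. The conventions matter here: with $C^k=C^{k-1,1}$ at integers, a reflection-across-the-boundary extension in local $C^b$ charts preserves all these norms. The resulting constants depend only on $\Omega$, $a$ and $b$.

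\emph{Step 2: the embedding.} I would write $a=k+\sigma$ and $b=m+\tau$.
\begin{itemize}
\item If $k<m$, I would bound the $\sigma$-seminorm of $D^k u$ by the sup of $D^{k+1}u$, using the mean value theorem along paths in $\Omega$ (or simply on $\R^n$ after extension).
\item If $k=m$, I would use $\sigma\le\tau$ together with $|x-y|^{\tau-\sigma}\le \operatorname{diam}(\Omega)^{\tau-\sigma}$.
\end{itemize}
In both cases, boundedness of $\Omega$ supplies the constant.

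\emph{Step 3: the interpolation on $\R^n$.} For $v=Eu$ I would use the classical dyadic (Littlewood--Paley) characterization of the Hölder--Zygmund spaces,
\[
\|v\|_{\mathcal C^s}\simeq\sup_j 2^{js}\|\Delta_j v\|_{L^\infty}.
\]
This characterization agrees with $C^s$ for non-integer $s$. Setting $s=\lambda a+(1-\lambda)b$, I would estimate
\[
2^{js}\|\Delta_j v\|_\infty=(2^{ja}\|\Delta_jv\|_\infty)^\lambda(2^{jb}\|\Delta_jv\|_\infty)^{1-\lambda},
\]
which immediately gives $\|v\|_{\mathcal C^s}\le\|v\|_{\mathcal C^a}^\lambda\|v\|_{\mathcal C^b}^{1-\lambda}$.

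\emph{Main obstacle.} The hardest part is the integer endpoints, since $C^{k-1,1}$ and the Zygmund spaces $\mathcal C^k$ differ. I would handle them with the continuous embeddings $C^{k-1,1}\hookrightarrow\mathcal C^k$ on the right-hand side. The left-hand side is the only delicate case: when $s$ is an integer, I would first interpolate to nearby non-integer exponents $s\pm\epsilon$. I would then use $\mathcal C^{s+\epsilon}\hookrightarrow C^{s-1,1}$ and reiterate, so that the exponents $\lambda$ shift only slightly. If that becomes messy, I would fall back on a direct elementary proof: interpolate between consecutive derivative orders using Taylor's formula with a step $h$, chosen by optimizing $h^{-1}\|u\|+h\|D^2u\|$, and handle the Hölder seminorms by splitting $|x-y|<h$ from $|x-y|\ge h$. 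Such Taylor arguments are valid near the boundary thanks to the extension from Step~1.
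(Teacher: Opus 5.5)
\textbf{Comparison with the paper.} Your Step~1 is the paper's first step. The paper extends $u$, using the operator from the proof of Lemma~6.37 in Gilbarg--Trudinger, to a ball $B_R(0)\supset\overline\Omega$, with bounds in all $C^{j+\beta}$-norms for $j+\beta\le b$ at once. After that the paper does no further work: it quotes H\"ormander's Theorem~A.5 for convex sets. That theorem already contains both inequalities with the convention $C^k=C^{k-1,1}$ at integer orders, which is why the statement fixes that convention.

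Your Steps~2 and~3 replace this citation by a self-contained argument, which is a legitimate alternative. The following parts are correct:
\begin{itemize}
\item the embedding in Step~2;
\item the non-integer case of Step~3 via Littlewood--Paley;
\item the treatment of integer $a$ or $b$ on the right-hand side via $C^{k-1,1}\hookrightarrow\mathcal C^k$.
\end{itemize}

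\textbf{The gap: integer $s$.} The step that fails as written is the one you flag yourself, namely integer $s=\lambda a+(1-\lambda)b$ on the left-hand side. Combining $\mathcal C^{s+\epsilon}\hookrightarrow C^{s-1,1}$ with the interpolation of $\mathcal C^{s+\epsilon}$ between $a$ and $b$ only yields
\[
\|u\|_{C^{s-1,1}}\le C\|u\|_{C^a}^{\lambda-\delta}\|u\|_{C^b}^{1-\lambda+\delta},\qquad \delta=\epsilon/(b-a)>0 .
\]
Since $\|u\|_{C^a}\le C\|u\|_{C^b}$, this is strictly weaker than the claim. An inequality with a fixed exponent $\lambda$ is not proved by shifting $\lambda$ slightly. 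Reiteration does not help as long as you stay in the $(\theta,\infty)$-scale, because $(\mathcal C^{s-\epsilon},\mathcal C^{s+\epsilon})_{1/2,\infty}$ is again the Zygmund space $\mathcal C^s\supsetneq C^{s-1,1}$.

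\textbf{The missing idea.} Sum the dyadic blocks instead of taking their supremum. Set $A=\sup_j2^{ja}\|\Delta_jv\|_\infty$ and $B=\sup_j2^{jb}\|\Delta_jv\|_\infty$, so that $B\ge A$. For $a<s<b$,
\[
\sum_{j\ge0}2^{js}\|\Delta_jv\|_\infty\le\sum_{j\ge0}\min\bigl\{2^{j(s-a)}A,\;2^{-j(b-s)}B\bigr\}\le C\,A^{\lambda}B^{1-\lambda}.
\]
The last bound follows by splitting the sum at $2^{J(b-a)}\approx B/A$. By Bernstein's inequality, the left-hand side bounds $\sup|D^\beta v|$ for all $|\beta|\le s$. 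Hence it bounds $\|v\|_{C^s}$, and a fortiori $\|v\|_{C^{s-1,1}}$. In interpolation language: you need the $\ell^1$-space $B^s_{\infty,1}$, i.e.\ the $(1-\lambda,1)$-interpolant, not $B^s_{\infty,\infty}$.

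\textbf{Your other two routes.} Your symmetric $s\pm\epsilon$ variant would also work, but only after you prove
\[
\|u\|_{C^{s-1,1}}\le C\|u\|_{\mathcal C^{s-\epsilon}}^{1/2}\|u\|_{\mathcal C^{s+\epsilon}}^{1/2}.
\]
This is itself an integer-target interpolation inequality and does not follow from the embedding alone. The Taylor fallback is the classical elementary route and would also close the gap. As written it is only a sketch: you would have to track the exact powers of the step $h$ through all derivative orders, and restrict to $h\le1$. That restriction is admissible because $\|u\|_{C^a}\lesssim\|u\|_{C^b}$.
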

\begin{proof}
We apply \cite[Theorem A.5]{hormander1976boundary}, which proves the
interpolation inequality for functions defined on convex sets.
Since $\Omega$ need not be convex, we extend $u$ to a convex domain.

Let $b=k+\alpha$, and $R>0$ with $\overline{\Omega}\subset B_R(0)$. By the proof of \cite[Lemma 6.37
]{gilbarg2001elliptic} there exists $\Cl{ext}=\Cr{ext}(\Omega, k,\alpha)$  and an extension operator $T\colon\Cc^{k+\alpha}(\overline{\Omega})\to \Cc^{k+\alpha}_0(B_R(0))$ such that 
\begin{align*}
    \forall\  0\le j+\beta \le k+\alpha: 
    \quad \|T u\|_{\Cc^{j+\beta}_0(B_R(0))} \le \Cr{ext} \| u\|_{\Cc^{j+\beta}(\overline{\Omega})}.
\end{align*}
Then by \cite[Theorem A.5 p. 50]{hormander1976boundary} applied in $B_R(0)$ with a constant $\Cl{hor}=\Cr{hor}(a,b,R)$ it follows
\begin{align*}
     \| u\|_{C^{\lambda a +(1-\lambda)b}(\overline{\Omega})} \le &\  
     \| T u\|_{C^{\lambda a +(1-\lambda)b}_0(B_R(0))}
    \le \Cr{hor} 
    \|Tu\|_{C^{a}_0(B_R(0))}^\lambda \cdot  \|Tu\|_{C^{b}_0(B_R(0))}^{1-\lambda} \\
    \le &\ \Cr{hor} \big(\Cr{ext}
    \|u\|_{C^{a}(\overline{\Omega})}\big)^\lambda \cdot  \big(\Cr{ext}\|u\|_{C^{b}(\overline{\Omega})}\big)^{1-\lambda} \\
    \le &\ \Cr{hor}\Cr{ext}    \|u\|_{C^{a}(\overline{\Omega})}^\lambda \cdot  \|u\|_{C^{b}(\overline{\Omega})}^{1-\lambda}.
\end{align*}
We finish the proof by setting $\Cr{int}=\Cr{hor}\Cr{ext}$.
\end{proof}

For the study of the long-time existence of the Willmore flow solutions, we also need a Hölder interpolation involving $L^2(\Omega)$-norm, over which we will have more control than over $\Cc^0(\overline{\Omega})$-norm. The idea is the same as in \cite[after 5.16] {dall2016lojasiewicz} \index{Hölder!  Lebesgue interpolation} \index{interpolation!  Hölder-$L^p$}

\begin{thm}[Hölder-$L^2$ interpolation]
\label{thm:interpolation}
Let $m\in\N_0$, let $0<\alpha<1$, and let
$\Omega\subset\R^2$ be a bounded domain with
$\partial\Omega\in C^{m+\alpha}$.
Then there exist constants
\[
C=C(m,\alpha,\Omega)>0,
\qquad
\theta=\theta(m,\alpha)\in(0,1),
\]
such that for every
$u\in C^{m+\alpha}(\overline\Omega)$,
\[
\|u\|_{C^m(\overline\Omega)}
\le
C\,
\|u\|_{C^{m+\alpha}(\overline\Omega)}^{1-\theta}
\|u\|_{L^2(\Omega)}^\theta .
\]
\end{thm}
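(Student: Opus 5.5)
The plan is to reduce the claim to the Hölder interpolation Theorem~\ref{interpolappendix} combined with a Gagliardo--Nirenberg-type estimate bounding the sup norm by the $L^2$ norm and a Hölder seminorm. The argument has two steps.

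\emph{Step 1: control $\|u\|_{C^0}$ by $L^2$ and $C^\alpha$.} Fix $x_0\in\overline\Omega$ with $|u(x_0)|=\|u\|_{C^0(\overline\Omega)}=:M$, and set $A:=[u]_{C^\alpha(\overline\Omega)}\le \|u\|_{C^{m+\alpha}}$. For $r:=(M/(2A))^{1/\alpha}$ we have $|u|\ge M/2$ on $B_r(x_0)\cap\Omega$. Since $\partial\Omega\in C^{1}$ (or merely Lipschitz), $\Omega$ satisfies a uniform interior cone condition. Hence there is $c=c(\Omega)>0$ with $|B_r(x_0)\cap\Omega|\ge c\min\{r,r_0\}^2$ for a fixed $r_0$. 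This gives
\[
\|u\|_{L^2}^2\ge \tfrac{M^2}{4}\,c\min\{r,r_0\}^2 .
\]
If $r\ge r_0$, then $M\le C\|u\|_{L^2}$. Since trivially $M\le C\|u\|_{C^{m+\alpha}}$, we get $M\le C\|u\|_{L^2}^{\theta_0}\|u\|_{C^{m+\alpha}}^{1-\theta_0}$ for any $\theta_0\in(0,1]$. If $r<r_0$, then $M^{2+2/\alpha}\le C A^{2/\alpha}\|u\|_{L^2}^2$, i.e.
\[
M\le C\,\|u\|_{L^2}^{\frac{\alpha}{1+\alpha}}\|u\|_{C^{m+\alpha}}^{\frac{1}{1+\alpha}} .
\]
Taking $\theta_0=\alpha/(1+\alpha)$ covers both cases, with the first case absorbed by writing $\|u\|_{L^2}=\|u\|_{L^2}^{\theta_0}\|u\|_{L^2}^{1-\theta_0}$ and $\|u\|_{L^2}\le C\|u\|_{C^0}\le C\|u\|_{C^{m+\alpha}}$.

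\emph{Step 2: interpolate up to $C^m$.} For $m=0$, Step 1 is already the claim. For $m\ge1$, apply Theorem~\ref{interpolappendix} with $a=0$, $b=m+\alpha$ and $\lambda=\alpha/(m+\alpha)$. Then $\lambda a+(1-\lambda)b=m$, and
\[
\|u\|_{C^m}\le C\|u\|_{C^0}^{\lambda}\|u\|_{C^{m+\alpha}}^{1-\lambda}.
\]
The convention $C^m=C^{m-1,1}$ in that theorem is harmless, since it only strengthens the left-hand side. Inserting Step 1 gives the claim with $\theta=\lambda\theta_0=\frac{\alpha}{m+\alpha}\cdot\frac{\alpha}{1+\alpha}\in(0,1)$, which depends only on $m$ and $\alpha$.

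The main obstacle is the geometric measure bound $|B_r(x_0)\cap\Omega|\gtrsim r^2$ uniformly for boundary points $x_0$. I will derive it from the regularity of $\partial\Omega$: locally $\Omega$ is the region above a $C^1$ graph, so it contains a fixed-aperture cone at each boundary point. The constant $r_0$ is then the scale below which these cones lie inside $\Omega$. Everything else is elementary bookkeeping of exponents.
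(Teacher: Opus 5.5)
Your proof is correct in substance, but it takes a genuinely different route from the paper. The paper's argument is abstract. It identifies the real interpolation space $(L^2,C^{m+\alpha})_{\theta,p}$, with $p=2/(1-\theta)$, with the Besov space $B^{\theta(m+\alpha)}_{p,p}$. It embeds that space into $C^m$ once $\theta(m+\alpha)>m+2/p$, which forces $\theta$ close to $1$. It then concludes with the general inequality $\|u\|_{(X_0,X_1)_{\theta,p}}\le C\|u\|_{X_0}^{1-\theta}\|u\|_{X_1}^{\theta}$, so the $L^2$-weight is $1-\theta$.

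You split the problem into two steps:
\begin{itemize}
\item an elementary Nash/Gagliardo--Nirenberg-type bound $\|u\|_{C^0}\lesssim\|u\|_{L^2}^{\alpha/(1+\alpha)}[u]_{C^\alpha}^{1/(1+\alpha)}$, coming from a lower density bound for $\Omega$;
\item the pure Hölder interpolation of Theorem~\ref{interpolappendix} with $a=0$, $b=m+\alpha$, $\lambda=\alpha/(m+\alpha)$.
\end{itemize}
Your remark that the convention $C^m=C^{m-1,1}$ there only strengthens the left-hand side is right.

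What your route buys: it is self-contained given the appendix, and it gives the explicit exponent $\theta=\alpha^2/((m+\alpha)(1+\alpha))$. It needs only a measure-density property of $\Omega$, not Besov spaces on domains (interpolation identities, embeddings and extension operators for the Besov scale, which the paper invokes only loosely). What the paper's route buys: carried out carefully, it gives an $L^2$-exponent arbitrarily close to the scaling-optimal value $\alpha/(m+1+\alpha)$. Yours loses some of this because Step~1 replaces $[u]_{C^\alpha}$ by the full $C^{m+\alpha}$-norm. Since the statement, and its use in the global existence argument, only needs some $\theta\in(0,1)$ depending on $m,\alpha$, this loss does not matter.

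Two small repairs are needed.
\begin{itemize}
\item For $m\ge1$, the bound $[u]_{C^\alpha(\overline\Omega)}\le\|u\|_{C^{m+\alpha}(\overline\Omega)}$ needs a constant $C(\Omega)$. You can take it from the first inequality of Theorem~\ref{interpolappendix}.
\item For $m=0$ the hypothesis is only $\partial\Omega\in C^{0,\alpha}$. Such boundaries allow cusps, so the interior cone condition you invoke is not available. The fix is as follows. Write $\Omega$ locally as the supergraph of a $C^{0,\alpha}$ function $f$ with Hölder constant $L$. For a boundary point $x_0=(a,f(a))$ and small $\rho\le1$, the box $\{|x_1-a|<\rho,\ f(a)+L\rho^\alpha<x_2<f(a)+L\rho^\alpha+\rho\}$ has area $2\rho^2$ and lies in $\Omega\cap B_{(L+2)\rho^\alpha}(x_0)$. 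Points of $\Omega$ near $\partial\Omega$ are handled the same way. This gives $|B_r(x_0)\cap\Omega|\ge c\min\{r,r_0\}^{2/\alpha}$. Your computation then yields $\theta_0=\alpha^2/(1+\alpha^2)$, which still depends only on $\alpha$, so the conclusion for $m=0$ survives.
\end{itemize}
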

\begin{proof}
We use the interpolation results for Besov spaces by  \cite[Thm 6.4.5 (3)]{bergh1976interpolation} with $\theta=2(b-a)/(4+2b)$:
\begin{align*}
    \big(B^{s_0}_{p_0,q_0}, B^{s_1}_{p_1,q_1}\big)_{\theta,p^\ast} = B^{s^\ast}_{p^\ast ,q^\ast}, \qquad (s_0\ne s_1, p^\ast=q^\ast, 1\le p_0,p_1,q_0,q_1\le \infty)
\end{align*}
with additional restrictions
\begin{align*}
    s^\ast=(1-\theta)s_0+\theta s_1 ,\qquad 
    \frac{1}{p^\ast}= \frac{1-\theta}{p_0 }+ \frac{\theta}{p_1}, \qquad
    \frac{1}{q^\ast}= \frac{1-\theta}{q_0 }+ \frac{\theta}{q_1}.
\end{align*}
With $q_0 = p_0=2$ by \cite[Theorem 4.6.1 (b) p. 328 ]{triebel1978interpolation} we get $B^s_{2,2}=H^s_{2}$ the generalized Sobolev Space. By \cite[Subsection 1.5.1 Theorem (ii) p.29]{triebel1992theory} for $s>0$, then $B^s_{\infty,\infty}=\mathcal C ^s$ ( Hölder–Zygmund space \cite[Subsection 1.2.2]{triebel1992theory}) and by \cite[Subsection 1.2.2 Theorem (ii)]{triebel1992theory} if $0<s\ne$ integer, then $C^s=\mathcal C^s$
\begin{align}
\big(B^{0}_{2,2}, B^{s_1}_{\infty,\infty }\big)_{\theta,p^\ast} = B^{s^\ast}_{p^\ast, q^\ast}.
\end{align}
To fulfill the above restrictions we set $p^\ast= q^\ast= 2/(1-\theta)$ and $s^\ast = \theta s_0$, thus
\begin{align}
\big( L^2(\Omega), C^{s_1}(\overline{\Omega})\big)_{\theta,p^\ast} = B^{\theta s_1}_{\frac{2}{1-\theta}, \frac{2}{1-\theta}}.
\end{align}
By \cite[Theorem 4.6.1. (b) p.327]{triebel1978interpolation}: For $1<p<\infty, 1\le r\le\infty, t\ge 0$ and $s> t + n/p$ it follows $B^s(\Omega)\subset  C^t(\overline\Omega)$, and by the proof of (e) and (d) of 2.8.1 on page 205 in \cite{triebel1978interpolation} the embedding is continuous. In our case: $\theta s_1> t + (1-\theta)$ so that $s_1 /(1-1/\theta)> t$, so we can find $\theta\in (0,1)$ for every $t<s_1$. Next, we use the interpolation inequality \cite[Thm 1.3.3. p.25]{triebel1978interpolation}
\begin{align*}
    \|u\| _{ C^{t}(\overline{\Omega}	)} \le C \|u\|^{1-\theta}_{L^2(\Omega)} \|u\|^{\theta}_{C^{s_1}(\overline{\Omega})}.
\end{align*}
We used  $\partial\Omega\in C^{m+\alpha}$ for technical reasons related to smoothing. 
\end{proof}

In following lemmas, we repeatedly use the following elementary facts.

\smallskip
\noindent
\emph{(i) Reduction of Hölder exponents in time.}
If $0<\mu<\nu\le 1$ and $f\in C^\nu([0,T])$, then
\begin{align}
 [f]_{C^\mu([0,T])}\le T^{\nu-\mu}[f]_{C^\nu([0,T])}.
\label{eq:i}   
\end{align}

\smallskip
\noindent
\emph{(ii) Vanishing initial trace.}
If $f(0)=0$ and $f\in C^\nu([0,T])$ for some $\nu\in(0,1]$, then
\begin{align}
\sup_{t\in[0,T]}|f(t)|\le T^\nu [f]_{C^\nu([0,T])}.
\label{eq:ii}   
\end{align}

\smallskip
\noindent
\emph{(iii) Interpolation of spatial Hölder seminorms.}
For $0<\gamma<\alpha<1$ and $v\in C^\alpha(\overline\Omega)$,
\begin{align}
[v]_{C^\gamma(\overline\Omega)}
\le
2^{1-\gamma/\alpha}
[v]_{C^\alpha(\overline\Omega)}^{\gamma/\alpha}
\|v\|_{C^0(\overline\Omega)}^{1-\gamma/\alpha}.
\label{eq:iii}   
\end{align}
\begin{lemma}\label{gammaalphaappendix}
Let $m\in\{1,2,3,4\}$ and let
$u\in \Cc^{4+\alpha,1+\alpha/4}_{m+\alpha}(Q_T)$ satisfy
\[
D_t^kD_x^\beta u(x,0)=0
\qquad
\text{for all }x\in\overline\Omega
\text{ and }4k+|\beta|\le m.
\]
Then there exists a constant
\[
\Cr{constgammaalpha}=\Cr{constgammaalpha}(\alpha,\gamma)>0
\]
such that for every $0<\gamma<\alpha$ and every $T\le1$,
\[
\|u\|_{\Cc^{4+\gamma,1+\gamma/4}_{m+\gamma}(Q_T)}
\le
\Cr{constgammaalpha}\,
T^{\frac{\alpha-\gamma}{4}}
\|u\|_{\Cc^{4+\alpha,1+\alpha/4}_{m+\alpha}(Q_T)}.
\]
\end{lemma}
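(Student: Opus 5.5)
We will prove the estimate term by term: the $\gamma$-norm on the left splits into (a) the unweighted part $\|u\|_{\Cc^{m+\gamma,(m+\gamma)/4}_{x,t}(\overline Q_T)}$, (b) the weighted sup terms $t^{(4k+|\beta|-m-\gamma)/4}|D_t^kD_x^\beta u|$ for $m<4k+|\beta|\le 4$, and (c) the weighted top seminorm $t^{(4-m)/4}[u]^{4+\gamma}_{Q_t'}$. Note first that the weight exponents for (b) and (c) in the $\gamma$-norm exceed those in the $\alpha$-norm by $(\alpha-\gamma)/4$ only with the wrong sign for (b). We will therefore obtain the factor $T^{(\alpha-\gamma)/4}$ from $t^{(\alpha-\gamma)/4}\le T^{(\alpha-\gamma)/4}$ wherever possible, and otherwise from the vanishing initial traces via the elementary facts \eqref{eq:i}--\eqref{eq:iii}.

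\textbf{Step (b).} We write $t^{(4k+|\beta|-m-\gamma)/4}=t^{(\alpha-\gamma)/4}\,t^{(4k+|\beta|-m-\alpha)/4}$. Hence each such term is bounded by $T^{(\alpha-\gamma)/4}$ times the corresponding term of the $\alpha$-norm. No vanishing is needed here.

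\textbf{Step (c).} For the spatial seminorms $[D_t^kD_x^\beta u(\cdot,t')]_{C^\gamma}$ with $4k+|\beta|=4$, $t'\in[t/2,t]$, we apply \eqref{eq:iii}. This gives $[\,\cdot\,]_{C^\alpha}^{\gamma/\alpha}\|\cdot\|_{C^0}^{1-\gamma/\alpha}$. The $C^\alpha$-seminorm is bounded by $t^{-(4-m)/4}\|u\|_\alpha$ and the sup by $t^{-(4-m-\alpha)/4}\|u\|_\alpha$ (when $m<4$; for $m=4$ the sup is controlled via \eqref{eq:ii} as in (a)). Multiplying by $t^{(4-m)/4}$ leaves the factor $t^{(\alpha/4)(1-\gamma/\alpha)}=t^{(\alpha-\gamma)/4}$. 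For the temporal seminorms on $[t/2,t]$ we use \eqref{eq:i} on an interval of length $t/2$: the exponent drops by $(\alpha-\gamma)/4$, which again yields $(t/2)^{(\alpha-\gamma)/4}$.

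\textbf{Step (a)} is the main obstacle, since the unweighted low-order part carries no power of $t$, and this is exactly where $D_t^kD_x^\beta u(\cdot,0)=0$ for $4k+|\beta|\le m$ enters. For the temporal Hölder seminorms of order $(m+\gamma-4k-|\beta|)/4$ we use \eqref{eq:i} on $[0,T]$ directly, obtaining $T^{(\alpha-\gamma)/4}$. For the sup norms of $D_t^kD_x^\beta u$ with $4k+|\beta|\le m$, the function vanishes at $t=0$ and is $C^{\nu}$ in time with $\nu=(m+\alpha-4k-|\beta|)/4>0$, controlled by the $\alpha$-norm. For $4k+|\beta|=m$ this temporal Hölder bound is not part of the unweighted $C^{m+\alpha}$ norm, so we recover it from the weighted norm: we integrate the bound $|\partial_t D^\beta u|\lesssim t^{(4-\alpha)/4-1}\cdots$, or we use the standard fact from Belonosov that $C^{4+\alpha}_{m+\alpha}$ embeds into $C^{m+\alpha,(m+\alpha)/4}_{x,t}(\overline Q_T)$ including the time Hölder continuity of order $\alpha/4$ of the $m$-th derivatives. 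Then \eqref{eq:ii} gives the sup bound $T^{\nu}\le T^{(\alpha-\gamma)/4}$ for $T\le1$. For the spatial seminorm $[D^\beta u(\cdot,t)]_{C^\gamma}$ with $4k+|\beta|=m$, we interpolate by \eqref{eq:iii} between $[\cdot]_{C^\alpha}$, which is bounded by the $\alpha$-norm, and the sup, which is bounded by $T^{\alpha/4}\|u\|_\alpha$. This yields $T^{(\alpha/4)(1-\gamma/\alpha)}=T^{(\alpha-\gamma)/4}$. Summing (a)--(c) gives the claim with $\Cr{constgammaalpha}$ depending only on $\alpha,\gamma$. The delicate point is justifying the time-Hölder control of the top unweighted derivatives; I would derive it from the weighted bound on $D_t u$ (or $D^{m+1}u$) near $t=0$ by integration in $t$.
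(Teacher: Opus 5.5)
Your argument is essentially the paper's proof. You use the same term-by-term splitting of the $\gamma$-norm: your (b) is the paper's Step 1, your (c) its Steps 5--6, and your (a) its Steps 2--4. The factor $T^{(\alpha-\gamma)/4}$ is produced by \eqref{eq:i}--\eqref{eq:iii} at the same places. In (c) you bound the sup of the fourth-order derivatives on $[t/2,t]$ by $C\,t^{-(4-m-\alpha)/4}M$, with $M:=\|u\|_{\Cc^{4+\alpha,1+\alpha/4}_{m+\alpha}(Q_T)}$. This is more careful than the paper's Step 6, which writes $T^{(m+\alpha-4)/4}$ there; for $m<4$ that inequality goes the wrong way, since the exponent is negative and $t'\le T$. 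The final power $t^{(\alpha-\gamma)/4}\le T^{(\alpha-\gamma)/4}$ is the same either way.

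Two corrections are needed.

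First, what you call the main obstacle is not one. The unweighted summand $\|u\|_{\Cc^{m+\alpha,(m+\alpha)/4}_{x,t}(\overline Q_T)}$ of the weighted norm contains the temporal seminorms for all $m-4+\alpha<4k+|\beta|\le m$. In particular it contains $\sup_x[D_t^kD_x^\beta u(x,\cdot)]_{\Cc^{\alpha/4}([0,T])}$ for $4k+|\beta|=m$, and the paper's Step 3 simply combines this with the vanishing trace via \eqref{eq:ii}. The derivation you say you would actually use, integrating $\partial_tD_x^\beta u$ in time, fails. For $|\beta|=m\ge1$ that derivative has parabolic order $4+m>4+\alpha$, so the norm does not control it at all. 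The embedding you cite as a fallback is correct, but it holds trivially by definition of the norm.

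Second, for $m=4$ the zeroth-order term $u$ has $\nu=(4+\alpha)/4>1$, so \eqref{eq:ii} does not apply. Use $|u(x,t)|\le t\sup_{Q_T}|u_t|\le TM\le T^{(\alpha-\gamma)/4}M$ instead, as the paper does.
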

\begin{proof}
Set
\[
M:=\|u\|_{\Cc^{4+\alpha,1+\alpha/4}_{m+\alpha}(Q_T)}.
\]
We show that every term occurring in the
$\Cc^{4+\gamma,1+\gamma/4}_{m+\gamma}(Q_T)$-norm
is bounded by $C(\alpha,\gamma)\,T^{\frac{\alpha-\gamma}{4}}M$.

Then we split the $\Cc^{4+\gamma,1+\frac{\gamma}{4}}_{m+\gamma}(Q_T)$–norm
into several parts:
\begin{align*}
\|u\|_{\Cc^{4+\gamma,1+\frac{\gamma}{4}}_{m+\gamma}(Q_T)}
= &\
\underbrace{\sum_{m<4k+|\beta|\le4}
\sup_{(x,t)\in\overline\Omega\times(0,T]}
t^{\frac{4k+|\beta|-m-\gamma}{4}}
\big|D_t^k D_x^\beta u(x,t)\big|}_{\cbm 1}
\\
&\ + \|u\|_{\Cc^{m+\gamma,\frac{m+\gamma}{4}}_{x,t}(\overline Q_T)}
+ \sup_{t\in(0,T]} t^{\frac{4-m}{4}} [u]_{Q_t'}^{4+\gamma}.
\end{align*}

We further decompose the second and the third term as
\begin{align*}
\|u\|_{\Cc^{m+\gamma,\frac{m+\gamma}{4}}_{x,t}(\overline Q_T)}
= &\
\underbrace{
\sum_{m-3\le 4k+|\beta|\le m}
\sup_{x\in\overline\Omega}
\Big[
D_t^k D_x^\beta u(x,\cdot)
\Big]_{\Cc^{\frac{m+\gamma-4k-|\beta|}{4}}([0,T])}
}_{\cbm 2}
\\
&\ +
\underbrace{
\sum_{4k+|\beta|\le m}
\sup_{(x,t)\in\overline Q_T}
\big|D_t^k D_x^\beta u(x,t)\big|
}_{\cbm 3}
\\
&\ +
\underbrace{
\sum_{4k+|\beta|=m}
\sup_{t\in[0,T]}
\Big[
D_t^k D_x^\beta u(\cdot,t)
\Big]_{\Cc^\gamma(\overline\Omega)}
}_{\cbm 4}.
\\
[u]_{Q_t'}^{4+\gamma}
= &\
\underbrace{
\sum_{1\le 4k+|\beta|\le4}
\sup_{x\in\overline\Omega}
\Big[
D_t^k D_x^\beta u(x,\cdot)
\Big]_{\Cc^{\frac{4-4k-|\beta|+\gamma}{4}}([t/2,t])}
}_{\cbm 5}
\\
&\ +
\underbrace{
\sum_{4k+|\beta|=4}
\sup_{t\in[t/2,t]}
\Big[
D_t^k D_x^\beta u(\cdot,t)
\Big]_{\Cc^\gamma(\overline\Omega)}
}_{\cbm 6}.
\end{align*}

We now estimate the different parts of the norm.

\medskip
\noindent
\cb 1 \emph{Weighted pointwise terms of parabolic order \(>m\).}
For every $m<4k+|\beta|\le4$ we have
\begin{align*}
\sup_{(x,t)\in Q_T}
t^{\frac{4k+|\beta|-m-\gamma}{4}}
|D_t^kD_x^\beta u(x,t)|
&=
\sup_{(x,t)\in Q_T}
t^{\frac{\alpha-\gamma}{4}}
t^{\frac{4k+|\beta|-m-\alpha}{4}}
|D_t^kD_x^\beta u(x,t)| \\
&\le
T^{\frac{\alpha-\gamma}{4}}\,M.
\end{align*}

\medskip
\noindent
\cb 2 \emph{Temporal Hölder seminorms in the low-order part.}
Let $m-3\le 4k+|\beta|\le m$. Then
\[
\frac{m+\gamma-4k-|\beta|}{4}
<
\frac{m+\alpha-4k-|\beta|}{4},
\]
hence by in \eqref{eq:i} in \textit{(i)},
\begin{align*}
\sup_{x\in\overline\Omega}
\big[D_t^kD_x^\beta u(x,\cdot)\big]_
{C^{\frac{m+\gamma-4k-|\beta|}{4}}([0,T])}
\le
T^{\frac{\alpha-\gamma}{4}}
\sup_{x\in\overline\Omega}
\big[D_t^kD_x^\beta u(x,\cdot)\big]_
{C^{\frac{m+\alpha-4k-|\beta|}{4}}([0,T])}
\le
T^{\frac{\alpha-\gamma}{4}}M.
\end{align*}

\medskip
\noindent
\cb 3 \emph{Pointwise terms of parabolic order \(\le m\).}
We first consider the case $m-3\le 4k+|\beta|\le m$.
Since
\[
D_t^kD_x^\beta u(x,0)=0
\qquad\text{for all }x\in\overline\Omega,
\]
we may apply (ii) to the function
$t\mapsto D_t^kD_x^\beta u(x,t)$ and obtain
\begin{align*}
\sup_{(x,t)\in\overline Q_T}|D_t^kD_x^\beta u(x,t)|
&\le
T^{\frac{m+\alpha-4k-|\beta|}{4}}
\sup_{x\in\overline\Omega}
\big[D_t^kD_x^\beta u(x,\cdot)\big]_
{C^{\frac{m+\alpha-4k-|\beta|}{4}}([0,T])}
\\
&\le
T^{\frac{\alpha-\gamma}{4}}M,
\end{align*}
because \(m+\alpha-4k-|\beta|\ge \alpha>\alpha-\gamma\) and \(T\le1\).

It remains to treat the case \(m=4\) and \(4k+|\beta|=0\), i.e. the
term \(u\) itself.
Since \(u(x,0)=0\), we use \(u_t\in C^0(\overline Q_T)\) and obtain
\[
|u(x,t)|
=
\left|\int_0^t u_t(x,\tau)\,d\tau\right|
\le
T\,\|u_t\|_{C^0(\overline Q_T)}
\le
T^{\frac{\alpha-\gamma}{4}}M,
\]
again because \(T\le1\).

\medskip
\noindent
\cb 4 \emph{Spatial Hölder seminorms of parabolic order \(m\).}
Let \(4k+|\beta|=m\).
By Step~3 and the vanishing initial trace,
\[
\|D_t^kD_x^\beta u(\,\cdot\,,t)\|_{C^0(\overline\Omega)}
\le
M\,T^{\alpha/4}.
\]
Therefore, by \eqref{eq:iii} in \emph{(iii)},
\begin{align*}
[D_t^kD_x^\beta u(\,\cdot\,,t)]_{C^\gamma(\overline\Omega)}
&\le
2^{1-\gamma/\alpha}
[D_t^kD_x^\beta u(\,\cdot\,,t)]_{C^\alpha(\overline\Omega)}^{\gamma/\alpha}
\|D_t^kD_x^\beta u(\,\cdot\,,t)\|_{C^0(\overline\Omega)}^{1-\gamma/\alpha}
\\
&\le
2^{1-\gamma/\alpha}
M\,T^{\frac{\alpha-\gamma}{4}}.
\end{align*}
Taking the supremum over \(t\in[0,T]\) gives the desired bound for this
part of the norm.

\medskip
\noindent
\cb 5 \emph{Weighted temporal Hölder seminorms in \([u]^{4+\gamma}_{Q_t'}\).}
Let \(1\le 4k+|\beta|\le4\).
Applying \eqref{eq:i} in \emph{(i)} on the interval \([t/2,t]\) yields
\begin{align*}
t^{\frac{4-m}{4}}
\big[D_t^kD_x^\beta u(x,\cdot)\big]_
{C^{\frac{4+\gamma-4k-|\beta|}{4}}([t/2,t])}
&\le
C(\alpha,\gamma)\,
t^{\frac{4-m}{4}}
t^{\frac{\alpha-\gamma}{4}}
\big[D_t^kD_x^\beta u(x,\cdot)\big]_
{C^{\frac{4+\alpha-4k-|\beta|}{4}}([t/2,t])}
\\
&\qquad\le
C(\alpha,\gamma)\,T^{\frac{\alpha-\gamma}{4}}M.
\end{align*}

\medskip
\noindent
\cb 6 \emph{Weighted spatial Hölder seminorms of parabolic order \(4\).}
Let \(4k+|\beta|=4\), and 
\(
t'\in[t/2,t].
\)
From the weighted norm of order \(4+\alpha\) and \cbm 1 we have
\[
t^{\frac{4-m}{4}}[D_t^kD_x^\beta u(\,\cdot\,,t')]_{C^\alpha(\overline\Omega)}\le M, \quad \|D_t^kD_x^\beta u(\,\cdot\,,t')\|_{C^0(\overline\Omega)}
\le
M\,(t')^{\frac{m+\alpha-4}{4}}
\le
C\,M\,T^{\frac{m+\alpha-4}{4}},
\]

Applying \eqref{eq:iii} in \emph{(iii)} to \(D_t^kD_x^\beta u(\,\cdot\,,t')\) gives
\begin{align*}
t^{\frac{4-m}{4}}[D_t^kD_x^\beta u(\,\cdot\,,t')]_{C^\gamma(\overline\Omega)}
&\le
C(\alpha,\gamma)\,
t^{\frac{4-m}{4}}
[D_t^kD_x^\beta u(\,\cdot\,,t')]_{C^\alpha(\overline\Omega)}^{\gamma/\alpha}
\|D_t^kD_x^\beta u(\,\cdot\,,t')\|_{C^0(\overline\Omega)}^{1-\gamma/\alpha}
\\
&\le
C(\alpha,\gamma)\,
M\,T^{\frac{\alpha-\gamma}{4}}.
\end{align*}
Taking the supremum over \(t'\in[t/2,t]\) and then over \(t\in(0,T]\)
gives the required estimate.

\medskip
Collecting the bounds and absorbing the finite number of
terms into the constant proves the lemma.
\end{proof}


\begin{lemma}\label{productruleappendix}
Let $m=1,2,3,4$, let $0<\gamma\le\alpha<1$, and assume
$\gamma\ge\alpha/2$.
If
\[
u,v,w\in \Cc^{4+\gamma,1+\gamma/4}_{m+\gamma}(Q_T)
\qquad\text{and}\qquad T\le1,
\]
then there exists a constant
\[
\Cr{prodrule}=\Cr{prodrule}(\alpha,\gamma,\Omega)>0
\]
depending only on $\alpha,\gamma,\Omega$ and on the fixed algebraic
structure of $\Rr$ and $L$ such that
\begin{align}
&\|\nabla u\|_{\Cc^{\alpha,\alpha/4}_{\max\{0,m+\alpha-4\}}(Q_T)}
\le
\Cr{prodrule}
\|\nabla u\|_{\Cc^{3+\gamma,\frac{3+\gamma}{4}}_{m+\gamma-1}(Q_T)},
\tag{\ref{eq:1}}\label{eq:1-app}
\\
&\ \|D^3wD^2 u\|_{C^{\alpha, \alpha/4}_{m+\alpha-4}(Q_T)}  
     \le 
     \Cr{prodrule} \|D^3w\|_{C^{1+\gamma, \frac{1+\gamma}{4}}_{m+\gamma-3}(Q_T)} 
     \cdot \|D^2u\|_{C^{2+\gamma, \frac{2+\gamma}{4}}_{m+\gamma-2}(Q_T)}  \tag{\ref{eq:32}} \\ 
 &\begin{aligned}
            \|D^2 u D^2 &\ w D^2v\|_{C^{\alpha, \alpha/4}_{m+\alpha-4}(Q_T)} \\
           &\ \le 
          \Cr{prodrule} \|D^2u\|_{C^{2+\gamma, \frac{2+\gamma}{4}}_{m+\gamma-2}(Q_T)} \cdot \|D^2w\|_{C^{2+\gamma, \frac{2+\gamma}{4}}_{m+\gamma-2}(Q_T)} 
          \cdot \|D^2v\|_{C^{2+\gamma, \frac{2+\gamma}{4}}_{m+\gamma-2}(Q_T)}  .
        \end{aligned} \tag{\ref{eq:222}}
\end{align}
\end{lemma}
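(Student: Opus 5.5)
The plan is to verify each of the three inequalities term by term against the definition of the weighted norm $\Cc^{\alpha,\alpha/4}_{s}(Q_T)$. The target is: for $s=m+\alpha-4\le\alpha$ and $\ell=\alpha$, the norm consists of $\sup_t t^{(\alpha-s)/4}[\,\cdot\,]^{\alpha}_{Q_t'}$ plus, when $s\ge 0$, the unweighted $\Cc^{s,s/4}_{x,t}$ part. When $s<0$ there is no unweighted part; pointwise terms are then controlled through the seminorm on $Q_t'$ together with a weighted sup, $\sup t^{-s/4}|\cdot|$, which is implicitly contained via $[\cdot]^\alpha_{Q_t'}$ in Belonosov's convention. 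So for each product I need two bounds on every cylinder $Q_t'=\overline\Omega\times[t/2,t]$: a sup bound of order $t^{s/4}$ and an $\alpha$-seminorm bound (spatial and temporal with exponent $\alpha/4$) of order $t^{(s-\alpha)/4}$.

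For \eqref{eq:1}, I use the embedding \eqref{eq:einbett} together with the definition. The right-hand norm $\|\nabla u\|_{\Cc^{3+\gamma}_{m+\gamma-1}}$ gives $|\nabla u|\le M$, $|D\nabla u|\lesssim t^{(1-m-\gamma+1)/4}$ when $m=1$ and is bounded otherwise, and a time Hölder bound of $\nabla u$ of exponent $\min\{1,(m+\gamma-1)/4\}$ or the $[t/2,t]$-seminorms. The spatial $\alpha$-seminorm on $Q_t'$ is bounded by interpolating between the sup of $\nabla u$ and the sup of $D^2u$ (mean value theorem plus \eqref{eq:iii}). The temporal $\alpha/4$-seminorm is bounded by the temporal seminorm of order $(3+\gamma)/4\ge\alpha/4$ on $[t/2,t]$ via \eqref{eq:i}, which costs powers of $t\le1$. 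Multiplying by the weight $t^{(\alpha-\max\{0,m+\alpha-4\})/4}$, the singular powers $t^{-(\alpha-\gamma)/4}$ that arise from mismatched exponents are controlled because $\gamma\ge\alpha/2$, so the total exponent stays nonnegative. For $m=4$ the unweighted $\Cc^{\alpha,\alpha/4}_{x,t}$ norm is needed, and I take it from the $\Cc^{3+\gamma}_{3+\gamma}$ part together with \eqref{eq:i}.

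For \eqref{eq:32} and \eqref{eq:222}, I use the Leibniz rule for seminorms on $Q_t'$:
\[
[fg]^\alpha_{Q_t'}\le \sup_{Q_t'}|f|\,[g]^\alpha_{Q_t'}+[f]^\alpha_{Q_t'}\sup_{Q_t'}|g|.
\]
From the definitions, on $Q_t'$ I have $|D^3w|\lesssim t^{(m+\gamma-3)/4}$ (bounded if $m\ge3$) and $|D^2u|\lesssim t^{(m+\gamma-2)/4}$. The $\alpha$-seminorms of $D^3w$ and $D^2u$ are obtained as above by interpolating the $\gamma$-seminorm bounds on $Q_t'$ with the sup of one higher derivative, or by \eqref{eq:i} in time. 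This loses a factor $t^{-(\alpha-\gamma)/4}$, and $\gamma\ge\alpha/2$ is what keeps the lost power absorbable. Adding up the exponents, the product is $O(t^{(2m+2\gamma-5)/4})$ in sup and $O(t^{(2m+2\gamma-5-\alpha)/4})$ in seminorm. Comparing with the required $t^{(m+\alpha-4)/4}$, the surplus is $(m+2\gamma-\alpha-1)/4\ge(m-1)/4\ge0$ for $\gamma\ge\alpha/2$, so since $T\le1$ the extra power is bounded by $1$. The triple product is handled in the same way, with surplus $(2m+3\gamma-2-\alpha)/4\ge0$.

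The main obstacle is the borderline exponent bookkeeping for $m=1$ and for $m=4$. For $m=1$ the margins are smallest, and I must verify that each mixed seminorm term, not only the sup terms, has a nonnegative power of $t$. For $m=4$ the target space has $s=\alpha\ge0$, which requires unweighted Hölder continuity up to $t=0$; I would obtain this from the unweighted parts of the right-hand norms, which are $\Cc^{1+\gamma}_{x,t}$ and $\Cc^{2+\gamma}_{x,t}$ up to $t=0$, again using \eqref{eq:i} and $\gamma\ge\alpha/2$ only where the exponent lowering in time requires it.
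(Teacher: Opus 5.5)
Your plan is essentially the paper's proof. It uses the same splitting of the target norm into the weighted sup $I_0$ and the spatial and temporal seminorms $I_x$, $I_t$ on $Q_t'$, the Leibniz rule, interpolation of spatial $\alpha$-seminorms against the sup of the next derivative, lowering of temporal exponents via \eqref{eq:i}, $\gamma\ge\alpha/2$ only for $m=1$, $T\le1$ to discard nonnegative powers, and reduction of $m=4$ to the unweighted estimates.

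One bookkeeping correction is needed. Without vanishing initial traces, $D^2u$ is merely bounded for $m\ge2$, not $O(t^{(m+\gamma-2)/4})$. So your product exponent $(2m+2\gamma-5)/4$, and likewise your triple-product count, is only right for $m=1$. With the exponents capped at $0$, the pointwise surpluses in \eqref{eq:32} are $(1+\gamma-\alpha)/4$ for $m=2$ and $(1-\alpha)/4$ for $m=3$, rather than $(m+2\gamma-\alpha-1)/4$. These are still nonnegative, so your conclusion stands exactly as in the paper's pointwise estimates $I_0^m$.
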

\begin{proof}
For $m=4$ the statement reduces to the corresponding product estimates in
unweighted anisotropic Hölder spaces; see \cite{gulyak2017willmore}.
Hence it remains to consider the cases $m=1,2,3$.

\medskip\noindent\cb{1}\ \emph{Decomposition of the weighted norm.}
The norm
\[\|h\|_{\Cc^{\alpha,\alpha/4}_{m+\alpha-4}(Q_T)}=I_0+I_x+I_t\]
consists of three parts :
\begin{align*}
I_0^m(h)
&:=
\sup_{(x,t)\in Q_T}
t^{\frac{4-m-\alpha}{4}}|h(x,t)|,\\[0.2em]
I_x^m(h)
&:=
\sup_{0<t\le T}
t^{\frac{4-m}{4}}
\sup_{t'\in[t/2,t]}
[h(\cdot,t')]_{\Cc^\alpha(\overline\Omega)},\\[0.2em]
I_t^m(h)
&:=
\sup_{0<t\le T}
t^{\frac{4-m}{4}}
\sup_{x\in\overline\Omega}
[h(x,\cdot)]_{\Cc^{\alpha/4}([t/2,t])}.
\end{align*}
We estimate these three contributions separately.

\medskip\noindent\cb{2}\ \emph{Pointwise part  \(I_0^m(D^3w\,D^2u)\).}
Since \(\gamma\ge\alpha/2\) and \(T\le1\), the factor
\(T^{\frac{2\gamma-\alpha}{4}}\le1\) we have
\begin{align*}
I_0^1(D^3w\,D^2u)
&=
\sup_{(x,t)\in Q_T}
t^{\frac{3-\alpha}{4}}|D^3w(x,t)|\,|D^2u(x,t)| \\
&\le
\sup_{(x,t)\in Q_T}
t^{\frac{2-\gamma}{4}}|D^3w(x,t)|
\,
t^{\frac{1-\gamma}{4}}|D^2u(x,t)|
\,
T^{\frac{2\gamma-\alpha}{4}}
\\
&
\le
\|D^3w\|_{\Cc^{1+\gamma,(1+\gamma)/4}_{1+\gamma-3}(Q_T)} \|D^2u\|_{\Cc^{2+\gamma,(2+\gamma)/4}_{1+\gamma-2}(Q_T)},
\\
I_0^2(D^3w\,D^2u)
&=
\sup_{(x,t)\in Q_T}
t^{\frac{2-\alpha}{4}}|D^3w(x,t)|\,|D^2u(x,t)| \\
&\le
\sup_{(x,t)\in Q_T}
t^{\frac{1-\gamma}{4}}|D^3w(x,t)|
\,|D^2u(x,t)|
\,T^{\frac{1+\gamma-\alpha}{4}}
\\
&\le
\|D^3w\|_{\Cc^{1+\gamma,(1+\gamma)/4}_{2+\gamma-3}(Q_T)} \|D^2u\|_{\Cc^{2+\gamma,(2+\gamma)/4}_{2+\gamma-2}(Q_T)},
\\
I_0^3(D^3w\,D^2u)
& \le
\sup_{Q_T}|D^3w|\,\sup_{Q_T}|D^2u|
\\
&\le
\|D^3w\|_{\Cc^{1+\gamma,(1+\gamma)/4}_{3+\gamma-3}(Q_T)} \|D^2u\|_{\Cc^{2+\gamma,(2+\gamma)/4}_{3+\gamma-2}(Q_T)},
\end{align*}

\newpage
\medskip\noindent\cb{3}\ \emph{Spatial Hölder seminorm  \(I_x^m(D^3w\,D^2u)\).}
For fixed \(t'\in[t/2,t]\), the elementary product rule gives
First, in preparation for the later estimates, we omit the weights and conclude for $t'\in [t/2,t]$
\begin{align*}
    \big[D^3w\cdot D^2u(\,\cdot\,,t')\big]_{C^\alpha(\overline\Omega)}\! = & \
    \sup_{x\in\overline\Omega} \big|D^3w(x,t')\big|\cdot \big[ D^2u(\,\cdot\,,t')\big]_{C^\alpha(\overline\Omega)} \! 
    \\
    &\ +\Big[D^3w(\,\cdot\,,t')\Big]_{C^\alpha(\overline\Omega)} \cdot \sup_{x\in\overline\Omega} \big|D^2u(x,t')\big| \\
    \le & \  
    \sup_{x\in\overline\Omega} \big|D^3w(x,t')\big|\cdot \sup_{x\in\overline\Omega} \big|D^3u(x,t')\big|^{\alpha} \cdot
    \sup_{x\in\overline\Omega} \big|D^2u(x,t')\big|^{1-\alpha}\\
     & \ +\sup_{x\in\overline\Omega} \big|D^4w(x,t')\big|^{\alpha} \cdot \sup_{x\in\overline\Omega} \big|D^3u(x,t')\big|^{1-\alpha}\cdot\sup_{x\in\overline\Omega} \big|D^2u(x,t')\big|
\end{align*}
By using \(\gamma\ge\alpha/2\) and \(T\le1\), we get for $m=1,2,3$
\begin{align*}
  t^{\frac{4-1}{4}} \big[D^3w\, & D^2u(\,\cdot\,,t')\big]_{C^\alpha(\overline\Omega)}\\
  \le &\ T^{\frac{2\gamma-\alpha} 4} t^{\frac{3-2\gamma+\alpha}4}\big[D^3w\cdot D^2u(\,\cdot\,,t')\big]_{C^\alpha(\overline\Omega)}  \\
   \le &\  \sup_{x\in\overline\Omega} \big|t^{\frac{2-\gamma}4}D^3w(x,t')\big|\cdot \sup_{x\in\overline\Omega} \big|t^{\frac{2-\gamma}4}D^3u(x,t')\big|^{\alpha} \cdot
    \sup_{x\in\overline\Omega} \big|t^{\frac{1-\gamma}4}D^2u(x,t')\big|^{1-\alpha}   \\
    &\ + \sup_{x\in\overline\Omega} \big| t^{\frac{3-\gamma}4}D^4w(x,t')\big|^{\alpha} \cdot \sup_{x\in\overline\Omega} \big|t^{\frac{2-\gamma}4}D^3w(x,t')\big|^{1-\alpha}\cdot\sup_{x\in\overline\Omega} \big|t^{\frac{1-\gamma}4} D^2u(x,t')\big|, 
    \\
   t^{\frac{4-2}{4}} \big[D^3w\, & D^2u(\,\cdot\,,t')\big]_{C^\alpha(\overline\Omega)} \\
   \le &\ T^{\frac{\gamma(1+\alpha)}{4}}\sup_{x\in\overline\Omega} \big|t^{\frac{1-\gamma}4}D^3w(x,t')\big|\cdot \big|t^{\frac{1-\gamma}4}D^3u(x,t')\big|^{\alpha} \cdot
    \sup_{x\in\overline\Omega} \big|D^2u(x,t')\big|^{1-\alpha}   \\
    &\ + T^{\frac{1+\gamma-\alpha}4} \sup_{x\in\overline\Omega} \big| t^{\frac{2-\gamma}4}D^4w(x,t')\big|^{\alpha} \cdot \sup_{x\in\overline\Omega} \big|t^{\frac{1-\gamma}4}D^3w(x,t')\big|^{1-\alpha}\cdot\sup_{x\in\overline\Omega} \big| D^2u(x,t')\big|,
    \\
     t^{\frac{4-3}{4}} \big[D^3w\, & D^2u(\,\cdot\, ,t')\big]_{C^\alpha(\overline\Omega)} \\
   \le &\ T^{\frac{1}{4}} \sup_{x\in\overline\Omega} \big|D^3w(x,t')\big|\cdot \big|D^3u(x,t')\big|^{\alpha} \cdot
    \sup_{x\in\overline\Omega} \big|D^2u(x,t')\big|^{1-\alpha}   \\
    &\ + T^{\frac{1-(1-\gamma)\alpha}4} \sup_{x\in\overline\Omega} \big| t^{\frac{1-\gamma}4}D^4w(x,t')\big|^{\alpha} \cdot \sup_{x\in\overline\Omega} \big|D^3w(x,t')\big|^{1-\alpha}\cdot\sup_{x\in\overline\Omega} \big| D^2u(x,t')\big|.
\end{align*} 
After inserting $\sup_{0<t\le T}\sup_{t'\in[t/2,t]}$ and the weighted norms, we obtain
\[
I_x^m(D^3w\,D^2u)\le C(\alpha,\gamma)\,\|D^3w\|_{\Cc^{1+\gamma,(1+\gamma)/4}_{3+\gamma-3}(Q_T)} \|D^2u\|_{\Cc^{2+\gamma,(2+\gamma)/4}_{3+\gamma-2}(Q_T)}.
\]

\medskip\noindent\cb{4}\ \emph{Temporal Hölder seminorm  \(I_t^m(D^3w\,D^2u)\).}
For fixed \(x\in\overline\Omega\), we use \eqref{eq:i} from \emph{(i)} and write
\begin{align*}
  \big[ D^3w \,  D^2u(x, .\,)\big]_{C^{\frac\alpha 4}([t/2,t])}
 \le &\ 
 t^{\frac{2+\gamma-\alpha}{4}} \sup_{t'\in[t/2,t]}\big| D^3w(x, t')\big|
     \cdot   \big[D^2u(x, .\,)\big]_{C^{\frac{2+\gamma}{4}}([t/2,t])} \\
    &\ + t ^{\frac{1+\gamma-\alpha}{4}} \sup_{t'\in[t/2,t]}\big|  D^2u(x, t')\big|
     \cdot   \big[ D^3w(x, .\,)\big]_{C^{\frac{1+\gamma}{4}}([t/2,t])} .
\end{align*}
Therefore, we obtain for $m=1,2,3$
\begin{align*}
  t^{\frac{4-1}{4}} \big[ D^3w\, &D^2u(x,\, .\,)\big]_{C^{\frac\alpha4}([t/2,t])} \\
        \le &\  T^{\frac{2\gamma-\alpha}{4}}  \sup_{t'\in[t/2,t]}\big|t^{\frac{2-\gamma}4} D^3w(x, t')\big|
     \cdot t^{\frac{3}4} \big[D^2u(x, .\,)\big]_{C^{\frac{2+\gamma}{4}}([t/2,t])} \\
       &\ + T^{\frac{2\gamma-\alpha}{4}}\sup_{t'\in[t/2,t]}\big|t^{\frac{1-\gamma}4} D^2u(x, t')\big|
     \cdot  t^{\frac{3}4}\big[ D^3w(x, .\,)\big]_{C^{\frac{1+\gamma}{4}}([t/2,t])} ,
\\
  t^{\frac{4-2}{4}} \big[ D^3w\, & D^2u(x, .\,)\big]_{C^{\frac\alpha4}([t/2,t])}
  \\
   \le &\ T^{\frac{1+2\gamma-\alpha}{4}}  \sup_{t'\in[t/2,t]}\big|t^{\frac{1-\gamma}4} D^3w(x, t')\big|
     \cdot t^{\frac{2}4} \big[D^2u(x, .\,)\big]_{C^{\frac{2+\gamma}{4}}([t/2,t])} \\
       &\ + T^{\frac{1+\gamma-\alpha}{4}}\sup_{t'\in[t/2,t]}\big| D^2u(x, t')\big|
     \cdot  t^{\frac{2}4}\big[ D^3w(x, .\,)\big]_{C^{\frac{1+\gamma}{4}}([t/2,t])},
     \\
       t^{\frac{4-3}{4}} \big[ D^3w\, & D^2u(x, \,.\,)\big]_{C^{\frac\alpha 4}([t/2,t])} \\
        \le &\ T^{\frac{2+\gamma-\alpha}{4}}  \sup_{t'\in[t/2,t]}\big| D^3w(x, t')\big|
     \cdot t^{\frac{1}4} \big[D^2u(x, .\,)\big]_{C^{\frac{2+\gamma}{4}}([t/2,t])} \\
       &\ + T^{\frac{1+\gamma-\alpha}{4}}\sup_{t'\in[t/2,t]}\big| D^2u(x, t')\big|
     \cdot  t^{\frac{1}4}\big[ D^3w(x, .\,)\big]_{C^{\frac{1+\gamma}{4}}([t/2,t])} .
\end{align*}
Consequently,
\[
I_t^m(D^3w\,D^2u)\le C(\alpha,\gamma)\,\|D^3w\|_{\Cc^{1+\gamma,(1+\gamma)/4}_{3+\gamma-3}(Q_T)} \|D^2u\|_{\Cc^{2+\gamma,(2+\gamma)/4}_{3+\gamma-2}(Q_T)}.
\]
Combining Steps \cbm{2}-\cbm{4} yields
\[
\|D^3w\,D^2u\|_{\Cc^{\alpha,\alpha/4}_{m+\alpha-4}(Q_T)}
\le
\Cr{prodrule}\,AB,
\]
which proves \eqref{eq:32}.

\medskip\noindent\cb{5}\ \emph{Estimate of \(D^2u\,D^2w\,D^2v\).}
The proof of \eqref{eq:222} is completely analogous and in fact
slightly simpler, since only second derivatives occur.
Again, one decomposes the weighted norm into the three parts
\(I_0^m\), \(I_x^m\), and \(I_t^m\), applies the product rule, and uses the
weighted bounds for \(D^2u\), \(D^2w\), and \(D^2v\).
The additional powers of \(T\) that appear are nonnegative for
\(m=1,2,3\), and since \(T\le1\) they can be discarded.
Hence
\begin{align*}
    \|D^2u\,& D^2w\,D^2v\|_{\Cc^{\alpha,\alpha/4}_{m+\alpha-4}(Q_T)}
\\
&\le
\Cr{prodrule}\,\|D^2u\|_{\Cc^{2+\gamma,(2+\gamma)/4}_{m+\gamma-2}(Q_T)}\,\|D^2w\|_{\Cc^{2+\gamma,(2+\gamma)/4}_{m+\gamma-2}(Q_T)}\,\|D^2v\|_{\Cc^{2+\gamma,(2+\gamma)/4}_{m+\gamma-2}(Q_T)}.
\end{align*}

\medskip\noindent\cb{6}\ \emph{Estimate of \(\nabla u\).}
It remains to prove \eqref{eq:1-app}. We decompose 
\begin{align*}
    \|\nabla u\| _{C^{\alpha, \alpha/4}_{0}(Q_T)} 
    = &\ \sup_{t<T}
    t^{\alpha/4}\left( \sup_{t'\in[t/2,t]}\Big[\nabla u (\,,\,,t')\Big]_{C^\alpha(\overline\Omega)} 
    + \sup_{x\in\overline\Omega}\Big[ \nabla u(x,\,\cdot\,)\Big]_{C^{\alpha/4}\big([t/2,t]\big)}\right)\\
    &\ +\sup_{(x,t)\in\overline\Omega\times(0,T]} \big|\nabla u(x,t)\big|.
\end{align*}
where as in step \cbm 1 the norm consists of three parts
\begin{align*}
I_0(\nabla u)
&:=
\sup_{(x,t)\in Q_T}
\big|\nabla u(x,t)\big|,\\[0.2em]
I_x(\nabla u)
&:=
\sup_{0<t\le T}
t^{\frac{\alpha}{4}}
\sup_{t'\in[t/2,t]}
\big[\nabla u(\cdot,t')\big]_{\Cc^\alpha(\overline\Omega)},\\[0.2em]
I_t(\nabla u)
&:=
\sup_{0<t\le T}
t^{\frac{\alpha}{4}}
\sup_{x\in\overline\Omega}
[\nabla u(x,\cdot)]_{\Cc^{\alpha/4}([t/2,t])}.
\end{align*}
The \emph{pointwise term} $I_x(\nabla u)$ is immediate from the definition of the norm:
\[
\sup_{Q_T}|\nabla u|
\le
\|\nabla u\|_{\Cc^{3+\gamma,\frac{3+\gamma}{4}}_{m+\gamma-1}(Q_T)}.
\]
For the \emph{temporal Hölder seminorm }$I_t(\nabla u)$ we lower the exponent from
\((3+\gamma)/4\) to \(\alpha/4\) by \eqref{eq:i}, obtaining
\begin{align*}
t^{\alpha/4}
\big[\nabla u(x,\cdot)\big]_{\Cc^{\alpha/4}([t/2,t])}
&\le
t^{\alpha/4}
t^{\frac{3+\gamma-\alpha}{4}}
\big[\nabla u(x,\cdot)\big]_{\Cc^{\frac{3+\gamma}{4}}([t/2,t])} \\
&\le
T^{\frac{m-1+\gamma}{4}}
t^{\frac{4-m}{4}}
\big[\nabla u(x,\cdot)\big]_{\Cc^{\frac{3+\gamma}{4}}([t/2,t])}.
\end{align*}
Since \(T\le1\), this is bounded by the right-hand side of
\eqref{eq:1-app}.
For the \emph{spatial Hölder seminorm} $I_x(\nabla u)$ we use
\[
\big[\nabla u(\cdot,t')\big]_{\Cc^\alpha(\overline\Omega)}
\le
C\big\|D^2u(\cdot,t')\big\|_{\Cc^0(\overline\Omega)}^{\alpha}
\big\|\nabla u(\cdot,t')\big\|_{\Cc^0(\overline\Omega)}^{1-\alpha}.
\]
If \(m=2,3\), then \(\|D^2u\|_{\C^0(\overline\Omega)}\) is already controlled without
weight, and thus
\[
t^{\alpha/4}
\big[\nabla u(\cdot,t')\big]_{\Cc^\alpha(\overline\Omega)}
\le
C\,T^{\alpha/4}
\|u\|_{\Cc^{4+\gamma,1+\gamma/4}_{m+\gamma}(Q_T)}.
\]
If \(m=1\), then
\[
t^{\alpha/4}\big\|D^2u(\cdot,t')\big\|_{C^0(\overline\Omega)}^{\alpha}
=
t^{\frac{\alpha\gamma}{4}}
\big(t^{\frac{1-\gamma}{4}}\big\|D^2u(\cdot,t')\big\|_{C^0(\overline\Omega)}\big)^\alpha,
\]
and hence again
\[
t^{\alpha/4}
\big[\nabla u(\cdot,t')\big]_{\Cc^\alpha(\overline\Omega)}
\le
C(\alpha,\gamma)
\|u\|_{\Cc^{4+\gamma,1+\gamma/4}_{m+\gamma}(Q_T)}.
\]
This proves \eqref{eq:1-app} and completes the proof.
\end{proof}

\begin{lemma}[Hölder estimates I]
\label{gewichtethoedlerIIappendix}
Let $m=1,2,3,4$, let $0<\gamma\le\alpha<1$, assume $\gamma\ge \alpha/2$,
and let $T\le1$.
Then there exist constants
\[
\Cr{gewhoelderI}=\Cr{gewhoelderI}(\Omega,\alpha,\gamma)>0,
\qquad
k_H\in\N,
\]
depending only on $\Omega$, $\alpha$, $\gamma$, and the fixed algebraic
structure of $\Rr$ and $L$, such that for every
$u\in \Cc^{4+\gamma,1+\gamma/4}_{m+\gamma}(Q_T)$,
\begin{align*}
\|\Rr(\nabla u,D^2u,D^3u)\|_{\Cc^{\alpha,\alpha/4}_{m+\alpha-4}(Q_T)}
&\le
\Cr{gewhoelderI}
\Big(1+\|\nabla u\|_{\Cc^{3+\gamma,\frac{3+\gamma}{4}}_{m+\gamma-1}(Q_T)}\Big)^{k_H}
\|\nabla u\|_{\Cc^{3+\gamma,\frac{3+\gamma}{4}}_{m+\gamma-1}(Q_T)}^{3},\\
\sum_{k+\ell=4}
\|L_{k\ell}(\nabla u)\|_{\Cc^{\alpha,\alpha/4}_{\max\{0,m+\alpha-4\}}(Q_T)}
&\le
\Cr{gewhoelderI}
\Big(1+\|\nabla u\|_{\Cc^{3+\gamma,\frac{3+\gamma}{4}}_{m+\gamma-1}(Q_T)}^{4}\Big).
\end{align*}
\end{lemma}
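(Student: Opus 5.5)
The plan is to use the explicit algebraic structure of $\Rr$ from Lemma~\ref{thm:structure-remainder-term} and of $L$ from \eqref{eq:L}, and reduce everything to the product estimates of Lemma~\ref{productruleappendix}. Abbreviate $N:=\|\nabla u\|_{\Cc^{3+\gamma,\frac{3+\gamma}{4}}_{m+\gamma-1}(Q_T)}$.

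\emph{Step 1: coefficients.} Every coefficient has the form $Q^{-2k}P_\ell(\nabla u)$, i.e.\ $F(\nabla u)$ with $F$ smooth, and $F$ together with all its derivatives is bounded on $\R^2$. For the pointwise part I will use that $F$ is bounded. For the Hölder seminorms, the mean value theorem gives $[F(\nabla u)]\le \|DF\|_\infty[\nabla u]$. This yields
\[
\|F(\nabla u)\|_{\Cc^{\alpha,\alpha/4}_{\max\{0,m+\alpha-4\}}(Q_T)}\le C(1+\|\nabla u\|_{\Cc^{\alpha,\alpha/4}_{\max\{0,m+\alpha-4\}}(Q_T)}),
\]
and \eqref{eq:1-app} bounds the right-hand side by $C(1+N)$. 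Alternatively, one can expand $P_\ell$ as a polynomial of degree at most $4$ and apply the product rule \eqref{eq:simpleprod} with $r_1=r_2=\max\{0,m+\alpha-4\}$; this also gives the stated bound $C(1+N^4)$ for $L_{k\ell}$. The difficulty is $Q^{-2k}$. For this factor I would use the mean value argument, since $Q^{-1}\le 1$ and $|D(Q^{-2k})|\le C$.

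\emph{Step 2: the remainder.} By Lemma~\ref{thm:structure-remainder-term}, $\Rr$ is a finite sum of terms $F(\nabla u)\,D^3u\star D^2u$ and $F(\nabla u)\,D^2u\star D^2u\star D^2u$. I will apply \eqref{eq:32} and \eqref{eq:222}. By \eqref{eq:einbett}, $\|D^3u\|_{\Cc^{1+\gamma}_{m+\gamma-3}}$ and $\|D^2u\|_{\Cc^{2+\gamma}_{m+\gamma-2}}$ are both bounded by $N$. The term $D^3u\,D^2u$ is then controlled by $N^2$ and the cubic term by $N^3$.

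\emph{Step 3: multiplying the coefficient back in.} It remains to multiply by $F(\nabla u)$. For this I will use \eqref{eq:simpleprod} with $r_1=\max\{0,m+\alpha-4\}$ and $r_2=m+\alpha-4$, so that $r=\min\{r_1,r_2,r_1+r_2\}=m+\alpha-4$ and the constant is bounded because $T\le1$. This gives the bound $C(1+N)^{k_H}(N^2+N^3)$.

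\emph{Main obstacle.} The stated estimate has the pure power $N^3$, and the quadratic term $D^3u\,D^2u$ does not fit it. My first attempt is to examine the coefficient more closely. In the $D^3u\star D^2u$ terms the coefficient $P_{2k-1}(\nabla u)$ has odd degree at least $1$, so it carries at least one factor of $\nabla u$. Since $\|\nabla u\|_{\Cc^{\alpha,\alpha/4}_{\max\{0,m+\alpha-4\}}}\le CN$ by \eqref{eq:1-app}, extracting this factor through the product rule gives $CN\cdot N^2$, which is exactly the cubic bound. For $m=4$ the weights disappear and I would simply quote the unweighted product estimates from \cite{gulyak2017willmore}.
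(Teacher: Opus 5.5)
Your proposal is correct and follows essentially the same route as the paper. The paper also uses the structure lemma for $\Rr$, the product estimates \eqref{eq:32} and \eqref{eq:222} together with \eqref{eq:einbett}, a composition estimate for the coefficients, and the product rule \eqref{eq:simpleprod}. It also relies on your key observation: the odd-degree factor $P_{2k-1}(\nabla u)$ with $k\ge1$ supplies the extra power of $\nabla u$ that turns $N^2$ into $N^3$ (in the paper's Step 4 this appears as $X^2\cdot X^{2k-1}$).

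The only difference is cosmetic. The paper bounds $P_b(\nabla u)$ and $Q^{-2k}$ separately, giving the cruder growth $(1+X)^{2k}X^b$. Your mean-value argument for the bounded function $Q^{-2k}P_\ell$, which has bounded derivative, gives the slightly sharper linear growth $C(1+N)$, but this changes nothing essential.
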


\begin{proof}
First, we set
\[
X:=\|\nabla u\|_{\Cc^{3+\gamma,\frac{3+\gamma}{4}}_{m+\gamma-1}(Q_T)}.
\]
By \eqref{eq:einbett}, we then have the estimate
\[
\|D^2u\|_{\Cc^{2+\gamma,\frac{2+\gamma}{4}}_{m+\gamma-2}(Q_T)}
+
\|D^3u\|_{\Cc^{1+\gamma,\frac{1+\gamma}{4}}_{m+\gamma-3}(Q_T)}
\le C X.
\]

\medskip\noindent\cb{1}\ \emph{Structure of the remainder term.}
By \eqref{eq:R-structure}, every monomial in $\Rr$ is of one of the
following two types:
\[
D^3u\star D^2u \star Q^{-2k}P_{2k-1}(\nabla u),
\qquad 1\le k\le4,
\]
or
\[
D^2u\star D^2u\star D^2u \star Q^{-2(k+1)}P_{2k}(\nabla u),
\qquad 0\le k\le4.
\]
Hence, by the product estimate \eqref{eq:simpleprod} for weighted norms,
\begin{align*}
\|\Rr(\nabla u,&\, D^2u,D^3u)\|_{\Cc^{\alpha,\alpha/4}_{m+\alpha-4}(Q_T)}
\\
&\le
C\|D^3u\star D^2u\|_{\Cc^{\alpha,\alpha/4}_{m+\alpha-4}(Q_T)}
\sum_{k=1}^4
\|Q^{-2k}P_{2k-1}(\nabla u)\|_{\Cc^{\alpha,\alpha/4}_{\max\{0,m+\alpha-4\}}(Q_T)}
\\
&\quad
+
C\|D^2u\star D^2u\star D^2u\|_{\Cc^{\alpha,\alpha/4}_{m+\alpha-4}(Q_T)}
\sum_{k=0}^4
\|Q^{-2(k+1)}P_{2k}(\nabla u)\|_{\Cc^{\alpha,\alpha/4}_{\max\{0,m+\alpha-4\}}(Q_T)} .
\end{align*}

\medskip\noindent\cb{2}\ \emph{Derivative factors.}
By Lemma~\ref{productruleappendix}, equations \eqref{eq:32} and \eqref{eq:222},
together with the previous bound on $D^2u$ and $D^3u$, we obtain
\begin{align*}
\|D^3u\star D^2u\|_{\Cc^{\alpha,\alpha/4}_{m+\alpha-4}(Q_T)}
&\le C X^2,\\
\|D^2u\star D^2u\star D^2u\|_{\Cc^{\alpha,\alpha/4}_{m+\alpha-4}(Q_T)}
&\le C X^3.
\end{align*}

\medskip
\noindent\cb{3}\ \emph{Gradient-dependent coefficients.}
We next estimate the factors \(Q^{-2k}P_b(\nabla u)\).
Since \(P_b(\nabla u)\) is a polynomial of degree \(b\) in \(\nabla u\),
repeated use of \eqref{eq:simpleprod} and \eqref{eq:1} gives
\[
\|P_b(\nabla u)\|_{\Cc^{\alpha,\alpha/4}_{\max\{0,m+\alpha-4\}}(Q_T)}
\le C\cdot X^b.
\]
Moreover, the map
\[
z\mapsto (1+|z|^2)^{-k}
\]
is smooth on \(\R^2\), hence the standard composition estimate in Hölder
spaces yields
\[
\|Q^{-2k}\|_{\Cc^{\alpha,\alpha/4}_{\max\{0,m+\alpha-4\}}(Q_T)}
\le C(1+X)^{2k}.
\]
Combining the two estimates and applying \eqref{eq:simpleprod} once more,
we infer
\[
\|Q^{-2k}P_b(\nabla u)\|_{\Cc^{\alpha,\alpha/4}_{\max\{0,m+\alpha-4\}}(Q_T)}
\le C(1+X)^{2k}X^b.
\]

\medskip\noindent\cb{4}\ \emph{Estimate for \(\Rr\).}
Inserting the bounds from Steps \cbm{2} and \cbm{3} into the expansion from
Step \cbm{1}, we obtain
\begin{align*}
\|\Rr(\nabla u,D^2u,D^3u)\|_{\Cc^{\alpha,\alpha/4}_{m+\alpha-4}(Q_T)}
&\le
C X^{2}
\sum_{k=1}^4 (1+X)^{2k}X^{2k-1}
+
C X^{3}
\sum_{k=0}^4 (1+X)^{2k+1}X^{2k}.
\end{align*}
Hence, the right-hand side is bounded by
\[
C(1+X)^{k_H}X^3
\]
for some \(k_H\in\N\).
This proves the first estimate.

\medskip
\noindent\cb{5}\ \emph{Estimate for the coefficients \(L_{k\ell}(\nabla u)\).}
By the explicit representation \eqref{eq:L}, each coefficient \(L_{k\ell}\)
is a finite linear combination of terms of the form
\[
Q^{-4}P_b(\nabla u),
\qquad 0\le b\le4.
\]
Therefore, the estimate from Step \cbm{3} gives
\[
\|L_{k\ell}(\nabla u)\|_{\Cc^{\alpha,\alpha/4}_{\max\{0,m+\alpha-4\}}(Q_T)}
\le C(1+X)^4.
\]
Summing over \(k+\ell=4\) yields
\[
\sum_{k+\ell=4}
\|L_{k\ell}(\nabla u)\|_{\Cc^{\alpha,\alpha/4}_{\max\{0,m+\alpha-4\}}(Q_T)}
\le
\Cr{gewhoelderI}(1+X^4),
\]
which is the desired second estimate.
\end{proof}


\begin{lemma}\label{productrule22appendix}
Let $0<\alpha<1$ and let $T\le1$.
If
\[
u,v,w\in \Cc^{4+\alpha,1+\alpha/4}_{1}(Q_T),
\]
then there exists a constant
\[
\Cr{thmproductrule22}=\Cr{thmproductrule22}(\alpha,\Omega)>0
\]
such that
\begin{align*}
\|D^3w\,D^2u\|_{\Cc^{\alpha,\alpha/4}_{-3}(Q_T)}
&\le
\Cr{thmproductrule22}\,
\|D^3w\|_{\Cc^{1+\alpha,\frac{1+\alpha}{4}}_{-2}(Q_T)}
\|D^2u\|_{\Cc^{2+\alpha,\frac{2+\alpha}{4}}_{-1}(Q_T)},\\[0.3em]
\|D^2u\,D^2w\,D^2v\|_{\Cc^{\alpha,\alpha/4}_{-3}(Q_T)}
&\le
\Cr{thmproductrule22}\,
\|D^2u\|_{\Cc^{2+\alpha,\frac{2+\alpha}{4}}_{-1}(Q_T)}
\|D^2w\|_{\Cc^{2+\alpha,\frac{2+\alpha}{4}}_{-1}(Q_T)}
\|D^2v\|_{\Cc^{2+\alpha,\frac{2+\alpha}{4}}_{-1}(Q_T)},\\[0.3em]
\|\nabla u\|_{\Cc^{\alpha,\alpha/4}_{0}(Q_T)}
&\le
\Cr{thmproductrule22}\,
\|\nabla u\|_{\Cc^{3+\alpha,\frac{3+\alpha}{4}}_{0}(Q_T)}.
\end{align*}
\end{lemma}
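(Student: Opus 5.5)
The plan is to mirror the proof of Lemma~\ref{productruleappendix}, now with $s=1$ and no reduction of the Hölder exponent, so every time factor must balance exactly. With $\alpha$ in place of $\gamma$, the norms on the right-hand sides give the following control for $t'\in[t/2,t]$:
\[
t^{\frac{1-\alpha}{4}}|D^2u|,\qquad t^{\frac{2-\alpha}{4}}|D^3w|,\qquad t^{\frac{3-\alpha}{4}}|D^4w|.
\]
By the same weighted definitions, the $t^{3/4}$-weighted spatial $\alpha$-seminorms of $D^2u$ and $D^3w$, and their temporal $\tfrac{2+\alpha}{4}$- and $\tfrac{1+\alpha}{4}$-seminorms on $[t/2,t]$, are also controlled. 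I write $A$ and $B$ for the two norms on the right of the first inequality. For the target norm $\Cc^{\alpha,\alpha/4}_{-3}$, I decompose into three parts as in Step~\cbm1 of Lemma~\ref{productruleappendix}:
\[
I_0=\sup t^{\frac{3+\alpha}{4}}|h|,\qquad I_x=\sup t^{3/4}[h(\cdot,t')]_{\Cc^\alpha},\qquad I_t=\sup t^{3/4}[h(x,\cdot)]_{\Cc^{\alpha/4}([t/2,t])}.
\]

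\textbf{Pointwise part.} For $D^3w\,D^2u$ the weights add up as
\[
\tfrac{2-\alpha}{4}+\tfrac{1-\alpha}{4}=\tfrac{3-2\alpha}{4}\le\tfrac{3+\alpha}{4},
\]
so $I_0\le T^{3\alpha/4}AB\le AB$ because $T\le1$. For the triple product the weights sum to $\tfrac{3-3\alpha}{4}$, and the surplus power $T^{\alpha}$ is harmless in the same way.

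\textbf{Spatial seminorm.} I apply the Leibniz rule. For $[D^2u(\cdot,t')]_{\Cc^\alpha}$ I interpolate between $|D^2u|$ and $|D^3u|$; this carries the weight $t^{\frac{1-\alpha}{4}(1-\alpha)+\frac{2-\alpha}{4}\alpha}=t^{\frac{1}{4}}$. Alternatively I use the directly controlled $t^{3/4}$-weighted seminorm. Multiplying by $t^{\frac{2-\alpha}{4}}|D^3w|$ gives total weight $t^{\frac{3-\alpha}{4}}\le t^{3/4}$ up to $T$-powers. The interpolation uses $D^3u$, which is bounded by the norm of $D^2u$ through \eqref{eq:einbett}. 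The term $[D^3w]_{\Cc^\alpha}\,|D^2u|$ is handled symmetrically. The triple product is analogous.

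\textbf{Temporal seminorm.} I use \eqref{eq:i} on $[t/2,t]$ to lower the exponents $\tfrac{2+\alpha}{4}$ and $\tfrac{1+\alpha}{4}$ to $\tfrac{\alpha}{4}$. This produces factors $t^{1/2}$ and $t^{1/4}$, which combine with the weights as above.

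\textbf{The estimate for $\nabla u$.} The sup part is immediate, and the temporal part follows by the same lowering via \eqref{eq:i}. The spatial part needs
\[
t^{\alpha/4}[\nabla u(\cdot,t')]_{\Cc^\alpha}\le C\,t^{\alpha/4}|D^2u|^\alpha|\nabla u|^{1-\alpha}=C\,t^{\alpha^2/4}\big(t^{\frac{1-\alpha}{4}}|D^2u|\big)^\alpha|\nabla u|^{1-\alpha},
\]
which is bounded by the right-hand norm. Alternatively, the weighted seminorm appears directly in $\Cc^{3+\alpha}_0$ through interpolation of $D^2(\nabla u)$.

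\textbf{Main obstacle.} I expect the main difficulty to be the exact bookkeeping of weights in the spatial seminorm. Since $\gamma=\alpha$, no slack exists, and each product must land at weight exponent at most the target, using only $T\le1$. The uniform Hölder interpolation constant on $\overline\Omega$, taken from Theorem~\ref{interpolappendix} and depending on $\Omega$, is where the dependence of the constant on $\Omega$ enters.
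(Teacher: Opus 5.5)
Your route is exactly the paper's: split the target norm into a pointwise part and spatial/temporal seminorm parts, apply the Leibniz rule, interpolate $[D^2u(\cdot,t')]_{C^\alpha}$ between $D^2u$ and $D^3u$, and lower the temporal exponents on $[t/2,t]$. However, the exponents you use are not those of the spaces in the lemma. Since the lemma is nothing but weight bookkeeping, the argument as written does not prove it.

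You have taken the weights of the scale $s=m+\gamma$ from Lemma~\ref{productruleappendix} with $m=1$, $\gamma=\alpha$, i.e.\ of $C^{4+\alpha,1+\alpha/4}_{1+\alpha}$, instead of the Lipschitz scale $s=1$. By definition:
\begin{itemize}
\item $\|D^2u\|_{C^{2+\alpha,\frac{2+\alpha}{4}}_{-1}(Q_T)}$ controls $t^{1/4}|D^2u|$, $t^{1/2}|D^3u|$, $t^{3/4}|D^4u|$, and $t^{\frac{3+\alpha}{4}}$ times its top seminorms ($[D^4u(\cdot,t')]_{C^\alpha}$ and the temporal ones).
\item $\|D^3w\|_{C^{1+\alpha,\frac{1+\alpha}{4}}_{-2}(Q_T)}$ controls $t^{1/2}|D^3w|$, $t^{3/4}|D^4w|$, and $t^{\frac{3+\alpha}{4}}$ times its seminorms.
\item The target norm is $\sup t^{3/4}|h|$ plus $t^{\frac{3+\alpha}{4}}$ times the spatial and temporal seminorms. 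So your $I_0$ weight and your $I_x, I_t$ weights are swapped.
\end{itemize}
Consequently, the starting controls $t^{\frac{1-\alpha}{4}}|D^2u|$ and $t^{\frac{2-\alpha}{4}}|D^3w|$ are not available. The ``directly controlled $t^{3/4}$-weighted'' $C^\alpha$-seminorms of $D^2u$ and $D^3w$ are not part of these norms at all. Interpolation gives only $t^{\frac{1+\alpha}{4}}[D^2u]_{C^\alpha}$ and $t^{\frac{2+\alpha}{4}}[D^3w]_{C^\alpha}$.

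This breaks each of your steps:
\begin{itemize}
\item Your pointwise step rests on an unavailable bound, and it only controls $t^{\frac{3+\alpha}{4}}|h|$, which is weaker than the required $t^{3/4}|h|$.
\item Your spatial target $t^{3/4}[h]_{C^\alpha}$ cannot be reached from the true controls, which leave an unbounded factor $t^{-\alpha/4}$.
\item Your gradient estimate uses the same unavailable quantity $t^{\frac{1-\alpha}{4}}|D^2u|$.
\end{itemize}
The surplus powers $T^{3\alpha/4}$, $T^{\alpha}$, $T^{\alpha/4}$ you find are artifacts of the wrong weights. They contradict your own (correct) remark that there is no slack.

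With the correct weights every term balances exactly and no power of $T$ survives; this is how the paper argues. At $t'\in[t/2,t]$, up to constants,
\begin{align*}
t^{3/4}|D^3w\,D^2u| &= \big(t^{1/2}|D^3w|\big)\big(t^{1/4}|D^2u|\big),\\
t^{\frac{3+\alpha}{4}}|D^3w|\,[D^2u]_{C^\alpha} &\le C\big(t^{1/2}|D^3w|\big)\big(t^{1/2}|D^3u|\big)^{\alpha}\big(t^{1/4}|D^2u|\big)^{1-\alpha},\\
t^{\frac{3+\alpha}{4}}[D^3w]_{C^\alpha}\,|D^2u| &\le C\big(t^{3/4}|D^4w|\big)^{\alpha}\big(t^{1/2}|D^3w|\big)^{1-\alpha}\big(t^{1/4}|D^2u|\big),\\
t^{\alpha/4}[\nabla u]_{C^\alpha} &\le C\big(t^{1/4}|D^2u|\big)^{\alpha}|\nabla u|^{1-\alpha}.
\end{align*}
In the temporal part, lowering the exponent on $[t/2,t]$ produces exactly the factors $t^{1/2}$ and $t^{1/4}$. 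These pair $t^{1/2}|D^3w|$ with $t^{\frac{3+\alpha}{4}}[D^2u(x,\cdot)]_{C^{(2+\alpha)/4}([t/2,t])}$, and $t^{1/4}|D^2u|$ with $t^{\frac{3+\alpha}{4}}[D^3w(x,\cdot)]_{C^{(1+\alpha)/4}([t/2,t])}$. The triple product works the same way, with exponents $\tfrac14+\tfrac14+\tfrac14$ for the pointwise part and $\tfrac{\alpha}{2}+\tfrac{1-\alpha}{4}+\tfrac14+\tfrac14$ for the spatial part.
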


\begin{proof}
\noindent\cb{1}\ \emph{Decomposition of the weighted norm.}
For a function \(h\in \Cc^{\alpha,\alpha/4}_{-3}(Q_T)\), the weighted norm is
\[
\|h\|_{\Cc^{\alpha,\alpha/4}_{-3}(Q_T)}
=
I_0(h)+I_x(h)+I_t(h),
\]
where we estimate three following contributions separately
\begin{align*}
I_0(h)
&:=
\sup_{(x,t)\in Q_T} t^{3/4}\big|h(x,t)\big|,\\[0.2em]
I_x(h)
&:=
\sup_{0<t\le T}
t^{\frac{3+\alpha}{4}}
\sup_{t'\in[t/2,t]}
\big[h(\cdot,t')\big]_{\Cc^\alpha(\overline\Omega)},\\[0.2em]
I_t(h)
&:=
\sup_{0<t\le T}
t^{\frac{3+\alpha}{4}}
\sup_{x\in\overline\Omega}
\big[h(x,\cdot)\big]_{\Cc^{\alpha/4}([t/2,t])}.
\end{align*}

\medskip\noindent\cb{2}\ \emph{Estimate of \(D^3w\,D^2u\).}
\emph{Pointwise term.}
By the definition of the weighted norms,
\begin{align*}
I_0(D^3w\,D^2u)
&=
\sup_{Q_T}
t^{3/4}|D^3w|\,|D^2u| \\
&\le
\sup_{Q_T} t^{2/4}|D^3w|
\cdot
\sup_{Q_T} t^{1/4}|D^2u|
\le \|D^3w\|_{\Cc^{1+\alpha,(1+\alpha)/4}_{-2}(Q_T)}
\|D^2u\|_{\Cc^{2+\alpha,\frac{2+\alpha}{4}}_{-1}(Q_T)}.
\end{align*}
\noindent
\emph{Spatial Hölder seminorm.}
For fixed \(t'\in[t/2,t]\), the product rule gives
\begin{align*}
&t^{\frac{3+\alpha}{4}}\big[D^3w\,D^2u(\cdot,t')\big]_{\Cc^\alpha(\overline\Omega)}
\\
&\quad \le
t^{\frac{3+\alpha}{4}} 
\|D^3w(\cdot,t')\|_{C^0(\overline\Omega)}
[D^2u(\cdot,t')]_{\Cc^\alpha(\overline\Omega)}
\\
&\qquad
+ t^{\frac{3+\alpha}{4}}
[D^3w(\cdot,t')]_{\Cc^\alpha(\overline\Omega)}
\|D^2u(\cdot,t')\|_{C^0(\overline\Omega)}
\\
&\quad\le
C\,
\big(t^{2/4}\|D^3w(\cdot,t')\|_{C^0(\overline\Omega)}\big)
\big(t^{2/4}\|D^3u(\cdot,t')\|_{C^0(\overline\Omega)}\big)^\alpha
\big(t^{1/4}\|D^2u(\cdot,t')\|_{C^0(\overline\Omega)}\big)^{1-\alpha}
\\
&\qquad
+
C\,
\big(t^{3/4}\|D^4w(\cdot,t')\|_{C^0(\overline\Omega)}\big)^\alpha
\big(t^{2/4}\|D^3w(\cdot,t')\|_{C^0(\overline\Omega)}\big)^{1-\alpha}
\big(t^{1/4}\|D^2u(\cdot,t')\|_{C^0(\overline\Omega)}\big)
\\
&\quad\le C\,\|D^3w\|_{\Cc^{1+\alpha,(1+\alpha)/4}_{-2}(Q_T)}
\|D^2u\|_{\Cc^{2+\alpha,\frac{2+\alpha}{4}}_{-1}(Q_T)}.
\end{align*}
Taking the supremum in \(t'\) and \(t\) gives
\(
I_x(D^3w\,D^2u)\le C\,\|D^3w\|_{\Cc^{1+\alpha,(1+\alpha)/4}_{-2}(Q_T)}
\|D^2u\|_{\Cc^{2+\alpha,\frac{2+\alpha}{4}}_{-1}(Q_T)}.
\)

\smallskip
\noindent
\emph{Temporal Hölder seminorm.}
For fixed \(x\in\overline\Omega\),
lowering the temporal Hölder exponents,
we infer
\begin{align*}
&t^{\frac{3+\alpha}{4}}[D^3w\,D^2u(x,\cdot)]_{\Cc^{\alpha/4}([t/2,t])}
\\
&\quad\le
t^{\frac{3+\alpha}{4}}\sup_{t'\in[t/2,t]}|D^3w(x,t')|\,
[D^2u(x,\cdot)]_{\Cc^{\alpha/4}([t/2,t])}
\\
&\qquad
+
t^{\frac{3+\alpha}{4}}\sup_{t'\in[t/2,t]}|D^2u(x,t')|\,
[D^3w(x,\cdot)]_{\Cc^{\alpha/4}([t/2,t])}
\\
&\quad\le
t^{\frac{3+\alpha}{4}}\sup_{t'\in[t/2,t]}|D^3w(x,t')|\,
t^{1/2}[D^2u(x,\cdot)]_{\Cc^{(2+\alpha)/4}([t/2,t])}
\\
&\qquad
+
t^{\frac{3+\alpha}{4}}\sup_{t'\in[t/2,t]}|D^2u(x,t')|\,
t^{1/4}[D^3w(x,\cdot)]_{\Cc^{(1+\alpha)/4}([t/2,t])}
\\
&\quad\le
\big(t^{2/4}\sup_{t'\in[t/2,t]}|D^3w(x,t')|\big)
\big(t^{\frac{3+\alpha}{4}}[D^2u(x,\cdot)]_{\Cc^{(2+\alpha)/4}([t/2,t])}\big)
\\
&\qquad
+
\big(t^{1/4}\sup_{t'\in[t/2,t]}|D^2u(x,t')|\big)
\big(t^{\frac{3+\alpha}{4}}[D^3w(x,\cdot)]_{\Cc^{(1+\alpha)/4}([t/2,t])}\big)
\\
&\quad\le
C\,\|D^3w\|_{\Cc^{1+\alpha,(1+\alpha)/4}_{-2}(Q_T)}
\|D^2u\|_{\Cc^{2+\alpha,\frac{2+\alpha}{4}}_{-1}(Q_T)}.
\end{align*}
Hence \(I_t(D^3w\, D^2u)\le C\,\|D^3w\|_{\Cc^{1+\alpha,(1+\alpha)/4}_{-2}(Q_T)}
\|D^2u\|_{\Cc^{2+\alpha,\frac{2+\alpha}{4}}_{-1}(Q_T)}\).
Combining the three parts, we obtain
\[
\|D^3w\,D^2u\|_{\Cc^{\alpha,\alpha/4}_{-3}(Q_T)}
\le
\Cr{thmproductrule22}\,\|D^3w\|_{\Cc^{1+\alpha,(1+\alpha)/4}_{-2}(Q_T)}
\|D^2u\|_{\Cc^{2+\alpha,\frac{2+\alpha}{4}}_{-1}(Q_T)}.
\]

\medskip
\noindent
\cb{3}\ \emph{Estimate of \(D^2u\,D^2w\,D^2v\).}
The proof is completely analogous.
One again decomposes the norm into \(I_0\), \(I_x\), and \(I_t\), applies
the product rule, and estimates the spatial Hölder seminorms by
\[
[D^2u(\cdot,t')]_{\Cc^\alpha}
\le
C\,\|D^3u(\cdot,t')\|_{C^0}^{\alpha}
\|D^2u(\cdot,t')\|_{C^0}^{1-\alpha},
\]
and similarly for \(D^2w\) and \(D^2v\).
The weights match exactly, and therefore
\[
\|D^2u\,D^2w\,D^2v\|_{\Cc^{\alpha,\alpha/4}_{-3}(Q_T)}
\le
\Cr{thmproductrule22}\,\|D^3w\|_{\Cc^{1+\alpha,(1+\alpha)/4}_{-2}(Q_T)}
\|D^2u\|_{\Cc^{2+\alpha,\frac{2+\alpha}{4}}_{-1}(Q_T)}
\|D^2v\|_{\Cc^{2+\alpha,\frac{2+\alpha}{4}}_{-1}(Q_T)}.
\]

\medskip
\noindent
\cb{4}\ \emph{Estimate of \(\nabla u\).}
We write
\begin{align*}
J_0&:=\sup_{Q_T}|\nabla u|,\\
\|\nabla u\|_{\Cc^{\alpha,\alpha/4}_{0}(Q_T)}
=
J_0+J_x+J_t \quad \text{with} \quad J_x&:=\sup_{0<t\le T}t^{\alpha/4}\sup_{t'\in[t/2,t]}
[\nabla u(\cdot,t')]_{\Cc^\alpha(\overline\Omega)},\\
J_t&:=\sup_{0<t\le T}t^{\alpha/4}\sup_{x\in\overline\Omega}
[\nabla u(x,\cdot)]_{\Cc^{\alpha/4}([t/2,t])}.
\end{align*}
The \emph{pointwise term} $J_0$ is immediate estimated.
For the \emph{temporal Hölder seminorm} we lower the exponent from
\((3+\alpha)/4\) to \(\alpha/4\):
\begin{align*}
t^{\alpha/4}[\nabla u(x,\cdot)]_{\Cc^{\alpha/4}([t/2,t])}
&\le
t^{\alpha/4}\,t^{3/4}
[\nabla u(x,\cdot)]_{\Cc^{(3+\alpha)/4}([t/2,t])}\\
&\le
t^{\frac{3+\alpha}{4}}
[\nabla u(x,\cdot)]_{\Cc^{(3+\alpha)/4}([t/2,t])},
\end{align*}
hence
\(
J_t\le
\|\nabla u\|_{\Cc^{3+\alpha,\frac{3+\alpha}{4}}_{0}(Q_T)}.
\)
For the \textit{spatial Hölder seminorm} $J_x$ we obtain
\begin{align*}
t^{\alpha/4}[\nabla u(\cdot,t')]_{\Cc^\alpha(\overline\Omega)}
&\le
C\,
\big(t^{1/4}\|D^2u(\cdot,t')\|_{C^0}\big)^\alpha
\|\nabla u(\cdot,t')\|_{C^0}^{1-\alpha}
\\
&\le
C\,
\|\nabla u\|_{\Cc^{3+\alpha,\frac{3+\alpha}{4}}_{0}(Q_T)}.
\end{align*}
Thus \(J_x\) is bounded by the same quantity.
Combining \(J_0\), \(J_x\), and \(J_t\) proves the claim.
\end{proof}




\bibliographystyle{alphaurl}
\bibliography{lib}


\end{document}